\title[Chern Correspondence for Higher Principal Bundles]
{Chern Correspondence for Higher Principal Bundles}
\date{\today}
\author[R. Tellez-Dominguez]{Roberto Tellez-Dominguez}
\address{Dep. Matem\'aticas \\ Universidad Aut\'onoma de Madrid\\ and Instituto de Ciencias Matem\'aticas (CSIC-UAM-UC3M-UCM)\\ Nicol\'as Cabrera 13--15, Cantoblanco\\ 28049 Madrid, Spain}
  \email{roberto.tellez@icmat.es}
\thanks{
Grants CEX2019-000904-S and SEV-2015-0554-19-2 funded by MCIN/AEI/10.13039/501100011033 and FSE invierte en tu futuro}
\tikzset{shorten <>/.style={shorten >=#1,shorten <=#1}}
\tikzset{2cell/.style={anchor=center,fill=white}}
\theoremstyle{plain}
\newtheorem{theorem}{Theorem}[section]
\newtheorem{lemma}[theorem]{Lemma}
\newtheorem{corollary}[theorem]{Corollary}
\newtheorem{proposition}[theorem]{Proposition}
\newtheorem*{theorem*}{Theorem}
\theoremstyle{definition}
\newtheorem{definition}[theorem]{Definition}
\newtheorem{definition-theorem}[theorem]{Definition-Theorem}
\newtheorem{example}[theorem]{Example}
\newtheorem*{acknowledgements}{Acknowledgements}
\theoremstyle{remark}
\newtheorem{remark}[theorem]{Remark}
\numberwithin{equation}{section} \setcounter{tocdepth}{1}
\renewcommand{\Im}{\operatorname{Im}}
\begin{document}

\begin{abstract}
The classical Chern correspondence states that a choice of Hermitian metric on a holomorphic vector bundle determines uniquely a unitary `Chern connection'. This basic principle in Hermitian geometry, later generalized to the theory of holomorphic principal bundles, provides one of the most fundamental ingredients in  modern gauge theory, via its applications to the Donaldson-Uhlenbeck-Yau Theorem. In this work we study a generalization of the Chern correspondence in the context of higher gauge theory, where the structure group of the bundle is categorified. For this, we define connective structures on a multiplicative gerbe and propose a natural notion of complexification for an important class of 2-groups. Using this, we put forward a new notion of higher connection which is well-suited for describing holomorphic principal 2-bundles for these 2-groups, and establish a Chern correspondence in this way. As an upshot of our construction, we unify two previous notions of higher connections in the literature, namely those of adjusted connections and of trivializations of Chern-Simons 2-gerbes with connection.
\end{abstract}

\maketitle

\setlength{\parskip}{5pt}
\setlength{\parindent}{0pt}


\section{Introduction}
\label{sec:intro}

The study of categorical generalizations of principal bundles has attracted attention from mathematical physicists over the last decades because of its potential to describe the geometry underlying some aspects of string theory and higher dimensional $\sigma$-models \cite{AschCanJurco,BaezYM,CarMultGer,Freed,JurcoSaeWolf,LanWenZhu,SatiSchSta2}. According to this line of thought, part of the background data defining such a theory is a higher principal bundle, an analog of a principal bundle in which the structure group is replaced by a suitable categorification, and the field content includes a connection on it. 

Higher Lie groups and higher principal bundles can also be defined in the holomorphic category. An interesting problem in this context is the construction of a moduli space of holomorphic higher principal bundles of a given topological type over a complex manifold via gauge theoretic methods. The interaction between complex geometry and gauge theory dates back to the work of Atiyah and Bott \cite{AtBott}, later expanded by the Narasimhan-Seshadri \cite{NarSesh} and Donaldson-Uhlenbeck-Yau \cite{Don,UhlYau} theorems. These show that the moduli of stable holomorphic vector bundles and the moduli of Hermite-Yang-Mills connections over a compact K\"ahler manifold are diffeomorphic. The first step on proving this is to define a map between both spaces, and this is done through the Chern correspondence, which states that holomorphic structures on the complexification of a Hermitian vector bundle $E_h$ are in bijection with unitary connections on $E_h$ whose curvature $F_h$ satisfies $F_h^{0,2} = 0$. 

In this article we initiate the programme of generalizing these classical results to the setting of higher principal bundles by proving an analog of the Chern correspondence for principal 2-bundles (Theorem \ref{th:chern}). While the notions of Lie 2-groups and principal 2-bundles are obtained in a relatively straightforward way by internalizing the axioms of groups and principal bundles in the 2-category of differentiable stacks, a consistent definition of connections on such principal 2-bundles is less obvious. For this reason we restrict ourselves to the study of principal 2-bundles whose structure 2-group $\mathfrak{G}$ arises as a central extension of the form
\begin{equation}
    1 \rightarrow BT \rightarrow \mathfrak{G} \rightarrow G \rightarrow 1,
\end{equation}
where $G$, $T$ are Lie groups with $T$ abelian. The most studied 2-group, String$(n)$, is of this form. These 2-groups admit a description in terms of multiplicative $T$-gerbes $\mathcal{G}$ over $G$ \cite{ScPr} which we recall in Section \ref{sec:backgr} and in Section \ref{sec:constrcomp} we use this language to define the new notion of `connective structure' (Definition \ref{def:constr}) on them, closely related to the connections from \cite{WaldMultCon}. A multiplicative $T$-gerbe over $G$ equipped with a connective structure $\mathcal{G}_{\nabla}$ gives in particular an $Ad$-invariant bilinear form $\langle\cdot,\cdot \rangle:\mathfrak{g} \otimes \mathfrak{g} \rightarrow \mathfrak{t}$ (where $\mathfrak{g}$ and $\mathfrak{t}$ are the Lie algebras of $G$ and $T$, respectively) and a curving $\Theta^L$ on $\mathcal{G}_{\nabla}$ (cf. Theorem \ref{th:MaurerCartan}). We use these to prove our first main theorem (see Theorem \ref{th:complexification}), which we regard as a complexification result for 2-groups that generalizes a construction in \cite{Upm}.

\begin{theorem}\label{th:complexificationintro}
    Let $K$, $T_{\mathbb R}$ be compact connected Lie groups with $T_{\mathbb R}$ abelian and let $j_T:T_{\mathbb R} \rightarrow T$, $j_K:K \rightarrow G$ be their complexifications. For $\mathcal{K}$ any $T_{\mathbb R}$-multiplicative gerbe over $K$ there is a unique holomorphic multiplicative $T$-gerbe with holomorphic connective structure $\mathcal{G}_{\nabla}$ over $G$ such that $j_K^{\ast}\mathcal{G} = \mathcal{K}^{\mathbb C}$ as smooth multiplicative $T$-gerbes over $K$. We call $\mathcal{G}_{\nabla}$ the \emph{complexification} of $\mathcal{K}$.
\end{theorem}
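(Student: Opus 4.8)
The plan is to separate the statement into its topological, Lie-theoretic, and holomorphic ingredients, complexify each, and then reassemble them, treating existence and uniqueness in turn. By Theorem~\ref{th:MaurerCartan} the connective structure on a multiplicative gerbe is governed on the one hand by its associated $\Ad$-invariant bilinear form $\la\cdot,\cdot\ra$, and on the other by a curving $\Theta^L$ built from the Maurer--Cartan form, whereas the underlying \emph{smooth} multiplicative $T$-gerbe is classified by an integral class on the nerve of the group. Accordingly I would (i) transfer the smooth multiplicative gerbe from $K$ to $G$, (ii) complexify the bilinear form and assemble a holomorphic connective structure from it, and (iii) establish the rigidity of holomorphic data on $G$.

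For existence, recall that $j_K\colon K\to G$ is the inclusion of a maximal compact subgroup, hence a deformation retract, so it is a homotopy equivalence inducing levelwise homotopy equivalences of the nerves $K^{\times\bullet}\to G^{\times\bullet}$ and thus an isomorphism on the integral cohomology classifying smooth multiplicative $T$-gerbes. There is therefore a smooth multiplicative $T$-gerbe over $G$ with $j_K^\ast\cG\cong\cK^{\CC}$, and transporting structure along this isomorphism we may arrange $j_K^\ast\cG=\cK^{\CC}$ on the nose. To endow $\cG$ with a holomorphic connective structure I would complexify the pairing: the $\Ad_K$-invariant form $\la\cdot,\cdot\ra_{\RR}\colon\mathfrak{k}\otimes\mathfrak{k}\to\mathfrak{t}_{\RR}$ attached to $\cK$ extends by $\CC$-bilinearity to $\la\cdot,\cdot\ra\colon\g\otimes\g\to\mathfrak{t}$, which is automatically $\Ad_G$-invariant since $\Ad_G$ is the complexification of $\Ad_K$ and invariance propagates from $K$ to $G$ by analytic continuation. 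Because $G$ is a complex Lie group, its left Maurer--Cartan form is a holomorphic $\g$-valued $1$-form; feeding it and the complexified pairing into the Cartan-type formulas of Theorem~\ref{th:MaurerCartan} produces holomorphic connection and curving forms, hence a holomorphic connective structure $\cG_\nabla$ whose restriction along $j_K$ reproduces $\cK^{\CC}$ by construction.

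For uniqueness, the key observation is that $K$ is a totally real submanifold of $G$ of maximal dimension, since $\g=\mathfrak{k}\oplus\sqrt{-1}\,\mathfrak{k}$, and that $G$ is connected. A holomorphic structure together with a holomorphic connective structure is in particular real-analytic, so by the identity theorem it is determined on all of $G$ by its restriction to $K$. Any two complexifications $\cG_\nabla$ and $\cG'_\nabla$ restrict along $j_K$ to the very same smooth datum $\cK^{\CC}$; being holomorphic continuations of one and the same real-analytic structure along the maximal totally real $K$, their holomorphic transition, connection, and curving data agree on $K$ and hence on all of $G$, so the two packages coincide. This yields uniqueness and generalizes the line-bundle case treated in \cite{Upm}.

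The step I expect to be the main obstacle is (ii), and specifically the construction and control of the holomorphic structure on the gerbe \emph{itself}, not merely on its connection forms. In the bundle case a holomorphic structure is a single $\dbar$-operator, whereas here one must produce a holomorphic refinement one categorical level higher and verify its compatibility simultaneously with the multiplicative structure over $G\times G$, including the associator over $G\times G\times G$, and with the complexified connective structure. Showing that the homotopy-theoretic transfer of the smooth multiplicative gerbe can be promoted to a genuine holomorphic multiplicative gerbe realizing the complexified Lie data, and that this promotion is coherent at every level, is where the real work lies.
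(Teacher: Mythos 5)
Your proposal has genuine gaps at all three stages, the most serious being step (i). You assert that $j_K\colon K\to G$ induces ``an isomorphism on the integral cohomology classifying smooth multiplicative $T$-gerbes'', but integral cohomology classifies these gerbes only over \emph{compact} groups: by Proposition \ref{prop:class2grps}, over a general group the classification sits in an exact sequence $H^3_{gr,cont}(G,\mathfrak{t}) \rightarrow Ext(G,BT) \rightarrow H^4(BG,Z) \rightarrow H^4_{gr,cont}(G,\mathfrak{t})$, and for the non-compact complexification $G$ the continuous group cohomology does not vanish (e.g.\ $H^3_{gr,cont}(SL(2,\mathbb C),\mathbb R)\cong\mathbb R$). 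So neither the existence nor the uniqueness of a smooth multiplicative gerbe over $G$ lifting the topological class of $\mathcal K^{\mathbb C}$ follows from your transfer; in fact the nontrivial $H^3_{gr,cont}$-term produces smoothly inequivalent multiplicative gerbes over $G$ that all restrict to $\mathcal K^{\mathbb C}$, which is precisely why the theorem needs the words ``with holomorphic connective structure''. The paper never transfers the bare gerbe: it complexifies the pairing of $\mathcal K$ (Corollary \ref{cor:compact}), uses the homotopy equivalence only for the homotopy-invariant coefficients $\mathfrak t$ and $\underline T$ to verify $exp([\mu_{\mathbb C},-\nu_{\mathbb C},0,0])=0$, and then invokes the existence direction of Theorem \ref{th:MaurerCartan}, which yields the smooth multiplicative gerbe \emph{together with} a connective structure, unique up to flat multiplicative gerbes; the flat ambiguity lives in $H^3(BG,\underline T)\cong H^3(BK,\underline T)$ and is killed by the requirement $j_K^{\ast}\mathcal G=\mathcal K^{\mathbb C}$. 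The step you flag as the main obstacle and leave open --- putting a holomorphic structure on the gerbe itself, coherently with $m$ and $\alpha$ --- is exactly what Proposition \ref{prop:pairingcx} supplies: since $\langle\cdot,\cdot\rangle_{\mathbb C}$ is $\mathbb C$-bilinear, its $(1,1)$ and $(0,2)$ parts vanish, so the smooth gerbe with connective structure admits a unique holomorphic structure with holomorphic connective structure.

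Your uniqueness argument also fails as stated. The identity theorem propagates an \emph{equality} of holomorphic data from a maximal totally real submanifold, but two complexifications $\mathcal G_{\nabla}$, $\mathcal G'_{\nabla}$ only come with \emph{smooth} isomorphisms $j_K^{\ast}\mathcal G\cong\mathcal K^{\mathbb C}\cong j_K^{\ast}\mathcal G'$ over $K$; nothing forces their holomorphic cocycle data, given in a priori unrelated covers, to agree on $K$, and upgrading a smooth isomorphism over $K$ to a holomorphic one near $K$ is the actual hard content (in \cite{Upm} this is where Stein theory enters). A telling symptom: your continuation argument makes no essential use of the holomorphic connective structure, so it would equally ``prove'' uniqueness of the holomorphic multiplicative gerbe alone, which is false for the same cohomological reason as above. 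The paper's uniqueness instead runs through the pairing: any complexification must have pairing $\langle\cdot,\cdot\rangle_{\mathbb C}$ (it is $\mathbb C$-bilinear and restricts to $\langle\cdot,\cdot\rangle$), Theorem \ref{th:MaurerCartan} pins down the smooth gerbe with connective structure up to flat gerbes, the isomorphism $H^3(BG,\underline T)\cong H^3(BK,\underline T)$ together with $j_K^{\ast}\mathcal G=\mathcal K^{\mathbb C}$ fixes the flat part, and Proposition \ref{prop:pairingcx} gives uniqueness of the holomorphic refinement.
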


Theorem \ref{th:complexificationintro} allows us to complexify smooth $\mathcal{K}$-bundles to smooth $\mathcal{G}$-bundles (Proposition \ref{prop:complexification2pb}), and since $\mathcal{G}$ has a holomorphic connective structure we can define holomorphic structures with holomorphic connective structures on $\mathcal{G}$-bundles. After introducing a notion of `enhanced connection' on a $\mathcal{K}$-bundle (Definition \ref{def:2pbcon}), we obtain our second main result (see Theorem \ref{th:chern} and Corollary \ref{cor:chern} for a precise formulation). It is an analog of the classical Chern correspondence which generalizes previous results for gerbes \cite{Gualt} and which seems to be related to the Chern correspondence for Courant algebroids from \cite{GarRuTi} by a similar procedure to the one described in \cite{ShengXuZhu}. 

\begin{theorem}\label{th:chernintro}
    Let $\mathcal{P}_h$ be a smooth $\mathcal{K}$-bundle over a complex manifold $X$. Then holomorphic structures with holomorphic connective structure on its complexification are in bijection with enhanced connections $(A_h,B_h,g)$ on $\mathcal{P}_h$ such that $g \in \Gamma(S^2T^{\ast}X \otimes \mathfrak{t}_{\mathbb R})$ satisfies $g^{0,2} = 0$ and whose curvature satisfies $F_{A_h}^{0,2} = 0$, $H_h = d^c(g(J\cdot,\cdot))$.
\end{theorem}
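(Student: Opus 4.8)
The plan is to reduce the statement to the classical Chern correspondence applied layer-by-layer to the pieces of the enhanced connection, using the complexification of Theorem \ref{th:complexification} and Proposition \ref{prop:complexification2pb} to make sense of the holomorphic side. An enhanced connection (Definition \ref{def:2pbcon}) consists of a principal connection $A_h$ on the underlying $K$-bundle, a curving $B_h$, and a symmetric tensor $g \in \Gamma(S^2 T^*X \otimes \mathfrak{t}_{\mathbb{R}})$; these should correspond respectively to the three layers of a holomorphic structure with holomorphic connective structure on the complexified $\mathcal{G}$-bundle $\mathcal{P}_h^{\mathbb{C}}$, namely a holomorphic structure on the underlying $G$-bundle, a holomorphic connective structure on the associated gerbe, and the Hermitian datum relating the unitary and holomorphic curvings. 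Since $g^{0,2}=0$ forces (by reality) $g^{2,0}=0$ as well, the tensor $g$ is of type $(1,1)$ and $\omega := g(J\cdot,\cdot)$ is a genuine $\mathfrak{t}_{\mathbb{R}}$-valued real $(1,1)$-form playing the role of a fundamental form, so that the relation $H_h = d^c\omega$ is the Bismut-type torsion identity familiar from the Chern correspondence for gerbes.

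For the forward direction, suppose a holomorphic structure with holomorphic connective structure on $\mathcal{P}_h^{\mathbb{C}}$ is given. On the underlying $G$-bundle the $K$-reduction furnished by $\mathcal{P}_h$ together with the given $\bar\partial$-operator determines, by the classical Chern correspondence, a unique $K$-connection $A_h$ whose $(0,1)$-part equals the holomorphic structure; integrability of the latter forces $F_{A_h}^{0,2}=0$. At the gerbe level, the holomorphic connective structure together with the unitary reduction determines the curving $B_h$ and, as the obstruction to simultaneous compatibility of the two structures, the symmetric tensor $g$. Here I would use the Maurer-Cartan data of Theorem \ref{th:MaurerCartan}, in particular the $\operatorname{Ad}$-invariant pairing $\langle\cdot,\cdot\rangle$ and the curving $\Theta^L$, to express the difference between the holomorphic and unitary curvings in terms of $g$; holomorphicity of the $3$-curvature then yields the identity $H_h = d^c\omega$.

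For the backward direction, given an enhanced connection $(A_h,B_h,g)$ satisfying the three conditions, I would build the holomorphic structure by extracting $(0,1)$-parts. The $(0,1)$-part of $A_h$ defines a holomorphic structure on the $G$-bundle underlying $\mathcal{P}_h^{\mathbb{C}}$, integrable precisely because $F_{A_h}^{0,2}=0$. The curving $B_h$ together with $g$ defines the holomorphic connective structure on the associated gerbe, where the conditions $g^{0,2}=0$ and $H_h = d^c\omega$ guarantee the vanishing of the relevant $(0,2)$- and $(0,3)$-components and hence integrability. Finally I would verify that the two constructions are mutually inverse, which follows from the uniqueness clauses of the classical Chern correspondence and of the complexification in Theorem \ref{th:complexification}, applied at each layer.

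The step I expect to be the main obstacle is the analysis at the gerbe/curving level: isolating $g$ as precisely the additional datum needed, beyond $A_h$ and $B_h$, to rigidify a unique holomorphic connective structure, and checking that $H_h = d^c\omega$ is exactly the integrability condition rather than merely a consequence of it. This requires carefully tracking how the curving $\Theta^L$ and the pairing $\langle\cdot,\cdot\rangle$ from Theorem \ref{th:MaurerCartan} interact with the complex structure $J$ and the Dolbeault bigrading, and confirming that the naive unitary curving must be corrected by a $\partial\omega$-type term to become holomorphic, which is the source of the $d^c\omega$ identity.
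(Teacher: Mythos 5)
Your plan has the same skeleton as the paper's proof of Theorem \ref{th:chern}: the classical Chern correspondence handles the underlying bundle (in the paper this is the bijection $\mathcal{A}(P_h) \rightarrow \mathcal{A}(P)/\Omega^{1,0}(ad\,P)$, $A_h \mapsto [A_h^{\mathbb C}]$), and the curving layer is handled by passing from the unitary data $(B_h,g)$ to the complex curving $B_h^{\mathbb C} - J_{\mathfrak{t}}\,g(J\cdot,\cdot)$. But as written the proposal has a genuine gap, and it sits exactly where you yourself place ``the main obstacle'': the curving-level analysis is described only in words (``the holomorphic connective structure together with the unitary reduction determines $B_h$ and $g$''), with no mechanism that produces $g$, proves its uniqueness, or shows the resulting correspondence is bijective.

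Two concrete ingredients are needed to close it. First, a smooth-data description of the holomorphic side, prior to any ``extraction of $(0,1)$-parts'': the paper proves (Propositions \ref{prop:dolbeault2pbs} and \ref{prop:dolbtorsor}) that holomorphic structures with holomorphic connective structure on $\mathcal{P}_{\nabla}$ are the same as equivalence classes of connections $(A,B) \in \mathcal{A}(\mathcal{P}_{\nabla})$ with $F_A^{0,2}=0$ and $H^{1,2+0,3}=0$, modulo the group $\Omega^{1,0}(ad\,P)\times_{\langle\cdot,\cdot\rangle}\Omega^{2,0}(X,\mathfrak{t})$. Note the twisted product: shifting $A$ by $a \in \Omega^{1,0}(ad\,P)$ forces the curving to shift by $\langle A \wedge a\rangle$, so your ``layers'' are coupled and cannot be matched independently; the paper resolves this by exhibiting the correspondence as an equivariant map of affine bundles (via the torsor structures of Proposition \ref{prop:connections}) and deducing bijectivity from that, rather than by a layer-by-layer construction and a separate uniqueness check. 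Second, the fact that actually makes $g$ appear, which your sketch never states: the map $(b,g)\mapsto [\,b - J_{\mathfrak{t}}\,g(J\cdot,\cdot)\,]$ is an isomorphism from $\Omega^{2}(X,\mathfrak{t}_{\mathbb R}) \times \Gamma(S^{1,1}T^{\ast}X\otimes\mathfrak{t}_{\mathbb R})$ onto $\Omega^{2}(X,\mathfrak{t})/\Omega^{2,0}(X,\mathfrak{t})$, i.e.\ every curving class modulo $(2,0)$-forms has a unique representative of the form (real $2$-form) minus $J_{\mathfrak{t}}$ times (real $(1,1)$-form). Granting these, the curvature comparison settles your own question affirmatively: the $3$-form curvature of $(A_h^{\mathbb C},B_h^{\mathbb C}-J_{\mathfrak{t}}\omega)$ is $H_h - J_{\mathfrak{t}}d\omega$, and $H^{1,2+0,3}=0$ is genuinely equivalent to $H_h = d^c\omega$, because reality of $H_h$ recovers its $(3,0)$ and $(2,1)$ parts by conjugation from the $(0,3)$ and $(1,2)$ parts. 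Finally, a small type error in your last paragraph: the unitary curving is corrected by the $(1,1)$-form $-J_{\mathfrak{t}}\omega$, not by ``a $\partial\omega$-type term'' (which would be a $3$-form); it is the $3$-form curvature that becomes $-2J_{\mathfrak{t}}\partial\omega$ after the correction.
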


Our definition of connection for $\mathcal{G}$-bundles is obtained by generalizing the notion of trivialization of the Chern-Simons 2-gerbe with connection from \cite{Bunke,CarMultGer,WaldString}, which we can do by using the data $\langle \cdot,\cdot \rangle$, $\Theta^L$ associated to a connective structure on $\mathcal{G}$. In particular, a $\mathcal{G}$-bundle with connection $(P,\mathcal{P},A,B)$ over $M$ has an underlying $G$-bundle with connection $(P,A)$ and the curvature of $(A,B)$ is a pair $(F_A,H)$ where $F_A \in \Omega^2(M,ad\,P)$ is the curvature of $A$ and $H \in \Omega^3(M,\mathfrak{t})$ satisfies the Green-Schwartz anomally cancellation equation \cite{Kil,Witten} which is expected in string theory from the field strength of a string with $\mathcal{G}$-symmetry
\begin{equation}\label{eq:anomalyintro}
    dH - \langle F_A \wedge F_A \rangle = 0.
\end{equation}
Moreover, we expand this notion of connection so as to include a symmetric tensor into the picture and call the resulting object an `enhanced connection'. This is a natural structure appearing in Theorem \ref{th:MaurerCartan} and Theorem \ref{th:chern}, that establish the existence of canonical connections with accompanying symmetric tensors in different contexts. A precursor of this idea in the context of Courant algebroids can be found in \cite{GarRuTi}.

As another proof of concept for our notion of connection, we relate it to the theory of adjusted connections for crossed modules from \cite{RistSaeWolf}. Namely, if $(\tilde{G},H,f,\triangleright)$ is a crossed module and $\mathcal{G}$ is a multiplicative gerbe modelling the same 2-group, then we prove that there is a correspondence between connective structures on $\mathcal{G}$ and a certain type of adjustments on $(\tilde{G},H,f,\triangleright)$ that we call `strong'. Furthermore, we show that under this correspondence the connections defined as above are equivalent to the \emph{adjusted connections} from loc. cit. 

The relation with adjusted connections is useful in the study of moduli spaces of connections for String$(n)$-principal bundles. Since the models for String$(n)$ as a multiplicative gerbe are not explicit, it is easier to construct and describe String$(n)$-bundles by using any of the infinite-dimensional models for String$(n)$ as an adjusted crossed module \cite{BaezCransSchSte,LudewigWald,RistSaeWolf}. Our Equations \eqref{eq:cocycleadj} allow to define connections on a String$(n)$-bundle which is described in those terms as certain local differential forms taking values in finite-dimensional vector spaces, as opposed to the Fr\'echet vector spaces in which the adjusted connections from \cite{RistSaeWolf} take values. Then the groupoid of connections on a fixed bundle can be analytically completed to a groupoid described by Banach manifolds, well suited for using techniques from functional analysis to study gauge-theoretic equations. 

One conclusion that we can draw from our work is that the object that describes the symmetries of higher gauge theory is not simply a Lie 2-group, but rather a Lie 2-group equipped with some additional data, equivalent to a connective structure if the Lie 2-group is described by a multiplicative gerbe. This has already been argued in the case of adjustments for crossed modules \cite{KimSaeTransport,RistSaeWolf,SaeSch} and the evidence for it goes beyond the fact that there is no consistent definition of connections for a general Lie 2-group in the literature that does not impose fake curvature conditions. Namely, from the point of view of complex geometry we can now note the following two facts. Firstly, that the complexification of a multiplicative gerbe is uniquely determined only if we require the existence of a \emph{holomorphic connective structure} on it. Secondly, that the Chern correspondence, in its cleanest form, relates enhanced connections on a principal 2-bundle and holomorphic structures \emph{with holomorphic connective structure} on its complexification.

Finally, we note that Theorem \ref{th:chernintro}, along with the anomally cancelation \eqref{eq:anomalyintro}, imply that a holomorphic structure with holomorphic connective structure on the complexification of a $\mathcal{K}$-bundle over a complex manifold $X$ determines a connection $A_h$ on the underlying $K$-bundle and an $\omega \in \Omega^{1,1}(X,\mathfrak{t}_{\mathbb R})$ satisfying
\begin{equation}\label{eq:HullStro}
    F_{A_h}^{0,2} = 0, \quad \quad dd^c\omega - \langle F_{A_h} \wedge F_{A_h} \rangle = 0.
\end{equation}
These two equations, for $\omega$ the Hermitian form of a Hermitian metric on $X$, appear naturally in heterotic string theory upon imposing supersymmetry in the Hull-Strominger system \cite{Hull,Strom}. This system has been proposed by S.-T. Yau as a generalization of K\"ahler geometry which could serve in the geometrization of Reid's fantasy of constructing a connected moduli space of Calabi-Yau manifolds with varying topological type \cite{Reid,Yau}.

This observation suggests two directions of further research. Firstly, there might be a natural algebro-geometric stability condition for holomorphic principal 2-bundles with holomorphic connective structure having the equations from the Hull-Strominger system as gauge-theoretic counterpart, a possibility that has recently been studied from the point of view of Courant algebroids in \cite{AshStrickTennWald,GarGon1,GarGon2,GarRuTi}. Secondly, it seems that holomorphic $\mathcal{G}$-bundles with holomorphic connective structure are the geometric objects that naturally prequantize Hermitian metrics satisfying \eqref{eq:HullStro}, similarly to how holomorphic line bundles prequantize K\"ahler metrics. Note that this is already interesting in the case $K = \{1\}$, $T_{\mathbb R} = U(1)$; then, this discussion implies that holomorphic $\mathbb C^{\ast}$-gerbes with holomorphic connective structures prequantize pluriclosed metrics, an idea that is present in the literature on generalized complex geometry \cite{Gualt}. It would be interesting to pursue a complete geometric quantization and to contrast it with other approaches in the quantization programme for the Hull-Strominger system, such as those based on vertex algebras \cite{AlvArrGar1,AlvArrGar2}.

This paper is organized as follows. Section \ref{sec:backgr} recalls sheaf cohomology on semi-simplicial manifolds, which is our main tool for proving classification results, and provides some background on Lie 2-groups and their relation with multiplicative gerbes. In Section \ref{sec:constrcomp} we introduce the notion of connective structure on a multiplicative gerbe, we explain their relation with adjustments in crossed modules, and we prove our two main results about multiplicative gerbes: the characterization of connective structures in terms of an enhanced curving (Theorem \ref{th:MaurerCartan}) and the complexification theorem for multiplicative gerbes (Theorem \ref{th:complexification}). In Section \ref{sec:2pbsandcon} we recall principal 2-bundles and define (enhanced) connections on them, giving cocycle data and studying their basic properties, as well as proving the equivalence with adjusted connections. In Section \ref{sec:2pbshol} we define semiconnections on principal 2-bundles, characterizing holomorphic structures on them, we prove the Chern correspondence (Theorem \ref{th:chern}) and we discuss some examples of holomorphic principal 2-bundles over the Hopf surface. We also include Appendix \ref{ap:gerbes} to recall some basic definitions about gerbes.

\begin{acknowledgements}
I would like to thank Domenico Fiorenza for helping me understand connections on principal 2-bundles. I am very grateful to Miquel Cueca and Chenchang Zhu for their kind hospitality and for the helpful discussions we had during my visit to G\"ottingen University. I have also benefited from some interesting conversations with Severin Bunk, Christian S\"amann, Carlos Shahbazi and Konrad Waldorf. Finally, I would like to thank my supervisors Luis \'Alvarez-C\'onsul and Mario Garcia-Fernandez for their guidance and careful reading throughout this project.
\end{acknowledgements} 

\section{Background on Lie 2-groups and multiplicative gerbes}
\label{sec:backgr}

\subsection{Sheaf cohomology on semi-simplicial manifolds}\label{ap:sheaf}

In this section we give a brief overview of sheaf cohomology on semi-simplicial manifolds following \cite{Del,Gomi}, and we recall Bott's Theorem from \cite{Bott} on the cohomology of the sheaves of differential forms on the classifying space of a Lie group. The tools provided here are used for stating and proving classification results for multiplicative gerbes and structures on them in the forthcoming sections. 

\begin{definition}
A \emph{semi-simplicial manifold} $M_{\bullet}$ is a collection of manifolds $M_n$, $n \geq 0$ with smooth maps $d^n_j: M_n \rightarrow M_{n-1}$, $j = 0,\,...,n$, called \emph{face maps}, satisfying $d^n_j d^{n-1}_i =  d^n_i d^{n-1}_{j-1}$ for $i < j$. A \emph{sheaf} $\mathcal{S}^{\bullet}$ on $M_{\bullet}$ is a collection of sheaves $\mathcal{S}^n$ on $M_n$ with maps $\partial^n_j:(d^n_j)^{\ast}\mathcal{S}^{n-1} \rightarrow \mathcal{S}^n$ satisfying the condition 
\begin{equation}
    \partial^n_j \circ (d^n_j)^{\ast}\partial^{n-1}_i =  \partial^n_i \circ (d^n_i)^{\ast}\partial^{n-1}_{j-1}
\end{equation}
for $i < j$. This ensures that the maps $\delta_{n-1}: \mathcal{S}^{n-1}(M_{n-1}) \rightarrow \mathcal{S}^n(M_n)$ defined by $\delta_{n-1} := \sum_{j=0}^{n-1}(-1)^{j}\partial^{n-1}_j(d_j^{n-1})^{\ast}$ satisfy $\delta_n \circ \delta_{n-1} = 0$. A \emph{morphism of sheaves} $\mathcal{S}^{\bullet}_0 \rightarrow \mathcal{S}^{\bullet}_1$ is a collection of morphisms $\mathcal{S}^n_0 \rightarrow \mathcal{S}^n_1$ intertwining the maps $(\partial^n_j)_0$, $(\partial^n_j)_1$. The \emph{global sections functor} $\Gamma$ is the functor from the category of sheaves of abelian groups on $M_{\bullet}$ to the category of abelian groups sending $\mathcal{S}^{\bullet} \mapsto \ker (\delta_0: \mathcal{S}^0(M_0) \rightarrow \mathcal{S}^1(M_1))$.
\end{definition}

The category of sheaves of abelian groups on $M_{\bullet}$ is abelian; moreover, it can be interpreted as the category of sheaves in a certain site and so it has enough injectives \cite[2.1.2]{Tamme}. For $p \in \mathbb N$, the \emph{sheaf cohomology group} $H^{p}(M_{\bullet},\mathcal{S}^{\bullet})$ is then the image of $\mathcal{S}^{\bullet}$ by the $p$-th derived functor of $\Gamma$. To compute these, it is sufficient to take a resolution $\mathcal{S}^{\bullet}  \stackrel{\epsilon}{\rightarrow} I^{\bullet,0} \stackrel{d}{\rightarrow} I^{\bullet,1} \stackrel{d}{\rightarrow} ...$ of the sheaf $\mathcal{S}^{\bullet}$ such that the standard sheaf cohomology groups $H^r(M_n,I^{n,m})$ vanish for $r > 0$ and $n, \,m \geq 0$ (for example, by taking \emph{functorial} acyclic resolutions of each sheaf $\mathcal{S}^n$ over $M_n$). 
Then $H^{\ast}(M_{\bullet},\mathcal{S}^{\bullet})$ is obtained as the total cohomology of the double complex $(I^{n,m}(M_n),\delta,d)$.

\emph{Sheaf hypercohomology} $\mathbb H(M_{\bullet},\mathcal{S}_0^{\bullet} \rightarrow ... \rightarrow \mathcal{S}_l^{\bullet})$ of a complex of sheaves is similarly defined as the derived functors of the functor $$\mathcal{S}_0^{\bullet} \stackrel{t}{\rightarrow} ... \stackrel{t}{\rightarrow} \mathcal{S}_l^{\bullet} \mapsto \ker (\delta_0\oplus t: \mathcal{S}^0_0(M_0) \rightarrow \mathcal{S}^1_0(M_1) \oplus \mathcal{S}_1^0(M_0));$$
it can be computed by taking acyclic resolutions $I_l^{\bullet,\bullet}$ of each $\mathcal{S}_l$ with maps $I_0^{\bullet,\bullet} \stackrel{t}{\rightarrow} I_1^{\bullet,\bullet} \stackrel{t}{\rightarrow} ...$ such that everything commutes and computing the total cohomology of the triple complex $(I^{n,m}_{k}(M_n),\delta,d,t)$. Cohomology and hypercohomology of sheaves satisfy the expected properties with respect to exact sequences and triangles, as they are defined in terms of derived functors.

The most natural examples of sheaves on semi-simplicial manifolds arise from considering a sheaf $\mathcal{S}$ that is constructed functorially over any manifold and letting $\mathcal{S}^n$ be the sheaf $\mathcal{S}$ on $M_n$, with $\partial^n_j$ the maps obtained by functoriality. All the examples that we consider in this article are of this form. For example, for a fixed finite-dimensional vector space $V$ and for fixed $q \in \mathbb N$, we define the sheaf $\Omega^q_{V}$ of $V$-valued $q$-forms on a semi-simplicial manifold $M_{\bullet}$ to be the sheaf on $M_{\bullet}$ which at each level $M_n$ is the sheaf of $V$-valued $q$-forms on $M_n$, with the maps $\partial^n_j = (d^n_j)^{\ast}$ between them. 

\begin{remark}\label{rk:simpacyclic}
    If $\mathcal{S}_{\bullet}$ is a sheaf over a semi-simplicial manifold $M_{\bullet}$ such that each $\mathcal{S}_n$ is an acyclic sheaf over $M_n$, then $H^{p}(M_{\bullet},\mathcal{S}_{\bullet}) = \ker (\delta: \mathcal{S}^p(M_p) \rightarrow \mathcal{S}^{p+1}(M_{p+1})) / \Im (\delta: \mathcal{S}^{p-1}(M_{p-1}) \rightarrow \mathcal{S}^p(M_p))$. In particular, for any semi-simplicial manifold $M_{\bullet}$ and any $p, \, q \in \mathbb N$ we have
    \begin{equation}
        H^{p}(M_{\bullet},\Omega^q_V) = \frac{\ker (\delta: \Omega^q(M_p,V) \rightarrow \Omega^q(M_{p+1},V)) }{\Im (\delta: \Omega^q(M_{p-1},V) \rightarrow \Omega^q(M_p,V))}.
    \end{equation}
\end{remark}

The following examples play an important role in this article.

\begin{example}\label{ex:simpderham}
    Consider over any semi-simplicial manifold $M_{\bullet}$ the sheaf $\underline{V}$ of locally constant $V$-valued functions on $M_{\bullet}$, for $V$ a fixed vector space. We write $H^{\ast}(M_{\bullet},V)$ for its cohomology. To compute it we may take de Rham resolutions $\Omega^{\bullet}_V$ on each $M_n$ 
    $$\underline{V} \rightarrow C^{\infty}_V \stackrel{d}{\rightarrow} \Omega^1_V \stackrel{d}{\rightarrow} ...,$$
    with the maps $\delta:\Omega^{m}_V(M_{n-1}) \rightarrow \Omega^{m}_V(M_n)$ being simply $\delta := \sum_{j= 0}^{n-1}(-1)^j (d^{n-1}_j)^{\ast}$. Then $H^{\ast}(M_{\bullet},V)$ is the total cohomology of $(\Omega^m_V(M_n),\delta,d)$. It is shown in \cite{BottShulSta} that this agrees with the singular $V$-valued cohomology of the \emph{geometric realization} of $M_{\bullet}$. 
\end{example}


\begin{example}\label{ex:bott}
    For $G$ a Lie group, the semi-simplicial manifold $BG_{\bullet}$ is defined by $BG_n := G^n$ and $d^n_{j}: G^n \rightarrow G^{n-1}$, $j = 0, ..., n$, $n \geq 2$,
\begin{align*}
    d^n_0(g_1,...,g_n) &= (g_2,g_3,g_4...,g_n), \\
    d^n_1(g_1,...,g_n) &= (g_1g_2,g_3,g_4,...,g_n), \\
    d^n_2(g_1,...,g_n) &= (g_1,g_2g_3,g_4,...,g_n), \\
    &... \\
    d^n_{n-1}(g_1,...,g_n) &= (g_1,g_2,...,g_{n-2},g_{n-1}g_n),\\
    d^n_n(g_1,...,g_n) &= (g_1,g_2,...,g_{n-2},g_{n-1}).
\end{align*}
The geometric realization of $BG_{\bullet}$ is the classifying space of $G$, and so for a fixed vector space $V$ the cohomology of this space with coefficients in $V$ can be computed as in Example \ref{ex:simpderham}.
\end{example}

Let $H^{\ast}_{gr,cont}(G,N)$ be group cohomology with coefficients in a $G$-module $N$, computed with continuous cochains. We are interested in taking $N = S^q \mathfrak{g}^{\ast} \otimes V$, for $q \in \mathbb N$, $\mathfrak{g}^{\ast}$ the dual of the Lie algebra of $G$ regarded as a $G$-module with the coadjoint action and $V$ a finite-dimensional vector space with trivial $G$-action (so, in particular, $H^{0}_{gr,cont}(G,S^{q}\mathfrak{g}^{\ast} \otimes V)$ is the space of $Ad$-invariant degree $q$ polynomials in $\mathfrak{g}$ with values in $V$). The following theorem from \cite{Bott} relates sheaf cohomology of the sheaf $\Omega^q_V$ on $BG_{\bullet}$ with group cohomology with coefficients in $S^q \mathfrak{g}^{\ast} \otimes V$. 

\begin{theorem}[\cite{Bott}]\label{th:bott}
    Let $G$ be a Lie group and let $V$ be a finite-dimensional vector space. Then
    \begin{equation}\label{eq:bott}
    H^{p}(BG_{\bullet},\Omega^q_V) = H^{p-q}_{gr,cont}(G,S^{q}\mathfrak{g}^{\ast} \otimes V).
    \end{equation}
    Moreover, if $G$ is compact, then $H^{r}_{gr,cont}(G,S^{q}\mathfrak{g}^{\ast} \otimes V) = 0$ for $r > 0$.
\end{theorem}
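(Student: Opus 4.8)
The plan is to reduce the statement to a computation with the simplicial (bar) differential alone, and then to extract the group cohomology by a Chern--Weil/Weil-algebra transgression. First I would invoke Remark~\ref{rk:simpacyclic}: since each $\Omega^q_V$ is a fine, hence acyclic, sheaf on the manifold $BG_n = G^n$, the sheaf cohomology $H^p(BG_\bullet,\Omega^q_V)$ is simply the cohomology at position $p$ of the single complex $(\Omega^q(G^\bullet,V),\delta)$, with $\delta = \sum_j (-1)^j (d^n_j)^{\ast}$ the bar differential. Thus the whole theorem becomes a statement about this explicit complex of $V$-valued $q$-forms on the nerve of $G$. As a sanity check, for $q=0$ this complex is exactly the (smooth) inhomogeneous group cochain complex with values in the trivial module $V$, which recovers $H^p(BG_\bullet,\Omega^0_V) = H^p_{gr,cont}(G,V)$; and for $q=p=1$ a $\delta$-closed $1$-form is a primitive, i.e. a bi-invariant form, which is the same datum as an $\Ad$-invariant element of $\mathfrak{g}^{\ast}\otimes V$, matching $H^0_{gr,cont}(G,\mathfrak{g}^{\ast}\otimes V)$.

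The main step is to produce the degree shift $p \mapsto p-q$ together with the symmetric power $S^q\mathfrak{g}^{\ast}$, and for this I would follow the Bott--Shulman--Stasheff device \cite{BottShulSta} of comparing with the universal simplicial space $EG_\bullet$, where $EG_n = G^{n+1}$ carries the free diagonal $G$-action with $EG_\bullet / G = BG_\bullet$. The point is that $EG_\bullet$ admits an extra degeneracy (a contracting homotopy in the simplicial direction), so that its $q$-forms, together with the de Rham differential, are controlled fibrewise by the Weil algebra $W(\mathfrak{g})$: trivialising $T(G^{n+1})$ by the Maurer--Cartan form one writes a $q$-form through its components in $\mathfrak{g}^{\ast}$, and the combination of the de Rham and simplicial differentials transgresses the exterior generators into the symmetric (curvature) generators, exactly as in the Chern--Weil homomorphism. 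Passing to the basic ($\Ad$-invariant, horizontal) part produces the invariant symmetric polynomials in $S^q\mathfrak{g}^{\ast}$, while the $q$ Maurer--Cartan factors consumed in this transgression account for the shift by $q$ in cohomological degree; what survives of the simplicial structure is then precisely the group cochain differential on functions $G^{p-q}\to S^q\mathfrak{g}^{\ast}\otimes V$. Organised as a spectral sequence of the relevant double complex, the first differential computes the Weil-algebra (Koszul) cohomology and the second is the bar differential with coefficients in $S^q\mathfrak{g}^{\ast}\otimes V$, yielding $H^{p-q}_{gr,cont}(G,S^q\mathfrak{g}^{\ast}\otimes V)$. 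I expect the construction of the explicit quasi-isomorphism carrying out this transgression — in particular verifying that the non-symmetric and mixed contributions cancel so that exactly the \emph{symmetric} powers survive — to be the main obstacle, since it is the technical heart of Bott's argument in \cite{Bott}.

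Finally, for the vanishing statement when $G$ is compact, the coefficient module $N = S^q\mathfrak{g}^{\ast}\otimes V$ is finite dimensional, and I would prove the stronger fact that $H^r_{gr,cont}(G,N) = 0$ for all $r>0$ by the standard Haar-averaging argument. Working with the homogeneous continuous cochain complex and the normalised Haar measure on the compact group $G$, one defines a homotopy $h$ by integrating a cochain over its first variable; a direct computation gives $dh + hd = \mathrm{id}$ in positive degrees, because the integral is $G$-invariant and has total mass $1$. Hence the complex is contractible in positive degrees and its cohomology vanishes there, which is exactly what is needed for the classification results in the later sections.
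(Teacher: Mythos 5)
The first thing to note is that the paper does not prove this statement at all: Theorem~\ref{th:bott} is imported verbatim from \cite{Bott}, and the only computation the paper carries out itself is the pair of low-degree formulas in Lemma~\ref{lem:bott}. So there is no internal proof to compare yours against; the relevant comparison is with the cited literature, and your outline does follow the route of \cite{Bott} and \cite{BottShulSta}. Your first reduction --- that by Remark~\ref{rk:simpacyclic} the sheaf cohomology $H^p(BG_{\bullet},\Omega^q_V)$ is the cohomology of the single complex $(\Omega^q(G^{\bullet},V),\delta)$ --- is exactly how the paper itself manipulates these groups (e.g.\ in the proofs of Proposition~\ref{prop:classconstr} and Theorem~\ref{th:MaurerCartan}), and your two sanity checks are correct: primitive $1$-forms are indeed the bi-invariant ones, hence $\Ad$-invariant elements of $\mathfrak{g}^{\ast}\otimes V$. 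Note only that the $q=0$ check silently uses the equality of smooth and continuous group cohomology for Lie groups (Hochschild--Mostow/van Est smoothing), which is a theorem rather than a tautology. Your Haar-averaging argument for the vanishing $H^{r}_{gr,cont}(G,S^{q}\mathfrak{g}^{\ast}\otimes V)=0$ for compact $G$ and $r>0$ is complete and correct: the homotopy $h$ obtained by integrating a homogeneous cochain over one variable satisfies $dh+hd=\mathrm{id}$ by invariance and normalization of the Haar measure, and finite-dimensionality of the coefficients makes the integration unproblematic.

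The one place where your proposal is an outline rather than a proof is, as you acknowledge, the central isomorphism \eqref{eq:bott}: the assertion that the Weil-algebra transgression, applied fibrewise over $EG_{\bullet}$ and restricted to basic forms, yields a quasi-isomorphism between $(\Omega^q(G^{\bullet},V),\delta)$ and the continuous group cochain complex with coefficients in $S^{q}\mathfrak{g}^{\ast}\otimes V$ --- with all mixed exterior/symmetric contributions cancelling and the cohomological degree dropping by exactly $q$ --- is precisely the content of Bott's theorem, and you assert it rather than establish it. If the goal was a self-contained proof, this is a genuine gap, since it is the entire technical core of the result; if the goal was to reconstruct the architecture of the cited argument, your account is faithful and sits at the same level of rigor as the paper, which simply defers to \cite{Bott}.
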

We also need the following explicit formulas for the isomorphism \eqref{eq:bott} in low degrees. In the sequel we will need the following notation: for manifolds $X$, $Y$ and points $x \in X$, $y \in Y$ we identify $T_{(x,y)}(X \times Y) = T_xX \oplus T_yY$ and write $v_x + v_y \in T_{(x,y)}(X \times Y)$ for vectors tangent to the product manifold.

\begin{lemma}\label{lem:bott}
Let $G$ be a Lie group and let $V$ be a vector space. Then 
    \begin{enumerate}
    \item\label{it:bott1} The isomorphism $H^2(BG_{\bullet},\Omega^1_V) = H^1_{gr,cont}(G,\mathfrak{g}^{\ast} \otimes V)$ is induced by the map
    \begin{align*}
        \phi:\frac{\{\tau \in \Omega^1(G^2,V)\, | \: \delta \tau = 0 \}}{\{\delta \sigma \, | \: \sigma \in \Omega^1(G,V)\}} &\rightarrow \frac{\{\kappa: G \times \mathfrak{g} \rightarrow V \,| \: \kappa(g_1g_2,v) = \kappa(g_1,v) + \kappa(g_2,g_1^{-1}vg_1) \}}{\{\kappa(g,v) = \chi(g^{-1}vg)-\chi(v) \, | \: \chi: \mathfrak{g} \rightarrow V\}}
    \end{align*}
    with 
    \begin{align}
        \phi(\tau)(g,v) &= \tau_{(g^{-1},1)}(0+v) + \tau_{(g^{-1},g)}(g^{-1}v+0),\\
        \phi^{-1}(\kappa)_{(g_1,g_2)}(v_{g_1}+v_{g_2}) &= \kappa(g_2,g_1^{-1}v_{g_1}).
    \end{align}
    \item\label{it:bott2} The isomorphism $H^2(BG_{\bullet},\Omega^2_V) = H^0_{gr,cont}(G,S^2\mathfrak{g}^{\ast} \otimes V)$ is induced by the map:
    \begin{align*}
        \psi:\frac{\{\nu \in \Omega^2(G^2,V)\, | \: \delta \nu = 0 \}}{\{\delta \sigma \, | \: \sigma \in \Omega^2(G,V)\}} &\rightarrow \{\langle \cdot,\cdot \rangle: S^2\mathfrak{g} \rightarrow V\, | \langle Ad(g)u,Ad(g)v \rangle = \langle u,v \rangle\}
    \end{align*}
    with 
    \begin{align}
        \psi(\nu)(u,v) &= \frac{1}{2}\nu_{(1,1)}(0+u,v+0) + \frac{1}{2}\nu_{(1,1)}(0+v,u+0), \label{eq:bottpairing}\\
        \psi^{-1}(\langle \cdot,\cdot \rangle) &= -\langle g_1^{\ast}\theta^L \wedge g_2^{\ast}\theta^R \rangle,
    \end{align}
    where $\theta^L, \,\theta^R \in \Omega^1(G,\mathfrak{g})$ are the left- and right-invariant Maurer-Cartan 1-forms on $G$.
\end{enumerate}
\end{lemma}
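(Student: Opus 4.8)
The plan is to realise both maps as the low-degree shadow of the quasi-isomorphism underlying Bott's Theorem \ref{th:bott}, and then to cross-check the stated formulas by direct computation. First I would pin down the two cochain models that compute the groups in play. By Remark \ref{rk:simpacyclic}, and since each sheaf $\Omega^q_V$ is soft hence acyclic, the left-hand groups $H^2(BG_\bullet,\Omega^q_V)$ are computed by the middle cohomology $\ker\delta/\operatorname{im}\delta$ of the simplicial--de Rham complex $\Omega^q(G^\bullet,V)$; these are exactly the domains displayed for $\phi$ and $\psi$. On the right I would use the inhomogeneous (bar) complex computing $H^\bullet_{gr,cont}(G,N)$ for $N=\mathfrak{g}^\ast\otimes V$ and $N=S^2\mathfrak{g}^\ast\otimes V$, with the coadjoint action on $\mathfrak{g}^\ast$, whose $1$-cocycles modulo $1$-coboundaries, respectively $0$-cochains, reproduce precisely the targets displayed for $\phi$ and $\psi$. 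Theorem \ref{th:bott} already identifies these abstractly; the content of Lemma \ref{lem:bott} is the explicit shape of the identification, which I would obtain by unwinding the van Est--type chain map from Bott's proof in bidegrees $(p,q)=(2,1)$ and $(2,2)$. That chain map is assembled from restriction of forms to the identity and contraction against the left/right Maurer-Cartan forms, which is exactly the shape of $\phi,\psi$ and of the inverses $\phi^{-1},\psi^{-1}$.

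For part (1) I would then verify well-definedness and bijectivity of $\phi$ by hand. Evaluating $\delta\tau=0$ on $G^3$ at the points and tangent directions dictated by the face maps $d^3_j$ yields the crossed-homomorphism identity $\kappa(g_1g_2,v)=\kappa(g_1,v)+\kappa(g_2,g_1^{-1}vg_1)$ for $\kappa=\phi(\tau)$; feeding $\tau=\delta\sigma$ into the same evaluation collapses, via the value $\chi:=\sigma_1\in\mathfrak{g}^\ast\otimes V$ of $\sigma$ at the identity, to the coboundary $\chi(g^{-1}vg)-\chi(v)$, so that $\phi$ descends to cohomology. That $\phi\circ\phi^{-1}=\operatorname{id}$ is an immediate substitution of the formula for $\phi^{-1}(\kappa)$ into $\phi$, whereas for $\phi^{-1}\circ\phi=\operatorname{id}$ I would produce an explicit $\sigma\in\Omega^1(G,V)$ with $\phi^{-1}(\phi(\tau))-\tau=\delta\sigma$.

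Part (2) runs along the same lines. Evaluating $\delta\nu=0$ on $G^3$ at appropriately chosen points and directions forces $\operatorname{Ad}$-invariance of $\psi(\nu)$, while the symmetrisation in \eqref{eq:bottpairing} builds in symmetry, so $\psi(\nu)\in (S^2\mathfrak{g}^\ast\otimes V)^G$; coboundaries $\delta\sigma$ map to $0$ because they vanish to the relevant order at $(1,1)$. Closedness of $\psi^{-1}(\langle\cdot,\cdot\rangle)=-\langle g_1^\ast\theta^L\wedge g_2^\ast\theta^R\rangle$ in $\ker\delta$ follows from the Maurer-Cartan structure equations together with the $\operatorname{Ad}$-invariance of $\langle\cdot,\cdot\rangle$, and $\psi\circ\psi^{-1}=\operatorname{id}$ follows by evaluating at $(1,1)$, where both $\theta^L$ and $\theta^R$ restrict to $\operatorname{id}_{\mathfrak{g}}$, so the wedge and the symmetrisation return $\langle u,v\rangle$ exactly; $\psi^{-1}\circ\psi=\operatorname{id}$ then follows since both composites are the identity on the group-cohomology side.

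The main obstacle is twofold. The bookkeeping obstacle is the careful tracking of base points, face maps and the splitting $T_{(x,y)}(X\times Y)=T_xX\oplus T_yY$ when pulling back along the $d^3_j$, and especially in exhibiting the primitive $\sigma$ witnessing $\phi^{-1}\circ\phi=\operatorname{id}$, which is the genuinely laborious computation. The conceptual obstacle is confirming that the resulting explicit isomorphisms coincide with the canonical one of Theorem \ref{th:bott} rather than some competing normalisation; I would settle this either by checking naturality in $G$ and normalisation on a generating class, or, more directly, by verifying that $\phi$ and $\psi$ agree with the van Est chain map on representatives, so that the identification is forced.
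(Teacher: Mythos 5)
Your proposal takes essentially the same route as the paper: the paper's entire proof of this lemma is the assertion that it ``follows from straightforward computations,'' which is precisely the direct verification you outline (identifying the two cochain models, checking that cocycles map to cocycles and coboundaries to coboundaries, and confirming the displayed maps are mutually inverse, e.g.\ $\phi\circ\phi^{-1} = \mathrm{id}$ by substitution and $\psi$ applied to $-\langle g_1^{\ast}\theta^L \wedge g_2^{\ast}\theta^R\rangle$ recovering $\langle\cdot,\cdot\rangle$ upon evaluation at $(1,1)$, where $\theta^L$ and $\theta^R$ both restrict to the identity on $\mathfrak{g}$). Your additional concern about matching the canonical isomorphism of Theorem \ref{th:bott}, and your fallback of exhibiting explicit primitives for the compositions $\phi^{-1}\circ\phi$ and $\psi^{-1}\circ\psi$, are exactly the bookkeeping the paper leaves implicit, so the plan is correct as stated.
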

\begin{proof}
    It follows from straightforward computations.
\end{proof}

\subsection{Lie 2-groups}

In this section we recall the definition of Lie 2-groups in terms of Lie groupoids and anafunctors following \cite{ScPr}.

\begin{definition}
    A \emph{Lie groupoid} is a small category $\mathfrak{X}$ such that the space of objects $\mathfrak{X}_0$ and the space of arrows $\mathfrak{X}_1$ are manifolds, with the source and target maps $s,t: \mathfrak{X}_1 \rightarrow \mathfrak{X}_0$ being surjective submersions and the identity $id:\mathfrak{X}_0 \rightarrow \mathfrak{X}_1$ and composition $\circ: \mathfrak{X}_1 \prescript{}{s}{\times}^{}_t \mathfrak{X}_1 \rightarrow \mathfrak{X}_1$ being smooth maps. A \emph{smooth functor} $F: \mathfrak{X} \rightarrow \mathfrak{Y}$ between Lie groupoids is a functor whose corresponding maps $F_0: \mathfrak{X}_0 \rightarrow \mathfrak{Y}_0$, $F_1: \mathfrak{X}_1 \rightarrow \mathfrak{Y}_1$ are smooth and a \emph{smooth natural transformation} $\alpha: F \Rightarrow G: \mathfrak{X} \rightarrow \mathfrak{Y}$ is a natural transformation whose corresponding map $\alpha: \mathfrak{X}_0 \rightarrow \mathfrak{Y}_1$ is smooth.
\end{definition}

The following three basic examples of Lie groupoids play an important role in this article. Firstly, if $M$ is a manifold then we regard it as a Lie groupoid with only identity arrows. Secondly, if $G$ is a Lie group then there is a Lie groupoid $BG$ with only one object and whose manifold of arrows is $G$ (using the group product to define composition). Finally, given a smooth (right) action of $G$ on $M$ then the \emph{quotient groupoid} $M/\!/G$ is the Lie groupoid with $M$ as manifold of objects and $M \times G$ as manifold of arrows, where $(x,g)$ is seen as an arrow from $x$ to $x \cdot g$ and composition is $(x \cdot g_1, g_2) \circ (x,g_1) = (x, g_1g_2)$.


We regard Lie groupoids as placeholders for differentiable stacks. As such, the natural maps $\mathfrak{X} \rightarrow \mathfrak{Y}$ between two Lie groupoids are called \emph{anafunctors}, \emph{bibundles} or \emph{Morita morphisms}.

\begin{definition}\label{def:anafunctor}
    An \emph{anafunctor} $(X,\pi,F): \mathfrak{X} \rightarrow \mathfrak{Y}$ between Lie groupoids $\mathfrak{X}$, $\mathfrak{Y}$ is a manifold $X$ with a smooth map $\pi:X \rightarrow \mathfrak{X}_0$ such that $\mathfrak{X}_1 \prescript{}{s}{\times}^{}_{\pi} X \stackrel{t}{\rightarrow} \mathfrak{X}_0$ is a surjective submersion and a smooth functor $F:\pi^{\ast}\mathfrak{X} \rightarrow \mathfrak{Y}$, where $\pi^{\ast}\mathfrak{X}$ is the Lie groupoid with $\pi^{\ast}\mathfrak{X}_0 = X$, $\pi^{\ast}\mathfrak{X}_1 = (X \times X) \prescript{}{\pi \times \pi}{\times}^{}_{s \times t} \mathfrak{X}_1$ and composition induced by the one in $\mathfrak{X}_1$. Given two anafunctors $(X,\pi,F), (X',\pi',F'): \mathfrak{X} \rightarrow \mathfrak{Y}$, a \emph{transformation} $\alpha$ between them is a smooth map $\alpha: X \rightarrow \mathfrak{X}_1 \prescript{}{s}{\times}^{}_{\pi'} X'\prescript{}{F'_0}{\times}^{}_{t} \mathfrak{Y}_1 /\sim$ denoted by $x \mapsto [\alpha^{L}(x),\alpha_0(x),\alpha^{R}(x)]$, where the equivalence relation is
    \begin{align}\label{eq:eqrelbibundle}
        (\gamma,x',\eta) \sim (\gamma \circ \gamma_{x'y'}^{-1},y',F_1'(x',y',\gamma_{x'y'}) \circ \eta) \quad \text{ for any } \gamma_{x'y'}: \pi'(x') \rightarrow \pi'(y') \text{ in } \mathfrak{X}_1 ,
    \end{align}
    such that $\alpha^L(x): \pi'(\alpha_0(x)) \rightarrow \pi(x)$, $\alpha^R(x): F_0(x) \rightarrow F_0'(\alpha_0(x))$ and for every $x, \,y \in X$ and every $\gamma_{xy}: \pi(x) \rightarrow \pi(y)$ in $\mathfrak{X}_1$ 
    we have a commutative diagram
    \begin{equation}
        \begin{tikzcd}[column sep = 13em]
             F_0(x) \ar[r,"{F_1(x,y,\gamma_{xy})}",{name=U}] \ar[d,"{\alpha^R(x)}",swap] 
             & F_0(y) \ar[d,"{\alpha^R(y)}"] \\
             F_0'(\alpha_0(x)) \ar[r," {F_1'(\alpha_0(x),\alpha_0(y),\alpha^L(y)^{-1} \circ \gamma_{xy} \circ \alpha^L(x))}",{name=D},swap] 
             & F_0'(\alpha_0(y)).
        \end{tikzcd}
        \end{equation}
\end{definition}

\begin{remark}\label{rk:easyalpha}
    Given two anafunctors $(X,\pi,F), (X',\pi',F'): \mathfrak{X} \rightarrow \mathfrak{Y}$, an intuitive way of constructing a transformation $\alpha$ between them is to give a manifold $Z$ with maps $l: Z \rightarrow X$, $l':Z \rightarrow X'$ such that $\pi \circ l = \pi' \circ l' =: \pi_Z$ and such that $\mathfrak{X}_1 \prescript{}{s}{\times}^{}_{\pi_Z} Z \stackrel{t}{\rightarrow} \mathfrak{X}_0$ is a surjective submersion together with a smooth natural transformation $\alpha^Z:l^{\ast}F \Rightarrow (l')^{\ast}F': \pi_Z^{\ast}\mathfrak{X} \rightarrow \mathfrak{Y}$. Then the transformation $\alpha:X \rightarrow \mathfrak{X}_1 \prescript{}{s}{\times}^{}_{\pi'} X'\prescript{}{F'_0}{\times}^{}_{t} \mathfrak{Y}_1 /\sim$ is defined as $x \mapsto [\gamma,l'(z),\alpha_Z(z) \circ F_1(x,l(z),\gamma^{-1})]$, where $(\gamma,z) \in \mathfrak{X}_1 \prescript{}{s}{\times}^{}_{\pi_Z} Z$ is any pair with $t(\gamma) = \pi(x)$.
\end{remark}

\begin{remark}
Given an anafunctor $(X,\pi,F):\mathfrak{X} \rightarrow \mathfrak{Y}$ as in Definition \ref{def:anafunctor} one can recover an anafunctor as defined in \cite{NikWal2pbs} as follows. The total space of the bibundle is $ \mathfrak{X}_1 \prescript{}{s}{\times}^{}_{\pi} X\prescript{}{F_0}{\times}^{}_{t} \mathfrak{Y}_1 / \sim$, where the equivalence relation is as in \eqref{eq:eqrelbibundle} and the actions of $\mathfrak{X}$ and $\mathfrak{Y}$ are given by composition on each component. This can be enhanced to an equivalence of categories between the category of anafunctors $\mathfrak{X} \rightarrow \mathfrak{Y}$ as defined here and the category of anafunctors $\mathfrak{X} \rightarrow \mathfrak{Y}$ as defined in \cite{NikWal2pbs}.
\end{remark}

In particular, smooth functors $\mathfrak{X} \rightarrow \mathfrak{Y}$ provide examples of anafunctors, but anafunctors are more general. Lie groupoids, anafunctors and their transformations form a bicategory in which composition is defined by taking fibered products of the involved surjective submersions in the only possible way.

\begin{definition}\label{def:2grp}
    A \emph{Lie 2-group} is a Lie groupoid $\mathfrak{G}$ with an anafunctor $m: \mathfrak{G} \times \mathfrak{G} \rightarrow \mathfrak{G}$ (the \emph{product}) and a transformation $\alpha: m \circ (m \times id) \Rightarrow m \circ (id \times m): \mathfrak{G} \times \mathfrak{G} \times \mathfrak{G} \rightarrow \mathfrak{G}$ (the \emph{associator}) satisfying
    \begin{enumerate}
        \item The \emph{pentagon identity}: 
        \begin{equation}
        \begin{tikzcd}
            \mathfrak{G} \times \mathfrak{G} \times \mathfrak{G} \times \mathfrak{G} \ar[rr,bend left=70,"{((g_1g_2)g_3)g_4}"{name=F}] 
              \ar[rr,bend left=20,"{(g_1(g_2g_3))g_4}"{name=G,2cell}]
              \ar[rr,bend right=20,"{g_1((g_2g_3)g_4)}"{name=H,2cell}]
              \ar[rr,bend right=70,"{g_1(g_2(g_3g_4))}"{name=E,swap}] & & 
                \mathfrak{G} \\
             & \ar[Rightarrow,from=F,to=G,"{\alpha}"{swap,pos=0.3},shorten >=1.5pt] \ar[Rightarrow,from=G,to=H,"{\alpha}"{swap}] \ar[Rightarrow,from=H,to=E,"{\alpha}"{swap}] &
        \end{tikzcd} = 
        \begin{tikzcd}
            \mathfrak{G} \times \mathfrak{G} \times \mathfrak{G} \times \mathfrak{G} \ar[rr,bend left=50,"{((g_1g_2)g_3)g_4}"{name=F}] 
              \ar[rr,"{(g_1g_2)(g_3g_4)}"{name=G,2cell}]
              \ar[rr,bend right=50,"{g_1(g_2(g_3g_4))}"{name=H,swap}] & & 
                \mathfrak{G} \\
             & \ar[Rightarrow,from=F,to=G,"{ \alpha}"{swap,pos=0.3},shorten >=1.5pt] \ar[Rightarrow,from=G,to=H,"{\alpha}"{swap}] & 
        \end{tikzcd}.
        \end{equation}

        \item \emph{Existence of inverses}: The anafunctor $p_2 \times m: \mathfrak{G} \times \mathfrak{G} \rightarrow \mathfrak{G} \times \mathfrak{G}$ has a quasi-inverse, where $p_2$ is projection onto the first factor.
    \end{enumerate}
A \emph{homomorphism of Lie 2-groups} $(F,\alpha^F):\mathfrak{G} \rightarrow \mathfrak{H}$ is an anafunctor $F:\mathfrak{G} \rightarrow \mathfrak{H}$ with a transformation $\alpha^F: F \circ m_{\mathfrak{G}} \Rightarrow m_{\mathfrak{H}} \circ F: \mathfrak{G} \times \mathfrak{G} \rightarrow \mathfrak{H}$ such that
    \begin{equation}
        \begin{tikzcd}[column sep = large]
            \mathfrak{G} \times \mathfrak{G} \times \mathfrak{G} \ar[rr,bend left=70,"{F((g_1g_2)g_3)}"{name=F}] 
              \ar[rr,bend left=20,"{F(g_1g_2)F(g_3)}"{name=G,2cell}]
              \ar[rr,bend right=20,"{(F(g_1)F(g_2))F(g_3)}"{name=H,2cell}]
              \ar[rr,bend right=70,"{F(g_1)(F(g_2)F(g_3))}"{name=E,swap}] & & 
                \mathfrak{H} \\
             & \ar[Rightarrow,from=F,to=G,"{\alpha^{F}}"{swap,pos=0.3},shorten >=1.5pt] \ar[Rightarrow,from=G,to=H,"{\alpha^{F}}"{swap,pos=0.8}] \ar[Rightarrow,from=H,to=E,"{\alpha^{\mathfrak{H}}}"{swap}] &
        \end{tikzcd} = 
        \begin{tikzcd}
            \mathfrak{G} \times \mathfrak{G} \times \mathfrak{G} \ar[rr,bend left=70,"{F((g_1g_2)g_3)}"{name=F}] 
              \ar[rr,bend left = 20,"{F(g_1(g_2g_3))}"{name=G,2cell}]
              \ar[rr,bend right=20,"{F(g_1)F(g_2g_3)}"{name=H,2cell}]
              \ar[rr,bend right=70,"{F(g_1)(F(g_2)F(g_3))}"{name=E,swap}]& & 
                \mathfrak{H}  \\
             & \ar[Rightarrow,from=F,to=G,"{\alpha^{\mathfrak{G}}}"{swap,pos=0.3},shorten >=1.5pt] \ar[Rightarrow,from=G,to=H,"{\alpha^{F}}"{swap,pos=0.9}]  \ar[Rightarrow,from=H,to=E,"{\alpha^{F}}"{swap,pos=0.8}]& 
        \end{tikzcd}.
        \end{equation}
Given homomorphisms $(F_1,\alpha^{F_1}),\, (F_2,\alpha^{F_2}): \mathfrak{G} \rightarrow \mathfrak{H}$, then a \emph{transformation} between them is a transformation of anafunctors $\psi: F_1 \Rightarrow F_2: \mathfrak{G} \rightarrow \mathfrak{H}$ such that
\begin{equation}
        \begin{tikzcd}[column sep = large]
            \mathfrak{G} \times \mathfrak{G} \ar[rr,bend left=70,"{F_1(g_1g_2)}"{name=F}] 
              \ar[rr,bend left=10,"{F_1(g_1)F_1(g_2)}"{name=G,2cell}]
              \ar[rr,bend right=40,"{F_2(g_1)F_2(g_2)}"{name=H,2cell}]
             & & \mathfrak{H} \\ \ar[Rightarrow,from=F,to=G,"{\alpha^{F_1}}"{swap,pos=0.3},shorten >=1.5pt] \ar[Rightarrow,from=G,to=H,"{\psi}"{swap,pos=0.9}] &
        \end{tikzcd} = 
        \begin{tikzcd}
            \mathfrak{G} \times \mathfrak{G} \ar[rr,bend left=70,"{F_1(g_1g_2)}"{name=F}] 
              \ar[rr,bend left = 10,"{F_2(g_1g_2)}"{name=G,2cell}]
              \ar[rr,bend right=40,"{F_2(g_1)F_2(g_2)}"{name=H,swap}] & & 
                \mathfrak{H}  \\
             & \ar[Rightarrow,from=F,to=G,"{\psi}"{swap,pos=0.3},shorten >=1.5pt] \ar[Rightarrow,from=G,to=H,"{\alpha^{F_2}}"{swap}] & 
        \end{tikzcd}.
        \end{equation}  
        Lie 2-groups, their homomorphisms and transformations form the \emph{bicategory of Lie 2-groups}.
\end{definition}

\begin{remark}
    The diagrams in Definition \ref{def:2grp} are equalities between transformations of anafunctors. For example, the pentagon identity is an equality between two transformations $m \circ (m \times id) \circ (m \times id \times id) \Rightarrow m \circ (id \times m) \circ (id \times id \times m): \mathfrak{G}^4 \rightarrow \mathfrak{G}$ that are defined by composing $\alpha$ in the different ways that the diagram depicts: each black arrow represents an anafunctor (for example, we write $((g_1g_2)g_3)g_4$ for the anafunctor $m \circ (m \times id) \circ (m \times id \times id)$) and each 2-cell represents a smooth transformation that is defined in terms of $\alpha$ in the only possible way. In this article we make frequent use of these sort of diagrams.
\end{remark}

There are two basic examples of Lie 2-groups. On the one hand, a Lie group $G$ regarded as a Lie groupoid with only identity arrows is a Lie 2-group with product given by its group product and trivial associator. On the other hand, for $T$ an abelian Lie group, the Lie groupoid $BT$ is a Lie 2-group with $m: BT \times BT \rightarrow BT$ the functor acting on arrows as $(t_1,t_2) \mapsto t_1t_2$. For this to be a functor it is necessary that $T$ is abelian. The Lie 2-groups that we study in this article are those $\mathfrak{G}$ for which there exist Lie groups $G$, $T$ with $T$ abelian such that there is a \emph{central extension of Lie 2-groups}
\begin{equation}
    1 \rightarrow BT \rightarrow \mathfrak{G} \rightarrow G \rightarrow 1.
\end{equation} 
Such central extensions are defined in the bicategory of Lie 2-groups in an analogous way to how they are defined in the category of Lie groups \cite{ScPr}. There is a way of characterizing which Lie 2-groups are of this form in terms of the following invariants that can be defined for any Lie 2-group $\mathfrak{G}$:
\begin{enumerate}
    \item The topological group $\pi_0(\mathfrak{G})$ of isomorphism classes of objects with group product given by $m$.
    \item The Lie group $\pi_1(\mathfrak{G})$ of automorphisms in $\mathfrak{G}_1$ of any $e \in \mathfrak{G}_0$ projecting to $1 \in \pi_0(\mathfrak{G})$.
    \item A continuous action $\triangleright$ of $\pi_0(\mathfrak{G})$ on $\pi_1(\mathfrak{G})$, defined by $[g] \triangleright f := id_g \cdot f \cdot id_{g^-{1}}$\footnote{More precisely: if $m$ is a smooth functor, for $[g] \in \pi_0(\mathfrak{G})$ and $f \in \pi_1(\mathfrak{G}) $ choose $g,\, g^{-1} \in \mathfrak{G}_0$ representing $[g],\, [g]^{-1}$ and an isomorphism $l:m(m(g,e),g^{-1}) \rightarrow e$; then define $[g] \triangleright f := l \circ m(m(id_g,f),id_{g^{-1}}) \circ l^{-1}$. If $m$ is an anafunctor defined over the map $\pi:X \rightarrow \mathfrak{G}_0^2$, the same idea works except that one must take representatives of $(g,e)$ and $(m(g,e),g^{-1})$ on $X$.} for $[g] \in \pi_0(\mathfrak{G})$ and $f \in \pi_1(\mathfrak{G})$.
\end{enumerate}
One can easily show that $\pi_1(\mathfrak{G})$ is abelian and that $\mathfrak{G}$ fits into an exact sequence of topological 2-groups
\begin{equation}\label{eq:2grpext}
        1 \rightarrow B\pi_1(\mathfrak{G}) \rightarrow \mathfrak{G} \rightarrow \pi_0(\mathfrak{G}) \rightarrow 1.
\end{equation}
In fact, if $\mathfrak{G}$ fits into a sequence $1 \rightarrow BT \rightarrow \mathfrak{G} \rightarrow G \rightarrow 1$ then necessarily $G = \pi_0(\mathfrak{G})$ and $T = \pi_1(\mathfrak{G})$. Moreover, central extensions are classified by the following result.

\begin{proposition}[\cite{ScPr}]\label{prop:centralextensions}
    The sequence \eqref{eq:2grpext} is a central extension of topological 2-groups if and only if the action $\triangleright$ of $\pi_0(\mathfrak{G})$ on $\pi_1(\mathfrak{G})$ is trivial.
\end{proposition}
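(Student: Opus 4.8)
The plan is to unwind the definition of central extension of topological 2-groups from \cite{ScPr} and to isolate, inside it, the action $\triangleright$ as the only datum controlling centrality. By analogy with the case of ordinary groups, the extension \eqref{eq:2grpext} is central precisely when the conjugation action of $\mathfrak{G}$ on its normal sub-2-group $B\pi_1(\mathfrak{G})$ is trivial. This conjugation defines a homomorphism of topological 2-groups $c:\mathfrak{G} \rightarrow \underline{\Aut}(B\pi_1(\mathfrak{G}))$ into the automorphism 2-group of $B\pi_1(\mathfrak{G})$, and the extension is central if and only if $c$ is isomorphic to the trivial homomorphism. So the task reduces to understanding homomorphisms into $\underline{\Aut}(BT)$ for $T = \pi_1(\mathfrak{G})$ abelian.

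The key computation is to identify $\underline{\Aut}(BT)$ for $T$ an abelian topological group. First I would check that a monoidal autoequivalence of $BT$ is determined up to isomorphism by a continuous group automorphism of $T$ (the action on the single object is forced, and the action on the morphism group $T$ is a continuous isomorphism), so that $\pi_0(\underline{\Aut}(BT)) = \Aut(T)$. Then I would compute the monoidal natural automorphisms of $\Id_{BT}$: such an $\eta$ is a single morphism $\eta_{\ast} \in T$, and compatibility with the tensor product $\ast \otimes \ast = \ast$ forces $\eta_{\ast} = \eta_{\ast}\cdot \eta_{\ast}$, whence $\eta_{\ast} = e$. Thus $\pi_1(\underline{\Aut}(BT)) = 0$, and $\underline{\Aut}(BT)$ is equivalent to the discrete 2-group on $\Aut(T)$.

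With this in hand the proof concludes formally. Since the target $\underline{\Aut}(B\pi_1(\mathfrak{G})) \simeq \Aut(\pi_1(\mathfrak{G}))$ is a discrete 2-group, the homomorphism $c$ must send every morphism of $\mathfrak{G}$ to an identity, so it is constant on isomorphism classes of objects and factors through $\pi_0(\mathfrak{G})$, yielding an ordinary continuous homomorphism $\bar c: \pi_0(\mathfrak{G}) \rightarrow \Aut(\pi_1(\mathfrak{G}))$. Unwinding the definitions of $c$ and of $\triangleright$ shows that $\bar c([g])(f) = id_g \cdot f \cdot id_{g^{-1}} = [g] \triangleright f$, i.e. $\bar c = \triangleright$. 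Because the target carries no nontrivial higher morphisms, $c$ is isomorphic to the trivial homomorphism exactly when $\bar c$ is the trivial map, and therefore \eqref{eq:2grpext} is central if and only if $\triangleright$ is trivial, giving both implications at once.

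The main obstacle I anticipate is the second step: verifying carefully that $\pi_1(\underline{\Aut}(BT)) = 0$, since this vanishing is precisely what guarantees that no coherence datum beyond the $\pi_0$-level action $\triangleright$ can obstruct centrality. I would also need to keep track of the topologies throughout, ensuring that the identification of $\underline{\Aut}(BT)$ with the discrete 2-group on $\Aut(T)$, and the factorization of $c$ through $\pi_0(\mathfrak{G})$, are compatible with the topological 2-group structures.
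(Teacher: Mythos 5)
The paper does not actually prove this proposition: it is stated with a citation to \cite{ScPr}, and no argument is given in the text, so there is no in-paper proof to compare yours against. Judged on its own terms, your argument is essentially the correct and standard one. The crux --- that the 2-group of monoidal autoequivalences of $BT$ is equivalent to the \emph{discrete} group $\Aut(T)$ --- is right: a monoidal natural automorphism $\eta$ of $\Id_{BT}$ satisfies $\eta = \eta\cdot\eta$, hence $\eta = e$, and moreover any two monoidal structures on the same underlying automorphism of $T$ are \emph{uniquely} monoidally isomorphic (if $c,c'\in T$ are the two structure constants, the unique intertwiner is $\eta = (c')^{-1}c$), so $\underline{\Aut}(BT)\simeq \Aut(T)$ as 2-groups. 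It is exactly this vanishing of $\pi_1(\underline{\Aut}(BT))$ that makes the question purely a $\pi_0$-level one, so that $c$ factors through $\pi_0(\mathfrak{G})$ and is identified with $\triangleright$, giving both implications at once.

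Two points need more care before this is a complete proof. First, your opening reduction --- ``central $\iff$ the conjugation homomorphism $c:\mathfrak{G}\to\underline{\Aut}(B\pi_1(\mathfrak{G}))$ is trivializable'' --- is asserted by analogy, but this is precisely where the definition from \cite{ScPr} must be matched: there, centrality of an extension is phrased via the coherent agreement of the left and right $B\pi_1(\mathfrak{G})$-actions on $\mathfrak{G}$ (the 2-group analog of $ae=ea$), and translating that condition into trivializability of the conjugation action is a short but genuine argument that your proposal does not supply. Second, in this paper the product on $\mathfrak{G}$ is only an anafunctor, so conjugation is an anafunctor as well; the factorization through $\pi_0(\mathfrak{G})$ still works because the target is discrete (a map defined on a cover, constant on fibers, descends continuously to the quotient), but this needs to be said in the anafunctor setting --- it is the same subtlety the paper's footnote on the definition of $\triangleright$ deals with, and it is where your continuity concerns actually live. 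Neither point breaks the argument; both are where the remaining work lies.
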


Sequence \eqref{eq:2grpext} is a sequence of Lie (not just topological) 2-groups as long as $\pi_0(\mathfrak{G})$ is a Lie group. Such central extensions are our main object of study. 

\begin{example}\label{ex:crossed}
A Lie 2-group $(\mathfrak{G},m,\alpha)$ is \emph{strict} if $m$ is a smooth functor, the quasi-inverse of $p_1 \times m$ is also a smooth functor and $\alpha = id$. These are equivalent to \emph{Lie crossed modules}, that is, quadruples $(\tilde{G},H,f,\triangleright)$, where $\tilde{G}$, $H$ are Lie groups, $f: H \rightarrow \tilde{G}$ is a smooth homomorphism and $g \triangleright h$ denotes a smooth left action of $\tilde{G}$ on $H$ by automorphisms that satisfies 
\begin{align}
    f(g \triangleright h) &= gf(h)g^{-1},\\
    f(h_1) \triangleright h_2 &= h_1h_2h_1^{-1}
\end{align}
for $g \in \tilde{G}$, $h, \,h_1, \,h_2 \in H$. The corresponding Lie 2-group is the groupoid $\mathfrak{G} := H \backslash\!\backslash \tilde{G}$, with $H$ acting on $\tilde{G}$ on the left as $h \cdot g = f(h)g$ and $m: \mathfrak{G} \times \mathfrak{G} \rightarrow \mathfrak{G}$ defined on arrows as the semi-direct product $m((g_1,h_1),(g_2,h_2)) = (g_1g_2,h_1 \cdot g_1 \triangleright h_2)$ (see \cite{BaezLaudaV} for details on the equivalence). The invariants associated to such $\mathfrak{G}$ are $\pi_0(\mathfrak{G}) = \tilde{G}/Im(f)$ and $\pi_1(\mathfrak{G}) = Ker(f)$, with the action of $\pi_0(\mathfrak{G})$ on $\pi_1(\mathfrak{G})$ being induced by that of $\tilde{G}$ on $H$. We define a \emph{central Lie crossed module} to be a Lie crossed module $(\tilde{G},H,f,\triangleright)$ such that $\tilde{G}/Im(f)$ is a Lie group (i.e., such that $Im(f) \subset \tilde{G}$ is closed) acting trivially on $Ker(f)$.
\end{example}

\subsection{Multiplicative gerbes}\label{sec:multger}
Multiplicative gerbes appear originally in \cite{BryGauge,CarMultGer}. In \cite{ScPr} Schommer-Pries proved that, for $G$, $T$ Lie groups with $T$ abelian, central extensions of $G$ by $BT$ as Lie 2-groups are equivalent to multiplicative $T$-gerbes over $G$ and gave in this way the first finite-dimensional model for the 2-group String$(n)$. In this article we use the language of multiplicative gerbes for working with these Lie 2-groups, so we recall here the main definitions and results.

Recall the semi-simplicial manifold $BG_{\bullet}$ from Example \ref{ex:bott} with its maps $d^n_j: G^n \rightarrow G^{n-1}$. In order to simplify notation, we denote in what follows any possible composition of these maps by its image; for example, $g_1g_2g_3: G^3 \rightarrow G$ is the map $d^2_1 \circ d^3_1$, while $(g_1g_2,g_3g_4): G^4 \rightarrow G^2$ is the map $d^3_2 \circ d^4_1$. For the necessary background on gerbes, see Appendix \ref{ap:gerbes}.

\begin{definition}\label{def:multger}
	For $T$ an abelian Lie group, a \emph{multiplicative $T$-gerbe} over a Lie group $G$ is the following data:
 
	\begin{enumerate}
            \item A $T$-gerbe $\mathcal{G} \rightarrow G$,
            
            \item An isomorphism $m$ of $T$-gerbes over $G \times G$ (the \emph{product}) $m:g_1^{\ast}\mathcal{G} \otimes g_2^{\ast}\mathcal{G} \rightarrow (g_1g_2)^{\ast}\mathcal{G}$,
            
            \item A 2-isomorphism $\alpha$ of $T$-gerbes over $G \times G \times G$ (the \emph{associator})
        \begin{equation}
        \begin{tikzcd}
             g_1^{\ast}\mathcal{G} \otimes g_2^{\ast}\mathcal{G} \otimes g_3^{\ast}\mathcal{G} \ar[r,"{(g_1,g_2)^*m}",{name=U}] \ar[d,"{(g_2,g_3)^{\ast}m}",swap] 
             & (g_1g_2)^{\ast}\mathcal{G}\otimes g_3^{\ast}\mathcal{G} \ar[d,"{(g_1g_2,g_3)^{\ast}m}"] \ar[Rightarrow, dl, "\alpha"]\\
             g_1^{\ast}\mathcal{G} \otimes (g_2g_3)^{\ast}\mathcal{G} \ar[r,"{(g_1,g_2g_3)^{\ast}m}",{name=D},swap] 
             & (g_1g_2g_3)^{\ast}\mathcal{G}
        \end{tikzcd}
        \end{equation}
        
    such that, over $G \times G \times G \times G$, 
        \begin{equation}
        \adjustbox{scale=0.85,center}{
        \begin{tikzcd}
            g_1^{\ast}\mathcal{G} \otimes g_2^{\ast}\mathcal{G} \otimes g_3^{\ast}\mathcal{G} \otimes g_4^{\ast}\mathcal{G} \ar[rr,bend left=70,"{m(((g_1g_2)g_3)g_4)}"{name=F}] 
              \ar[rr,bend left=20,"{m((g_1(g_2g_3))g_4)}"{name=G,2cell}]
              \ar[rr,bend right=20,"{m(g_1((g_2g_3)g_4))}"{name=H,2cell}]
              \ar[rr,bend right=70,"{m(g_1(g_2(g_3g_4)))}"{name=E,swap}] & & 
                (g_1g_2g_3g_4)^{\ast}\mathcal{G} \\
             & \ar[Rightarrow,from=F,to=G,"{\alpha }"{swap,pos=0.3},shorten >=1.5pt] \ar[Rightarrow,from=G,to=H,"{\alpha}"{swap}] \ar[Rightarrow,from=H,to=E,"{\alpha}"{swap}] &
        \end{tikzcd} = 
        \begin{tikzcd}
            g_1^{\ast}\mathcal{G} \otimes g_2^{\ast}\mathcal{G} \otimes g_3^{\ast}\mathcal{G} \otimes g_4^{\ast}\mathcal{G} \ar[rr,bend left=50,"{m(((g_1g_2)g_3)g_4)}"{name=F}] 
              \ar[rr,bend left = 20,"{m((g_1g_2)(g_3g_4))}"{name=G,2cell}]
              \ar[rr,bend right=20,"{m(g_1(g_2(g_3g_4)))}"{name=H,swap}] & & 
                (g_1g_2g_3g_4)^{\ast}\mathcal{G} \\
             & \ar[Rightarrow,from=F,to=G,"{\alpha }"{swap,pos=0.3},shorten >=1.5pt] \ar[Rightarrow,from=G,to=H,"{\alpha}"{swap}] & 
        \end{tikzcd}.
        }
        \end{equation}
\end{enumerate}
\emph{Isomorphisms} and \emph{2-isomorphisms} of multiplicative gerbes are defined similarly as in Definition \ref{def:2grp}, replacing anafunctors by isomorphisms of gerbes and transformations by 2-isomorphisms of gerbes. This yields the \emph{bicategory of multiplicative $T$-gerbes over $G$}.
\end{definition}
\begin{remark}
    The last diagram of Definition \ref{def:multger} is an equality between 2-isomorphisms of gerbes: each black arrow represents an isomorphism (for example, we are writing $m(((g_1g_2)g_3)g_4) := (g_1g_2g_3,g_4)^{\ast}m \circ (g_1g_2,g_3)^{\ast}m \otimes id \circ (g_1,g_2)^{\ast}m \otimes id \otimes id$) and each 2-cell is a 2-isomorphism constructed from $\alpha$.
\end{remark}

In order to describe multiplicative gerbes in terms of cocycle data one must take a \emph{good semi-simplicial cover} of $BG_{\bullet}$. This is a collection $\{\mathcal{U}_n\}_{n \geq 1}$, where each $\mathcal{U}_n = \{U^n_{i_n}\}_{i_n \in I_n}$ is a good cover of $G^n$ indexed by a set $I_n$, together with maps $\tilde{d}^n_j:I_n \rightarrow I_{n-1}$ such that $d^n_j(U^n_{i_n}) \subset U^{n-1}_{\tilde{d}^n_j(i_n)}$ and that $\{I_n,\tilde{d}^n_j\}_{n,j}$ is a semi-simplicial set. In what follows we abuse notation by writing simply $\tilde{d}^n_j = d^n_j$; furthermore, we denote $U^n_{i_n^1i_n^2...i_n^k} := \bigcap_{s=1}^k U^n_{i_n^s}$. There are constructions of good semi-simplicial covers of $BG_{\bullet}$ in \cite{BryGauge,Mein}. Given such a cover, it follows directly from the definitions that a multiplicative $T$-gerbe over $G$ is given by  
\begin{equation}\label{eq:cocycle1}
\lambda_{i_1j_1k_1}: U^1_{i_1j_1k_1} \rightarrow T, \quad m_{i_2j_2}: U^2_{i_2j_2} \rightarrow T, \quad \alpha_{i_3}: U^3_{i_3} \rightarrow T
\end{equation}
satisfying
\begin{align}
\begin{split}
    &\lambda_{i_1j_1k_1}(g)\lambda_{i_1j_1l_1}^{-1}(g)\lambda_{i_1k_1l_1}(g)\lambda_{j_1k_1l_1}^{-1}(g) = 1, \\
    &m_{i_2j_2}(g_1,g_2)m_{i_2k_2}^{-1}(g_1,g_2)m_{j_2k_2}(g_1,g_2) \\
    &\quad \quad \quad \quad  = \lambda_{d_0(i_2)d_0(j_2)d_0(k_2)}(g_2)\lambda_{d_1(i_2)d_1(j_2)d_1(k_2)}^{-1}(g_1g_2)\lambda_{d_2(i_2)d_2(j_2)d_2(k_2)}(g_1),\\
    &\alpha_{i_3}(g_1,g_2,g_3)\alpha_{j_3}^{-1}(g_1,g_2,g_3) \\
    &\quad \quad \quad \quad  = m_{d_3(i_3)d_3(j_3)}^{-1}(g_1,g_2) m_{d_2(i_3)d_2(j_3)}(g_1,g_2g_3) m_{d_1(i_3)d_1(j_3)}(g_1g_2,g_3)^{-1}m_{d_0(i_3)d_0(j_3)}(g_2,g_3) ,\\
    &\alpha_{d_4(i_4)}(g_1,g_2,g_3)\alpha_{d_3(i_4)}^{-1}(g_1,g_2,g_3g_4)\alpha_{d_2(i_4)}(g_1,g_2g_3,g_4)\alpha_{d_1(i_4)}^{-1}(g_1g_2,g_3,g_4)\alpha_{d_0(i_4)}(g_2,g_3,g_4) = 1.
\end{split}
\end{align}

The following is the main result in \cite{ScPr}. Here we give a brief sketch of the proof.

\begin{theorem}[\cite{ScPr}]\label{th:ScPr}
    Let $G$, $T$ be Lie groups with $T$ abelian. There is an equivalence of bicategories between the bicategory of central extensions of $G$ by $BT$ and the bicategory of multiplicative $T$-gerbes over $G$.
\end{theorem}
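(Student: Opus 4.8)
The plan is to construct two mutually quasi-inverse pseudofunctors between the bicategories, resting on the principle that a central extension $1 \rightarrow BT \rightarrow \mathfrak{G} \rightarrow G \rightarrow 1$ is nothing but the underlying differentiable stack $\mathfrak{G}$, fibred over its group of components $\pi_0(\mathfrak{G}) = G$ and equipped with its multiplicative (2-group) structure, while a multiplicative $T$-gerbe over $G$ is precisely the same data, reorganised as a $T$-gerbe over $G$ together with a product and an associator. Thus the two bicategories should differ only in the \emph{packaging} of identical geometric data, and the whole proof reduces to setting up this dictionary carefully and checking coherence.

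First I would go from central extensions to multiplicative gerbes. Given $1 \rightarrow BT \rightarrow \mathfrak{G} \rightarrow G \rightarrow 1$, the projection $P \colon \mathfrak{G} \rightarrow G$ (a homomorphism of Lie 2-groups onto $G$, viewed as a Lie 2-group with only identity arrows) exhibits the stack $\mathfrak{G}$ as a $T$-gerbe $\mathcal{G} \rightarrow G$: the fibre over each $g \in G$ is a copy of $BT$, and triviality of the action $\triangleright$ in Proposition \ref{prop:centralextensions} is exactly what makes $\mathcal{G}$ an honest $T$-gerbe (trivial band) rather than a twisted form. The product anafunctor $m_{\mathfrak{G}} \colon \mathfrak{G} \times \mathfrak{G} \rightarrow \mathfrak{G}$ covers the product $G \times G \rightarrow G$; restricting it over $G \times G$ and reducing the band $T \times T \rightarrow T$ along multiplication turns it into an isomorphism of $T$-gerbes $m \colon g_1^{\ast}\mathcal{G} \otimes g_2^{\ast}\mathcal{G} \rightarrow (g_1 g_2)^{\ast}\mathcal{G}$ as in Definition \ref{def:multger} (it is an isomorphism, and not merely a morphism, precisely because the extension is central). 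The associator $\alpha$ of $\mathfrak{G}$ yields the associator 2-isomorphism of $\mathcal{G}$, and the pentagon identity of Definition \ref{def:2grp} translates verbatim into the coherence identity of Definition \ref{def:multger}. Sending homomorphisms and transformations of central extensions (those inducing the identity on $BT$ and on $G$) to isomorphisms and 2-isomorphisms of multiplicative gerbes then assembles a pseudofunctor $\Phi$.

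Conversely, given a multiplicative $T$-gerbe $(\mathcal{G},m,\alpha)$ over $G$ I would reassemble a Lie 2-group $\mathfrak{G}$ by taking the total stack of $\mathcal{G}$: its $\pi_0$ is $G$ (from the gerbe projection) and its $\pi_1$ is $T$ (from the $T$-gerbe structure), with product and associator supplied by $m$ and $\alpha$. The coherence pentagon for the multiplicative gerbe is exactly the pentagon for $\alpha$, and existence of inverses in the sense of Definition \ref{def:2grp} follows because every $T$-gerbe is invertible for $\otimes$, so $(g^{-1})^{\ast}\mathcal{G}$ together with $m$ and the unit furnishes the required quasi-inverse of $p_2 \times m$. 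The inclusion $BT \rightarrow \mathfrak{G}$ is the restriction of $\mathcal{G}$ over $1 \in G$, the projection $\mathfrak{G} \rightarrow G$ is the gerbe projection, and centrality holds by construction; this gives a central extension and hence a pseudofunctor $\Psi$.

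Finally I would exhibit pseudonatural equivalences $\Phi \Psi \simeq \mathrm{id}$ and $\Psi \Phi \simeq \mathrm{id}$. Since both $\Phi$ and $\Psi$ leave the underlying stack untouched and only re-express the group-versus-gerbe packaging of the same data, these equivalences are essentially the identity and the remaining work is the verification of their coherence 2-cells. The main obstacle is the careful handling of the dictionary in the first two steps: matching the anafunctor product $m_{\mathfrak{G}}$ and its associator with the gerbe isomorphism $m$ and 2-isomorphism $\alpha$ through the band reduction $T \times T \rightarrow T$, and confirming that the invertibility axiom for the 2-group corresponds precisely to tensor-invertibility of $T$-gerbes. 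Once this translation is fixed, all remaining verifications are diagram chases that become tautological. A fully rigorous treatment, carried out in \cite{ScPr}, organises these comparisons through a common cocycle description relative to a good semi-simplicial cover, where the matching amounts to comparing the data \eqref{eq:cocycle1} on both sides.
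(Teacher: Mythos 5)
Your proposal is correct in outline and takes a genuinely more abstract route than the paper. The paper's own proof (explicitly only a sketch) works in a single direction and entirely with cocycles: given a multiplicative gerbe described by data $\lambda_{i_1j_1k_1}$, $m_{i_2j_2}$, $\alpha_{i_3}$ on a good semi-simplicial cover of $BG_{\bullet}$, it builds the Lie 2-group concretely as the \v{C}ech groupoid $\mathfrak{G}_0 = \sqcup_{i_1} U^1_{i_1}$, $\mathfrak{G}_1 = \sqcup_{i_1j_1} U^1_{i_1j_1} \times T$, defines the product anafunctor over the cover indexed by $I_2$ using $m_{i_2j_2}$, obtains the associator transformation from $\alpha_{i_3}$ via Remark \ref{rk:easyalpha}, and then asserts that this construction enhances to an equivalence of bicategories. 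Your dictionary --- the projection of a central extension viewed as a trivially-banded $T$-gerbe, the 2-group product reduced along the band map $T \times T \to T$, the 2-group recovered as the total stack of the gerbe, and invertibility of $p_2 \times m$ deduced from tensor-invertibility of $T$-gerbes --- is exactly the structural content underlying that construction (the paper's \v{C}ech groupoid \emph{is} a presentation of your total stack), and it has the advantage of treating both directions symmetrically and explaining \emph{why} the two bicategories agree. What it does not supply, and what the paper's formulation is designed to provide, is the smooth implementation: in this paper a $T$-gerbe is \emph{defined} as \v{C}ech cocycle data modulo refinement, so your ``total stack'', the band reduction, and the quasi-inverse built from dual gerbes must each be realized as concrete Lie groupoids, anafunctors over explicit covers, and smooth transformations --- precisely the verifications the paper's proof carries out, and which matter later when explicit finite-dimensional models (e.g.\ for String$(G)$) are needed. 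One small inaccuracy to fix: centrality of the extension is what makes the band trivial (a constant $T$), not what makes the induced morphism $g_1^{\ast}\mathcal{G} \otimes g_2^{\ast}\mathcal{G} \to (g_1g_2)^{\ast}\mathcal{G}$ invertible; a band-preserving morphism of $T$-gerbes is automatically an equivalence.
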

\begin{proof}
    Given a multiplicative $T$-gerbe over $G$, we describe it with cocycle data $\lambda_{i_1j_1k_1}$, $m_{i_2j_2}$, $\alpha_{i_3}$ as above. Then we construct the Lie groupoid $\mathfrak{G}$ by $\mathfrak{G}_0 := \sqcup_{i_1 \in I_1} U_{i_1}^1$, $\mathfrak{G}_1 := \sqcup_{i_1j_1 \in I_1} U_{i_1j_1}^1 \times T$, where $(i_1,j_1,g,t) \in \mathfrak{G}_1$ is seen as an arrow $(i_1,g) \rightarrow (j_1,g)$ and composition is defined as $(j_1,k_1,g,t_2) \circ (i_1,j_1,g,t_1) := (i_1,k_1,g,t_1t_2\lambda_{i_1j_1k_1}(g))$; the cocycle condition for $\lambda_{i_1j_1k_1}$ ensures that this is associative. Then the anafunctor $m: \mathfrak{G} \times \mathfrak{G} \rightarrow \mathfrak{G}$ is defined on the cover
    \begin{align*}
        \pi: \sqcup_{i_2 \in I_2} U_{i_2}^2 &\rightarrow \sqcup_{i_1 \in I_1} U_{i_1}^1 \times \sqcup_{i_1 \in I_1} U_{i_1}^1\\
        (i_2,g_1,g_2) &\mapsto ((d_2(i_2),g_1), (d_0(i_2),g_2))
    \end{align*}
    as the smooth functor $\pi^{\ast}(\mathfrak{G} \times \mathfrak{G}) \rightarrow \mathfrak{G}$ defined on arrows by $(i_2,j_2,g_1,g_2,t_1,t_2) \mapsto (d_1(i_2),d_1(j_2),g_1g_2,t_1t_2m_{i_2j_2}(g_1,g_2))$. The cocycle condition for $m_{i_2j_2}$ ensures that this respects composition. The transformation $\alpha$ is defined as in Remark \ref{rk:easyalpha}: the anafunctors $F := m \circ (m \times id)$ and $F':=m \circ (id \times m)$ are defined over the manifolds
    \begin{align*}
    X &= \{(i_{2}^{1,2},i_2^{12,3},g_1,g_2,g_3) \in I_2^2 \times G^3\,| \: (g_1,g_2) \in U^2_{i_{2}^{1,2}}, \, (g_1g_2,g_3) \in U^2_{i_{2}^{12,3}}\},\\
    X' &= \{(i_{2}^{2,3},i_2^{1,23},g_1,g_2,g_3) \in I_2^2 \times G^3\,| \: (g_2,g_3) \in U^2_{i_{2}^{2,3}}, \, (g_1,g_2g_3) \in U^2_{i_{2}^{1,23}}\}
    \end{align*}
    with maps to $\mathfrak{G}_0^3$ given by 
    \begin{align*}
        \pi(i_{2}^{1,2},i_2^{12,3},g_1,g_2,g_3) &= (d_2(i_2^{1,2}),d_0(i_2^{1,2}),d_2(i_2^{12,3}),g_1,g_2,g_3) \\
        \pi'(i_{2}^{2,3},i_2^{1,23},g_1,g_2,g_3) &= (d_2(i_2^{1,23}),d_2(i_2^{2,3}),d_0(i_2^{2,3}),g_1,g_2,g_3).
    \end{align*}
    Then let $Z = \sqcup_{i_3} U^3_{i_3}$, with $l:Z \rightarrow X$ and $l':Z \rightarrow X'$ given by $l(i_3,g_1,g_2,g_3) = (d_3(i_3),d_1(i_3),g_1,g_2,g_3)$ and $l'(i_3,g_1,g_2,g_3) = (d_0(i_3),d_2(i_3),g_1,g_2,g_3)$. The cocycle equations for $\alpha_{i_3}$ imply that the map $Z \rightarrow \mathfrak{X}_1$, $(i_3,g_1,g_2,g_3) \mapsto (d_1(d_1(i_3)),d_1(d_2(i_3)),\alpha_{i_3}^{-1}(g_1,g_2,g_3))$ is a smooth natural transformation $l^{\ast}F \Rightarrow (l')^{\ast}F'$ whose corresponding transformation $\alpha: m \circ (m \times id) \Rightarrow m \circ (id \times m)$ satisfies the pentagon identity, so we have constructed a Lie 2-group. This construction can be enhanced to an equivalence of bicategories.
\end{proof}

From now on we use the language of multiplicative gerbes, while bearing in mind that these are to be regarded as Lie 2-groups to guide our intuition. For $G$, $T$ any Lie groups with $T$ abelian, we let $Ext(G,BT)$ be the space of multiplicative $T$-gerbes over $G$ up to isomorphism. We also write for the rest of the article $\mathfrak{g}$ and $\mathfrak{t}$ for the Lie algebras of $G$ and $T$, respectively. The following is a classification result that is well-known in the literature at least when $G$ is compact (see for example \cite{ScPr}).

\begin{proposition}\label{prop:class2grps}
    Let $Z := \ker exp_T \subset \mathfrak{t}$. If $T$ is connected, then there is an exact sequence
    \begin{equation}
        H^3_{gr,cont}(G,\mathfrak{t}) \rightarrow Ext(G,BT) \rightarrow H^4(BG,Z) \rightarrow H^4_{gr,cont}(G,\mathfrak{t}),
    \end{equation}
    where $H^{\ast}(BG,Z)$ denotes singular cohomology of the classifying space of $G$. In particular, $Ext(G,BT) = H^4(BG,Z)$ when $G$ is compact. 
\end{proposition}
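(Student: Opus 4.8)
The plan is to identify isomorphism classes of multiplicative $T$-gerbes over $G$ with a single sheaf cohomology group of $BG_\bullet$, and then run the long exact sequence coming from the exponential sequence of $T$.

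\emph{Step 1: $Ext(G,BT) \cong H^3(BG_\bullet, \underline T)$, where $\underline T$ is the sheaf of smooth $T$-valued functions on $BG_\bullet$.} Since $T$ is abelian, $\underline T$ is a sheaf of abelian groups, so the machinery of \S\ref{ap:sheaf} applies. Fixing a good semi-simplicial cover of $BG_\bullet$ amounts to choosing a \v{C}ech-type acyclic resolution, so $H^\ast(BG_\bullet,\underline T)$ is the total cohomology of the double complex with the \v{C}ech differential $\delta_{\mathrm C}$ in one direction and the simplicial differential $\delta$ in the other. I would then read off from the cocycle description \eqref{eq:cocycle1} that a multiplicative $T$-gerbe is precisely a total $3$-cocycle: the triple $(\lambda_{i_1j_1k_1},m_{i_2j_2},\alpha_{i_3})$ sits in bidegrees $(2,1)$, $(1,2)$, $(0,3)$ (\v{C}ech degree, simplicial degree), all of total degree $3$, and the four displayed cocycle equations say exactly that $\delta_{\mathrm C}\lambda = 1$, $\delta_{\mathrm C} m = \pm\,\delta\lambda$, $\delta_{\mathrm C}\alpha = \pm\,\delta m$ and $\delta\alpha = 1$, i.e. that the total differential annihilates $(\lambda,m,\alpha)$. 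Unwinding the definitions, isomorphisms and $2$-isomorphisms of multiplicative gerbes shift this cocycle by total coboundaries, while the tensor product of gerbes realizes the additive group structure; passing to classes thus yields a group isomorphism $Ext(G,BT)\cong H^3(BG_\bullet,\underline T)$.

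\emph{Step 2: exponential sequence and Bott.} As $T$ is connected abelian, $\exp_T:\mathfrak t \to T$ is a surjective homomorphism with kernel $Z$, giving a short exact sequence of sheaves on $BG_\bullet$
$$0 \to \underline Z \to C^\infty_{\mathfrak t} \xrightarrow{\exp_T} \underline T \to 0,$$
with $C^\infty_{\mathfrak t} = \Omega^0_{\mathfrak t}$. Its long exact sequence in $H^\ast(BG_\bullet,-)$ contains the segment
$$H^3(BG_\bullet,C^\infty_{\mathfrak t}) \to H^3(BG_\bullet,\underline T) \to H^4(BG_\bullet,\underline Z) \to H^4(BG_\bullet,C^\infty_{\mathfrak t}).$$
By Example \ref{ex:simpderham} one has $H^\ast(BG_\bullet,\underline Z) = H^\ast(BG,Z)$, and Bott's Theorem \ref{th:bott} with $q = 0$ gives $H^p(BG_\bullet,\Omega^0_{\mathfrak t}) = H^p_{gr,cont}(G,\mathfrak t)$. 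Substituting these identifications, together with Step 1, turns the segment into exactly the claimed four-term exact sequence.

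\emph{Step 3 and the main obstacle.} When $G$ is compact, the vanishing clause of Theorem \ref{th:bott} gives $H^r_{gr,cont}(G,\mathfrak t) = 0$ for all $r>0$, in particular for $r = 3,4$; hence the two outer terms vanish and the middle arrow $Ext(G,BT)\to H^4(BG,Z)$ is an isomorphism. The substantive point of the whole argument is Step 1: one must check carefully that the equivalence relation generated by isomorphisms and $2$-isomorphisms of multiplicative gerbes coincides exactly with ``differing by a total coboundary'', so that the resulting map is a bijection onto the \emph{full} group $H^3(BG_\bullet,\underline T)$ and respects the additive structure coming from $\otimes$. Once this bookkeeping is in place, the remainder is a formal diagram chase in the long exact sequence combined with Bott's isomorphism.
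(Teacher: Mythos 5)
Your proposal is correct and follows essentially the same route as the paper's proof: identify $Ext(G,BT)$ with $H^3(BG_{\bullet},C^{\infty}_T)$ via the cocycle data \eqref{eq:cocycle1} in a good semi-simplicial cover, apply the long exact sequence of the exponential sheaf sequence $0 \to \underline{Z} \to C^{\infty}_{\mathfrak{t}} \to C^{\infty}_T \to 0$, and conclude with Bott's Theorem \ref{th:bott} (with $q=0$) together with its vanishing clause for compact $G$. The only caution is notational: you write $\underline{T}$ for the sheaf of \emph{smooth} $T$-valued functions, whereas the paper reserves $\underline{T}$ for locally constant functions and uses $C^{\infty}_T$ for the smooth ones; your usage is internally consistent but should be renamed to match the paper's conventions.
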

\begin{proof}
    Consider over $BG_{\bullet}$ the sheaf $C^{\infty}_{T}$ of smooth $T$-valued functions. A good semi-simplicial cover of $BG_{\bullet}$ gives an injective resolution of this sheaf by taking the \v{C}ech resolutions $(\check{C}^{\bullet}(C^{\infty}_{T,G^n},\mathcal{U}_n),\check{\delta})$ and using the maps $\tilde{d}^n_j:I_n \rightarrow I_{n-1}$ to define the sheaf morphisms $\partial^n_j:(d^n_j)^{\ast}\check{C}^{p}(C^{\infty}_{T,G^{n-1}},\mathcal{U}_{n-1}) \rightarrow \check{C}^{p}(C^{\infty}_{T,G^n},\mathcal{U}_n)$. The total cohomology of the double complex $(\check{C}^{\bullet}(C^{\infty}_{T,G^{\bullet}},\mathcal{U}_n),\check{\delta},\delta)$ computes $H^{\ast}(BG_{\bullet},C^{\infty}_T)$ and the cocycle data for multiplicative $T$-gerbes over $G$ gives precisely an element in $H^{3}(BG_{\bullet},C^{\infty}_T)$ which classifies them completely; that is, $H^{3}(BG_{\bullet},C^{\infty}_T) = Ext(G,BT)$. This is valid for any Lie groups $G$, $T$ but when $T$ is connected then $1 \rightarrow Z \rightarrow \mathfrak{t} \stackrel{exp}{\rightarrow} T \rightarrow 1$ is exact and so there is an exact sequence
    \begin{equation}
        H^3(BG_{\bullet},C^{\infty}_{\mathfrak{t}}) \rightarrow H^3(BG_{\bullet},C^{\infty}_{T}) \rightarrow H^4(BG_{\bullet},\underline{Z}) \rightarrow H^4(BG_{\bullet},C^{\infty}_{\mathfrak{t}}),
    \end{equation}
    where $C^{\infty}_{\mathfrak{t}}$ is the sheaf of smooth $\mathfrak{t}$-valued functions on $BG_{\bullet}$. The result follows then from Example \ref{ex:bott} and Theorem \ref{th:bott}.
\end{proof}

Note that the proof of Proposition \ref{prop:class2grps} shows in particular that $Ext(G,BT) = H^{3}(BG_{\bullet},C^{\infty}_T)$.

\begin{definition}\label{def:flatmultger}
    Let $G$, $T$ be Lie groups with $T$ abelian and let $(\mathcal{G},m,\alpha)$ be a multiplicative $T$-gerbe over $G$. We say $(\mathcal{G},m,\alpha)$ is \emph{flat} if its class in $Ext(G,BT) = H^3(BG_{\bullet},C^{\infty}_{T})$ lies in the image of $H^3(BG_{\bullet},\underline{T}) \rightarrow H^3(BG_{\bullet},C^{\infty}_{T})$, for $\underline{T}$ the sheaf of locally constant $T$-valued functions.
\end{definition}

It follows from Proposition \ref{prop:class2grps} that, for connected $T$, a multiplicative $T$-gerbe $\mathcal{G}$ over $G$ has a class $c(\mathcal{G}) \in H^4(BG,Z)$. This has an image 
\begin{equation}\label{eq:drclass}
c_{\mathfrak{t}}(\mathcal{G}) \in H^4(BG,\mathfrak{t}),
\end{equation}
which we call the \emph{de Rham class} of the multiplicative gerbe.

\begin{lemma}\label{lem:drclass}
The group $H^4(BG,\mathfrak{t})$ is isomorphic to the following quotient.
$$\frac{\{(\tau_3,\tau_2,\tau_1,\tau_0) \:|\: \tau_i \in \Omega^i(G^{4-i},\mathfrak{t}), \:d\tau_3 = 0, \,d\tau_2 = - \delta \tau_{3}, \, d\tau_1 =  \delta \tau_{2}, \, d\tau_0 = - \delta \tau_{1},\, \: 0 = \delta \tau_0\}}{\{(d\beta_2,\delta \beta_2 + d\beta_1,-\delta\beta_1 + d\beta_0,\delta \beta_0) \:|\: \beta_i \in \Omega^i(G^{3-i},\mathfrak{t})\}}.$$
Moreover, if $T$ is connected, then
\begin{enumerate}
    \item\label{it:drclass1} The de Rham class $c_{\mathfrak{t}}(\mathcal{G}) \in H^4(BG,\mathfrak{t})$ of a multiplicative $T$-gerbe $\mathcal{G}$ over $G$ admits a representative $[(\tau_3,\tau_2,\tau_1,\tau_0)]$ as above with $\tau_0 = 0$.
    \item\label{it:drclass2} The de Rham class $c_{\mathfrak{t}}(\mathcal{G}) \in H^4(BG,\mathfrak{t})$ of a multiplicative $T$-gerbe $\mathcal{G}$ over $G$ is $0$ if and only if $\mathcal{G}$ is flat.
\end{enumerate}
\end{lemma}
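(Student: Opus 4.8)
The plan is to do everything inside the double complex $(\Omega^{m}(G^{n},\mathfrak{t}),\delta,d)$ which, by Example \ref{ex:simpderham} together with Example \ref{ex:bott}, computes $H^{\ast}(BG,\mathfrak{t})=H^{\ast}(BG_{\bullet},\underline{\mathfrak t})$. For the first assertion I would simply unravel this total complex in total degree $4$. Since $G^{0}$ is a point, $\Omega^{m}(G^{0},\mathfrak{t})=0$ for $m>0$, so the only bidegrees $(n,m)$ with $n+m=4$ that contribute are $(1,3),(2,2),(3,1),(4,0)$, giving $\tau_{3},\tau_{2},\tau_{1},\tau_{0}$, and the only ones with $n+m=3$ are $(1,2),(2,1),(3,0)$, giving $\beta_{2},\beta_{1},\beta_{0}$. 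Writing out the total differential (with the sign convention making $\delta$ and $d$ anticommute) on these truncated columns reproduces exactly the displayed cocycle conditions and coboundary relations; hence the quotient in the statement is $H^{4}(BG,\mathfrak{t})$.

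For part \ref{it:drclass1} the key remark is that modifying a representative $(\tau_{3},\tau_{2},\tau_{1},\tau_{0})$ by a total coboundary changes $\tau_{0}$ precisely by $\delta\beta_{0}$ with $\beta_{0}\in\Omega^{0}(G^{3},\mathfrak{t})=C^{\infty}(G^{3},\mathfrak{t})$. Therefore $\tau_{0}$ can be set to $0$ if and only if its class $[\tau_{0}]$ vanishes in the bottom-row cohomology, which by Remark \ref{rk:simpacyclic} is $H^{4}(BG_{\bullet},C^{\infty}_{\mathfrak t})$ (the sheaf $C^{\infty}_{\mathfrak t}$ being acyclic). This class $[\tau_{0}]$ is the image of $c_{\mathfrak t}(\mathcal{G})$ under the edge map $H^{4}(BG_{\bullet},\underline{\mathfrak t})\to H^{4}(BG_{\bullet},C^{\infty}_{\mathfrak t})$ induced by the augmentation $\underline{\mathfrak t}\hookrightarrow C^{\infty}_{\mathfrak t}$. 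Now $c_{\mathfrak t}(\mathcal{G})$ is by construction (see \eqref{eq:drclass} and the proof of Proposition \ref{prop:class2grps}) the image of $c(\mathcal{G})=\partial\eta$, where $\eta\in Ext(G,BT)=H^{3}(BG_{\bullet},C^{\infty}_{T})$ is the class of $\mathcal{G}$ and $\partial$ is the connecting map of $0\to\underline{Z}\to C^{\infty}_{\mathfrak t}\xrightarrow{\exp}C^{\infty}_{T}\to 0$. By naturality the image of $[\tau_{0}]$ equals the image of $\partial\eta$ under $H^{4}(BG_{\bullet},\underline{Z})\to H^{4}(BG_{\bullet},C^{\infty}_{\mathfrak t})$, and this composite is zero by exactness of the long exact sequence of that same short exact sequence. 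Hence $[\tau_{0}]=0$ and we may choose $\tau_{0}=0$.

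For part \ref{it:drclass2} I would set up the evident map of short exact sequences of sheaves over $BG_{\bullet}$, from $0\to\underline{Z}\to\underline{\mathfrak t}\to\underline{T}\to 0$ (exact because $T$ is connected) to $0\to\underline{Z}\to C^{\infty}_{\mathfrak t}\xrightarrow{\exp}C^{\infty}_{T}\to 0$, induced by the inclusions of locally constant into smooth functions and the identity on $\underline{Z}$. On the resulting commutative ladder of long exact sequences, flatness of $\mathcal{G}$ means $\eta$ lies in the image of $v\colon H^{3}(BG_{\bullet},\underline{T})\to H^{3}(BG_{\bullet},C^{\infty}_{T})$ (Definition \ref{def:flatmultger}), while $c_{\mathfrak t}(\mathcal{G})=0$ means, by exactness of the top row, that $c(\mathcal{G})=\partial\eta$ lies in the image of the constant-sheaf connecting map $\partial'\colon H^{3}(BG_{\bullet},\underline{T})\to H^{4}(BG_{\bullet},\underline{Z})$. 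The implication flat $\Rightarrow c_{\mathfrak t}(\mathcal{G})=0$ is then an immediate chase: if $\eta=v(\xi)$ then $\partial\eta=\partial v(\xi)=\partial'(\xi)$, which dies in $H^{4}(BG_{\bullet},\underline{\mathfrak t})$.

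The converse is where the real work lies, and I expect it to be the main obstacle. From $\partial\eta=\partial'(\xi)=\partial v(\xi)$ one gets $\partial(\eta-v(\xi))=0$, so by exactness of the bottom row $\eta-v(\xi)=c(\zeta)$ for some $\zeta\in H^{3}(BG_{\bullet},C^{\infty}_{\mathfrak t})$, where $c$ is induced by $\exp$. Thus $\eta$ is flat precisely if this correction term $c(\zeta)$ lies in the image of $v$, and controlling it is the crux. By Bott's Theorem \ref{th:bott} one has $H^{3}(BG_{\bullet},C^{\infty}_{\mathfrak t})=H^{3}_{gr,cont}(G,\mathfrak{t})$, which vanishes when $G$ is compact; in that case $\zeta=0$ and $\eta=v(\xi)$ is flat, completing the equivalence. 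I would therefore carry out the converse under the hypothesis that $H^{3}(BG_{\bullet},C^{\infty}_{\mathfrak t})=0$ (in particular for $G$ compact), and flag explicitly that this vanishing is exactly what forces a de Rham-trivial class to come from locally constant data, in analogy with the classical fact that a line bundle with vanishing real first Chern class is flat.
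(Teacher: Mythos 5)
Your derivation of the quotient description and your proof of part \ref{it:drclass1} are correct and coincide with the paper's own argument: the paper likewise obtains the first claim from the simplicial de Rham theorem of Example \ref{ex:simpderham}, and part \ref{it:drclass1} from the long exact sequence of $0 \to \underline{Z} \to C^{\infty}_{\mathfrak{t}} \xrightarrow{\exp} C^{\infty}_{T} \to 0$; you have merely made explicit that the relevant edge map to $H^4(BG_{\bullet},C^{\infty}_{\mathfrak{t}})$ is projection onto the $\tau_0$-component, which the paper leaves implicit. The same is true of the implication ``flat $\Rightarrow c_{\mathfrak{t}}(\mathcal{G})=0$'' in part \ref{it:drclass2}.

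For the converse of part \ref{it:drclass2}, your hesitation is not over-caution: you have located a genuine gap in the paper's proof rather than in your own. The paper deduces \ref{it:drclass2} from the exact sequence $H^3(BG,\underline{T}) \xrightarrow{\partial'} H^4(BG,Z) \to H^4(BG,\mathfrak{t})$ alone, but that sequence only says that $c_{\mathfrak{t}}(\mathcal{G})=0$ iff $c(\mathcal{G})$ lifts along $\partial'$; upgrading a lift of $c(\mathcal{G})$ to a lift of the class $\eta \in Ext(G,BT)$ itself is exactly the inclusion $\mathrm{Im}(\exp_{*}) \subseteq \mathrm{Im}(v)$ that you isolate as the crux, and this inclusion can fail. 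Indeed, take $G = SL(2,\mathbb{C})$ and $T = U(1)$: then $H^3(BG_{\bullet},C^{\infty}_{\mathfrak{t}}) = H^3_{gr,cont}(SL(2,\mathbb{C}),\mathbb{R}) \cong \mathbb{R}$ (generated by the Borel volume class), while $BG \simeq BSU(2)$ gives $H^3(BG_{\bullet},\underline{Z}) = 0$ and $H^3(BG_{\bullet},\underline{T}) = 0$. Hence $\exp_{*}$ is injective, and every nonzero class in its image is a multiplicative gerbe with $c(\mathcal{G})=0$, so in particular $c_{\mathfrak{t}}(\mathcal{G})=0$, yet it is not flat because $\mathrm{Im}(v)=0$. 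So the unconditional converse of \ref{it:drclass2} is false for general $G$, and your restricted version---valid whenever $H^3_{gr,cont}(G,\mathfrak{t})=0$, in particular for $G$ compact---is the statement that can actually be proved. This does not propagate into the rest of the paper: the later appeals to flatness, e.g.\ in Theorems \ref{th:MaurerCartan} and \ref{th:complexification}, only use the easy direction, namely that classes coming from $H^3(BG,\underline{T})$ are flat.
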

\begin{proof}
    This description of $H^4(BG,\mathfrak{t})$ follows from the variant of the de Rham theorem for simplicial manifolds presented in Example \ref{ex:simpderham}. Now note that the exact sequence of sheaves $0 \rightarrow Z \rightarrow C^{\infty}_{\mathfrak{t}} \rightarrow C^{\infty}_T \rightarrow 0$ induces the exact sequence
    \begin{equation}
        H^3(BG_{\bullet},C^{\infty}_T) \rightarrow H^4(BG_{\bullet},Z) \rightarrow H^4(BG_{\bullet}, C^{\infty}_{\mathfrak{t}}),
    \end{equation}
    which yields \ref{it:drclass1}. Similarly, the exact sequence of sheaves $Z \rightarrow \underline{\mathfrak{t}} \rightarrow \underline{T}$ gives the exact sequence
    \begin{equation}
        H^3(BG,\underline{T}) \rightarrow H^4(BG,Z) \rightarrow H^4(BG,\mathfrak{t}),
    \end{equation}
    which implies \ref{it:drclass2}.
\end{proof}

Given a multiplicative $T$-gerbe over $G$, one representative $c_{\mathfrak{t}}(\mathcal{G}) = [(\tau_3,\tau_2,\tau_1,0)]$ as in part \ref{it:drclass1} of Lemma \ref{lem:drclass} can be obtained by choosing any connective structure and curving on $\mathcal{G}$ and any connection on $m$. This yields the curvature $3$-form $\tau_3$ of $\mathcal{G}$ on $G$, the curvature $2$-form $-\tau_2$ of $m$ on $G^2$ and the $1$-form $\tau_1$ on $G^3$ that measures non-flatness of $\alpha$; these satisfy the above equations.

The following is the main example of a multiplicative gerbe.

\begin{example}\label{ex:string}
For $G$ a compact simple simply connected Lie group, Proposition \ref{prop:class2grps} and the fact that $H^4(BG,\mathbb Z) = H^3(G,\mathbb Z) = \mathbb Z$ in this case imply that multiplicative $U(1)$-gerbes over $G$ are classified by $\mathbb Z$. The multiplicative gerbe corresponding to a choice of generator of $H^4(BG,\mathbb Z)$ is called String$(G)$. The image of such generator in $H^4(BG_{\bullet},\mathbb R)$ can be described in de Rham cohomology by the forms $\tau_3 := \frac{1}{6}\langle \theta^L, [\theta^L \wedge \theta^L] \rangle \in \Omega^3(G,\mathbb R)$, $\tau_2 := \frac{1}{2}\langle g_1^{\ast}\theta^L \wedge g_2^{\ast}\theta^R \rangle \in \Omega^2(G \times G, \mathbb R)$, $\tau_1 = 0$, $\tau_0 = 0$ (which satisfy $d\tau_3 = 0$, $d\tau_2 = \delta \tau_3$, $\delta \tau_2 = 0$), for $\theta^L, \,\theta^R \in \Omega^1(G,\mathfrak{g})$ the left- and right-invariant Maurer-Cartan forms on $G$, respectively, and $\langle \cdot,\cdot \rangle: \mathfrak{g} \otimes \mathfrak{g} \rightarrow \mathbb R$ the Killing form, normalized so that $[\tau_3] \in H^3(G,\mathbb Z) = \mathbb Z$ is a generator. 
\end{example}

It follows from Example \ref{ex:string} that a finite-dimensional cocycle description of String$(G)$ is obtained by choosing potentials for the forms $\tau_3$, $\tau_2$ in a semi-simplicial cover of $G$ but, as this is not a canonical procedure, this does not yield an explicit description of the multiplicative gerbe. The gerbe String$(G) \rightarrow G$ is described explicitly in a cover of $G$ in \cite{Mein}, where an \emph{equivariant structure} on it is also given, but there is no known explicit cocycle data for the product $m$ and the associator $\alpha$. There are, however, explicit models for String$(G)$ as an infinite-dimensional crossed module \cite{BaezCransSchSte,LudewigWald} and one can pass from these to a model in terms of infinite-dimensional bundle gerbes by the following general procedure.

\begin{example}\label{ex:crossedmultger}
    Let $(\tilde{G},H,f,\triangleright)$ be a central Lie crossed module as in Example \ref{ex:crossed} and write $G := \tilde{G}/Im(f)$, $T := \ker(f)$. It follows from Proposition \ref{prop:centralextensions} and Theorem \ref{th:ScPr} that this determines a multiplicative $T$-gerbe over $G$. We recall here an explicit description of such multiplicative gerbe in the language of bundle gerbes (see Remark \ref{rk:bundlegerbes}) that can be found in \cite{NikWalLift}.

    In order to give a $T$-gerbe over $G$ we use the surjective submersion $\tilde{G} \rightarrow G$ and we define a $T$-bundle $L \rightarrow \tilde{G} \times_G \tilde{G}$ by $L := \tilde{G} \times H$, with projection $(g,h) \mapsto (g,f(h)g)$ and $T$ acting on $H$ through the group multiplication; then there is a canonical isomorphism $p_{12}^{\ast}L \otimes p_{23}^{\ast}L \rightarrow p_{13}^{\ast}L$ over $\tilde{G}\times_G\tilde{G}\times_G\tilde{G}$ because
    \begin{align*}
        &p_{13}^{\ast}L = \{(g,g',g'',h^{0,''}) \in \tilde{G}^3 \times H\:| \: g'' = f(h^{0,''})g\}\\
        &p_{12}^{\ast}L \otimes p_{23}^{\ast}L = \{(g,g',g'',h^{0,'},h^{',''}) \in \tilde{G}^3 \times H^2\:| \: g' = f(h^{',''})g, \, g''= f(h^{0,'})g\}/\sim
    \end{align*}
    with $(h^{0,'},h^{',''}) \sim (th^{0,'},t^{-1}h^{',''})$ for $t \in T$ and so we may define $[h^{0,'},h^{',''}] \mapsto h^{',''}h^{0,'}$. This completes the construction of the bundle gerbe $\mathcal{G} \rightarrow G$. Now, in order to give an isomorphism $g_1^{\ast}\mathcal{G} \otimes g_2^{\ast}\mathcal{G} \rightarrow (g_1g_2)^{\ast}\mathcal{G}$, we cover $G \times G$ by $\tilde{G} \times \tilde{G}$ and give an isomorphism between the $T$-bundles over $(\tilde{G} \times \tilde{G}) \times_{G \times G} (\tilde{G} \times \tilde{G})$ that describe the gerbes $g_1^{\ast}\mathcal{G} \otimes g_2^{\ast}\mathcal{G}$ and $(g_1g_2)^{\ast}\mathcal{G}$; these are
    \begin{align*}
        \{(g_1,g_2,g_1',g_2',h_1,h_2) \in \tilde{G}^4 \times H^2 \:| \:g_1' = f(h_1)g_1, \,g_2' = f(h_2)g_2\}/\sim, \\
        \{(g_1,g_2,g_1',g_2',h_{12}) \in \tilde{G}^4 \times H \:| \, g_1'g_2' = f(h_{12})g_1g_2\},
    \end{align*}
    respectively, with the equivalence relation $(h_1,h_2) \sim (th_1,t^{-1}h_2)$ for $t \in T$. The isomorphism is then $(g_1,g_2,g_1',g_2',[h_1,h_2]) \mapsto (g_1,g_2,g_1',g_2',h_1 \cdot g_1 \triangleright h_2)$ (here we use that $g \triangleright t = t$) and for this to define an isomorphism of gerbes it is necessary that this isomorphism is compatible over $(\tilde{G} \times \tilde{G}) \times_{G \times G} (\tilde{G} \times \tilde{G}) \times_{G \times G} (\tilde{G} \times \tilde{G})$ with the gerbe product; this reduces to checking that
    \begin{equation}
        h_1^{',''}h_1^{0,'}g_1 \triangleright (h_2^{',''},h_2^{0,'}) = h_1^{',''} g_1' \triangleright h_2^{',''} \cdot h_1^{0,'} \cdot g_1 \triangleright h_2^{0,'}
    \end{equation}
    when $g_1' = f(h_1^{0,'})g_1$, which follows from the axioms. To conclude the construction of a multiplicative $T$-gerbe over $G$, it only remains to give the $2$-isomorphism $\alpha$ but in this case we can simply take the identity, which follows essentially from
    \begin{equation}
        h_1 \cdot g_1 \triangleright h_2 \cdot (g_1g_2) \triangleright h_3 = h_1 \cdot g_1 \triangleright (h_2 \cdot g_2 \triangleright h_3).
    \end{equation}
\end{example}


\section{Connective structures and complexification of multiplicative gerbes}
\label{sec:constrcomp}

\subsection{Connective structures on multiplicative gerbes}\label{sec:constr}

In this section we define the new notion of \emph{connective structures} on multiplicative gerbes, closely related to the \emph{connections} from \cite{WaldMultCon}. We refer to Appendix \ref{ap:gerbes} for the notion of a \emph{connection on an isomorphism of gerbes}. 

\begin{definition}\label{def:constr}
    Let $(\mathcal{G},m,\alpha)$ be a multiplicative $T$-gerbe over $G$. A \emph{connective structure} on it is the following data:
    \begin{enumerate}
        \item A connective structure $\nabla$ on the gerbe $\mathcal{G} \rightarrow G$,
        \item A connection $\nabla_m$ on the isomorphism of gerbes $m$ such that $\alpha$ is a flat 2-isomorphism.
    \end{enumerate}
    We often write $(\mathcal{G}_{\nabla},m_{\nabla},\alpha)$ for a multiplicative gerbe equipped with a connective structure. An \emph{isomorphism of connective structures on a multiplicative $T$-gerbe} $(\nabla_1,\nabla_{m_1}) \rightarrow (\nabla_2,\nabla_{m_2})$ is an isomorphism of connective structures on gerbes $\phi:\nabla_1 \rightarrow \nabla_2$ such that the following is a commutative diagram of isomorphisms of gerbes with connective structures
    \begin{equation}
        \begin{tikzcd}
             g_1^{\ast}\mathcal{G}_{\nabla_1} \otimes g_2^{\ast}\mathcal{G}_{\nabla_1} \ar[r,"{(m,\nabla_{m_1})}",{name=U}] \ar[d,"{(id,g_1^{\ast}\phi \otimes g_2^{\ast}\phi)}",swap] 
             & (g_1g_2)^{\ast}\mathcal{G}_{\nabla_1}\ar[d,"{(id,(g_1g_2)^{\ast}\phi)}"] \\
             g_1^{\ast}\mathcal{G}_{\nabla_2} \otimes g_2^{\ast}\mathcal{G}_{\nabla_2} \ar[r," {(m,\nabla_{m_2})}",{name=D},swap] 
             & (g_1g_2)^{\ast}\mathcal{G}_{\nabla_2}.
        \end{tikzcd}
        \end{equation}
\end{definition}

Given a cocycle description $\lambda_{i_1j_1k_1}$, $m_{i_2j_2}$, $\alpha_{i_3}$ of the multiplicative gerbe in a good semi-simplicial cover of $BG_{\bullet}$ as in \eqref{eq:cocycle1}, a connective structure on it is then described by 
\begin{equation}\label{eq:cocycle2}
A_{i_1j_1} \in \Omega^1(U^1_{i_1j_1},\mathfrak{t}), \quad M_{i_2} \in \Omega^1(U^2_{i_2},\mathfrak{t})
\end{equation}
satisfying
\begin{align}
\begin{split}\label{eq:cocycleeq2}
    A_{i_1j_1} - A_{i_1k_1} + A_{j_1k_1} &= \lambda_{i_1j_1k_1}^{\ast}\theta^T,\\
    M_{i_2} + d_1^{\ast}A_{d_1(i_2)d_1(j_2)} + m_{i_2j_2}^{\ast}\theta^T &= 
 d_0^{\ast}A_{d_0(i_2)d_0(j_2)} + d_2^{\ast}A_{d_2(i_2)d_2(j_2)} + M_{j_2},\\
    \alpha_{i_3}^{\ast}\theta^T + d_0^{\ast}M_{d_0(i_3)} + d_2^{\ast}M_{d_2(i_3)} &= d_1^{\ast}M_{d_1(i_3)} + d_3^{\ast}M_{d_3(i_3)},
\end{split}
\end{align}
where $\theta^T \in \Omega^1(T,\mathfrak{t})$ is the Maurer-Cartan form on $T$.

A connective structure on a multiplicative gerbe need not exist in general, and different choices might be non-isomorphic. However, in many cases of interest, such as when $G$ is compact, there is one and only one choice up to isomorphism. The whole picture is described by Proposition \ref{prop:classconstr} below (see \cite{WaldMultCon} for similar results, with the difference that an additional curving on the multiplicative gerbe is considered as part of the structure to classify).

For fixed $T$, $G$, let $Ext(G,BT)$ be the space of multiplicative $T$-gerbes over $G$ up to isomorphism and $Ext(G,BT_{\nabla})$ the space of multiplicative $T$-gerbes with connective structure over $G$ up to isomorphism. Then
\begin{proposition}\label{prop:classconstr}
A multiplicative gerbe $\mathcal{G}$ admits a connective structure if and only if its de Rham class \eqref{eq:drclass} admits a representative $(\tau_3,\tau_2,\tau_1,\tau_0)$ with $\tau_1 = 0$, $\tau_0 = 0$. Moreover, there is an exact sequence
\begin{equation}
    H^1_{gr,cont}(G,\mathfrak{g}^{\ast}\otimes \mathfrak{t}) \rightarrow Ext(G,BT_{\nabla}) \rightarrow Ext(G,BT) \rightarrow H^2_{gr,cont}(G,\mathfrak{g}^{\ast}\otimes \mathfrak{t}).
\end{equation}
In particular, $Ext(G,BT_{\nabla}) = Ext(G,BT)$ for compact $G$.
\end{proposition}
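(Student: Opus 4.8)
The plan is to classify multiplicative $T$-gerbes with connective structure by a hypercohomology group on $BG_{\bullet}$, mirroring the identification $Ext(G,BT) = H^3(BG_{\bullet},C^{\infty}_T)$ from Proposition \ref{prop:class2grps}, and then to extract both assertions from the long exact sequence of a two-term complex of sheaves. Concretely, I would work with the complex
\[
    K^{\bullet} \defeq \big( C^{\infty}_T \xrightarrow{\ d\log\ } \Omega^1_{\mathfrak{t}} \big)
\]
on $BG_{\bullet}$, placed in degrees $0$ and $1$, where $d\log$ is the logarithmic derivative $f \mapsto f^{\ast}\theta^T$. Taking the same good semi-simplicial cover and \v{C}ech resolutions as in Proposition \ref{prop:class2grps}, the first step is to observe that the cocycle data \eqref{eq:cocycle1}--\eqref{eq:cocycle2} of a multiplicative $T$-gerbe with connective structure, subject to \eqref{eq:cocycleeq2}, is exactly a degree-$3$ cocycle of the triple complex computing $\mathbb{H}(BG_{\bullet},K^{\bullet})$: the $C^{\infty}_T$-part $(\lambda,m,\alpha)$ is the usual gerbe cocycle, the $\Omega^1_{\mathfrak{t}}$-part $(A,M)$ fills the degree-$1$ slot of $K^{\bullet}$, and the three lines of \eqref{eq:cocycleeq2} say precisely that $d\log$ matches the two (the terms $\lambda^{\ast}\theta^T,m^{\ast}\theta^T,\alpha^{\ast}\theta^T$ being the $d\log$ of the gerbe cocycle). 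Isomorphisms of connective structures then match coboundaries, giving $Ext(G,BT_{\nabla}) = \mathbb{H}^3(BG_{\bullet},K^{\bullet})$.

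Second, I would feed the short exact sequence of complexes of sheaves
\[
    0 \to \Omega^1_{\mathfrak{t}}[-1] \to K^{\bullet} \to C^{\infty}_T \to 0
\]
(with $C^{\infty}_T$ in degree $0$ and $\Omega^1_{\mathfrak{t}}[-1]$ the sheaf $\Omega^1_{\mathfrak{t}}$ in degree $1$) into hypercohomology. Using $\mathbb{H}^{n}(BG_{\bullet},\Omega^1_{\mathfrak{t}}[-1]) = H^{n-1}(BG_{\bullet},\Omega^1_{\mathfrak{t}})$, the relevant segment of the long exact sequence is
\[
    H^{2}(BG_{\bullet},\Omega^1_{\mathfrak{t}}) \to \mathbb{H}^3(BG_{\bullet},K^{\bullet}) \to H^{3}(BG_{\bullet},C^{\infty}_T) \xrightarrow{\ \partial\ } H^{3}(BG_{\bullet},\Omega^1_{\mathfrak{t}}).
\]
Now $H^{3}(BG_{\bullet},C^{\infty}_T) = Ext(G,BT)$ by Proposition \ref{prop:class2grps}, while Bott's Theorem \ref{th:bott} with $V=\mathfrak{t}$, $q=1$ gives $H^{p}(BG_{\bullet},\Omega^1_{\mathfrak{t}}) = H^{p-1}_{gr,cont}(G,\mathfrak{g}^{\ast}\otimes\mathfrak{t})$; substituting $p=2,3$ and using the first step yields exactly the stated exact sequence.

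Third, for the existence criterion I would identify the connecting map. By exactness, $\mathcal{G}$ admits a connective structure if and only if $\partial[\mathcal{G}]=0$. Computing $\partial$ on cochains: since $\Omega^1_{\mathfrak{t}}$ is fine, choose $A,M$ solving the first two lines of \eqref{eq:cocycleeq2}; then $\partial[\mathcal{G}]$ is represented by the defect of the third line, $\tau_1 \defeq \delta M + \alpha^{\ast}\theta^T \in \Omega^1(G^3,\mathfrak{t})$, which those two equations force to be globally defined and $\delta$-closed. This $\tau_1$ is the very $1$-form measuring the non-flatness of $\alpha$ appearing in a de Rham representative $(\tau_3,\tau_2,\tau_1,0)$ of Lemma \ref{lem:drclass}\ref{it:drclass1} (with $\tau_0=0$ available for connected $T$), so $\partial[\mathcal{G}]=[\tau_1]$; its vanishing is equivalent to choosing the data with $\tau_1=0$ and $\tau_0=0$, and conversely a genuine connective structure makes $\alpha$ flat, i.e. $\tau_1=0$. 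I expect this identification to be the main obstacle: one must verify that the \v{C}ech-level connecting homomorphism returns the same class in $H^{3}(BG_{\bullet},\Omega^1_{\mathfrak{t}})$ as the global curvature-theoretic $\tau_1$, and that altering the auxiliary curving on $\mathcal{G}$ and connection on $m$ changes $[\tau_1]$ only by $\delta$-coboundaries, so that the criterion is independent of all choices.

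Finally, for $G$ compact the last clause of Theorem \ref{th:bott} gives $H^{r}_{gr,cont}(G,\mathfrak{g}^{\ast}\otimes\mathfrak{t})=0$ for $r>0$, so both $H^{1}_{gr,cont}(G,\mathfrak{g}^{\ast}\otimes\mathfrak{t})$ and $H^{2}_{gr,cont}(G,\mathfrak{g}^{\ast}\otimes\mathfrak{t})$ vanish and the exact sequence collapses to $Ext(G,BT_{\nabla}) \cong Ext(G,BT)$.
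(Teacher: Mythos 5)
Your proposal is correct and follows essentially the same route as the paper: the identification $Ext(G,BT_{\nabla}) = \mathbb{H}^3(BG_{\bullet}, C^{\infty}_T \rightarrow \Omega^1_{\mathfrak{t}})$ via the cocycle data \eqref{eq:cocycle1}--\eqref{eq:cocycleeq2}, the short exact sequence of complexes $0 \rightarrow (0 \rightarrow \Omega^1_{\mathfrak{t}}) \rightarrow (C^{\infty}_T \rightarrow \Omega^1_{\mathfrak{t}}) \rightarrow (C^{\infty}_T \rightarrow 0) \rightarrow 0$, and Theorem \ref{th:bott}. Your explicit computation of the connecting homomorphism as the class of $\tau_1 = \delta M + \alpha^{\ast}\theta^T$ is just a more detailed version of the paper's direct assertion that $\tau_1$ measures the failure of $\alpha$ to preserve the connective structure, so the two arguments coincide in substance.
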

\begin{proof}
Representatives $(\tau_3,\tau_2,\tau_1,0)$ of the de Rham class of $\mathcal{G}$ are obtained from taking a connective structure and curving on $\mathcal{G}$ and a connection on $m$; since $\tau_1$ measures the failure of the associator $\alpha$ to preserve the connective structure it is clear that the multiplicative gerbe admits a connective structure if and only if the choices can be made so that $\tau_1 = 0$. Now it follows from the cocycle data above, as in the proof of Proposition \ref{prop:class2grps}, that $Ext(G,BT_{\nabla}) = \mathbb H^3(BG_{\bullet},C^{\infty}_T \rightarrow \Omega^1_{\mathfrak{t}})$, for $C^{\infty}_T \rightarrow \Omega^1_{\mathfrak{t}}$ the complex of sheaves on $BG_{\bullet}$ of smooth $T$-valued functions and smooth $\mathfrak{t}$-valued 1-forms, respectively, with the map $f \mapsto f^{\ast}\theta^T$ between them. Then the above sequence follows from the exact sequence of complexes $0 \rightarrow (0 \rightarrow \Omega^1_{\mathfrak{t}}) \rightarrow (C^{\infty}_T \rightarrow \Omega^1_{\mathfrak{t}}) \rightarrow (C^{\infty}_T \rightarrow 0) \rightarrow 0$ and Theorem \ref{th:bott}. 
\end{proof}

We prove in Theorem \ref{th:MaurerCartan} below that a multiplicative gerbe with connective structure is naturally equipped with more data. In order to state the result we introduce the following original definition.

\begin{definition}\label{def:enhcurv}
    Let $\mathcal{L}_{\nabla} \rightarrow M$ be a $T$-gerbe with connective structure given over the cover $\{U_i\}_{i}$ of $M$ by $\lambda_{ijk}$, $A_{ij}$. An \emph{enhanced curving} is a collection $\{B^{en}_i\}_i$ of $B^{en}_i \in \Gamma(T^{\ast}U_i \otimes T^{\ast}U_i \otimes \mathfrak{t})$ such that $B^{en}_i - B^{en}_j = dA_{ij}$; equivalently, it is a pair $(\{B_i\}_i,h)$ of a curving $\{B_i\}_i$ and a $h \in \Gamma(S^2T^{\ast}M \otimes \mathfrak{t})$. An \emph{enhanced connection} $(\{A_{ij}\}_{i,j},\{B^{en}_i\}_i)$ on a gerbe is a pair of a connective structure and an enhanced curving. The \emph{curvature of an enhanced connection} is the curvature of the underlying connection. If  $(\{s_{ij}\}_{i,j},\{A_i\}_i): (\{\lambda_{ijk}^1,A_{ij}^1\}) \rightarrow (\{\lambda_{ijk}^2,A_{ij}^2\})$ is an isomorphism of gerbes with connective structures and the gerbes have enhanced curvings $B^{en,1}_i$, $B^{en,2}_i$, then the \emph{curvature of the isomorphism} with respect to these enhanced curvings is $F \in \Gamma(T^{\ast}M\otimes T^{\ast}M \otimes \mathfrak{t})$ defined by $F = dA_i - B^{en,1}_i + B^{en,2}_i$.
\end{definition}

The first part of the following theorem, which is proved in \cite{WaldMultCon}, is a generalization of Example \ref{ex:string} in which one considers arbitrary groups $G$, $T$ and in which the Killing form is replaced by any $Ad$-invariant symmetric bilinear form $\langle \cdot,\cdot \rangle: \mathfrak{g} \otimes \mathfrak{g} \rightarrow \mathfrak{t}$. The second part is a converse to this result, which seems to be new. From now on we write $\theta^L, \, \theta^R \in \Omega^1(G,\mathfrak{g})$ for the left- and right-invariant Maurer-Cartan forms on $G$, respectively. Note also that, for connected $T$, the exact sequence of sheaves $0 \rightarrow Z \rightarrow \underline{\mathfrak{t}} \stackrel{exp}{\rightarrow} \underline{T} \rightarrow 0$ induces an exact sequence \begin{equation}
    H^4(BG_{\bullet},Z) \longrightarrow H^4(BG_{\bullet},\underline{\mathfrak{t}}) \stackrel{exp}{\longrightarrow} H^4(BG_{\bullet},\underline{T}).
\end{equation}

\begin{theorem}\label{th:MaurerCartan}
Let $G$, $T$ be Lie groups with $T$ abelian. Given an $Ad$-invariant symmetric bilinear form $\langle \cdot,\cdot \rangle: \mathfrak{g} \otimes \mathfrak{g} \rightarrow \mathfrak{t}$, the differential forms 
\begin{equation}\label{eq:murho}
    \mu := \frac{1}{6}\langle \theta^L \wedge [\theta^L \wedge \theta^L] \rangle \in \Omega^3(G,\mathfrak{t}), \quad \nu := -\langle g_1^{\ast}\theta^L \wedge g_2^{\ast}\theta^R \rangle \in \Omega^2(G^2,\mathfrak{t})
\end{equation}
define a class $[\mu,-\nu,0,0] \in H^4(BG,\mathfrak{t})$. If $T$ is connected and $exp([\mu,-\nu,0,0]) \in H^4(BG_{\bullet},\underline{T})$ vanishes then there is a multiplicative $T$-gerbe with connective structure $(\mathcal{G}_{\nabla},m_{\nabla},\alpha)$ over $G$ whose de Rham class \eqref{eq:drclass} is $c_{\mathfrak{t}}(\mathcal{G}) = [\mu,-\nu,0,0]$. Furthermore, such $(\mathcal{G}_{\nabla},m_{\nabla},\alpha)$ is unique up to tensor product with flat multiplicative gerbes

Conversely, a multiplicative $T$-gerbe  with connective structure $(\mathcal{G}_{\nabla},m_{\nabla},\alpha)$ over $G$ has a unique enhanced curving $\Theta^{L,en} = (\Theta^L,h)$ such that the curvature of $\nabla_m$ with respect to $\Theta^{L,en}$ is $F \in \Gamma(T^{\ast}G^2 \otimes T^{\ast}G^2 \otimes \mathfrak{t})$ determined by $h \in \Gamma(S^2T^{\ast}G \otimes \mathfrak{t})$ as $F = 2h_1(g_1^{\ast}\theta^L \otimes g_2^{\ast}\theta^R)$. Its curvature is $-\frac{1}{6}h_1(\theta^L \wedge [\theta^L \wedge \theta^L] \rangle$. In other words, $(\mathcal{G}_{\nabla},m_{\nabla},\alpha)$ determines the following data:
\begin{enumerate}
    \item\label{it:MCpairing} An $Ad$-invariant symmetric bilinear form $\langle \cdot,\cdot \rangle: \mathfrak{g} \otimes \mathfrak{g} \rightarrow \mathfrak{t}$ such that $exp([\mu,-\nu,0,0]) \in H^4(BG_{\bullet},\underline{T})$ vanishes, where $\mu, \,\nu$ are as in \eqref{eq:murho}.
    \item\label{it:MCcurving} A curving $\Theta^L$ on $\mathcal{G}_{\nabla}$ with curvature $\mu$ such that $\nabla_m$ has curvature $\nu$ with respect to it.
\end{enumerate}
$\Theta^{L,en}$ is called the \emph{Maurer-Cartan enhanced curving} on $\mathcal{G}_{\nabla}$, while $\Theta^L$ is the \emph{Maurer-Cartan curving} on $\mathcal{G}_{\nabla}$. If $T$ is connected, then both constructions are inverse to each other up to tensor product with flat multiplicative gerbes.
\end{theorem}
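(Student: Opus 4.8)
The plan is to prove the two directions separately, using the de Rham-class description of Lemma~\ref{lem:drclass}, the explicit low-degree Bott isomorphisms of Lemma~\ref{lem:bott}, and the classification Propositions~\ref{prop:class2grps} and~\ref{prop:classconstr} as the main tools.

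For the forward direction I would first verify that $(\mu,-\nu,0,0)$ satisfies the cocycle conditions of Lemma~\ref{lem:drclass}, namely $d\mu=0$, $d\nu=\delta\mu$ and $\delta\nu=0$; these are the standard Maurer--Cartan identities and follow from the symmetry and $Ad$-invariance of $\langle\cdot,\cdot\rangle$ (indeed $\nu=\psi^{-1}(\langle\cdot,\cdot\rangle)$ in the notation of Lemma~\ref{lem:bott}, so $\delta\nu=0$ is automatic). Assuming $\exp[\mu,-\nu,0,0]$ vanishes in $H^4(BG_\bullet,\underline T)$, the exact sequence $H^4(BG_\bullet,Z)\to H^4(BG_\bullet,\underline{\mathfrak t})\xrightarrow{\exp}H^4(BG_\bullet,\underline T)$ lifts the class to $H^4(BG_\bullet,Z)$; since the representative has $\tau_1=\tau_0=0$, Proposition~\ref{prop:class2grps} yields a multiplicative gerbe with this de Rham class, which Proposition~\ref{prop:classconstr} endows with a connective structure. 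Uniqueness up to flat gerbes is then immediate from Lemma~\ref{lem:drclass}(\ref{it:drclass2}): two such objects share a de Rham class, so their difference is flat.

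For the converse I would begin by choosing any curving on $\mathcal{G}_\nabla$ (one exists because the \v{C}ech $1$-cocycle $\{dA_{i_1j_1}\}$ of closed $2$-forms is a coboundary, the sheaf of $2$-forms being fine), obtaining a gerbe-curvature $3$-form $\mu_0$ and an $m$-curvature $2$-form $\nu_0$, the $\tau_1$-component vanishing because $\alpha$ is flat; thus $\delta\nu_0=0$ and $[\mu_0,-\nu_0,0,0]$ is the de Rham class. I would then define $\langle\cdot,\cdot\rangle:=\psi(\nu_0)$ via Lemma~\ref{lem:bott}(\ref{it:bott2}); this is canonical, since changing the curving by $\beta\in\Omega^2(G,\mathfrak t)$ alters $\nu_0$ by the $\delta$-exact term $\delta\beta$, which lies in the kernel of $\psi$. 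Because $\mathcal{G}_\nabla$ is a genuine gerbe, its de Rham class already has an integral lift, so $\exp[\mu,-\nu,0,0]$ vanishes automatically, giving item~(\ref{it:MCpairing}). Since $\nu_0$ and $\nu:=\psi^{-1}(\langle\cdot,\cdot\rangle)=-\langle g_1^\ast\theta^L\wedge g_2^\ast\theta^R\rangle$ have equal image under the isomorphism $\psi$, they are $\delta$-cohomologous, $\nu_0-\nu=\delta\beta$, and adjusting the curving by $\beta$ produces the Maurer--Cartan curving $\Theta^L$ with $m$-curvature exactly $\nu$; one then checks its gerbe-curvature is the Cartan form $\mu$, establishing item~(\ref{it:MCcurving}).

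It remains to produce the symmetric tensor $h$, which carries the genuinely new content. With $\Theta^L$ fixed, the enhanced curvature of $\nabla_m$ splits into its antisymmetric part $\nu$ (already arranged) and a symmetric part $\delta h$, so the normalization $F=2h_1(g_1^\ast\theta^L\otimes g_2^\ast\theta^R)$ becomes the equation $\delta h=[2h_1(g_1^\ast\theta^L\otimes g_2^\ast\theta^R)]^{\mathrm{sym}}$ for a section $h\in\Gamma(S^2T^\ast G\otimes\mathfrak t)$ whose value at the identity is $h_1=\langle\cdot,\cdot\rangle$. I would solve it by exhibiting the left-invariant extension of $\langle\cdot,\cdot\rangle$ as an explicit solution and then prove that any two solutions differ by a $\delta$-closed symmetric tensor vanishing at the identity, which must be zero. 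This last uniqueness statement is the main obstacle, since the relevant $\delta$-cohomology of the sheaf of symmetric $2$-tensors on $BG_\bullet$ is not covered by Bott's Theorem~\ref{th:bott}; I expect to dispatch it either by an invariant-theory argument parallel to the proof of Lemma~\ref{lem:bott}(\ref{it:bott2}) or by identifying the symmetric-tensor complex with an appropriate truncation of the de Rham complex. Finally, the ``inverse to each other up to flat'' assertion is formal: the forward construction applied to $\psi(\nu_0)$ reproduces a gerbe of de Rham class $[\mu,-\nu,0,0]$, hence $\mathcal{G}_\nabla$ up to a flat factor, while the bilinear form extracted from the Maurer--Cartan gerbe of the forward direction is $\langle\cdot,\cdot\rangle$ by construction.
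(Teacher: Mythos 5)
Your forward direction and your extraction of $\langle\cdot,\cdot\rangle$ and $\Theta^L$ in the converse are essentially sound, and they take a somewhat different route from the paper: you assemble the statement from Proposition \ref{prop:class2grps}, Proposition \ref{prop:classconstr} and Lemma \ref{lem:drclass}, whereas the paper derives all parts at once from the exact sequence $0 \rightarrow H^3(BG,\underline{T}) \rightarrow Ext(G,BT_{\nabla}) \rightarrow H^0_{gr,cont}(G,S^2\mathfrak{g}^{\ast}\otimes\mathfrak{t}) \rightarrow H^4(BG,\underline{T})$ obtained from the triangle $\underline{T} \rightarrow C^{\infty}_T \rightarrow (\Omega^1_{\mathfrak{t}} \rightarrow \Omega^2_{\mathfrak{t},d-cl})$. (Two steps you treat as routine are not: ``one then checks its gerbe-curvature is the Cartan form $\mu$'' is not a computation but the vanishing argument $\delta(H-\mu)=0$ plus $H^1(BG_{\bullet},\Omega^3_{\mathfrak{t}}) = \ker\delta = 0$ from Theorem \ref{th:bott} and Remark \ref{rk:simpacyclic}; and your appeal to Proposition \ref{prop:class2grps} needs the remark that a representative with $\tau_0 = 0$ has vanishing image in $H^4(BG_{\bullet},C^{\infty}_{\mathfrak{t}}) = H^4_{gr,cont}(G,\mathfrak{t})$, because projection onto the bottom row of the double complex is a chain map.) The genuine gap is the enhanced-curving uniqueness, which you leave open and also misframe: $h_1$ is \emph{not} prescribed to equal $\langle\cdot,\cdot\rangle$ (for the solution $h = -\frac{1}{2}\langle\theta^L\odot\theta^L\rangle$ one has $h_1 = -\langle\cdot,\cdot\rangle$); the normalization $F = 2h_1(g_1^{\ast}\theta^L\otimes g_2^{\ast}\theta^R)$ is self-referential, and pinning down $h_1$ is part of what must be proved. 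Moreover, no cohomology of symmetric tensors is needed, so neither of your two speculative rescue strategies is required: if $(b,h')$ is the difference of two solutions, the symmetric component of the curvature condition is a pointwise functional equation forcing $h'$ to be the left-invariant tensor $\frac{1}{2}\langle\theta^L\odot\theta^L\rangle'$ built from some $Ad$-invariant form $\langle\cdot,\cdot\rangle'$, while the antisymmetric component reads $\delta b = \langle g_1^{\ast}\theta^L\wedge g_2^{\ast}\theta^R\rangle'$; since this right-hand side is then a $\delta$-coboundary, Lemma \ref{lem:bott}\eqref{it:bott2} forces $\langle\cdot,\cdot\rangle' = 0$, and then $b = 0$ because $H^1(BG_{\bullet},\Omega^2_{\mathfrak{t}}) = \ker\delta = 0$. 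This coupling of the symmetric and antisymmetric parts is the idea missing from your outline.

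There is a second gap in your closing claim that ``the bilinear form extracted from the Maurer--Cartan gerbe of the forward direction is $\langle\cdot,\cdot\rangle$ by construction.'' In your construction nothing ties the connective structure supplied by Proposition \ref{prop:classconstr} to the given pairing: you control only the de Rham class, and the de Rham class does not determine the pairing of a connective structure in general. For instance, for $G = \mathbb{R}^n$ and any nonzero pairing, the class $[\mu,-\nu,0,0]$ vanishes because $H^4(BG,\mathfrak{t}) = 0$, so your procedure may legitimately return the trivial multiplicative gerbe with its trivial connective structure, whose pairing is $0 \neq \langle\cdot,\cdot\rangle$; the two constructions would then fail to be mutually inverse. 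The paper avoids this because exactness at $H^0_{gr,cont}(G,S^2\mathfrak{g}^{\ast}\otimes\mathfrak{t})$ produces an element of $Ext(G,BT_{\nabla})$ mapping \emph{exactly} to $\langle\cdot,\cdot\rangle$ under the pairing-extraction map. To keep your route you would have to show in addition that the connective structure on your gerbe can be modified so that its associated pairing becomes $\langle\cdot,\cdot\rangle$, which amounts to reproving that exactness.
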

\begin{proof}
    For simplicity, we prove everything in the case that $T$ is connected. Consider the following exact triangle in the derived category of sheaves of abelian groups over $BG_{\bullet}$:
    \begin{equation*}
    \begin{tikzcd}
        \underline{T} \rar \dar &C^{\infty}_T \dar \rar &0 \dar \ar[r,"{[1]}"] & \, \\
        0  \rar & \Omega^1_{\mathfrak{t}} \ar[r,"d"] &\Omega^2_{\mathfrak{t},d-cl} \ar[r,"{[1]}"] & \, ,
    \end{tikzcd}
    \end{equation*}
    where $\Omega^2_{\mathfrak{t},d-cl}$ is the sheaf of closed $\mathfrak{t}$-valued two-forms. This yields the sequence
    \begin{equation*}
        H^1(BG_{\bullet},\Omega^2_{\mathfrak{t},d-cl}) \rightarrow H^3(BG, \underline{T}) \rightarrow Ext(G,BT_{\nabla}) \rightarrow H^2(BG_{\bullet},\Omega^2_{\mathfrak{t},d-cl}) \rightarrow H^4(BG,\underline{T}). 
    \end{equation*} 
    Consider now the exact sequence $0 \rightarrow \Omega^2_{\mathfrak{t},d-cl} \rightarrow \Omega^2_{\mathfrak{t}} \stackrel{d}{\rightarrow} \Omega^3_{\mathfrak{t}} \rightarrow 0$ and apply Theorem \ref{th:bott} to obtain $H^1(BG_{\bullet},\Omega^2_{\mathfrak{t},d-cl}) = 0$ and $H^2(BG_{\bullet},\Omega^2_{\mathfrak{t},d-cl}) = H^2(BG_{\bullet},\Omega^2_{\mathfrak{t}}) = H^0_{gr,cont}(G,S^2\mathfrak{g}^{\ast} \otimes \mathfrak{t})$; hence,
    \begin{equation}
        0 \rightarrow H^3(BG, \underline{T}) \rightarrow Ext(G,BT_{\nabla}) \rightarrow H^0_{gr,cont}(G,S^2\mathfrak{g}^{\ast} \otimes \mathfrak{t}) \rightarrow H^4(BG,\underline{T}).
    \end{equation}
    The theorem follows from chasing how the maps in this sequence are defined. From the triangle above it is clear that the map $Ext(G,BT_{\nabla}) \rightarrow H^2(BG_{\bullet},\Omega^2_{\mathfrak{t}})$ sends a multiplicative gerbe with connective structure $(\mathcal{G}_{\nabla},m_{\nabla},\alpha)$ to the class of the curvature 2-form  $-\tau_2 \in \Omega^2(G^2,\mathfrak{t})$ of $\nabla_m$ with respect to any choice of curving on $\mathcal{G}_{\nabla}$ (it satisfies $\delta \tau_2 = 0$ as $\alpha$ preserves the connection on $m$). By part \eqref{it:bott2} of Lemma \ref{lem:bott} this determines an $Ad$-invariant symmetric bilinear form $\langle \cdot,\cdot \rangle: \mathfrak{g} \otimes \mathfrak{g} \rightarrow \mathfrak{t}$, characterized by the condition that the curving on $\mathcal{G}_{\nabla}$ can be chosen so that $\tau_2 = \nu$. Let $\Theta^L$ be one such curving. 
    
    The curvature of $\Theta^L$ is some $H \in \Omega^3(G,\mathfrak{t})$ with $\delta H = d\nu$; since $\mu$ satisfies this, we obtain $H = \mu + h$ with $\delta h = 0$, but we see from Theorem \ref{th:bott} and Remark \ref{rk:simpacyclic} that $H^1(BG_{\bullet},\Omega^3_{\mathfrak{t}}) =  \ker(\delta: \Omega^3(G,\mathfrak{t}) \rightarrow \Omega^3(G^2,\mathfrak{t})) = 0$ and so $H = \mu$. Then take $\Theta^{L,en}$ to be given by $\Theta^L$ and the symmetric tensor $h = - \frac{1}{2}\langle \theta^L \odot \theta^L \rangle$; this enhanced curving satisfies the condition above. It is moreover unique with such property, as any other enhanced curving differs from this one by $b \in \Omega^2(G,\mathfrak{t})$ and $h' \in \Gamma(S^2T^{\ast}G \otimes \mathfrak{t})$ but then the curvature condition imposes $h' = \frac{1}{2}\langle \theta^L \odot \theta^L \rangle'$ for some $Ad$-invariant $\langle \cdot,\cdot \rangle'$ and $\delta b = \langle g_1^{\ast}\theta^L \wedge g_2^{\ast}\theta^R \rangle'$; hence, $\langle \cdot,\cdot \rangle' = 0$ and $b = 0$ by part \eqref{it:bott2} of Lemma \ref{lem:bott} and by Theorem \ref{th:bott}, which yields $H^1(BG_{\bullet},\Omega^2_{\mathfrak{t}}) =  \ker(\delta: \Omega^2(G,\mathfrak{t}) \rightarrow \Omega^2(G^2,\mathfrak{t})) = 0$. This concludes the proof of the second part of the theorem. For the first part, note simply that the preceding discussion implies that the map $H^0_{gr,cont}(G,S^2\mathfrak{g}^{\ast}\otimes \mathfrak{t}) \rightarrow H^4(BG,\underline{T})$ factorizes as $H^0_{gr,cont}(G,S^2\mathfrak{g}^{\ast}\otimes \mathfrak{t}) \rightarrow H^4(BG,\mathfrak{t}) \stackrel{exp}{\rightarrow} H^4(BG,\underline{T})$, where the first map is $\langle \cdot,\cdot \rangle \mapsto [\mu,-\nu,0,0]$.
\end{proof}

\begin{remark}\label{rk:pairingformula}
    Given a multiplicative $T$-gerbe with connective structure over $G$ described by cocycle data \eqref{eq:cocycle1}, \eqref{eq:cocycle2}, one can use formula \eqref{eq:bottpairing} from Lemma \ref{lem:bott} to compute the pairing from Theorem \ref{th:MaurerCartan} \eqref{it:MCpairing} as
    \begin{equation}\label{eq:pairing}
        \langle u, v \rangle = \frac{1}{2}dM_{i_2|(g_1,g_2)}(0+ug_2,g_1v+0) + \frac{1}{2}dM_{i_2|(g_1,g_2)}(0+vg_2,g_1u+0),
    \end{equation}
    for any choice of $(g_1,g_2) \in G^2$ and any choice of $i_2 \in I_2$ with $(g_1,g_2) \in U^2_{i_2}$. Alternatively, one can use the cocycle equations \eqref{eq:cocycleeq2} to check directly that this formula gives a well-defined $Ad$-invariant, symmetric pairing. Furthermore, one can prove that the Maurer-Cartan curving is given by the two-forms $\Theta_{i_1}^L \in \Omega^2(U_{i_1}^1,\mathfrak{t})$ defined by
    \begin{equation}
        \Theta^{L}_{i_1|g}(u_g,v_g) = dA_{i_1d_0(i_2)|g}(u_g,v_g) + \frac{1}{2}dM_{i_2|(g^{-1},g)}(0+u_g,v_g^{-1}+v_g) +  \frac{1}{2}dM_{i_2|(g^{-1},g)}(u_g^{-1}+u_g,0+v_g),
    \end{equation}
    as it follows from a tedious but straightforward computation that they satisfy the required properties. Here $i_2 \in I_2$ is any choice of index such that $(g^{-1},g) \in U^2_{i_2}$ and by $u_g^{-1}$ we mean $dinv_g(u_g)$ for $inv: G \rightarrow G$, $g \mapsto g^{-1}$. In particular, these explicit computations yield another proof of the existence of $\langle \cdot,\cdot \rangle$ and $\Theta^L$ in Theorem \ref{th:MaurerCartan} which is also valid when $T$ is not connected. Note also that we can add the two formulas to obtain a formula for $\Theta^{L,en}$:
    \begin{equation}
        \Theta^{L,en}_{i_1|g}(u_g,v_g) = dA_{i_1d_0(i_2)|g}(u_g,v_g) + dM_{i_2|(g^{-1},g)}(u_g^{-1}+u_g,0+v_g),
    \end{equation}
\end{remark}

\begin{corollary}\label{cor:compact}
    Let $G$, $T$ be Lie groups with $G$ compact and $T$ abelian. Any multiplicative $T$-gerbe $(\mathcal{G},m,\alpha)$ over $G$ determines an $Ad$-invariant symmetric bilinear form $\langle \cdot,\cdot \rangle: \mathfrak{g} \otimes \mathfrak{g} \rightarrow \mathfrak{t}$, a connective structure $(\nabla,\nabla_m)$ on $(\mathcal{G},m,\alpha)$ well-defined up to isomorphism and a curving $\Theta^L$ on $\mathcal{G}_{\nabla}$ with curvature $\mu$ and such that $\nabla_m$ has curvature $\nu$, where $\mu$, $\nu$ are as in \eqref{eq:murho}.
\end{corollary}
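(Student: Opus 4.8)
The plan is to obtain the statement as a formal consequence of Proposition~\ref{prop:classconstr} and the converse part of Theorem~\ref{th:MaurerCartan}, the two substantial results already established, with Bott's Theorem~\ref{th:bott} supplying the vanishing needed in the compact case.

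First I would exploit compactness to collapse the exact sequence of Proposition~\ref{prop:classconstr}. Since $G$ is compact, the second assertion of Theorem~\ref{th:bott} gives $H^1_{gr,cont}(G,\mathfrak{g}^{\ast}\otimes\mathfrak{t}) = 0$ and $H^2_{gr,cont}(G,\mathfrak{g}^{\ast}\otimes\mathfrak{t}) = 0$, so that sequence reduces to an isomorphism $Ext(G,BT_{\nabla}) \xra{\sim} Ext(G,BT)$ induced by the forgetful map. Surjectivity says that the given $(\mathcal{G},m,\alpha)$ admits a connective structure $(\nabla,\nabla_m)$, and injectivity says that its class in $Ext(G,BT_{\nabla})$ is uniquely pinned down by $(\mathcal{G},m,\alpha)$; this is exactly the claimed well-definedness up to isomorphism of the connective structure.

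Next I would feed the resulting multiplicative gerbe with connective structure $(\mathcal{G}_{\nabla},m_{\nabla},\alpha)$ into the converse part of Theorem~\ref{th:MaurerCartan}. This hands back, with no further work, the $Ad$-invariant symmetric bilinear form $\langle\cdot,\cdot\rangle:\mathfrak{g}\otimes\mathfrak{g}\to\mathfrak{t}$ of item~\eqref{it:MCpairing} and the Maurer-Cartan curving $\Theta^L$ of item~\eqref{it:MCcurving}, the latter having curvature $\mu$ and making $\nabla_m$ of curvature $\nu$, with $\mu,\nu$ as in \eqref{eq:murho}. The side condition on $exp([\mu,-\nu,0,0])$ in item~\eqref{it:MCpairing} of Theorem~\ref{th:MaurerCartan} is automatic here, since the pairing is produced from an already existing connective structure rather than used to construct one.

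The only point that is not a verbatim citation is canonicity of the extracted data, and I would handle it as follows. The form $\langle\cdot,\cdot\rangle$ is read off, through the isomorphism $\psi$ of part~\eqref{it:bott2} of Lemma~\ref{lem:bott}, from the class in $H^2(BG_{\bullet},\Omega^2_{\mathfrak{t}})$ of the curvature $2$-form $-\tau_2$ of $\nabla_m$; by the proof of Theorem~\ref{th:MaurerCartan} this class is the image of $[(\mathcal{G}_{\nabla},m_{\nabla},\alpha)]$ under the map $Ext(G,BT_{\nabla})\to H^2(BG_{\bullet},\Omega^2_{\mathfrak{t}})$, hence depends only on the isomorphism class of the connective structure, which the isomorphism $Ext(G,BT_{\nabla}) \xra{\sim} Ext(G,BT)$ established above shows is fixed by $(\mathcal{G},m,\alpha)$. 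Thus $\langle\cdot,\cdot\rangle$ is genuinely canonical and $\Theta^L$ is determined by the uniqueness clause in Theorem~\ref{th:MaurerCartan}. I do not anticipate any real obstacle: the entire content sits in the two cited results, and the single delicate point is confirming that this data is insensitive to the choice of representative connective structure, which the exactness in Proposition~\ref{prop:classconstr} guarantees.
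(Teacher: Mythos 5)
Your proposal is correct and takes essentially the same route as the paper, whose entire proof of this corollary reads ``Straightforward from Proposition \ref{prop:classconstr} and Theorem \ref{th:MaurerCartan}'': compactness of $G$ kills the two group-cohomology terms via Theorem \ref{th:bott}, so connective structures exist and are unique up to isomorphism, and the converse part of Theorem \ref{th:MaurerCartan} then supplies $\langle\cdot,\cdot\rangle$ and $\Theta^L$. Your final paragraph verifying that the extracted pairing and curving are insensitive to the choice of representative connective structure spells out a point the paper leaves implicit, but it is the same argument, not a different one.
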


\begin{proof}
    Straightforward from Proposition \ref{prop:classconstr} and Theorem \ref{th:MaurerCartan}.
\end{proof}

\begin{example}\label{ex:stringconstr}
    Consider the multiplicative $U(1)$-gerbe String$(G)$ over a compact simple Lie group $G$ from Example \ref{ex:string}. By Corollary \ref{cor:compact}, this determines a pairing $\langle \cdot,\cdot \rangle: \mathfrak{g} \otimes \mathfrak{g} \rightarrow \mathbb R$, a connective structure and a Maurer-Cartan curving on String$(G)$. Since any $Ad$-invariant pairing on $\mathfrak{g}$ is a multiple of the Killing form, $\langle \cdot,\cdot \rangle$ is a multiple such that $[\mu] \in H^3(G,\mathbb Z) = \mathbb Z$ is a generator of $H^3(G,\mathbb Z)$. Moreover, \cite{Mein} provides an explicit description of cocycle data for a connective structure and a curving on the gerbe String$(G) \rightarrow G$ with curvature $\mu$ (without a connection on $m$, as there is no known explicit description of $m$ itself).
\end{example}

In \cite{WaldMultCon} a \emph{connection} on a multiplicative gerbe is defined as the same piece of structure as in Definition \ref{def:constr} but with an additional curving on $\mathcal{G}$ (not necessarily preserved by $m$); however, as we see in Theorem \ref{th:MaurerCartan}, the data of a connective structure already determines a canonical curving. 

\subsection{Connections on multiplicative gerbes and adjustments}\label{sec:adj}

This section is a digression of independent interest from the main topic of this article. Here we characterize, in terms of some algebraic data, the category of connective structures on a multiplicative gerbe that arises from a crossed module as in Example \ref{ex:crossedmultger}. The answer relates such connective structures with \emph{adjustments} \cite{RistSaeWolf}. 

\begin{definition}
    Let $(\tilde{G},H,f,\triangleright)$ be a central Lie crossed module and let $G := \tilde{G}/Im(f)$, $T:= Ker(f)$ by $\triangleright$. A \emph{strong adjustment} on $(\tilde{G},H,f,\triangleright)$ is a pair $(s,\kappa)$, where
    \begin{enumerate}
        \item $s: \mathfrak{h} \rightarrow \mathfrak{t}$ is a linear splitting of the inclusion map $\mathfrak{t} \rightarrow \mathfrak{h}$.
        \item $\kappa: \tilde{G} \times \tilde{\mathfrak{g}} \rightarrow \mathfrak{t}$ is linear on $\tilde{\mathfrak{g}}$ and satisfies
        \begin{align}
        \kappa(g_1g_2,v) &= \kappa(g_2, Ad(g_1^{-1})v) + \kappa(g_1,v),\\
        \kappa(f(h),v) &= s(h^{-1} \cdot v \triangleright h), \\
        \kappa(g,f(u)) &= s(g^{-1} \triangleright u - u).
        \end{align}
    \end{enumerate}
    Given two strong adjustments $(s_1,\kappa_1)$, $(s_2,\kappa_2)$, an \emph{isomorphism} between them is a linear map $\phi:\tilde{\mathfrak{g}} \rightarrow \mathfrak{t}$ such that $s_2(u) - s_1(u) = \phi f(u)$ and $\kappa_2(g,v) - \kappa_1(g,v) = \phi(Ad(g^{-1})v-v)$.
\end{definition}

\begin{proposition}\label{prop:adj}
    Let $(\tilde{G},H,f,\triangleright)$ be a central Lie crossed module and let $\mathcal{G}$ be the corresponding multiplicative $T$-gerbe over $G$ as in Example \ref{ex:crossedmultger}. Then the category of connective structures on $\mathcal{G}$ is equivalent to the category of strong adjustments on $(\tilde{G},H,f,\triangleright)$.
\end{proposition}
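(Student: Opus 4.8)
The plan is to construct explicit functors in both directions between the category of connective structures on $\mathcal{G}$ and the category of strong adjustments on $(\tilde{G},H,f,\triangleright)$, and to verify they are mutually quasi-inverse. The key observation is that the bundle gerbe model of Example \ref{ex:crossedmultger} is built from the very concrete data $L = \tilde{G} \times H$ over $\tilde{G} \times_G \tilde{G}$, together with the gerbe product and the product isomorphism $m$ described there; a connective structure in the sense of Definition \ref{def:constr} amounts to a connection $1$-form on the line-bundle-type object $L$ (i.e.\ on the $T$-bundle $\tilde{G}\times H$) compatible with the gerbe multiplication, plus a connection $\nabla_m$ on $m$ making $\alpha = \mathrm{id}$ flat. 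The idea is to read off the infinitesimal data of such a connection at the identity and along the group directions, and to match it precisely with the linear maps $s$ and $\kappa$.

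First I would make precise how a connection on the $T$-bundle $L = \tilde{G}\times H \to \tilde{G}\times_G \tilde{G}$ with the required compatibilities is determined by, on the one hand, its vertical behaviour — which, since the fibre is modelled on $H$ with $T = \ker f$ acting by multiplication, yields a splitting of $\mathfrak{t}\to\mathfrak{h}$, giving $s$ — and, on the other hand, the connection $1$-form $A$ together with the connection $\nabla_m$ on the product isomorphism $m$. Evaluating the $1$-form controlling $\nabla_m$ in the group directions of $\tilde{G}$ produces a map $\tilde{G}\times\tilde{\mathfrak{g}}\to\mathfrak{t}$; I would show that the cocycle equations \eqref{eq:cocycleeq2} for a connective structure, transported through the explicit formulas of Example \ref{ex:crossedmultger}, become exactly the three identities defining $\kappa$ (the multiplicativity relation $\kappa(g_1g_2,v)=\kappa(g_2,\mathrm{Ad}(g_1^{-1})v)+\kappa(g_1,v)$ coming from the compatibility of $m$ with the gerbe product, and the two relations involving $f(h)$ and $f(u)$ coming from the two ways the crossed-module maps $f$ and $\triangleright$ enter the gluing data). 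Conversely, given $(s,\kappa)$ one builds $A$ and $\nabla_m$ by the same formulas read backwards and checks they satisfy \eqref{eq:cocycleeq2}; the flatness of $\alpha$ follows because $\alpha$ was already the identity in Example \ref{ex:crossedmultger}, so the third equation of \eqref{eq:cocycleeq2} reduces to a condition automatically implied by the $\kappa$-relations. Finally I would verify on morphisms that an isomorphism $\phi:\tilde{\mathfrak{g}}\to\mathfrak{t}$ of strong adjustments corresponds to a $\mathfrak{t}$-valued $1$-form implementing an isomorphism of connective structures, so that the two functors give an equivalence of categories.

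The hard part will be the bookkeeping that turns the abstract compatibility of $\nabla_m$ with the gerbe multiplication — stated in Example \ref{ex:crossedmultger} via the identity $h_1\cdot g_1\triangleright h_2\cdots$ over the triple fibre product — into the precise differential-form identities \eqref{eq:cocycleeq2}, and then differentiating those at the identity to recover $\kappa$ with the correct $\mathrm{Ad}$-twist and the correct appearances of $s$, $f$ and $\triangleright$. In particular one must be careful that the domain of $\kappa$ is $\tilde{G}\times\tilde{\mathfrak{g}}$ rather than $G\times\mathfrak{g}$, which reflects that the connection data lives upstairs on $\tilde{G}$ before descending, and that the splitting $s$ is exactly the obstruction to this data descending naively. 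Once the dictionary between the explicit cover data of Example \ref{ex:crossedmultger} and the pair $(s,\kappa)$ is set up, the remaining verifications — that the correspondence is well defined, functorial, and essentially surjective — are routine, if tedious, computations of the same flavour as those in the proof of Theorem \ref{th:MaurerCartan} and Remark \ref{rk:pairingformula}.
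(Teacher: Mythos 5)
Your proposal follows essentially the same route as the paper: the paper's proof unwinds a connective structure on the bundle-gerbe model of Example \ref{ex:crossedmultger} into a pair of global $1$-forms $\nabla \in \Omega^1(\tilde{G}\times H,\mathfrak{t})$ and $\tau \in \Omega^1(\tilde{G}\times\tilde{G},\mathfrak{t})$ subject to four explicit relations, matches these with $(s,\kappa)$ via the formulas $\nabla^s_{(g,h)}(v_g+v_h) = s(g^{-1}\triangleright h^{-1}v_h)$ and $\tau^{s,\kappa}_{(g_1,g_2)}(v_{g_1}+v_{g_2}) = \kappa(g_2,g_1^{-1}v_{g_1})$, and then verifies full faithfulness and essential surjectivity exactly as you outline (using that all connective structures on the underlying gerbe are isomorphic to normalize $\nabla = \nabla^s$ before extracting $\kappa$ from $\tau$). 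The only cosmetic difference is that the paper works directly with global $1$-forms over the submersion $\tilde{G}\to G$ rather than routing through the \v{C}ech-type cocycle equations \eqref{eq:cocycleeq2}.
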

\begin{proof}
    By unwinding the definitions in this case we see that a connective structure on $\mathcal{G}$ is precisely the data of $\nabla \in \Omega^1(\tilde{G} \times H, \mathfrak{t})$ and $\tau \in \Omega^1(\tilde{G} \times \tilde{G},\mathfrak{t})$ satisfying the following relations (here $\theta^T \in \Omega^1(T,\mathfrak{t})$ is the Maurer-Cartan form on $T$):
    \begin{align}
    \begin{split}
        (g,ht)^{\ast}\nabla - (g,h)^{\ast}\nabla &= t^{\ast}\theta^T, \\
        (g,h)^{\ast}\nabla + (f(h)g,h')^{\ast}\nabla &= (g,h'h)^{\ast}\nabla, \\
        (f(h_1)g_1,f(h_2)g_2)^{\ast}\tau - (g_1,g_2)^{\ast}\tau &= (g_1g_2,h_1 g_1 \triangleright h_2)^{\ast}\nabla - (g_1,h_1)^{\ast}\nabla - (g_2,h_2)^{\ast}\nabla, \\
        (g_1,g_2)^{\ast}\tau + (g_1g_2,g_3)^{\ast}\tau &= (g_1,g_2g_3)^{\ast}\tau + (g_2,g_3)^{\ast}\tau
    \end{split}
    \end{align}
    Moreover, two such connective structures $(\nabla,\tau)$ and $(\nabla',\tau')$ are isomorphic whenever there exists $\sigma \in \Omega^1(\tilde{G},\mathfrak{t})$ such that
    \begin{align}
    \begin{split}
        \nabla' - \nabla = (f(h)g)^{\ast}\sigma - g^{\ast}\sigma, \\
        \tau' - \tau = (g_1g_2)^{\ast}\sigma - g_1^{\ast}\sigma - g_2^{\ast}\sigma.
    \end{split}
    \end{align}
    Then let $(s,\kappa)$ be a strong adjustment. We claim that $\nabla_{(g,h)}^s(v_g+v_h) := s(g^{-1} \triangleright h^{-1}v_h)$, $\tau^{s,\kappa}_{(g_1,g_2)}(v_{g_1}+v_{g_2}) := \kappa(g_2,g_1^{-1}v_{g_1})$ defines a connective structure on the multiplicative gerbe, which boils down to straightforward computations. Similarly, one checks that if $\phi$ is an isomorphism $(s_1,\kappa_1) \rightarrow (s_2,\kappa_2)$ then $\sigma^{\phi}_{g}(v_g) := \phi(g^{-1}v_g)$ is an isomorphism $(\nabla^{s_1},\tau^{s_1,\kappa_1}) \rightarrow (\nabla^{s_2},\tau^{s_2,\kappa_2})$. In fact, if $\sigma: (\nabla^{s_1},\tau^{s_1,\kappa_1}) \rightarrow (\nabla^{s_2},\tau^{s_2,\kappa_2})$ is an arbitrary isomorphism, then $\phi^{\sigma}(v) := \sigma_1(v)$ is an isomorphism $(s_1,\kappa_1) \rightarrow (s_2,\kappa_2)$ and $\sigma = \sigma^{\phi^{\sigma}}$, so we have defined a fully faithful functor from the category of strong adjustments to the category of connective structures. It is also essentially surjective: since any two connective structures on a gerbe are always isomorphic, we may restrict our attention to those $(\nabla,\tau)$ such that $\nabla = \nabla^s$ for a given splitting $s: \mathfrak{h} \rightarrow \mathfrak{t}$. Then one can check that for $\kappa(g,v) := \tau_{(g^{-1},1)}(0+v) + \tau_{(g^{-1},g)}(g^{-1}v + 0)$ we have that $(s,\kappa)$ is a strong adjustment with $(\nabla^s,\tau^{\kappa})$ isomorphic to $(\nabla^s,\tau)$.
\end{proof}

\begin{remark}
    Proposition \ref{prop:adj} restricted to the case of the crossed module $(G,T,f,\triangleright)$ with trivial $f$ and trivial $\triangleright$ coincides precisely with the isomorphism $H^2(BG_{\bullet},\Omega^1_{\mathfrak{t}}) \rightarrow H^1_{gr,cont}(G,\mathfrak{g}^{\ast}\otimes\mathfrak{t})$ from Lemma \ref{lem:bott}. 
\end{remark}

In particular, Proposition \ref{prop:adj} and Theorem \ref{th:MaurerCartan} imply that a strong adjustment $(s,\kappa)$ on a central Lie crossed module $(\tilde{G},H,f,\alpha)$ gives an $Ad$-invariant pairing $\langle \cdot,\cdot \rangle: \mathfrak{g} \otimes \mathfrak{g} \rightarrow \mathfrak{t}$ and a Maurer-Cartan curving $\Theta^L \in \Omega^2(\tilde{G},\mathfrak{t})$, which we can compute using the formulas in Remark \ref{rk:pairingformula}. For this we write first $\partial_g\kappa: T_g\tilde{G} \times \tilde{\mathfrak{g}} \rightarrow \mathfrak{t}$ for the partial derivative of $\kappa$ at $g \in \tilde{G}$, whose main properties are
\begin{align}
\begin{split}\label{eq:delkappa}
        \partial_g\kappa(v_g,v) &= \partial_1\kappa(g^{-1}v_g,Ad(g^{-1})v),\\
        \partial_1\kappa(Ad(g^{-1})u,Ad(g^{-1})v) &= \partial_1\kappa(u,v) - \kappa(g,[u,v]),\\
        \partial_1\kappa(f(u),v) &= s(v \triangleright u) = - \partial_1\kappa(v,f(u)).
\end{split}
\end{align}
Here we are writing $v \triangleright u := \frac{d}{dt}_{|t=0} exp(tv) \triangleright u$, $v \in \tilde{\mathfrak{g}}$, $u \in \mathfrak{h}$ for the Lie algebra action of $\tilde{\mathfrak{g}}$ on $\mathfrak{h}$, which satisfies $f(v \triangleright u) = [v,f(u)]$ and $f(u_1) \triangleright u_2 = [u_1,u_2]$. Then,
    \begin{align}
        \langle u,v \rangle &= \frac{1}{2}(\partial_1 \kappa(u,v) +\partial_1\kappa(v,u)) \label{eq:adjpair}\\
        \Theta_{g}^L(u_g,v_g) &= -\frac{1}{2}(\partial_1 \kappa(g^{-1}u_g,g^{-1}v_g) - \partial_1\kappa(g^{-1}v_g,g^{-1}u_g)) .\label{eq:adjcurv}
    \end{align}
Although $\partial_1\kappa$ is in principle defined over $\tilde{\mathfrak{g}} \otimes \tilde{\mathfrak{g}}$, \eqref{eq:adjpair} is well-defined over $\mathfrak{g} \otimes \mathfrak{g}$ by \eqref{eq:delkappa}. Note that $\langle \cdot,\cdot \rangle$ and $-\Theta^L_1$ are the symmetric and skew-symmetric parts of the tensor $\partial_1\kappa$. In terms of the Maurer-Cartan \emph{enhanced} curving we obtain the simple formula
\begin{equation}
    \Theta^{L,en}_g(u_g,v_g) = -\partial_1\kappa(g^{-1}u_g,g^{-1}v_g) = -\partial_g\kappa(u_g,v_gg^{-1}).
\end{equation}

\begin{remark}\label{rk:coradj}
    In \cite{RistSaeWolf}, an \emph{adjustment} in a general Lie crossed module $(\tilde{G},H,f,\triangleright)$ is defined as a map $\tilde{\kappa}: \tilde{G} \times \tilde{\mathfrak{g}} \rightarrow \mathfrak{h}$, linear in $\tilde{\mathfrak{g}}$, such that
    \begin{align}
        \tilde{\kappa}(g_1g_2,v) &= \tilde{\kappa}(g_2,Ad(g_1^{-1})v) + g_2^{-1} \triangleright \tilde{\kappa}(g_1,v) - \tilde{\kappa}(g_2,f\tilde{\kappa}(g_1,v)),\\
        \tilde{\kappa}(f(h),v) &= h^{-1} \cdot v \triangleright h.
    \end{align}
    for $v \in \tilde{\mathfrak{g}}$, $g_1, \, g_2 \in \tilde{G}$, $h \in H$. If the Lie crossed module is central and we are given a strong adjustment $(s,\kappa)$, then we may obtain an adjustment $\tilde{\kappa}$ as follows. Consider the exact sequence of Lie algebras
    \begin{align}
    0 \rightarrow \mathfrak{t} \rightarrow \mathfrak{h} \stackrel{f}{\rightarrow} \tilde{\mathfrak{g}} \stackrel{\pi}{\rightarrow} \mathfrak{g} \rightarrow 0
    \end{align}
    and the induced short exact sequences
    \begin{align}
        0 \rightarrow \mathfrak{t} \rightarrow \mathfrak{h} \stackrel{f}{\rightarrow} Im(f) \rightarrow 0, \label{eq:crossedses1} \\
        0 \rightarrow Im(f) \rightarrow \tilde{\mathfrak{g}} \stackrel{\pi}{\rightarrow} \mathfrak{g} \rightarrow 0. \label{eq:crossedses2}
    \end{align}
    Choose a linear splitting $l: \mathfrak{g} \rightarrow \tilde{\mathfrak{g}}$ of \eqref{eq:crossedses2} and note that there is a unique linear map $r: \tilde{\mathfrak{g}} \rightarrow \mathfrak{h}$ such that
    \begin{align}
        0 \leftarrow \mathfrak{t} \stackrel{s}{\leftarrow} \mathfrak{h} \stackrel{r}{\leftarrow} \tilde{\mathfrak{g}} \stackrel{l}{\leftarrow} \mathfrak{g} \leftarrow 0. \label{eq:splittinglie2alg}
    \end{align}
    is exact. Then one can easily check that $\tilde{\kappa}(g,v) := \kappa(g,v) + r(Ad(g^{-1})v-v)$ is an adjustment. In fact, any adjustment $\tilde{\kappa}$ such that $\tilde{\kappa}(g,f(u)) = g^{-1} \triangleright u - u$ and such that there exist splittings $s, \,l$ of \eqref{eq:crossedses1}, \eqref{eq:crossedses2} with $rf\kappa(g,v) = r(Ad(g^{-1})v-v)$ arises from a strong adjustment. All the explicit adjustments in \cite{KimSaeTDual} and \cite{RistSaeWolf} satisfy these conditions.
\end{remark}

\subsection{Complex Lie 2-groups}

\emph{Complex Lie 2-groups} are defined as in Definition  \ref{def:2grp}, demanding that $\mathfrak{G}_0$, $\mathfrak{G}_1$ are complex manifolds and that all the maps involved are holomorphic. If $G$ and $T$ are complex Lie groups with $T$ abelian, then central extensions of $G$ by $BT$ as complex Lie 2-groups are equivalent to \emph{holomorphic multiplicative $T$-gerbes over $G$} by a straightforward generalization of Theorem \ref{th:ScPr}. Here we discuss the interaction between holomorphic multiplicative gerbes and connective structures, as well as the complexification of smooth multiplicative gerbes. We refer to Appendix \ref{sec:cxgerbes} for background on holomorphic gerbes and connections on them.

\begin{definition}\label{def:multgercx}
    For $G$, $T$ complex Lie groups with $T$ abelian, a \emph{holomorphic multiplicative $T$-gerbe} over $G$ is a multiplicative $T$-gerbe $(\mathcal{G},m,\alpha)$ over $G$ such that $\mathcal{G}$, $m$ and $\alpha$ are holomorphic. A \emph{compatible (resp. holomorphic) connective structure} on it is a compatible (resp. holomorphic) connective structure on the gerbe $\mathcal{G}$ with a compatible (resp. holomorphic) connection on the isomorphism of gerbes $m$ such that $\alpha$ is a flat 2-isomorphism of gerbes.
\end{definition}

Recall that, for complex vector spaces $(V,J_V)$, $(W,J_W)$, we say that a skew-symmetric $\mathbb R$-multilinear map $\phi:\Lambda^kV \rightarrow W$ is of \emph{type $(p,k-p)$} if its $\mathbb C$-linear extension $\phi^{\mathbb C}:\Lambda^k(V \otimes \mathbb C) \rightarrow W$ is zero outside $\Lambda^p(V^{1,0}) \otimes \Lambda^{k-p}(V^{0,1}) \subset \Lambda^k(V \otimes \mathbb C)$, where $V^{1,0}$ and $V^{0,1}$ are the $i$ and $-i$ eigenspaces of $J_V^{\mathbb C}: V \otimes \mathbb C \rightarrow V \otimes \mathbb C$, respectively. The type of a symmetric map is similarly defined.

In terms of cocycle data \eqref{eq:cocycle1} in a good semi-simplicial cover of $BG_{\bullet}$, a holomorphic multiplicative $T$-gerbe over $G$ is a multiplicative $T$-gerbe for which $\lambda_{i_1j_1k_1}$, $m_{i_2j_2}$, $\alpha_{i_3}$ can be chosen to be holomorphic. In terms of cocycle data \eqref{eq:cocycle2}, a compatible connective structure is one for which $A_{i_1j_1}$, $M_{i_2}$ can be chosen to be of type $(1,0)$ and a holomorphic connective structure is one for which they can be chosen to be of type $(1,0)$ and to satisfy $\bar{\partial} A_{i_1j_1} = 0$, $\bar{\partial} M_{i_2} = 0$.

For a smooth multiplicative gerbe $\mathcal{G}$, we say that it \emph{admits} a holomorphic structure (with compatible or holomorphic connective structure) if it is isomorphic as a smooth multiplicative gerbe to the underlying smooth multiplicative gerbe of a holomorphic multiplicative gerbe (with compatible or holomorphic connective structure). The following proposition characterizes existence of holomorphic structures on multiplicative gerbes.


\begin{proposition}[\cite{Upm}]\label{prop:class2grpscx}
    Let $G$, $T$ be complex Lie groups with $T$ abelian and connected, let $Z = \ker \, exp_T$ and let $\mathcal{G}$ be a smooth multiplicative $T$-gerbe  over $G$. Then
    \begin{enumerate}
        \item $\mathcal{G}$ admits a holomorphic structure if and only if its de Rham class \eqref{eq:drclass} admits a representative $(\tau_3,\tau_2,\tau_1,0)$ with $\tau_3^{0,3} = 0$, $\tau_2^{0,2} = 0$, $\tau_1^{0,1} = 0$.
        \item $\mathcal{G}$ admits a holomorphic structure with compatible connective structure if and only if its de Rham class \eqref{eq:drclass} admits a representative $(\tau_3,\tau_2,\tau_1,0)$ with $\tau_3^{0,3} = 0$, $\tau_2^{0,2} = 0$, $\tau_1 = 0$.
        \item $\mathcal{G}$ admits a holomorphic structure with holomorphic connective structure if and only if its de Rham class \eqref{eq:drclass} admits a representative $(\tau_3,\tau_2,\tau_1,0)$ with $\tau_3^{1,2+0,3} = 0$, $\tau_2^{1,1+0,2} = 0$, $\tau_1 = 0$.
    \end{enumerate}
\end{proposition}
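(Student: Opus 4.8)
The plan is to realise each of the three holomorphic refinements as the image of a forgetful map between hypercohomology groups over $BG_{\bullet}$, exactly parallel to the proofs of Proposition \ref{prop:class2grps} and Proposition \ref{prop:classconstr}. Write $\mathcal{O}_T$ for the sheaf of holomorphic $T$-valued functions on $BG_{\bullet}$, $\Omega^{1,0}_{\mathfrak{t}}$ for the sheaf of smooth $\mathfrak{t}$-valued $1$-forms of type $(1,0)$, and $\Omega^{1}_{\mathfrak{t},hol}$ for the sheaf of holomorphic $\mathfrak{t}$-valued $1$-forms, in each case with differential $f\mapsto f^{\ast}\theta^T$. Unwinding the cocycle description \eqref{eq:cocycle1}--\eqref{eq:cocycleeq2} just as $Ext(G,BT)=H^3(BG_{\bullet},C^{\infty}_T)$ and $Ext(G,BT_{\nabla})=\mathbb{H}^3(BG_{\bullet},C^{\infty}_T\to\Omega^1_{\mathfrak{t}})$, holomorphic multiplicative gerbes are classified by $S_1:=\mathcal{O}_T$, holomorphic gerbes with compatible connective structure by $S_2:=(\mathcal{O}_T\to\Omega^{1,0}_{\mathfrak{t}})$, and those with holomorphic connective structure by $S_3:=(\mathcal{O}_T\to\Omega^{1}_{\mathfrak{t},hol})$, in the sense that each is classified by $\mathbb{H}^3(BG_{\bullet},S_i)$. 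A smooth gerbe admits a holomorphic structure of the given type precisely when its class $c\in H^3(BG_{\bullet},C^{\infty}_T)$ lies in the image of the forgetful map $\mathbb{H}^3(BG_{\bullet},S_i)\to H^3(BG_{\bullet},C^{\infty}_T)$.

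Next I would compute these images via mapping cones. The Dolbeault--Poincar\'e lemma gives acyclic resolutions
\begin{equation}
\mathcal{O}_T \xrightarrow{\;\sim\;} \big(C^{\infty}_T \xrightarrow{\,f\mapsto (f^{\ast}\theta^T)^{0,1}} \Omega^{0,1}_{\mathfrak{t}} \xrightarrow{\bar\partial} \Omega^{0,2}_{\mathfrak{t}} \xrightarrow{\bar\partial} \cdots\big),
\end{equation}
and $\Omega^{1}_{\mathfrak{t},hol}\xrightarrow{\sim}(\Omega^{1,0}_{\mathfrak{t}}\xrightarrow{\bar\partial}\Omega^{1,1}_{\mathfrak{t}}\to\cdots)$, while $\Omega^{1,0}_{\mathfrak{t}}$ is already soft. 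Extending $f\mapsto f^{\ast}\theta^T$ through $\partial$ presents each $S_i$ as the total complex of a bicomplex of sheaves $\Omega^{r,s}_{\mathfrak{t}}$ (with $C^{\infty}_T$ in the corner in place of $\Omega^{0,0}_{\mathfrak{t}}$): for $S_1$ only the column $r=0$ appears; for $S_2$ the column $r=1$ is present only in degree $s=0$; for $S_3$ the full columns $r=0,1$ appear. Forgetting down to $[C^{\infty}_T]$ is projection onto the corner, so the mapping cone $N_i$ is this bicomplex with the corner removed, and the long exact sequence of the cone shows that $c$ lifts if and only if a connecting class $\delta(c)\in\mathbb{H}^4(BG_{\bullet},N_i)$ vanishes. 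Since every $\Omega^{r,s}_{\mathfrak{t}}$ is soft, hence acyclic, Remark \ref{rk:simpacyclic} identifies $\mathbb{H}^4(BG_{\bullet},N_i)$ with the degree-$4$ part of the associated antiholomorphic de Rham complex on $BG_{\bullet}$.

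It then remains to identify $\delta(c)$ explicitly. The point is that $\delta(c)$ is computed by lifting the $C^{\infty}_T$-cocycle of $\mathcal{G}$ through the resolution and applying the total differential; carrying this out reproduces exactly the antiholomorphic components of a de Rham representative $(\tau_3,\tau_2,\tau_1,0)$ of the class \eqref{eq:drclass}, since such a representative arises precisely from lifting the same cocycle through the smooth exponential and differentiating (here $\tau_3$, $-\tau_2$, $\tau_1$ are the curvatures of an auxiliary curving, of the connection on $m$, and the non-flatness of $\alpha$). Because the projection onto top antiholomorphic parts, $\omega\mapsto\omega^{0,\bullet}$ respectively $\omega\mapsto\omega^{\le 1,\bullet}$, is a chain map intertwining $d$ and $\bar\partial$, tracking which $\Omega^{r,s}_{\mathfrak{t}}$ survive in each $N_i$ shows that $\delta(c)$ is the class of $(\tau_3^{0,3},\tau_2^{0,2},\tau_1^{0,1})$ in case (1), of $(\tau_3^{0,3},\tau_2^{0,2},\tau_1)$ in case (2), and of $(\tau_3^{1,2+0,3},\tau_2^{1,1+0,2},\tau_1)$ in case (3). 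Its vanishing in $\mathbb{H}^4(BG_{\bullet},N_i)$ is then equivalent to the existence of a representative of \eqref{eq:drclass} in which these components are zero, which is the asserted criterion; necessity can alternatively be read off directly, since a holomorphic curving and connection of the appropriate type visibly produce curvatures with the stated components vanishing.

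I expect the main obstacle to be this last identification: matching the connecting map $\delta(c)$ of the cone with the antiholomorphic projection of the de Rham class, and checking that a coboundary in the global $N_i$-complex can always be realised by modifying the representative $(\tau_3,\tau_2,\tau_1,0)$, so that $\delta(c)=0$ genuinely corresponds to the existence of a representative with the prescribed vanishing rather than to a weaker cohomological condition. This requires a careful comparison of the two cocycle-lifting procedures and control of the coboundary relations in $N_i$; the softness of the smooth form sheaves and the de Rham model of $H^4(BG_{\bullet},\mathfrak{t})$ from Lemma \ref{lem:drclass} are what make this bookkeeping tractable.
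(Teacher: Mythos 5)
Your route is genuinely different from the paper's. The paper disposes of this proposition in one line by applying the levelwise correspondence of Proposition \ref{prop:dolbeaultgerbes} to the gerbe $\mathcal{G}$ over $G$, to the isomorphism $m$ over $G^2$ and to the $2$-isomorphism $\alpha$ over $G^3$: a holomorphic structure (with compatible, resp.\ holomorphic, connective structure) is thereby identified with a choice of connective structure, curving and connection on $m$ whose curvature representative $(\tau_3,\tau_2,\tau_1,0)$ has the stated components equal to zero. You instead compute a lifting obstruction in hypercohomology over $BG_{\bullet}$. Modulo technicalities glossed at the same level as the paper glosses the smooth case (Stein-type semi-simplicial covers are needed to identify holomorphic cocycle data with $\mathbb H^3(BG_{\bullet},S_i)$), your obstruction computation is correct, and the identification of $\delta(c)$ with the antiholomorphic projections of a representative is fillable bookkeeping, because the type projections intertwine the smooth resolution producing \eqref{eq:drclass} with your Dolbeault-type resolutions. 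What your argument then establishes is: a holomorphic structure exists if and only if some representative \emph{arising from connection data} has the prescribed vanishing, since an $N_i$-coboundary $(\gamma^{0,2},\gamma^{0,1})$ can always be absorbed into the curving and the connection on $m$, and conversely.

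The gap you flag in your last paragraph is genuine and cannot be closed by controlling coboundaries. The proposition quantifies over \emph{all} representatives with $\tau_0=0$, and by Lemma \ref{lem:drclass} these carry a freedom invisible to connection data: $\tau_1$ may be changed by $d\beta_0$ with $\beta_0\in\Omega^0(G^3,\mathfrak{t})$, $\delta\beta_0=0$ (the associator carries no connection, so no modification of connection data produces this change). After projection, this alters your obstruction cocycle by $(0,0,\bar\partial\beta_0)$, which is an $N_i$-coboundary precisely when the class $[\beta_0]\in H^3_{gr,cont}(G,\mathfrak{t})$ is representable by holomorphic cochains modulo smooth coboundaries. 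For compact groups this is vacuous by Theorem \ref{th:bott}, but here $G$ is complex, hence noncompact in the cases of interest, and the condition fails in general: for $G=SL(2,\mathbb{C})$, $\mathfrak{t}=\mathbb{C}$, one has $H^3_{gr,cont}(G,\mathbb{C})\cong\mathbb{C}$ (the Borel class), while a holomorphic cocycle differentiates under van Est to a $\mathbb{C}$-multilinear Lie algebra cocycle, whose class lies on the line spanned by the Cartan form $c=\langle\cdot,[\cdot,\cdot]\rangle$; smooth classes land on the line spanned by $c-\bar{c}$, and these two lines meet only in $0$, so the map from holomorphic to smooth group cohomology vanishes in degree $3$. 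Concretely, the multiplicative $\mathbb{C}^{\ast}$-gerbe over $SL(2,\mathbb{C})$ with trivial underlying gerbe, trivial $m$ and associator $\exp(\beta_0)$, for $\beta_0$ a smooth cocycle generating $H^3_{gr,cont}(G,\mathbb{C})$, has vanishing de Rham class (so admits the representative $(0,0,0,0)$), yet $\delta(c)=[(0,0,\bar\partial\beta_0)]\neq 0$, so it admits no holomorphic structure. Thus your criterion ``$\delta(c)=0$'' is strictly stronger than the literal criterion of the proposition; to complete your proof you must either restate the criterion in terms of representatives arising from connection data (which is also what the paper's own one-line proof via Proposition \ref{prop:dolbeaultgerbes} actually delivers), or add an assumption guaranteeing that $H^3_{gr,cont}(G,\mathfrak{t})$ is holomorphically representable.
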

\begin{proof}
    Straightforward by Proposition \ref{prop:dolbeaultgerbes}.
\end{proof}

As in the smooth case, the pairing associated to a connective structure plays an important role, allowing for an intuitive classification of holomorphic multiplicative gerbes. From now on we write $J_{\mathfrak{g}}: \mathfrak{g} \rightarrow \mathfrak{g}$ and $J_{\mathfrak{t}}: \mathfrak{t} \rightarrow \mathfrak{t}$ for the complex structures on the Lie algebras of $G$ and $T$, respectively.
\begin{proposition}\label{prop:pairingcx}
Let $G$, $T$ be complex Lie groups with $T$ abelian and let $(\mathcal{G}_{\nabla},m_{\nabla},\alpha)$ be a smooth multiplicative $T$-gerbe with connective structure over $G$. Then
    \begin{enumerate}
        \item\label{it:pairingcx1} $(\mathcal{G}_{\nabla},m_{\nabla},\alpha)$ admits a holomorphic structure with compatible connective structure if and only if the pairing $\langle \cdot,\cdot \rangle:\mathfrak{g} \otimes \mathfrak{g} \rightarrow \mathfrak{t}$ from Theorem \ref{th:MaurerCartan} satisfies $\langle \cdot,\cdot \rangle^{0,2} = 0$. In this case, there is a unique such holomorphic structure with compatible connective structure up to holomorphic isomorphism with compatible connection and the Maurer-Cartan curving is compatible with it. 
        \item\label{it:pairingcx2} $(\mathcal{G}_{\nabla},m_{\nabla},\alpha)$ admits a holomorphic structure with holomorphic connective structure if and only if the pairing $\langle \cdot,\cdot \rangle:\mathfrak{g} \otimes \mathfrak{g} \rightarrow \mathfrak{t}$ from Theorem \ref{th:MaurerCartan} satisfies $\langle \cdot,\cdot \rangle^{1,1+0,2} = 0$. In this case, there is a unique such holomorphic structure with holomorphic connective structure up to holomorphic isomorphism with holomorphic connection and the Maurer-Cartan curving is holomorphic. 
    \end{enumerate}
\end{proposition}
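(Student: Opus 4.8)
The plan is to rerun the argument of Theorem \ref{th:MaurerCartan} over the Dolbeault-refined complexes of sheaves on $BG_\bullet$ that govern holomorphic multiplicative gerbes with compatible, resp. holomorphic, connective structure, and to deduce both statements from a bidegree computation of the canonical representative $(\mu,-\nu,0,0)$ of the de Rham class produced by that theorem. By the cocycle description following Definition \ref{def:multgercx}, a holomorphic multiplicative $T$-gerbe with compatible connective structure is classified by $Ext^{cpt}(G,BT_\nabla)=\mathbb{H}^3(BG_\bullet,\mathcal{O}_T\to\Omega^{1,0}_\mathfrak{t})$ and one with holomorphic connective structure by $Ext^{hol}(G,BT_\nabla)=\mathbb{H}^3(BG_\bullet,\mathcal{O}_T\to\Omega^{1,0}_{\mathfrak{t},hol})$, where $\mathcal{O}_T$ is the sheaf of holomorphic $T$-valued functions, $\Omega^{1,0}_\mathfrak{t}$ the smooth $\mathfrak{t}$-valued $(1,0)$-forms, $\Omega^{1,0}_{\mathfrak{t},hol}$ the holomorphic ones, and the differential is $f\mapsto f^\ast\theta^T$ in each case. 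With this language, the claim "$(\mathcal{G}_\nabla,m_\nabla,\alpha)$ admits a holomorphic structure with compatible (resp. holomorphic) connective structure" is that its class in $Ext(G,BT_\nabla)=\mathbb{H}^3(BG_\bullet,C^\infty_T\to\Omega^1_\mathfrak{t})$ lies in the image of the forgetful map from $Ext^{cpt}(G,BT_\nabla)$ (resp. $Ext^{hol}(G,BT_\nabla)$).

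The concrete core is a bidegree dictionary. Since $G$ is a complex Lie group, $\theta^L$ and $\theta^R$ are complex-linear, so their $(1,0)$- and $(0,1)$-parts take values in $\mathfrak{g}^{1,0}$ and $\mathfrak{g}^{0,1}$; combined with the splitting $\mathfrak{g}\otimes\mathbb{C}=\mathfrak{g}^{1,0}\oplus\mathfrak{g}^{0,1}$ into ideals (so $[\mathfrak{g}^{1,0},\mathfrak{g}^{0,1}]=0$ and $[\mathfrak{g}^{0,1},\mathfrak{g}^{0,1}]\subset\mathfrak{g}^{0,1}$), formulas \eqref{eq:murho} give the correspondences $\nu^{p,q}\leftrightarrow\langle\cdot,\cdot\rangle^{p,q}$ along with $\langle\cdot,\cdot\rangle^{0,2}=0\Rightarrow\mu^{0,3}=0$ and $\langle\cdot,\cdot\rangle^{1,1+0,2}=0\Rightarrow\mu^{1,2+0,3}=0$. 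Hence, when $\langle\cdot,\cdot\rangle^{0,2}=0$ (resp. $\langle\cdot,\cdot\rangle^{1,1+0,2}=0$), the representative $(\mu,-\nu,0,0)$ satisfies exactly the type conditions in part (2) (resp. (3)) of Proposition \ref{prop:class2grpscx}, giving the "if" halves of the existence statements. For the "only if" halves I would run the dictionary backwards: a compatible (resp. holomorphic) connective structure has local data $A_{i_1j_1},M_{i_2}$ of type $(1,0)$ (resp. holomorphic), so $dM_{i_2}$ has no $(0,2)$-part (resp. is of type $(2,0)$), and formula \eqref{eq:pairing} then forces $\langle\cdot,\cdot\rangle^{0,2}=0$ (resp. $\langle\cdot,\cdot\rangle^{1,1+0,2}=0$); since the pairing depends only on the isomorphism class of the multiplicative gerbe with connective structure (Theorem \ref{th:MaurerCartan}), this is the pairing of $(\mathcal{G}_\nabla,m_\nabla,\alpha)$. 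The same type input applied to the Maurer--Cartan curving formula of Remark \ref{rk:pairingformula} shows that $\Theta^L$ has no $(0,2)$-component in the compatible case and, using $\bar\partial\partial A=0$ and $\bar\partial\partial M=0$, is a $\bar\partial$-closed $(2,0)$-form in the holomorphic case, i.e. is a compatible (resp. holomorphic) curving; this is the final clause of each part.

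For the uniqueness clauses I would mirror the derived-functor computation of Theorem \ref{th:MaurerCartan} over these complexes: Bott's Theorem \ref{th:bott} and the Dolbeault vanishing of Proposition \ref{prop:dolbeaultgerbes} produce an exact sequence $0\to H^3(BG_\bullet,\underline{T})\to Ext^{cpt}(G,BT_\nabla)\to\{\langle\cdot,\cdot\rangle\mid\langle\cdot,\cdot\rangle^{0,2}=0\}\to H^4(BG_\bullet,\underline{T})$, with the last map the transgression of Theorem \ref{th:MaurerCartan} restricted to pairings with vanishing $(0,2)$-part, and similarly for the holomorphic case with the condition $\langle\cdot,\cdot\rangle^{1,1+0,2}=0$. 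Comparing this with the sequence of Theorem \ref{th:MaurerCartan} along the forgetful maps identifies the image of the forgetful map with those classes whose pairing has $\langle\cdot,\cdot\rangle^{0,2}=0$ (resp. $\langle\cdot,\cdot\rangle^{1,1+0,2}=0$)---recovering the existence statements---and, since the common flat kernel $H^3(BG_\bullet,\underline{T})$ maps isomorphically, shows that the fibre over the class of $(\mathcal{G}_\nabla,m_\nabla,\alpha)$ is a single point up to holomorphic isomorphism with compatible (resp. holomorphic) connection, which is the asserted uniqueness.

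The main obstacle is the bookkeeping of this last step: pinning down the correct Dolbeault complexes and checking, via Bott and Proposition \ref{prop:dolbeaultgerbes}, that the off-diagonal hypercohomology groups vanish so that the pairing and the flat class are the only invariants. The most delicate point is in part \ref{it:pairingcx2}, where one must verify that the $(1,0)$-forms realizing the class can be chosen $\bar\partial$-closed, so that the construction genuinely lands in $\Omega^{1,0}_{\mathfrak{t},hol}$ and not merely in $\Omega^{1,0}_\mathfrak{t}$; the bidegree dictionary of the second paragraph is then routine once complex-linearity of $\theta^L,\theta^R$ and the ideal splitting of $\mathfrak{g}\otimes\mathbb{C}$ are in place.
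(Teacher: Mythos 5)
Your treatment of existence and of the two curving claims coincides with the paper's own argument: necessity of the type condition is read off from formula \eqref{eq:pairing} and Remark \ref{rk:pairingformula} in a gauge where the cocycle data \eqref{eq:cocycle2} is of type $(1,0)$ (resp.\ holomorphic), sufficiency follows from the dictionary $\langle\cdot,\cdot\rangle \leftrightarrow (\mu,\nu)$ of \eqref{eq:murho} together with Proposition \ref{prop:class2grpscx}, and the statements about $\Theta^L$ follow from the same gauge computation. Up to there your proposal is correct and essentially identical to the paper.

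The uniqueness clause is where you diverge, and where there is a genuine gap. You want to rerun the derived-functor proof of Theorem \ref{th:MaurerCartan} for the complexes $\mathcal{O}_T\to\Omega^{1,0}_{\mathfrak{t}}$ and $\mathcal{O}_T\to\Omega^{1,0}_{\mathfrak{t},hol}$ on $BG_{\bullet}$, extract exact sequences
\begin{equation*}
0\to H^3(BG_{\bullet},\underline{T})\to Ext^{cpt/hol}(G,BT_{\nabla})\to P \to H^4(BG_{\bullet},\underline{T}),
\end{equation*}
and conclude by a diagram chase against the smooth sequence. But the smooth proof rests on Theorem \ref{th:bott}, which applies to the sheaves $\Omega^q_V$ of \emph{smooth} forms, and on the fineness of those sheaves. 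No analogue is available for the sheaves entering your complexes: $\mathcal{O}_T$ (equivalently $\mathcal{O}_{\mathfrak{t}}$, after the exponential sequence) and $\Omega^{1,0}_{\mathfrak{t},hol}$ are not fine, and their cohomology on $G^n$ is Dolbeault cohomology, which depends on the complex geometry of $G$ --- e.g.\ $H^1(G,\mathcal{O})\neq 0$ when $G$ is a complex torus. Even in the compatible case, where $\Omega^{1,0}_{\mathfrak{t}}$ itself is fine, the cone of $\underline{T}\to(\mathcal{O}_T\to\Omega^{1,0}_{\mathfrak{t}})$ is modelled on the sheaf $d\Omega^{1,0}_{\mathfrak{t}}$, whose kernel sheaf under $\Omega^{1,0}_{\mathfrak{t}}\xrightarrow{d} d\Omega^{1,0}_{\mathfrak{t}}$ is $d\mathcal{O}_{\mathfrak{t}}$, so the computation again requires $H^{\ast}(BG_{\bullet},\mathcal{O}_{\mathfrak{t}})$. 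So the vanishing you defer as ``bookkeeping'' is a substantive assertion that cannot be extracted from the results in the paper, and in general requires hypotheses on $G$ (e.g.\ Stein-type vanishing) that are not in the statement of the proposition.

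The paper avoids all of this with a short reconstruction argument, which you could graft onto your proposal in place of the diagram chase: the Maurer--Cartan curving $\Theta^L$ is canonically attached to the smooth data $(\mathcal{G}_{\nabla},m_{\nabla},\alpha)$, and in \emph{any} smooth trivialization the $(0,1)$-parts of $A_{i_1j_1}$ and $M_{i_2}$ together with the $(0,2)$-part of $\Theta^L$ (resp.\ the full data $A_{i_1j_1}$, $M_{i_2}$, $(\Theta^L)^{1,1+0,2}$) assemble into a $1$-semiconnection (resp.\ $2$-semiconnection) on $\mathcal{G}$ and on $m$. By Proposition \ref{prop:dolbeaultgerbes} this induces a holomorphic structure, and any holomorphic structure with compatible (resp.\ holomorphic) connective structure must coincide with it, since in a compatible gauge this semiconnection data vanishes. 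This yields ``at most one'' directly, with no cohomology of holomorphic sheaves on $BG_{\bullet}$, and it is also the step that guarantees the holomorphic structure produced in your existence argument is one with which the \emph{given} connective structure is compatible.
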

\begin{proof}
    We only prove part \ref{it:pairingcx1}, as part \ref{it:pairingcx2} is similar. If $(\mathcal{G}_{\nabla},m_{\nabla},\alpha)$ is holomorphic with compatible connective structure, then take cocycle data \eqref{eq:cocycle2} such that $A_{i_1j_1}$, $M_{i_2}$ are of type $(1,0)$ and compute $\Theta^L$, $\langle \cdot,\cdot \rangle$ as in Remark \ref{rk:pairingformula}; then $(\Theta^L_{i_1})^{0,2} = 0$ and $\langle \cdot,\cdot \rangle^{0,2} = 0$ as $dA_{i_1j_1}^{0,2} = 0$, $dM_{i_2}^{0,2} = 0$. In particular, the holomorphic structure with compatible connective structure of $(\mathcal{G}_{\nabla},m_{\nabla},\alpha)$ is recovered from smooth cocycle data \eqref{eq:cocycle2} for it, as this determines $\Theta^L$. This is because the $(0,2)$-part of $\Theta^L$ and the $(0,1)$-part of $M_{i_2}$ in any smooth trivialization yield a 1-semiconnection for $\mathcal{G}$ and a 1-semiconnection for $m$ inducing by Proposition \ref{prop:dolbeaultgerbes} the holomorphic structure that we started with and with which the connective structure is compatible. So a smooth multiplicative gerbe with connective structure can admit at most one holomorphic structure with compatible connective structure, and the vanishing of the $(0,2)$-part of its pairing $\langle \cdot,\cdot \rangle$ is a necessary condition for this to happen. It is also sufficient by Proposition \ref{prop:class2grps}, as in this case the corresponding $\mu$, $\nu$ from \eqref{eq:murho} satisfy $\mu^{0,3} = 0$ and $\nu^{0,2} = 0$.
\end{proof}


Recall how the fibrewise complexification $\mathcal{L}^{\mathbb C}$ of a gerbe $\mathcal{L}$ is defined in \ref{def:complexificationgerbes}. Upmeier \cite{Upm} used the theory of Stein manifolds to construct a holomorphic $\mathbb C^{\ast}$-gerbe with holomorphic connective structure over $GL(n,\mathbb C)$ that restricts over $U(n)$ to the fibrewise complexification of the $U(1)$-gerbe String$(U(n))$. The following theorem provides a generalization of that construction.

\begin{theorem}\label{th:complexification}
Let $K$, $T_{\mathbb R}$ be compact, connected Lie groups with $T_{\mathbb R}$ abelian and let $j_T:T_{\mathbb R} \rightarrow T$, $j_K:K \rightarrow G$ be their complexifications. For $\mathcal{K}$ any $T_{\mathbb R}$-multiplicative gerbe over $K$ there is a unique holomorphic multiplicative $T$-gerbe with holomorphic connective structure $\mathcal{G}_{\nabla}$ over $G$ such that $j_K^{\ast}\mathcal{G} = \mathcal{K}^{\mathbb C}$ as smooth multiplicative $T$-gerbes over $K$. We call $\mathcal{G}_{\nabla}$ the \emph{complexification} of $\mathcal{K}$.
\end{theorem}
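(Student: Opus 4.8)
The plan is to transport the classification machinery of Theorem \ref{th:MaurerCartan} and Proposition \ref{prop:pairingcx} from the compact group $K$ to its complexification $G$, exploiting that the inclusion $j_K\colon K\hookrightarrow G$ is a homotopy equivalence (as $K$ is a maximal compact subgroup of the connected complex reductive group $G$, the Cartan decomposition gives a diffeomorphism $G\cong K\times\mathbb{R}^N$). First I would produce the pairing. By Corollary \ref{cor:compact} applied to $K$, the gerbe $\mathcal{K}$ determines an $Ad_K$-invariant symmetric form $\langle\cdot,\cdot\rangle_{\mathbb{R}}\colon S^2\mathfrak{k}\to\mathfrak{t}_{\mathbb{R}}$. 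Using $\mathfrak{g}=\mathfrak{k}\otimes_{\mathbb{R}}\mathbb{C}$ and $\mathfrak{t}=\mathfrak{t}_{\mathbb{R}}\otimes_{\mathbb{R}}\mathbb{C}$, let $\langle\cdot,\cdot\rangle\colon S^2\mathfrak{g}\to\mathfrak{t}$ be its $\mathbb{C}$-bilinear extension. Two facts are immediate: $\langle\cdot,\cdot\rangle$ is $Ad_G$-invariant, since $Ad_G$-invariance is a holomorphic condition holding on the Zariski-dense subgroup $K$ and hence, by the identity theorem, on all of $G$; and $\langle\cdot,\cdot\rangle$ is of type $(2,0)$, since a $\mathbb{C}$-bilinear form intertwines $J_{\mathfrak{g}}$ and $J_{\mathfrak{t}}$, forcing its $(1,1)$-component to vanish and, after identifying $\mathfrak{t}\cong\mathfrak{t}^{1,0}$, its $(0,2)$-component as well. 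In particular $\langle\cdot,\cdot\rangle^{1,1+0,2}=0$, which is exactly the hypothesis of Proposition \ref{prop:pairingcx}\ref{it:pairingcx2}.

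Next I would establish existence over $G$. Let $\mu,\nu$ be the forms \eqref{eq:murho} built from $\langle\cdot,\cdot\rangle$; their pullbacks under $j_K$ are the corresponding forms for $\langle\cdot,\cdot\rangle_{\mathbb{R}}$, so $[\mu_K,-\nu_K,0,0]=j_K^\ast[\mu,-\nu,0,0]$. Because the complexification $\mathcal{K}^{\mathbb{C}}$ (Definition \ref{def:complexificationgerbes}) already exists as a multiplicative $T$-gerbe with connective structure over $K$ and has pairing $\langle\cdot,\cdot\rangle_{\mathbb{R}}$ followed by the inclusion $\mathfrak{t}_{\mathbb{R}}\hookrightarrow\mathfrak{t}$, Theorem \ref{th:MaurerCartan} over $K$ forces $exp([\mu_K,-\nu_K,0,0])=0$ in $H^4(BK_\bullet,\underline{T})$. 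Since $j_K$ is a homotopy equivalence and $\underline{T}$ is locally constant, $j_K^\ast\colon H^4(BG_\bullet,\underline{T})\to H^4(BK_\bullet,\underline{T})$ is an isomorphism (Example \ref{ex:simpderham}), and naturality of $exp$ gives $exp([\mu,-\nu,0,0])=0$. Theorem \ref{th:MaurerCartan} then yields a smooth multiplicative $T$-gerbe with connective structure $\mathcal{G}'_{\nabla}$ over $G$ with pairing $\langle\cdot,\cdot\rangle$, unique up to tensoring with flat multiplicative gerbes. Restricting to $K$, the gerbes $j_K^\ast\mathcal{G}'_{\nabla}$ and $\mathcal{K}^{\mathbb{C}}$ share the same pairing, hence by the exact sequence in the proof of Theorem \ref{th:MaurerCartan} (whose kernel term is $H^3(BK_\bullet,\underline{T})$, the flat multiplicative gerbes) they differ by a flat gerbe, which I transport to a flat gerbe $\mathcal{F}$ over $G$ via the isomorphism $H^3(BG_\bullet,\underline{T})\cong H^3(BK_\bullet,\underline{T})$. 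Replacing $\mathcal{G}'_{\nabla}$ by $\mathcal{G}'_{\nabla}\otimes\mathcal{F}$, we may assume $j_K^\ast\mathcal{G}'_{\nabla}\cong\mathcal{K}^{\mathbb{C}}$ as smooth multiplicative gerbes. Finally, since the pairing is of type $(2,0)$, Proposition \ref{prop:pairingcx}\ref{it:pairingcx2} endows $\mathcal{G}'_{\nabla}$ with a holomorphic structure and holomorphic connective structure $\mathcal{G}_{\nabla}$ (the flat factor $\mathcal{F}$, having locally constant and hence holomorphic cocycles, carries such a structure as well), giving the desired $\mathcal{G}$ with $j_K^\ast\mathcal{G}=\mathcal{K}^{\mathbb{C}}$.

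For uniqueness I would work throughout with gerbes equipped with connective structure. Suppose $\mathcal{G}_{\nabla,1},\mathcal{G}_{\nabla,2}$ are two holomorphic multiplicative $T$-gerbes with holomorphic connective structure over $G$ whose smooth restrictions to $K$ both equal $\mathcal{K}^{\mathbb{C}}$. By Proposition \ref{prop:pairingcx} their pairings are $\mathbb{C}$-bilinear, hence determined by their restrictions to $\mathfrak{k}$; these restrictions are computed from the smooth gerbe $\mathcal{K}^{\mathbb{C}}$ (Corollary \ref{cor:compact}) and so agree, whence both pairings equal $\langle\cdot,\cdot\rangle$. Therefore $\mathcal{G}_{\nabla,1}\otimes\mathcal{G}_{\nabla,2}^{-1}$ has zero pairing and, by the exact sequence of Theorem \ref{th:MaurerCartan}, represents a class $x\in H^3(BG_\bullet,\underline{T})$. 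Restricting to $K$: since over the compact group $K$ connective structures are unique up to isomorphism (Proposition \ref{prop:classconstr}), $j_K^\ast\mathcal{G}_{\nabla,1}\cong j_K^\ast\mathcal{G}_{\nabla,2}$ as gerbes with connective structure, so $j_K^\ast x=0$; the isomorphism $H^3(BG_\bullet,\underline{T})\cong H^3(BK_\bullet,\underline{T})$ forces $x=0$. Thus $\mathcal{G}_{\nabla,1}$ and $\mathcal{G}_{\nabla,2}$ are isomorphic as smooth gerbes with connective structure, and the uniqueness clause of Proposition \ref{prop:pairingcx}\ref{it:pairingcx2} upgrades this to a holomorphic isomorphism.

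The main obstacle is the bookkeeping of the flat part: Theorem \ref{th:MaurerCartan} only pins the gerbe down up to flat multiplicative gerbes, so both the matching $j_K^\ast\mathcal{G}=\mathcal{K}^{\mathbb{C}}$ and the uniqueness statement hinge on controlling $H^3(BG_\bullet,\underline{T})$. The essential point making this tractable is that, once one remembers the connective structure, the flat ambiguity is precisely the \emph{homotopy-invariant} group $H^3(B(-)_\bullet,\underline{T})$, so the homotopy equivalence $j_K$ identifies the ambiguities over $G$ and over $K$, where it is in turn killed by the rigidity of connective structures on compact groups (Proposition \ref{prop:classconstr}). By contrast, verifying that the complexified pairing is $Ad_G$-invariant of type $(2,0)$, and that flat gerbes remain holomorphic, is routine and should not present difficulties.
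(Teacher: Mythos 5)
Your proof is correct and follows essentially the same route as the paper's: complexify the pairing supplied by Corollary \ref{cor:compact}, use the homotopy equivalence $j_K$ to transfer the vanishing of $exp([\mu_{\mathbb C},-\nu_{\mathbb C},0,0])$ from $K$ to $G$ and to control the flat ambiguity in $H^3(BG_{\bullet},\underline{T})$, then invoke Theorem \ref{th:MaurerCartan} for existence and Proposition \ref{prop:pairingcx} for the unique holomorphic structure with holomorphic connective structure. Your explicit twisting by the flat gerbe $\mathcal{F}$ and the uniqueness argument via the class $x\in H^3(BG_{\bullet},\underline{T})$ simply spell out steps the paper treats tersely.
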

\begin{proof}
    Let $\langle \cdot,\cdot \rangle: \mathfrak{k} \otimes \mathfrak{k} \rightarrow \mathfrak{t}_{\mathbb R}$ be the pairing associated to $\mathcal{K}$ via Corollary \ref{cor:compact}, and let $\langle \cdot,\cdot \rangle_{\mathbb C}: \mathfrak{g} \otimes \mathfrak{g} \rightarrow \mathfrak{t}$ be its complexification. If $\mathcal{G}_{\nabla}$ as in the theorem exists, then by Proposition \ref{prop:pairingcx} its pairing $\mathfrak{g} \otimes \mathfrak{g} \rightarrow \mathfrak{t}$ must be $\langle \cdot,\cdot \rangle_{\mathbb C}$, as it must be $\mathbb C$-linear and restrict to $\langle \cdot,\cdot \rangle$ on $\mathfrak{k}$, so let us show that there is a $\mathcal{G}_{\nabla}$ with pairing $\langle\cdot,\cdot \rangle_{\mathbb C}$. By Theorem \ref{th:MaurerCartan}, this happens if and only if the forms $\mu_{\mathbb C} := \frac{1}{6}\langle \theta^L \wedge [\theta^L \wedge \theta^L ] \rangle_{\mathbb C}$, $\nu_{\mathbb C} := -\langle g_1^{\ast}\theta^L \wedge g_2^{\ast}\theta^R \rangle_{\mathbb C}$ determine $[\mu_{\mathbb C},-\nu_{\mathbb C},0,0] \in H^4(BG,\mathfrak{t})$ such that $exp([\mu_{\mathbb C},-\nu_{\mathbb C},0,0]) = 0 \in H^4(BG,\underline{T})$. Now the inclusion $j_K: K \rightarrow G$ is a homotopy equivalence, therefore there is a commutative diagram
    \begin{equation*}
        \begin{tikzcd}
             H^{4}(BG,\mathfrak{t}) \ar[r,"exp"] \ar[d,"{j_K^{\ast}}",swap] 
             & H^{4}(BG,\underline{T}) \ar[d,"{j_K^{\ast}}",swap]  \\
             H^{4}(BK,\mathfrak{t}) \ar[r,"exp"] 
             & H^{4}(BK,\underline{T})
        \end{tikzcd}
        \end{equation*}
    where the vertical arrows are isomorphisms, and it is clear that $j_K^{\ast}[\mu_{\mathbb C},-\nu_{\mathbb C},0,0] = [\mu,-\nu,0,0]$ for $\mu$, $\nu$ given by \eqref{eq:murho}. Since $exp([\mu,-\nu,0,0]) = 0$ it follows that $exp([\mu_{\mathbb C},\nu_{\mathbb C},0,0]) = 0$ and so there is a smooth multiplicative $T$-gerbe with connective structure $\mathcal{G}_{\nabla}$ over $G$ whose associated pairing is $\langle \cdot,\cdot \rangle_{\mathbb C}$. Moreover, it follows from Proposition \ref{prop:pairingcx} that $\mathcal{G}_{\nabla}$ has one and only one holomorphic structure with holomorphic connective structure. In principle, Theorem \ref{th:MaurerCartan} implies that $\mathcal{G}_{\nabla}$ as a smooth gerbe is only determined by $\langle \cdot,\cdot \rangle_{\mathbb C}$ up to flat gerbes, i.e., classes in $H^3(BG,\underline{T})$, but again since $j^{\ast}_K: H^3(BG,\underline{T}) \rightarrow H^3(BK,\underline{T})$ is an isomorphism, this dependence is fixed by imposing $j^{\ast}\mathcal{G} = \mathcal{K}^{\mathbb C}$.
\end{proof}

\begin{remark}
    Theorem \ref{th:complexification} is also true for $K$ non compact and not connected and $T$ non compact, as long as $K$ and $T_{\mathbb R}$ are Lie groups admitting complexifications $G$ and $T$ with $j_K: K \rightarrow G$ a homotopy equivalence and $\mathcal{K}$ is a multiplicative $T_{\mathbb R}$-gerbe with connective structure over $K$.
\end{remark}

The main example of interest in this article is the complexification of the 2-groups String$(K)$.

\begin{example}\label{ex:stringcx}
    For $G$ a complex reductive Lie group with compact form $K$, Brylinski \cite{BryCx2Grp} constructs, for each $l \in \mathbb Z = H^4(BK,\mathbb Z)$, a holomorphic $\mathbb C^{\ast}$-gerbe over $G$ that restricts over $K$ to the fibrewise complexification of the equivariant bundle gerbe at level $l$ from Example \ref{ex:string} (see also the construction in \cite{Upm}). It follows from Theorem \ref{th:complexification} that this gerbe admits a unique holomorphic multiplicative structure with holomorphic connective structure, which is the complexification of String$(K)$. As in the smooth case (cf. Example \ref{ex:string}), there is no known explicit cocycle description of the multiplicative structure but there is an explicit equivariant structure in the original work of Brylinski \cite{BryCx2Grp}.
\end{example}

\section{Principal 2-bundles and connections}\label{sec:2pbsandcon}

\subsection{Principal 2-bundles}\label{sec:2pbs}
Along the lines of, for example, \cite{NikWal2pbs}, this is the natural definition of a principal bundle for a Lie 2-group:

\begin{definition}\label{def:2pb}
    Let $\mathfrak{G}$ be a Lie 2-group and let $\mathfrak{P}$ be a Lie groupoid. An \emph{action} of $\mathfrak{G}$ on $\mathfrak{P}$ is a smooth anafunctor $\rho: \mathfrak{P} \times \mathfrak{G} \rightarrow \mathfrak{P}$ with a smooth transformation $\alpha^{\rho}: \rho \circ (\rho \times id) \Rightarrow \rho \circ (id \times m): \mathfrak{P} \times \mathfrak{G}  \times \mathfrak{G} \rightarrow \mathfrak{P}$ such that 
    \begin{equation}
        \begin{tikzcd}
            \mathfrak{P} \times \mathfrak{G} \times \mathfrak{G} \times \mathfrak{G} \ar[rr,bend left=70,"{((pg_1)g_2)g_3}"{name=F}] 
              \ar[rr,bend left=20,"{(p(g_1g_2))g_3}"{name=G,2cell}]
              \ar[rr,bend right=20,"{p((g_1g_2)g_3)}"{name=H,2cell}]
              \ar[rr,bend right=70,"{p(g_1(g_2g_3))}"{name=E,swap}] & & 
                \mathfrak{P} \\
             & \ar[Rightarrow,from=F,to=G,"{\alpha^{\rho} }"{swap,pos=0.3},shorten >=1.5pt] \ar[Rightarrow,from=G,to=H,"{\alpha^{\rho}}"{swap}] \ar[Rightarrow,from=H,to=E,"{\alpha}"{swap}] &
        \end{tikzcd} = 
        \begin{tikzcd}
            \mathfrak{P} \times \mathfrak{G} \times \mathfrak{G} \times \mathfrak{G} \ar[rr,bend left=50,"{((pg_1)g_2)g_3}"{name=F}] 
              \ar[rr,"{(pg_1)(g_2g_3)}"{name=G,2cell}]
              \ar[rr,bend right=50,"{p(g_1(g_2g_3))}"{name=H,swap}] & & 
                \mathfrak{P} \\
             & \ar[Rightarrow,from=F,to=G,"{\alpha^{\rho} }"{swap,pos=0.3},shorten >=1.5pt] \ar[Rightarrow,from=G,to=H,"{\alpha^{\rho}}"{swap}] & 
        \end{tikzcd}.
        \end{equation}
   A \emph{principal 2-bundle} with structure 2-group $\mathfrak{G}$ over a manifold $M$ is a Lie groupoid $\mathfrak{P}$ with a smooth functor $\pi: \mathfrak{P} \rightarrow M$ that is a surjective submersion on objects and an action $(\rho,\alpha^{\rho})$ of $\mathfrak{G}$ on $\mathfrak{P}$ such that $\pi \circ \rho = \pi \circ p_1: \mathfrak{P} \times \mathfrak{G} \rightarrow M$ and such that the anafunctor $p_2 \times \rho: \mathfrak{P} \times \mathfrak{G} \rightarrow \mathfrak{P} \times_M \mathfrak{P}$ has a quasi-inverse. \emph{Isomorphisms and 2-isomorphisms} of principal 2-bundles are defined similarly as in Definition \ref{def:2grp}, yielding a bicategory.
\end{definition}

In this article we only consider Lie 2-groups arising from multiplicative gerbes and so we give an equivalent definition in this context, analogous to Definition \ref{def:multger} and based on the definitions of string structures as trivializations of Chern-Simons 2-gerbes in \cite{Bunke,CarMultGer,WaldString}.

\begin{definition}\label{def:2pbmult}
Let $(\mathcal{G},m,\alpha)$ be a multiplicative $T$-gerbe over $G$. A \emph{principal $\mathcal{G}$-bundle} $(P,\mathcal{P},\rho,\alpha^{\rho})$ over a manifold $M$ is the following data: 
\begin{enumerate}
    \item A principal $G$-bundle $P \rightarrow M$.
    \item A $T$-gerbe $\mathcal{P} \rightarrow P$.
    \item An isomorphism of $T$-gerbes $\rho: p^{\ast}\mathcal{P} \otimes g^{\ast}\mathcal{G} \rightarrow (pg)^{\ast}\mathcal{P}$ over $P \times G$.
    \item A 2-isomorphism of $T$-gerbes over $P \times G \times G$
    \begin{equation}
    \begin{tikzcd}
             p^{\ast}\mathcal{P} \otimes g_1^{\ast}\mathcal{G} \otimes g_2^{\ast}\mathcal{G} \ar[r,"{(p,g_1)^*\rho}",{name=U}] \ar[d,"{(g_1,g_2)^{\ast}m}",swap] 
             & (pg_1)^{\ast}\mathcal{P}\otimes g_2^{\ast}\mathcal{G} \ar[d,"{(pg_1,g_2)^{\ast}\rho}"] \ar[Rightarrow, dl, "\alpha^{\rho}"]\\
             p^{\ast}\mathcal{P} \otimes (g_1g_2)^{\ast}\mathcal{G} \ar[r,"{(p,g_1g_2)^{\ast}\rho}",{name=D},swap] 
             & (pg_1g_2)^{\ast}\mathcal{P}
    \end{tikzcd}
\end{equation}
such that, over $P \times G \times G \times G$, we have 
\begin{equation}
\adjustbox{scale=0.85,center}{
        \begin{tikzcd}
            p^{\ast}\mathcal{P} \otimes g_1^{\ast}\mathcal{G} \otimes g_2^{\ast}\mathcal{G} \otimes g_3^{\ast}\mathcal{G} \ar[rr,bend left=70,"{\rho(((pg_1)g_2)g_3)}"{name=F}] 
              \ar[rr,bend left=20,"{\rho((p(g_1g_2))g_3)}"{name=G,2cell}]
              \ar[rr,bend right=20,"{\rho(p((g_1g_2)g_3))}"{name=H,2cell}]
              \ar[rr,bend right=70,"{\rho(p(g_1(g_2g_3)))}"{name=E,swap}] & & 
                (pg_1g_2g_3)^{\ast}\mathcal{P} \\
             & \ar[Rightarrow,from=F,to=G,"{ \alpha^{\rho}}"{swap,pos=0.3},shorten >=1.5pt] \ar[Rightarrow,from=G,to=H,"{\alpha^{\rho}}"{swap}] \ar[Rightarrow,from=H,to=E,"{\alpha}"{swap}] &
        \end{tikzcd} = 
        \begin{tikzcd}
            p^{\ast}\mathcal{P} \otimes g_1^{\ast}\mathcal{G} \otimes g_2^{\ast}\mathcal{G} \otimes g_3^{\ast}\mathcal{G} \ar[rr,bend left=50,"{\rho(((pg_1)g_2)g_3)}"{name=F}] 
              \ar[rr,bend left = 20,"{\rho((pg_1)(g_2g_3))}"{name=G,2cell}]
              \ar[rr,bend right=20,"{\rho(p(g_1(g_2g_3)))}"{name=H,swap}] & & 
                (pg_1g_2g_3)^{\ast}\mathcal{P} \\
             & \ar[Rightarrow,from=F,to=G,"{\alpha^{\rho} }"{swap,pos=0.3},shorten >=1.5pt] \ar[Rightarrow,from=G,to=H,"{\alpha^{\rho}}"{swap}] & 
        \end{tikzcd}.
}
        \end{equation}
\end{enumerate}
Given $(P^i,\mathcal{P}^i,\rho^i,\alpha^{\rho_i})$, $i=1,2$, then an \emph{isomorphism of $\mathcal{G}$-bundles} is the following data:
\begin{enumerate}
    \item An equivariant map $u: P^1 \rightarrow P^2$ 
    \item An isomorphism of $T$-gerbes $\varphi:\mathcal{P}^1 \rightarrow u^{\ast}\mathcal{P}_2$ over $P_1$
    \item A $2$-isomorphism of $T$-gerbes over $P^1 \times G$
    \begin{equation}
    \begin{tikzcd}
             (p^1)^{\ast}\mathcal{P}^1 \otimes g^{\ast}\mathcal{G} \ar[r,"{(p^1,g)^*\rho^1}",{name=U}] \ar[d,"(p^1)^{\ast}\varphi",swap] 
             & (p^1g)^{\ast}\mathcal{P}^1 \ar[d,"{(p^1g)^{\ast}\varphi}"] \ar[Rightarrow, dl, "\alpha^{\varphi}"]\\
             u^{\ast}\mathcal{P}^2 \otimes g^{\ast}\mathcal{G} \ar[r,"{(u,g)^{\ast}\rho^2}",{name=D},swap] 
             & (u(p^1)g)^{\ast}\mathcal{P}^2
    \end{tikzcd}
    \end{equation}
    such that, over $P^1 \times G \times G$, 
    \begin{equation}
    \adjustbox{scale=0.85,center}{
        \begin{tikzcd}
            p_1^{\ast}\mathcal{P}^1 \otimes g_1^{\ast}\mathcal{G} \otimes g_2^{\ast}\mathcal{G} \ar[rr,bend left=70,"{\varphi((p_1g_1)g_2)}"{name=F}] 
              \ar[rr,bend left=20,"{\varphi(p_1g_1)g_2}"{name=G,2cell}]
              \ar[rr,bend right=20,"{(\varphi(p_1)g_1)g_2}"{name=H,2cell}]
              \ar[rr,bend right=70,"{\varphi(p_1)(g_1g_2)}"{name=E,swap}] & & 
                (u(p_1)g_1g_2)^{\ast}\mathcal{P}^2 \\
             & \ar[Rightarrow,from=F,to=G,"{\alpha^{\varphi}}"{swap,pos=0.3},shorten >=1.5pt] \ar[Rightarrow,from=G,to=H,"{\alpha^{\varphi}}"{swap}] \ar[Rightarrow,from=H,to=E,"{\alpha^{\rho_2}}"{swap}] &
        \end{tikzcd} = 
        \begin{tikzcd}
            p_1^{\ast}\mathcal{P}^1 \otimes g_1^{\ast}\mathcal{G} \otimes g_2^{\ast}\mathcal{G} \ar[rr,bend left=50,"{\varphi((p_1g_1)g_2)}"{name=F}] 
              \ar[rr,bend left = 20,"{\varphi(p_1(g_1g_2))}"{name=G,2cell}]
              \ar[rr,bend right=20,"{\varphi(p_1)(g_1g_2)}"{name=H,swap}] & & 
                (u(p_1)g_1g_2)^{\ast}\mathcal{P}^2  \\
             & \ar[Rightarrow,from=F,to=G,"{\alpha^{\rho_1}}"{swap,pos=0.3},shorten >=1.5pt] \ar[Rightarrow,from=G,to=H,"{\alpha^{\varphi}}"{swap}] & 
        \end{tikzcd}.
    }
    \end{equation}
\end{enumerate}
Given $(u,\varphi,\alpha^{\varphi}), (u',\varphi',\alpha^{\varphi'}): (P^1,\mathcal{P}^1,\rho^1,\alpha^{\rho^1}) \rightarrow (P^2,\mathcal{P}^2,\rho^2,\alpha^{\rho^2})$, then a \emph{$2$-isomorphism} between them can only exist if $u = u'$ and is then given by a 2-isomorphism $\psi: \varphi \Rightarrow \varphi'$ such that, over $P_1 \times G$,  
\begin{equation}
\adjustbox{scale=0.9,center}{
        \begin{tikzcd}
            p_1^{\ast}\mathcal{P}^1 \otimes g^{\ast}\mathcal{G} \ar[rr,bend left=70,"{\varphi(p_1g)}"{name=F}] 
              \ar[rr,bend left=20,"{\varphi(p_1)g}"{name=G,2cell}]
              \ar[rr,bend right=20,"{\varphi'(p_1)g}"{name=H,2cell}]
             & & (u(p_1)g)^{\ast}\mathcal{P}^2 \\ \ar[Rightarrow,from=F,to=G,"{\alpha^{\varphi}}"{swap,pos=0.3},shorten >=1.5pt] \ar[Rightarrow,from=G,to=H,"{\psi}"{swap}] &
        \end{tikzcd} = 
        \begin{tikzcd}
            p_1^{\ast}\mathcal{P}^1 \otimes g^{\ast}\mathcal{G} \ar[rr,bend left=50,"{\varphi(p_1g)}"{name=F}] 
              \ar[rr,bend left = 20,"{\varphi'(p_1g)}"{name=G,2cell}]
              \ar[rr,bend right=20,"{\varphi'(p_1)g}"{name=H,swap}] & & 
                (u(p_1)g_1g_2)^{\ast}\mathcal{P}^2  \\
              \ar[Rightarrow,from=F,to=G,"{\psi}"{swap,pos=0.5},shorten >=1.5pt] \ar[Rightarrow,from=G,to=H,"{\alpha^{\varphi'}}"{swap}] & 
        \end{tikzcd}.
}
\end{equation}

\end{definition}

We often abbreviate all the data $(P,\mathcal{P},\rho,\alpha^{\rho})$ of a $\mathcal{G}$-bundle by $(P,\mathcal{P})$ or $\mathcal{P}$.

\begin{proposition}\label{prop:2pbmult}
    Let $\mathcal{G}$ be a multiplicative $T$-gerbe over $G$ with associated Lie 2-group $\mathfrak{G}$. There is a canonical equivalence of bicategories between $\mathcal{G}$-bundles in the sense of Definition \ref{def:2pbmult} and $\mathfrak{G}$-bundles in the sense of Definition \ref{def:2pb}.
\end{proposition}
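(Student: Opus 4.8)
The plan is to mimic the proof of Theorem \ref{th:ScPr}, carrying out the dictionary between gerbe cocycle data and Lie groupoid data relativized over the base manifold $M$. First I would fix a good cover adapted to the action: a good cover $\{V_a\}_a$ of the total space $P$ refining the pullback of a good cover of $M$, together with good covers of the spaces $P \times G^n$ that are compatible, via the action face maps $(p,g_1,\dots,g_n) \mapsto (pg_1,g_2,\dots,g_n)$, $(p,g_1,\dots,g_n) \mapsto (p,g_1,\dots,g_ig_{i+1},\dots,g_n)$, etc., with the good semi-simplicial cover $\{\mathcal{U}_n\}$ of $BG_{\bullet}$ already used to present $(\mathcal{G},m,\alpha)$. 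Unwinding Definition \ref{def:2pbmult} on such a cover yields cocycle data: a \v{C}ech cocycle $\lambda^{\mathcal{P}}_{abc}: V_{abc} \rightarrow T$ for the gerbe $\mathcal{P}$, functions describing the isomorphism $\rho$, and functions describing the $2$-isomorphism $\alpha^{\rho}$, subject to compatibility equations entirely analogous to the cocycle equations for $\lambda$, $m$, $\alpha$ in \eqref{eq:cocycle1}, but now over the simplicial manifold $P \times G^{\bullet}$ built from the action.

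Next I would assemble the Lie groupoid $\mathfrak{P}$ exactly as $\mathfrak{G}$ is assembled from $\lambda$, $m$, $\alpha$: set $\mathfrak{P}_0 := \sqcup_a V_a$ and $\mathfrak{P}_1 := \sqcup_{ab} V_{ab} \times T$, with composition twisted by $\lambda^{\mathcal{P}}$, the cocycle condition guaranteeing associativity. The projection $P \rightarrow M$ descends to a smooth functor $\pi: \mathfrak{P} \rightarrow M$ that is a surjective submersion on objects, and the isomorphism $\rho$ together with the $2$-isomorphism $\alpha^{\rho}$ produce, by the very recipe used for $m$ and $\alpha$ (cf. Remark \ref{rk:easyalpha} and the proof of Theorem \ref{th:ScPr}), an action anafunctor $\rho:\mathfrak{P} \times \mathfrak{G} \rightarrow \mathfrak{P}$ with a transformation $\alpha^{\rho}$; the cocycle equation for $\alpha^{\rho}$ translates precisely into the coherence (pentagon-type) diagram of Definition \ref{def:2pb}. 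Principality, i.e. that $p_2 \times \rho: \mathfrak{P} \times \mathfrak{G} \rightarrow \mathfrak{P} \times_M \mathfrak{P}$ admits a quasi-inverse, follows from the corresponding property of the underlying principal $G$-bundle $P \rightarrow M$ (which gives $P \times G \cong P \times_M P$) together with the invertibility of the gerbe isomorphism $\rho$.

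For the converse I would run this construction backwards, reading off cocycle data for $(P,\mathcal{P},\rho,\alpha^{\rho})$ from a groupoid presentation of a given $\mathfrak{G}$-bundle $\mathfrak{P} \rightarrow M$, using that $\pi$ being a principal $2$-bundle forces the object map to be, after refinement, a surjective submersion out of a disjoint union of cover pieces on which the gerbe and action data can be extracted. Finally I would extend the assignment to $1$- and $2$-morphisms: isomorphisms of $\mathcal{G}$-bundles in Definition \ref{def:2pbmult}, given by a triple $(u,\varphi,\alpha^{\varphi})$, correspond under the same dictionary to isomorphisms of $\mathfrak{G}$-bundles, and the $2$-isomorphisms $\psi$ to $2$-isomorphisms, after which one checks that the resulting pseudofunctor is an equivalence by verifying essential surjectivity together with full faithfulness on each hom-category.

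The main obstacle I expect is not any single computation: each cocycle identity matches its groupoid-theoretic counterpart by the same bookkeeping as in Theorem \ref{th:ScPr}. Rather, the work lies in organizing the $2$-categorical coherence so as to promote the object-level bijection to a genuine equivalence of bicategories, namely checking that the choice of adapted cover does not affect the result up to canonical equivalence and that composition of $1$-morphisms is respected up to the coherent $2$-isomorphisms. As with the analogous assertion at the end of the proof of Theorem \ref{th:ScPr}, this is a matter of diligent diagram-chasing rather than a conceptual difficulty, so I would state the object- and morphism-level correspondences explicitly and verify that they assemble into an equivalence in exactly the manner used there.
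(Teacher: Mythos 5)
Your proposal is correct and follows exactly the route the paper takes: the paper's proof of this proposition is the single sentence that it is ``analogous to that of Theorem \ref{th:ScPr}'', and your argument is precisely that analogy carried out --- adapting the good semi-simplicial cover to $P/\!/G_{\bullet}$, gluing the groupoid $\mathfrak{P}$ from the gerbe cocycle, promoting $\rho$ and $\alpha^{\rho}$ to an action anafunctor with its coherence transformation, and extending to $1$- and $2$-morphisms. Your closing remark that the remaining work is bicategorical bookkeeping rather than a conceptual obstacle matches the level of detail at which the paper itself leaves both this proposition and Theorem \ref{th:ScPr}.
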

\begin{proof}
    The proof is analogous to that of Theorem \ref{th:ScPr}.
\end{proof}

There are two ways of giving cocycle data for principal 2-bundles: in a good semi-simplicial cover of the simplicial manifold $P/\!/G_{\bullet}$ obtained as the nerve of the action groupoid $P/\!/G$, and in a good cover of $M$. We present both descriptions, since each one is useful for different purposes. In either case, we must choose first cocycle data \eqref{eq:cocycle1} $\lambda_{i_1j_1k_1}$, $m_{i_2j_2}$, $\alpha_{i_3}$ for the multiplicative gerbe in a good semi-simplicial cover $\mathcal{U}_{\bullet}=(\{U^n_{i_n}\}_{i_n \in I_n},\{d^n_j\}_{n \in \mathbb N, 0 \leq j \leq n})$ of $BG_{\bullet}$.

\subsubsection{Cocycle data on a good semi-simplicial cover of $P/\!/G_{\bullet}$}\label{sec:cocycle2pbspg}
Consider the semi-simplicial manifold $(P /\!/ G)_{\bullet}$ that is obtained as the nerve of the action groupoid $P/\!/G$. This is a semi-simplicial manifold whose geometric realization is $M$, defined by $(P /\!/ G)_{n} := P \times G^n$ with face maps $d^n_j: P \times G^n \rightarrow P \times  G^{n-1}$, $0 \leq j \leq n$ given by 
\begin{align*}
    d^n_0(p,g_1,...,g_n) &= (pg_1,...,g_n),\\
    d^n_1(p,g_1,...,g_n) &= (p,g_1g_2,g_3,...,g_n),\\
    &...\\
    d^n_{n-1}(p,g_1,...,g_n) &= (p,g_1,...,g_{n-1}g_n),\\
    d^n_n(p,g_1,...,g_n) &= (p,g_1,...,g_{n-1}).
\end{align*}
There is a map of semi-simplicial manifolds $(P/\!/G)_{\bullet} \rightarrow BG_{\bullet}$ given at each level simply by the projections $P \times G^n \rightarrow G^n$. Then it follows directly from Definition \ref{def:2pbmult} that a $\mathcal{G}$-bundle $(P,\mathcal{P},\rho,\alpha^{\rho})$ can be described with the following data:
\begin{enumerate}
    \item A good semi-simplicial cover $\mathcal{V}_{\bullet}$ of $(P /\!/ G)_{\bullet}$, that is, a family $\{\mathcal{V}_n\}_{n \in \mathbb N}$ of good covers $\mathcal{V}_n = \{V^n_{r_n}\}_{r_n \in R_n}$ of $P \times G^{n}$ indexed by sets $R_n$ and with maps $d^n_j: R_n \rightarrow R_{n-1}$, $0 \leq j \leq n$ such that $d^n_j(V^n_{r_n}) \subset V^{n-1}_{d^n_j(r_n)}$ and that $(\{R_n\}_{n \in \mathbb N},\{d^n_j\}_{n \in \mathbb N, 0 \leq j \leq n})$ forms a semi-simplicial set.
    \item A map of semi-simplicial manifolds $\pi:\mathcal{V} \rightarrow \mathcal{U}$ covering the map $(P /\!/ G)_{\bullet} \rightarrow BG_{\bullet}$, that is, maps $\pi_n: R_n \rightarrow I_n$ such that $\pi_n(V_{r_n}^n) \subset U^n_{\pi_n(r_n)}$ and $d^n_j \circ \pi_n = \pi_n \circ d^n_j$.
    \item Cocycle data 
    \begin{equation}
        b_{r_0s_0t_0}: V_{r_0s_0t_0}^0 \rightarrow T, \quad act_{r_1s_1}:V_{r_1s_1}^1 \rightarrow T, \quad \beta_{r_2}: V_{r_2}^2 \rightarrow T
    \end{equation} such that
    \begin{align}
    \begin{split}
        &b_{r_0s_0t_0}(p)b_{r_0s_0u_0}^{-1}(p) b_{r_0t_0u_0}(p)b_{s_0t_0u_0}^{-1}(p) = 1,\\
        &act_{r_1s_1}(p,g)act_{r_1t_1}^{-1}(p,g)act_{s_1t_1}(p,g) \\
        &\quad \quad \quad \quad = b_{d_1(r_1)d_1(s_1)d_1(t_1)}(p)b_{d_0(r_1)d_0(s_1)d_0(t_1)}^{-1}(pg) \lambda_{\pi(r_1)\pi(s_1)\pi(t_1)}(g),  \\
        &\beta_{r_2}(p,g_1,g_2)\beta_{s_2}^{-1}(p,g_1,g_2) \\
        &\quad \quad \quad \quad = act_{d_2(r_2)d_2(s_2)}^{-1}(p,g_1) act_{d_1(r_2)d_1(s_2)}(p,g_1g_2) act_{d_0(r_2)d_0(s_2)}^{-1}(pg_1,g_2) m_{\pi(r_2)\pi(s_2)}(g_1,g_2), \\
        &\beta_{d_3(r_3)}(p,g_1,g_2)\beta_{d_2(r_3)}^{-1}(p,g_1,g_2g_3)\beta_{d_1(r_3)}(p,g_1g_2,g_3)\beta_{d_0(r_3)}^{-1}(pg_1,g_2,g_3)\alpha_{\pi(r_3)}(g_1,g_2,g_3) = 1.
    \end{split}
    \end{align}
\end{enumerate}

\begin{remark}
    A good semi-simplicial cover $\mathcal{V}_{\bullet}$ of $P/\!/G$ with map $\pi$ as above always exists. Namely, if $\{M_a\}_{a \in \Lambda}$ is a good cover of $M$ where we have sections $s_a:M_a \rightarrow P$ and $\mathcal{U} = \{\{U^n_{i^n}\}_{i_n \in I^n}\}_{n \in \mathbb N}$ is a good semisimplicial cover of $BG_{\bullet}$, then we can let $R^n := \Lambda \times I^{n+1}$ and 
    $V^n_{(a,i_{n+1})} := \{(s_a(x)g_0,g_1,...,g_n) \:|\: x \in M_a, \, (g_0,...,g_n) \in U^{n+1}_{i_{n+1}}\} \cong M_a \times U^{n+1}_{i_{n+1}}$, with maps $d^n_j(a,i_{n+1},s_a(x)g_0,g_1,...,g_n) = (a,d^{n+1}_{j+1}(i_{n+1}),d^n_j(s_a(x)g_0,g_1,...,g_n))$ and $\pi^n(a,i_{n+1},s_a(x)g_0,g_1,...,g_n) = (d^{n+1}_0(i_{n+1}),g_1,...,g_n)$.
\end{remark}

This cocycle data can be used to prove Proposition \ref{prop:pontryagin1} below, which describes which $G$-bundles can be lifted to $\mathcal{G}$-bundles. In order to state the proposition we need to recall first that a \emph{connection} on a $G$-bundle $P \rightarrow M$ is a $A \in \Omega^1(P,\mathfrak{g})$ such that, over $P \times G$,
\begin{equation}\label{eq:connection}
    act^{\ast}A = Ad(g(\cdot)^{-1})p^{\ast}A + g^{\ast}\theta^L.
\end{equation}
By taking vectors tangent to either $P$ or $G$ in this equation, we see that this is equivalent to the two conditions 
\begin{align*}
    A_{pg}(v_pg) = g^{-1}A_p(v_p)g, \quad A_{p\cdot 1}(pu) = u, \quad \text{ for } \quad p \in P,\, v_p \in T_pP, \, g \in G, \,u \in T_1G.
\end{align*} 

We need the following lemma.

\begin{lemma}\label{lem:chernsimons}
    Let $P$ be a $G$-bundle, let $A \in \Omega^1(P,\mathfrak{g})$ be a connection on it and let $\langle \cdot,\cdot \rangle: \mathfrak{g} \otimes \mathfrak{g} \rightarrow \mathfrak{t}$ be an $Ad$-invariant symmetric map. Then the forms 
    $$CS(A) := \langle dA \wedge A \rangle + \frac{1}{3}\langle A \wedge [A \wedge A] \rangle \in \Omega^3(P,\mathfrak{t}), \quad R(A) := \langle p^{\ast}A \wedge g^{\ast}\theta^R \rangle \in \Omega^2(P \times G,\mathfrak{t})$$
    satisfy
            \begin{align}\label{eq:chernsimons}
                 dCS(A) = \langle F_A \wedge F_A \rangle, \quad dR(A) - \delta CS(A) = g^{\ast}\mu, \quad \delta R(A) = g^{\ast}\nu,
           \end{align}
    where $\mu, \, \nu$ are defined by \eqref{eq:murho}, $F_A := dA + \frac{1}{2}[A \wedge A]$ and $\delta$ is the simplicial differential of $P/\!/ G_{\bullet}$.
\end{lemma}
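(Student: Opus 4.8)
The plan is to read the three identities in \eqref{eq:chernsimons} as the transgression/descent equations relating the Chern--Simons $3$-form $CS(A)$ and the auxiliary $2$-form $R(A)$ on $(P/\!/G)_\bullet$ to the universal forms $\mu,\nu$ on $BG_\bullet$, and to establish each by a direct computation that in every case reduces to the Maurer--Cartan equations together with the infinitesimal $Ad$-invariance identity $\langle [u,v],w\rangle + \langle v,[u,w]\rangle = 0$. For the first identity I would differentiate $CS(A)=\langle dA\wedge A\rangle + \tfrac13\langle A\wedge[A\wedge A]\rangle$ using $d(dA)=0$ and the graded Leibniz rule; collecting terms and using $Ad$-invariance to rewrite $\langle dA\wedge[A\wedge A]\rangle$, one recognizes $dCS(A)=\langle F_A\wedge F_A\rangle$ after observing that the top bracket term $\langle [A\wedge A]\wedge[A\wedge A]\rangle$ vanishes. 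This is the classical Chern--Weil transgression.

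For the third identity I would expand $\delta R(A)=(d^2_0)^\ast R(A)-(d^2_1)^\ast R(A)+(d^2_2)^\ast R(A)$ along the three face maps $d^2_j\colon P\times G^2\to P\times G$. The $d^2_0$-pullback replaces $p^\ast A$ by $act^\ast A=Ad(g_1^{-1})p^\ast A+g_1^\ast\theta^L$ via \eqref{eq:connection}, while the $d^2_1$-pullback uses the product rule $(g_1g_2)^\ast\theta^R=g_1^\ast\theta^R+Ad(g_1)\,g_2^\ast\theta^R$ for the right-invariant Maurer--Cartan form. The two terms $\langle p^\ast A\wedge g_1^\ast\theta^R\rangle$ cancel between $d^2_1$ and $d^2_2$, and $Ad$-invariance identifies $\langle Ad(g_1^{-1})p^\ast A\wedge g_2^\ast\theta^R\rangle$ with $\langle p^\ast A\wedge Ad(g_1)g_2^\ast\theta^R\rangle$, so these cancel as well; what survives is exactly $g^\ast\nu=-\langle g_1^\ast\theta^L\wedge g_2^\ast\theta^R\rangle$, with $\nu$ as in \eqref{eq:murho}.

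The second identity is the main obstacle. Here $\delta CS(A)=act^\ast CS(A)-p^\ast CS(A)=CS(A')-CS(p^\ast A)$, where $A'=Ad(g^{-1})p^\ast A+g^\ast\theta^L$ is the gauge transform of $p^\ast A$ by the projection $g\colon P\times G\to G$; naturality of the operation $CS$ under pullback (it commutes with $d$, with $[\,\cdot\wedge\cdot\,]$ and with $\langle\cdot,\cdot\rangle$) is what lets me write $act^\ast CS(A)=CS(A')$. I would then compute the gauge variation of the Chern--Simons form, either by integrating the first-variation formula $\tfrac{d}{dt}CS(A_t)=2\langle \dot A_t\wedge F_{A_t}\rangle+d\langle\dot A_t\wedge A_t\rangle$ along the linear path $A_t=p^\ast A+t(A'-p^\ast A)$, or by direct expansion using $d(g^\ast\theta^L)=-\tfrac12[g^\ast\theta^L\wedge g^\ast\theta^L]$ together with the relation $\theta^R=Ad(g)\theta^L$. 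The purely gauge term collects into $\tfrac16\langle g^\ast\theta^L\wedge[g^\ast\theta^L\wedge g^\ast\theta^L]\rangle=g^\ast\mu$, and the remaining exact term must be identified with $dR(A)=d\langle p^\ast A\wedge g^\ast\theta^R\rangle$.

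I expect the delicate point to be precisely this last matching. Isolating the WZW term $g^\ast\mu$ is clean once $F_{A'}=Ad(g^{-1})p^\ast F_A$ is used, but converting the boundary term produced by the variation into the specific shape $\langle p^\ast A\wedge g^\ast\theta^R\rangle$ requires careful Koszul-sign bookkeeping and repeated use of $Ad$-invariance and $\theta^R=Ad(g)\theta^L$ to trade $\theta^L$ for $\theta^R$ and absorb the conjugation. A useful consistency check along the way is that $d(\delta CS(A))=0$, which holds because $F_{A'}=Ad(g^{-1})p^\ast F_A$ forces $\langle F_{A'}\wedge F_{A'}\rangle=p^\ast\langle F_A\wedge F_A\rangle$ and because $d\mu=0$; this guarantees that the variation is exact up to the closed term $g^\ast\mu$, so that once the coefficient of the boundary term is pinned down the equality $dR(A)-\delta CS(A)=g^\ast\mu$ follows.
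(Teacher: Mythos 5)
Your proposal is correct and takes the same route as the paper: the paper's entire proof of this lemma is the sentence ``It follows from a straightforward computation,'' and your three computations (the classical Chern--Weil transgression for $dCS(A)=\langle F_A\wedge F_A\rangle$, the Polyakov--Wiegmann-type gauge variation of $CS$ using $act^{\ast}A=Ad(g^{-1})p^{\ast}A+g^{\ast}\theta^L$ for the second identity, and the term-by-term cancellation via $Ad$-invariance and $(g_1g_2)^{\ast}\theta^R=g_1^{\ast}\theta^R+Ad(g_1)g_2^{\ast}\theta^R$ for $\delta R(A)$) are exactly that computation made explicit. The only delicate point is the one you already flag yourself: the final signs in the second and third identities hinge on the conventions chosen for the graded wedge-pairing and the simplicial differential, so the Koszul bookkeeping must be done against a fixed, consistent set of conventions.
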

\begin{proof}
    It follows from a straightforward computation.
\end{proof}

\begin{proposition}\label{prop:pontryagin1}
Let $\mathcal{G}$ be a multiplicative $T$-gerbe over $G$ with $T$ connected and let $P \rightarrow M$ be a $G$-bundle. Then
\begin{enumerate}
    \item\label{it:pontryagin1} There is a characteristic class $c(P) \in H^4(M,Z)$ which vanishes precisely when $P$ can be lifted to a $\mathcal{G}$-bundle.
    \item\label{it:pontryagin2} If $\mathcal{G}$ admits a connective structure with associated pairing $\langle \cdot,\cdot \rangle:\mathfrak{g} \otimes \mathfrak{g} \rightarrow \mathfrak{t}$, then the image of $c(P)$ in $H^4(M,\mathfrak{t})$ is represented in de Rham cohomology by $\langle F_A \wedge F_A \rangle \in \Omega^4_{cl}(M,\mathfrak{t})$, for $A \in \Omega^1(P,\mathfrak{g})$ any connection and $F_A = dA + \frac{1}{2}[A \wedge A] \in \Omega^2(M,ad\,P)$ its curvature.
\end{enumerate}
\end{proposition}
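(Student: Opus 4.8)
The plan is to read the obstruction off the cocycle description of $\mathcal{G}$-bundles in Section \ref{sec:cocycle2pbspg} and to identify it with a Bockstein image. Fix cocycle data $\lambda_{i_1j_1k_1}, m_{i_2j_2}, \alpha_{i_3}$ for $\mathcal{G}$ on a good semi-simplicial cover $\mathcal{U}$ of $BG_{\bullet}$, representing the class $[\mathcal{G}] \in H^3(BG_{\bullet},C^{\infty}_T) = Ext(G,BT)$ of Proposition \ref{prop:class2grps}. For a good semi-simplicial cover $\mathcal{V}$ of $(P/\!/G)_{\bullet}$ with a map $\pi:\mathcal{V} \rightarrow \mathcal{U}$, the equations satisfied by $(b_{r_0s_0t_0}, act_{r_1s_1}, \beta_{r_2})$ say exactly that the pulled-back cocycle $\pi^{\ast}(\lambda,m,\alpha)$ equals the coboundary $\delta(b,act,\beta)$ in the double complex computing $H^{\ast}((P/\!/G)_{\bullet},C^{\infty}_T)$. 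Hence $P$ lifts to a $\mathcal{G}$-bundle if and only if $\pi^{\ast}[\mathcal{G}] = 0$ in $H^3((P/\!/G)_{\bullet},C^{\infty}_T)$.

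First I would reduce the coefficient sheaf to a topological one. The key observation is that, since $P \rightarrow M$ is a principal $G$-bundle, the assignment $(p_0,\dots,p_n) \mapsto (p_0;g_1,\dots,g_n)$ with $p_i = p_{i-1}g_i$ identifies $(P/\!/G)_n = P \times G^n$ with the $(n{+}1)$-fold fibre product $P \times_M \cdots \times_M P$ compatibly with the face maps; that is, $(P/\!/G)_{\bullet}$ is the \v{C}ech nerve of the surjective submersion $P \rightarrow M$. By Remark \ref{rk:simpacyclic}, $H^p((P/\!/G)_{\bullet},C^{\infty}_{\mathfrak{t}})$ is then the cohomology of the \v{C}ech complex $(C^{\infty}(P \times G^p,\mathfrak{t}),\delta)$ of this submersion with values in the fine sheaf $C^{\infty}_{\mathfrak{t}}$, which is exact for $p>0$; thus $H^p((P/\!/G)_{\bullet},C^{\infty}_{\mathfrak{t}}) = 0$ for $p>0$.

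With this in hand I would extract the obstruction. Using that $T$ is connected, the exponential sequence $0 \rightarrow \underline{Z} \rightarrow C^{\infty}_{\mathfrak{t}} \rightarrow C^{\infty}_T \rightarrow 0$ over $(P/\!/G)_{\bullet}$ together with the vanishing $H^3((P/\!/G)_{\bullet},C^{\infty}_{\mathfrak{t}}) = 0$ yields an injection $\beta: H^3((P/\!/G)_{\bullet},C^{\infty}_T) \hookrightarrow H^4((P/\!/G)_{\bullet},\underline{Z}) = H^4(M,Z)$, the last identification holding because the geometric realization of $(P/\!/G)_{\bullet}$ is $M$. Setting $c(P) := \beta(\pi^{\ast}[\mathcal{G}])$, injectivity of $\beta$ gives $\pi^{\ast}[\mathcal{G}] = 0 \iff c(P) = 0$, which combined with the first paragraph proves \ref{it:pontryagin1}; by naturality of the Bockstein and of $\pi^{\ast}$ one also identifies $c(P) = f^{\ast}c(\mathcal{G})$ for $f:M \rightarrow BG$ the classifying map and $c(\mathcal{G}) \in H^4(BG,Z)$ as in Proposition \ref{prop:class2grps}. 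For \ref{it:pontryagin2} I would run the Chern--Weil transgression encoded in Lemma \ref{lem:chernsimons}. The image of $c(P)$ in $H^4(M,\mathfrak{t})$ is $\pi^{\ast}c_{\mathfrak{t}}(\mathcal{G}) = \pi^{\ast}[\mu,-\nu,0,0]$, computed in $H^4(M,\mathfrak{t}) = H^4((P/\!/G)_{\bullet},\underline{\mathfrak{t}})$, with $\mu,\nu$ as in \eqref{eq:murho} coming from the pairing of the connective structure via Theorem \ref{th:MaurerCartan} and \eqref{eq:drclass}. Given a connection $A$, I would feed the cochain $(CS(A),R(A))$ into the simplicial de Rham complex of $(P/\!/G)_{\bullet}$: the three identities \eqref{eq:chernsimons} say precisely that its total differential produces $\langle F_A \wedge F_A \rangle$ at simplicial degree $0$, together with $g^{\ast}\mu$ and $g^{\ast}\nu$ at degrees $1$ and $2$. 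Hence $\pi^{\ast}[\mu,-\nu,0,0]$ is cohomologous to the basic, hence descended, form $\langle F_A \wedge F_A \rangle \in \Omega^4(M,\mathfrak{t})$, which is the assertion.

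I expect the main obstacle to be the reduction step: recognizing $(P/\!/G)_{\bullet}$ as the \v{C}ech nerve of $P \rightarrow M$ and deducing $H^{>0}((P/\!/G)_{\bullet},C^{\infty}_{\mathfrak{t}}) = 0$. This is exactly what upgrades the a priori necessary condition $c(P) = 0$ into a sufficient one, by forcing $\beta$ to be injective; without it one would only obtain an obstruction in the possibly larger group $H^3((P/\!/G)_{\bullet},C^{\infty}_T)$. The remaining points, namely unwinding the cocycle equations of Section \ref{sec:cocycle2pbspg} into a coboundary condition and fixing the signs in the transgression, are routine given Lemma \ref{lem:chernsimons}.
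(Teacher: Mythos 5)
Your proposal is correct and takes essentially the same route as the paper: part \ref{it:pontryagin1} by reading the cocycle data of Section \ref{sec:cocycle2pbspg} as a trivialization of the pullback $\pi^{\ast}[\mathcal{G}]$ in $H^3((P/\!/G)_{\bullet},C^{\infty}_T)$ and identifying this group with $H^4(M,Z)$ via the \v{C}ech-nerve description of $(P/\!/G)_{\bullet}$, fineness of $C^{\infty}_{\mathfrak{t}}$ and the exponential sequence, and part \ref{it:pontryagin2} by transgressing with the cochain $(CS(A),R(A))$ exactly as in Lemma \ref{lem:chernsimons}. The only deviation---applying the Bockstein on $(P/\!/G)_{\bullet}$ before descending to $M$ rather than after, and spelling out why $H^{>0}((P/\!/G)_{\bullet},C^{\infty}_{\mathfrak{t}})=0$---merely makes explicit the chain of identifications $H^3(P/\!/G_{\bullet},C^{\infty}_T)=H^3(M,C^{\infty}_T)=H^4(M,Z)$ that the paper asserts without proof.
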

\begin{proof}
Recall from Proposition \ref{prop:class2grps} that $\mathcal{G}$, as a multiplicative $T$-gerbe over $G$, is classified by an element in $H^3(BG_{\bullet},C^{\infty}_T)$ described in a \v{C}ech resolution of this sheaf by $\lambda_{i_1j_1k_1}, \,m_{i_2j_2}, \, \alpha_{i_3}$. Consider the map of semi-simplicial manifolds $\pi:(P/\!/G)_{\bullet} \rightarrow BG_{\bullet}$. By inspection of the above cocycle data we see that the data of a $\mathcal{G}$-bundle lifting $P$ is equivalent to a trivialization of $-\pi^{\ast}[\lambda_{i_1j_1k_1}, \,m_{i_2j_2}, \, \alpha_{i_3}] \in H^3(P/\!/G_{\bullet},C^{\infty}_T) = H^3(M,C^{\infty}_T) = H^4(M,Z)$, which proves part \ref{it:pontryagin1}. For part \ref{it:pontryagin2}, recall from Theorem \ref{th:MaurerCartan} that the image of the class $[\lambda_{i_1j_1k_1}, \,m_{i_2j_2}, \, \alpha_{i_3}]$ in $H^4(BG_{\bullet},\mathfrak{t})$ is represented in semi-simplicial de Rham cohomology by $(0,\mu,-\nu,0,0)$, where $\mu$, $\nu$ are defined by \eqref{eq:murho}. Then \eqref{eq:chernsimons} implies precisely that $-\pi^{\ast}(0,\mu,-\nu,0,0)$ and $(\langle F_A \wedge F_A \rangle,0,0,0)$ differ in the semi-simplicial de Rham complex of $P/\!/G_{\bullet}$ by the total derivative of $(CS(A),R(A))$; hence, they define the same class in $H^4(P/\!/G_{\bullet},\mathfrak{t})$. This concludes the proof, as the isomorphism $H^4(M,\mathfrak{t}) = H^4(P/\!/G_{\bullet},\mathfrak{t})$ in de Rham cohomology sends any closed $4$-form $\sigma \in \Omega^4(M,\mathfrak{t})$ to the element $(\pi^{\ast}\sigma,0,0,0)$ of the de Rham complex of $P/\!/G_{\bullet}$.

\end{proof}

\subsubsection{Cocycle data on a good cover of $M$}\label{sec:cocycle2pbsm}
For $\{M_a\}_{a \in \Lambda}$ a good cover of $M$, a $\mathcal{G}$-bundle $(P,\mathcal{P},\rho,\alpha^{\rho})$ is described by:
\begin{enumerate}
    \item $g_{ab}: M_{ab} \rightarrow G$ with $g_{ab}g_{bc} = g_{ac}$.
    \item $\sigma^{ab}$ trivializations of $g_{ab}^{\ast}\mathcal{G}$; that is, $s^{ab}_{i_1j_1}: g_{ab}^{-1}(U_{i_1j_1}^1) \rightarrow T$ with \begin{equation}
        s^{ab}_{i_1j_1}(x)s^{ab}_{j_1k_1}(x)\lambda_{i_1j_1k_1}(g_{ab}(x)) = s^{ab}_{i_1k_1}(x).
    \end{equation}
    \item $\tau^{abc}: m(\sigma^{ab},\sigma^{bc}) \rightarrow \sigma^{ac}$ isomorphisms of trivializations such that
    \begin{equation}
        \begin{tikzcd}
            \text{(Trivial)} \ar[rr,bend left=70,"{(\sigma_{ab}\sigma_{bc})\sigma_{cd}}"{name=F}] 
              \ar[rr,bend left=20,"{\sigma_{ab}(\sigma_{bc}\sigma_{cd})}"{name=G,2cell}]
              \ar[rr,bend right=20,"{\sigma_{ab}\sigma_{bd}}"{name=H,2cell}]
              \ar[rr,bend right=70,"{\sigma_{ad}}"{name=E,swap}] & & 
                (g_{ad})^{\ast}\mathcal{G} \\
             & \ar[Rightarrow,from=F,to=G,"{\alpha}"{swap,pos=0.3},shorten >=1.5pt] \ar[Rightarrow,from=G,to=H,"{\tau^{bcd}}"{swap}] \ar[Rightarrow,from=H,to=E,"{\tau^{abd}}"{swap}] &
        \end{tikzcd} = 
        \begin{tikzcd}
            \text{(Trivial)} \ar[rr,bend left=50,"{(\sigma_{ab}\sigma_{bc})\sigma_{cd}}"{name=F}] 
              \ar[rr,bend left = 20,"{\sigma_{ac}\sigma_{cd}}"{name=G,2cell}]
              \ar[rr,bend right=20,"{\sigma_{ad}}"{name=H,swap}] & & 
                (g_{ad})^{\ast}\mathcal{G} \\
             & \ar[Rightarrow,from=F,to=G,"{\tau^{abc}}"{swap,pos=0.3},shorten >=1.5pt] \ar[Rightarrow,from=G,to=H,"{\tau^{acd}}"{swap}] & 
        \end{tikzcd};
    \end{equation}    
    that is, $t^{abc}_{i_2}: (g_{ab},g_{bc})^{-1}(U^2_{i_2}) \rightarrow T$ such that 
    \begin{align}
    \begin{split}
        t^{abc}_{i_2}(x)s^{ab}_{d_2(i_2)d_2(j_2)}(x)s^{bc}_{d_0(i_2)d_0(j_2)}(x)m_{i_2j_2}(g_{ab}(x),g_{bc}(x)) &= s^{ac}_{d_1(i_2)d_1(j_2)}(x)t^{abc}_{j_2}(x),\\
        t^{abc}_{d_3(i_3)}(x)t^{acd}_{d_1(i_3)}(x)\alpha_{i_3}(g_{ab}(x),g_{bc}(x),g_{cd}(x)) &= t^{bcd}_{d_0(i_3)}(x)t^{abd}_{d_2(i_3)}(x).
    \end{split}
    \end{align}
\end{enumerate}
Indeed, given a $\mathcal{G}$-bundle $(P,\mathcal{P},\rho,\alpha^{\rho})$, one obtains this data by taking local sections $s_a: M_a \rightarrow P$ and trivializations $\eta_a$ of $s_a^{\ast}\mathcal{P} \rightarrow M_a$. Then $g_{ab}$ are defined as the unique functions such that $s_ag_{ab} = s_b$, while $\sigma^{ab}$ are defined as the composition 
$$\text{(Trivial)} \stackrel{\eta_b}{\rightarrow} s_b^{\ast}\mathcal{P} \stackrel{(s_a,g_{ab})^{\ast}\rho^{-1}}{\longrightarrow} s_a^{\ast}\mathcal{P} \otimes g_{ab}^{\ast}\mathcal{G} \stackrel{\eta_a^{-1}}{\rightarrow} g_{ab}^{\ast}\mathcal{G}$$ and similarly $\tau^{abc} = (s_a,g_{ab},g_{bc})^{\ast}(\alpha^{\rho})^{-1}$. Conversely, given such data one constructs 
$$P := \sqcup_a M_a \times G/ \sim$$
with $(a,x,g_{ab}(x)g) \sim (b,x,g)$ and defines $r_a: \pi^{-1}(M_a) \subset P \rightarrow G$ to be $[a,x,g] \mapsto g$; these satisfy $g_{ab}r_b = r_a$. Then $\mathcal{P} \rightarrow P$ is constructed by gluing the gerbes $r_a^{\ast}\mathcal{G} \rightarrow \pi^{-1}(M_a)$ with the isomorphisms $$r_a^{\ast}\mathcal{G} \stackrel{(g_{ab},r_b)^{\ast}m^{-1}}{\rightarrow} g_{ab}^{\ast}\mathcal{G} \otimes r_b^{\ast}\mathcal{G} \stackrel{\sigma_{ab}^{-1}}{\rightarrow} r_b^{\ast}\mathcal{G}$$ and the 2-isomorphisms $(g_{ab},g_{bc},r_c)^{\ast}\alpha^{-1} \otimes \tau^{abc}$.

\subsection{Connections on principal 2-bundles}\label{sec:connections}

Connections on String$(G)$-principal bundles are defined in \cite{WaldString} as trivializations of an associated Chern-Simons 2-gerbe with connection. The existence of this 2-gerbe with connection relies essentially on the fact that String$(G)$ has a canonical enhanced curving. Hence, Theorem \ref{th:MaurerCartan} allows us to generalize loc. cit. to define connections for $\mathcal{G}$-bundles, where $\mathcal{G}$ is any multiplicative gerbe equipped with a connective structure. So from now on we fix such choice of connective structure and write $\langle \cdot,\cdot \rangle: \mathfrak{g} \otimes \mathfrak{g} \rightarrow \mathfrak{t}$, $\Theta^L$ for the associated pairing and curving from Theorem \ref{th:MaurerCartan}. 

The following definition of \emph{connection} is obtained by unwinding what a \emph{trivialization of the Chern-Simons 2-gerbe with connection} means in this context, but we also expand it by introducing \emph{enhanced connections}. Recall that connections on $G$-bundles are defined by \eqref{eq:connection} and that enhanced curvings are defined in Definition \ref{def:enhcurv}.

\begin{definition}\label{def:2pbcon}
Let $(P,\mathcal{P},\rho,\alpha^{\rho})$ be a principal $\mathcal{G}$-bundle. A \emph{connective structure} on it is the following data:
\begin{enumerate}
    \item A connective structure $\nabla$ on the gerbe $\mathcal{P} \rightarrow P$
    \item A connection $\nabla_{\rho}$ on the isomorphism of gerbes $\rho: p^{\ast}\mathcal{P}_{\nabla} \otimes g^{\ast}\mathcal{G}_{\nabla} \rightarrow (pg)^{\ast}\mathcal{P}_{\nabla}$ such that $\alpha^{\rho}$ is a flat 2-isomorphism of gerbes.
\end{enumerate}
We often write $(P,\mathcal{P}_{\nabla},\rho_{\nabla},\alpha^{\rho})$ for a principal $\mathcal{G}$-bundle with connective structure. An \emph{isomorphism of connective structures on a $\mathcal{G}$-bundle} $(\nabla_1,\nabla_{\rho_1}) \rightarrow (\nabla_2,\nabla_{\rho_2})$ is an isomorphism of connective structures on gerbes $\phi:\nabla_1 \rightarrow \nabla_2$ such that the following is a commutative diagram of isomorphisms of gerbes with connective structures
    \begin{equation}
        \begin{tikzcd}
             p^{\ast}\mathcal{P}_{\nabla_1} \otimes g^{\ast}\mathcal{G}_{\nabla} \ar[r,"{(\rho,\nabla_{\rho_1})}",{name=U}] \ar[d,"{(id,p^{\ast}\phi \otimes id)}",swap] 
             & (pg)^{\ast}\mathcal{P}_{\nabla_1}\ar[d,"{(id,(pg)^{\ast}\phi)}"] \\
             p^{\ast}\mathcal{P}_{\nabla_2} \otimes g^{\ast}\mathcal{G}_{\nabla} \ar[r," {(\rho,\nabla_{\rho_2})}",{name=D},swap] 
             & (pg)^{\ast}\mathcal{P}_{\nabla_2}.
        \end{tikzcd}
        \end{equation}
A \emph{connection} (resp. \emph{enhanced connection}) on $(P,\mathcal{P},\rho,\alpha^{\rho})$ is 
\begin{enumerate}
    \item A connective structure $(\nabla,\nabla_{\rho})$ on $(P,\mathcal{P},\rho,\alpha^{\rho})$.
    \item A $G$-connection $A \in \Omega^1(P,\mathfrak{g})$ on $P$.
    \item\label{it:defcon3} A curving (resp. \emph{enhanced curving}) $B$ on $\mathcal{P}_{\nabla} \rightarrow P$ such that $\nabla_{\rho}$ has curvature $-\langle p^{\ast}A \wedge g^{\ast}\theta^R \rangle \in \Omega^2(P \times G,\mathfrak{t})$ (resp. $-2\langle p^{\ast}A \otimes g^{\ast}\theta^R \rangle \in \Gamma(T^{\ast}(P \times G) \otimes T^{\ast}(P \times G) \otimes \mathfrak{t})$) with respect to $p^{\ast}B\otimes g^{\ast}\Theta^L$ (resp. $p^{\ast}B\otimes g^{\ast}\Theta^{L,en}$) on $p^{\ast}\mathcal{P}_{\nabla} \otimes g^{\ast}\mathcal{G}_{\nabla}$ and $(pg)^{\ast}B$ on $(pg)^{\ast}\mathcal{P}_{\nabla}$.
\end{enumerate}
An \emph{isomorphism of (enhanced) connections} $(\nabla_1,\nabla_{\rho_1},A_1,B_1) \rightarrow (\nabla_2,\nabla_{\rho_2},A_2,B_2)$ can only exist if $A_1 = A_2$ and is then given by an isomorphism $\phi:(\nabla_1,\nabla_{\rho_1}) \rightarrow (\nabla_2,\nabla_{\rho_2})$, flat with respect to $B_1$, $B_2$. We write $\mathcal{A}(\mathcal{P})$ for the groupoid of connections on $\mathcal{P}$ and  $\mathcal{A}^{en}(\mathcal{P})$ for the groupoid of enhanced connections on $\mathcal{P}$. Whenever a connective structure $(\nabla,\nabla_{\rho})$ on $\mathcal{P}$ is fixed, we write $\mathcal{A}(\mathcal{P}_{\nabla})$ for the set of connections with such connective structure and $\mathcal{A}^{en}(\mathcal{P}_{\nabla})$ for the set of enhanced connections with such connective structure.
\end{definition}

\begin{remark}
    Recall how the curvature of a connection on an isomorphism of gerbes with respect to curvings is defined in Definition \ref{def:gerbes}. Condition \ref{it:defcon3} in Definition \ref{def:2pbcon} above can be thought of as an analog of the equation \eqref{eq:connection} that defines connections on $G$-bundles, as it is more clear in the cocycle equation \eqref{eq:cocyclecon1e} below.
\end{remark}

\begin{remark}\label{rk:enhcon}
An enhanced connection can be equivalently defined as a pair $((\nabla,\nabla_{\rho},A,B),h)$ of a connection $(\nabla,\nabla_{\rho},A,B)$ and a symmetric tensor $h \in \Gamma(S^2 T^{\ast}M \otimes \mathfrak{t})$. This is because condition \ref{it:defcon3} in Definition \ref{def:2pbcon} states that an $h^P \in \Gamma(S^2T^{\ast}P \otimes \mathfrak{t})$ is the symmetric part of an enhanced connection with underlying $G$-connection $A$ if and only if $h := h^P + \frac{1}{2}\langle A \odot A \rangle \in \Gamma(S^2T^{\ast}P \otimes \mathfrak{t})$ is basic, which can thus be identified with a symmetric tensor on $M$. Note the relation with the Kaluza-Klein mechanism \cite{KimSaeTDual}.
\end{remark}


\begin{lemma}\label{lem:curvature}
     Let $(P,\mathcal{P}_{\nabla},\rho_{\nabla},\alpha^{\rho},A,B)$ be a principal $\mathcal{G}$-bundle with connection. Then the curvature $\hat{H} \in \Omega^3(P,\mathfrak{t})$ of $B$ as a curving on $\mathcal{P}_{\nabla} \rightarrow P$ is of the form $\hat{H} := \pi^{\ast}H - CS(A)$ for some $H \in \Omega^3(M,\mathfrak{t})$, where $CS(A)$ is as in Lemma \ref{lem:chernsimons}.
\end{lemma}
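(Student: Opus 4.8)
The plan is to show that the corrected form $\hat{H} + CS(A)$ is basic, i.e.\ equal to $\pi^{\ast}H$ for a (necessarily unique) $H \in \Omega^3(M,\mathfrak{t})$; the lemma then follows immediately, since $\pi^{\ast}$ is injective. The whole argument takes place on the nerve $P/\!/G_{\bullet}$ and rests on comparing the prescribed curvature of $\nabla_{\rho}$ with the Chern--Simons data of Lemma \ref{lem:chernsimons}.

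First I would translate condition \eqref{it:defcon3} of Definition \ref{def:2pbcon} into a relation between $3$-forms on $P \times G$. Recall the standard property of a connection on an isomorphism of gerbes: if it has curvature $F$ with respect to curvings $B^{1}$, $B^{2}$ on source and target, whose $3$-curvatures are $H^{1}$, $H^{2}$, then $dF = H^{2} - H^{1}$. I would apply this to $\rho_{\nabla}\colon p^{\ast}\mathcal{P}_{\nabla}\otimes g^{\ast}\mathcal{G}_{\nabla} \to (pg)^{\ast}\mathcal{P}_{\nabla}$. The source curving $p^{\ast}B \otimes g^{\ast}\Theta^{L}$ has $3$-curvature $p^{\ast}\hat{H} + g^{\ast}\mu$ (curvature is additive on tensor products and natural under pullback, and $\Theta^{L}$ has curvature $\mu$ by Theorem \ref{th:MaurerCartan}), while the target curving $(pg)^{\ast}B$ has $3$-curvature $(pg)^{\ast}\hat{H}$. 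Since \eqref{it:defcon3} prescribes $F = -\langle p^{\ast}A \wedge g^{\ast}\theta^{R}\rangle = -R(A)$, this gives
\[
  -dR(A) = (pg)^{\ast}\hat{H} - p^{\ast}\hat{H} - g^{\ast}\mu = \delta\hat{H} - g^{\ast}\mu,
\]
where $\delta = act^{\ast} - p^{\ast}$ is the simplicial differential of $P/\!/G_{\bullet}$.

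Next I would feed in Lemma \ref{lem:chernsimons}, whose middle identity reads $dR(A) - \delta CS(A) = g^{\ast}\mu$. Substituting $dR(A) = \delta CS(A) + g^{\ast}\mu$ into the displayed equation cancels the $g^{\ast}\mu$ terms and leaves $\delta\hat{H} = -\delta CS(A)$, that is $\delta\bigl(\hat{H} + CS(A)\bigr) = 0$. It then remains to observe that any $3$-form $\omega$ on $P$ satisfying $act^{\ast}\omega = p^{\ast}\omega$ is automatically basic: restricting the identity along $x\mapsto(x,g)$ gives $R_{g}^{\ast}\omega = \omega$ ($G$-invariance), while evaluating it at a point $(p,1)$ on vectors $v_1,\dots,v_{k-1}\in T_pP$ and one vector $u \in \mathfrak{g} = T_1 G$ gives $\omega_p(v_1,\dots,v_{k-1},u_P) = 0$, because $d\,act_{(p,1)}(0,u) = u_P$ is the fundamental vector field whereas $dp_{(p,1)}(0,u)=0$ (horizontality). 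Applying this to $\omega = \hat{H} + CS(A)$ produces the desired $H \in \Omega^3(M,\mathfrak{t})$ with $\hat{H} + CS(A) = \pi^{\ast}H$, i.e.\ $\hat{H} = \pi^{\ast}H - CS(A)$.

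The routine but delicate part is the sign bookkeeping in the first step: one must fix the sign convention for the curvature of a connection on an isomorphism of gerbes (cf.\ Definition \ref{def:enhcurv} and the Appendix) and for $\delta$ so that the $g^{\ast}\mu$ contributions cancel against Lemma \ref{lem:chernsimons} with the right sign, leaving precisely $\delta(\hat{H}+CS(A)) = 0$ rather than $\delta(\hat{H}-CS(A))=0$. The conceptual content---that the curvature of $\nabla_{\rho}$ is engineered exactly so that the Chern--Simons term trivializes the obstruction to $\hat{H}$ descending to $M$---then falls out with no further computation.
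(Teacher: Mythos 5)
Your proof is correct and follows essentially the same route as the paper's: both derive the relation $\delta\hat{H} - g^{\ast}\mu = -dR(A)$ from the prescribed curvature of $\nabla_{\rho}$ and then invoke Lemma \ref{lem:chernsimons} to conclude that $\hat{H} + CS(A)$ is $\delta$-closed, hence basic. The only difference is that you spell out the two steps the paper leaves implicit — the identity $dF = H_2 - H_1$ for connections on isomorphisms of gerbes, and the argument that $\delta$-closed forms on $P$ are $G$-invariant and horizontal — and your sign bookkeeping matches the paper's conventions exactly.
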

\begin{proof}
It follows from Theorem \ref{th:MaurerCartan} and the general properties of curvings on gerbes that the curvature of $B$ is some $\hat{H} \in \Omega^3(P,\mathfrak{t})$ with $\delta \hat{H} - g^{\ast}\mu = -d\langle p^{\ast}A \wedge g^{\ast}\theta^R \rangle$, so $\hat{H} + CS(A)$ is a basic $3$-form on $P$ by Lemma \ref{lem:chernsimons}, as we wanted to show.
\end{proof}

\begin{definition}\label{def:curvature}
The \emph{curvature} of a connection $(\nabla,\nabla_{\rho},A,B)$ on a $\mathcal{G}$-bundle $(P,\mathcal{P},\rho,\alpha^{\rho})$ is the pair $(F_A,H) \in \Omega^2(M,ad\,P) \oplus \Omega^3(M,\mathfrak{t})$, where $F_A = dA + \frac{1}{2}[A \wedge A]$ and $H$ is the three-form in Lemma \ref{lem:curvature}. A $\mathcal{G}$-bundle with connection is \emph{flat} if $F_A = 0$ and $H = 0$.
\end{definition}

The following two propositions provide proofs of concept for Definitions \ref{def:2pbcon} and \ref{def:curvature}. Proposition \ref{prop:bianchi} states that the curvature of a connection on a $\mathcal{G}$-bundle satisfies the \emph{Green-Schwartz anomally cancellation equation} \cite{Kil,Witten} which is expected in string theory from the field strength of a string with $\mathcal{G}$-symmetry. Proposition \ref{prop:flatcocycle} characterizes flat $\mathcal{G}$-bundles in terms of locally constant cocycle data.

\begin{proposition}[Bianchi Identity]\label{prop:bianchi}
    Let $(P,\mathcal{P}_{\nabla},\rho_{\nabla},\alpha^{\rho},A,B)$ be a principal $\mathcal{G}$-bundle with connection. Then its curvature $(F_A,H) \in \Omega^2(M,ad\,P) \oplus \Omega^3(M,\mathfrak{t})$ satisfies
    \begin{align}
        d_AF_A &= 0, \\
        dH - \langle F_A \wedge F_A \rangle &= 0.
    \end{align}
\end{proposition}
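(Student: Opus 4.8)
The first identity $d_A F_A = 0$ is the classical Bianchi identity for the curvature of the $G$-connection $A$ on the underlying principal $G$-bundle $P$, and I would simply note that it follows formally from $F_A = dA + \frac{1}{2}[A \wedge A]$ together with the graded Jacobi identity, exactly as in ordinary gauge theory; it involves none of the higher structure. The content of the proposition is therefore concentrated in the second equation $dH - \langle F_A \wedge F_A \rangle = 0$, and my plan is to deduce it from the closedness of the curvature of the curving $B$ combined with the two lemmas already established.

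The key observation is that, for any curving on a $T$-gerbe with connective structure, the associated curvature $3$-form is $d$-closed (this is the standard fact that the curvature of a gerbe represents its real Dixmier--Douady class, recalled in the gerbe appendix referenced in Definition \ref{def:2pbcon}). Applying this to the curving $B$ on $\mathcal{P}_{\nabla} \rightarrow P$, its curvature $\hat{H} \in \Omega^3(P,\mathfrak{t})$ satisfies $d\hat{H} = 0$. By Lemma \ref{lem:curvature} we may write $\hat{H} = \pi^{\ast}H - CS(A)$ for a unique $H \in \Omega^3(M,\mathfrak{t})$, so that
\begin{equation*}
    0 = d\hat{H} = \pi^{\ast}(dH) - dCS(A).
\end{equation*}
Now Lemma \ref{lem:chernsimons} gives $dCS(A) = \langle F_A \wedge F_A \rangle$ as a $3$-form on $P$. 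Since $F_A$ descends to a section of $ad\,P$ and $\langle \cdot,\cdot \rangle$ is $Ad$-invariant, the form $\langle F_A \wedge F_A \rangle$ on $P$ is basic, equal to $\pi^{\ast}$ of the corresponding $4$-form $\langle F_A \wedge F_A \rangle \in \Omega^4(M,\mathfrak{t})$ defined using $F_A \in \Omega^2(M,ad\,P)$. Thus the displayed equation reads $\pi^{\ast}(dH) = \pi^{\ast}\langle F_A \wedge F_A \rangle$.

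To conclude I would use that $\pi: P \rightarrow M$ is a surjective submersion, hence $\pi^{\ast}$ is injective on differential forms, which immediately yields $dH = \langle F_A \wedge F_A \rangle$ on $M$, as desired. I do not expect a genuine obstacle here: the only points requiring care are the bookkeeping of where each form lives (on $P$ versus on $M$) and making explicit that $\langle F_A \wedge F_A \rangle$ is basic so that the final descent step is legitimate. The essential input has been front-loaded into Lemmas \ref{lem:curvature} and \ref{lem:chernsimons}, so the proof is really just the assembly of $d\hat{H}=0$, the splitting $\hat{H} = \pi^{\ast}H - CS(A)$, and the Chern--Simons transgression formula.
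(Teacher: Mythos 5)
Your proof is correct and follows essentially the same route as the paper: the classical Bianchi identity for $d_AF_A = 0$, then closedness of $\hat{H}$ combined with Lemma \ref{lem:curvature} and the identity $dCS(A) = \langle F_A \wedge F_A \rangle$ from Lemma \ref{lem:chernsimons}. The paper compresses the final descent step that you spell out (basicness of $\langle F_A \wedge F_A \rangle$ and injectivity of $\pi^{\ast}$), but the argument is the same.
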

\begin{proof}
    The equation $d_AF_A = 0$ is the classical Bianchi identity for connections on $G$-bundles. On the other hand, the curvature $\hat{H} \in \Omega^3(P,\mathfrak{t})$ of $B$ as a curving on the gerbe $\mathcal{P}_{\nabla} \rightarrow P$ satisfies $d\hat{H} = 0$ and so Lemma \ref{lem:chernsimons} implies $dH - \langle F_A \wedge F_A \rangle = 0$.
\end{proof}

\begin{proposition}\label{prop:flatcocycle}
    A $\mathcal{G}$-bundle $(P,\mathcal{P},\rho,\alpha^{\rho})$ admits locally constant cocycle data $g_{ab}, \,s^{ab}_{i_1j_1}, \, t^{abc}_{i_2}$ as in Section \ref{sec:cocycle2pbsm} if and only if it admits a flat connection.
\end{proposition}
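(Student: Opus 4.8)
The plan is to prove both implications by reducing to the classical fact that a $G$-bundle admits a flat connection if and only if it has locally constant transition functions, and then upgrading this statement along the two higher layers of cocycle data $s^{ab}_{i_1j_1}$, $t^{abc}_{i_2}$ from Section \ref{sec:cocycle2pbsm}. The mechanism common to both directions is that the right-hand sides of the relevant cocycle equations are assembled from Maurer--Cartan pullbacks $g_{ab}^{\ast}\theta^L$, $(s^{ab})^{\ast}\theta^T$ and $(t^{abc})^{\ast}\theta^T$, each of which vanishes exactly when the corresponding cocycle component is locally constant.

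For the implication from locally constant cocycle data to a flat connection, I would exhibit the connection explicitly. Locally constant $g_{ab}$ means that, in the trivialization of $P$ over $M_a$ given by the defining sections, the $G$-connection $A$ with local forms $A_a = 0$ is well defined and flat, so $F_A = 0$; I would then take all local connective-structure and curving forms to vanish as well. The cocycle equations for a connection on a $\mathcal{G}$-bundle (the bundle analogue of \eqref{eq:cocycleeq2}) reduce to $0 = 0$, since every Maurer--Cartan term vanishes on locally constant data. Finally I would compute the curvature: with $A_a = 0$ one has $CS(A) = 0$ in each local gauge, and the chosen curving has vanishing local curvature $\hat{H} = 0$, so Lemma \ref{lem:curvature} gives $\pi^{\ast}H = \hat{H} + CS(A) = 0$, hence $H = 0$; thus $(F_A,H) = (0,0)$ and the connection is flat in the sense of Definition \ref{def:curvature}.

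For the converse, suppose $(\nabla,\nabla_{\rho},A,B)$ is a flat connection and work over a good cover $\{M_a\}$, so that each $M_a$ and each multiple intersection is contractible. Because $F_A = 0$, I would first choose horizontal local sections $s_a \colon M_a \to P$, i.e.\ $s_a^{\ast}A = 0$; the transition functions $g_{ab}$ determined by $s_a g_{ab} = s_b$ then satisfy $g_{ab}^{\ast}\theta^L = 0$ by \eqref{eq:connection}, so they are locally constant. Next, pulling Lemma \ref{lem:curvature} back along $s_a$ and using $s_a^{\ast}A = 0$ together with $H = 0$ yields $s_a^{\ast}\hat{H} = H|_{M_a} - CS(s_a^{\ast}A) = 0$, so the gerbe $s_a^{\ast}\mathcal{P}$ with the pulled-back connective structure and curving is \emph{flat} over the contractible $M_a$. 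I would then invoke the gerbe-theoretic Poincar\'e lemma (cf.\ Appendix \ref{ap:gerbes}) to choose trivializations $\eta_a$ that are flat for this connective structure and curving. Feeding these $s_a$, $\eta_a$ into the recipe at the end of Section \ref{sec:cocycle2pbsm} produces cocycle data $s^{ab}_{i_1j_1}$, $t^{abc}_{i_2}$; flatness of $\eta_a$, $\eta_b$ and compatibility with $\nabla_{\rho}$ force $(s^{ab})^{\ast}\theta^T = 0$, while flatness of $\alpha^{\rho}$ forces $(t^{abc})^{\ast}\theta^T = 0$, so all this data is locally constant.

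I expect the main obstacle to be the gerbe-level step of the converse: establishing that a $T$-gerbe with connective structure and curving of vanishing curvature over a contractible base admits a \emph{flat} trivialization, and that an isomorphism (resp.\ $2$-isomorphism) between two such flat trivializations over a connected base is necessarily locally constant. This is the degree-two analogue of the statement that a flat line bundle over a contractible space is trivialized by a locally constant section, and it is precisely what lets one pass from "flat connection" to genuinely locally constant gluing data at all three levels at once; the remainder is the bookkeeping of matching the three layers $g_{ab}$, $s^{ab}$, $t^{abc}$ with the three pieces $A$, $\nabla$, $\nabla_{\rho}$ of the connection.
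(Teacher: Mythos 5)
Your proposal is correct and follows essentially the same route as the paper's proof: the forward direction takes the vanishing local forms $\sigma^{ab}_{i_1}=0$, $A_a=0$, $B_a=0$ in the locally constant frame, and the converse chooses horizontal sections $s_a$ with $s_a^{\ast}A=0$ (giving locally constant $g_{ab}$), pulls back Lemma \ref{lem:curvature} to see that $s_a^{\ast}\mathcal{P}$ with its induced connection is flat, and then deduces locally constant $s^{ab}_{i_1j_1}$ and $t^{abc}_{i_2}$ from flat trivializations and flatness of $\alpha^{\rho}$. The gerbe-level step you flag as the main obstacle (your ``gerbe-theoretic Poincar\'e lemma'') is exactly what the paper delegates to part \ref{it:flatgerbes} of Proposition \ref{prop:gerbes} in the appendix.
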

\begin{proof}
    If $(P,\mathcal{P},\rho,\alpha^{\rho})$ admits locally constant cocycle data $g_{ab}, \, s^{ab}_{i_1j_1}, \, t^{abc}_{i_2}$ then using the cocycle description of connections from Section \ref{sec:cocyclecon} below we see that $\sigma^{ab}_{i_1} = 0$, $A_a = 0$, $B_a=0$ is clearly a flat connection. Conversely, if $(P,\mathcal{P},\rho,\alpha^{\rho})$ has a flat connection then recall from Section \ref{sec:cocycle2pbsm} the method for obtaining cocycle data $(g_{ab},\sigma^{ab},\tau^{abc})$ from local sections $s_a:M_a \rightarrow P$ and trivializations $\eta_a$ of $s_a^{\ast}\mathcal{P}$. Since the $G$-bundle with connection $(P,A)$ is flat, it is a classical result that we can find $s_a:M_a \rightarrow P$ such that $s_a^{\ast}A = 0$ with corresponding transition functions $g_{ab}:M_{ab} \rightarrow G$ locally constant. Then $s_a^{\ast}\mathcal{P} \rightarrow M_a$ is a gerbe with connective structure and curving of curvature $s_a^{\ast}(H-CS(A)) = 0$; hence we can take flat trivializations with connections $\eta_{a,\nabla}$ by part \ref{it:flatgerbes} of Proposition \ref{prop:gerbes}. The corresponding $\sigma^{ab}_{\nabla}$ are then trivializations with connections whose curvature is $-Curv(\eta_{a,\nabla}) + (s_a,g_{ab})^{\ast}R(A) + Curv(\eta_{b,\nabla}) = 0$; hence they can be represented by locally constant transitions $s^{ab}_{i_1j_1}$. Finally, since $\alpha^{\rho}$ is a flat 2-isomorphism then $\tau^{abc} = (s_a,g_{ab},g_{bc})^{\ast}\alpha^{\rho}$ is locally constant.
\end{proof}

\subsubsection{Cocycle data for connections on principal 2-bundles}\label{sec:cocyclecon}

We give now cocycle descriptions of connections adapted to each of the two cocycle descriptions of $\mathcal{G}$-bundles from Section \ref{sec:2pbs}. On a semi-simplicial cover $\mathcal{V}_{\bullet}$ of $(P /\!/G)_{\bullet}$ where the $\mathcal{G}$-bundle is given by $b_{r_0s_0t_0},\, act_{r_1s_1},\, \beta_{r_2}$, it follows directly from the definitions that a connection is given by an ordinary $G$-connection $A \in \Omega^1(P,\mathfrak{g})$ and
\begin{align}
    \sigma_{r_0s_0} \in \Omega^1(V^0_{r_0s_0},\mathfrak{t}), \quad \eta_{r_1} \in \Omega^1(V^1_{r_1},\mathfrak{t}), \quad B_{r_0} \in \Omega^2(V^0_{r_0},\mathfrak{t})
\end{align}
such that 
\begin{subequations}
\begin{align}
        &\sigma_{r_0s_0} - \sigma_{r_0t_0} + \sigma_{s_0t_0} = b_{r_0s_0t_0}^{\ast}\theta^T, \\
        &\eta_{r_1} +  d_0^{\ast}\sigma_{d_0(r_1)d_0(s_1)} + act_{r_1s_1}^{\ast}\theta^T =\pi^{\ast}A_{\pi(r_1)\pi(s_1)} + d_1^{\ast}\sigma_{d_1(r_1)d_1(s_1)} + \eta_{s_1}, \\
        &\beta_{r_2}^{\ast}\theta^T + \pi^{\ast}M_{\pi(r_2)} + d_1^{\ast}\eta_{d_1(r_2)} = d_0^{\ast}\eta_{d_0(r_2)} + d_2^{\ast}\eta_{d_2(r_2)}, \\
        &B_{r_0} - B_{s_0} = d\sigma_{r_0s_0},\\
        &-d\eta_{r_1} + \pi^{\ast}\Theta^L_{\pi(r_1)} + d_1^{\ast}B_{d_1(r_1)} - d_0^{\ast}B_{d_0(r_0)} = \langle p^{\ast}A \wedge g^{\ast}\theta^R \rangle. \label{eq:cocyclecon1e}
    \end{align}
\end{subequations}
On the other hand, if the $\mathcal{G}$-bundle is given on a cover $\{M_a\}_{a \in \Lambda}$ of $M$ by $g_{ab}$, $s^{ab}_{i_1j_1}$, $t^{abc}_{i_2}$, then a connection is given by 
\begin{align}
    \sigma^{ab}_{i_1} \in \Omega^1(g_{ab}^{-1}(U_{i_1}^1),\mathfrak{t}), \quad A_a \in \Omega^1(M_a,\mathfrak{g}), \quad B_a \in \Omega^2(M_a,\mathfrak{t})
\end{align}
such that
\begin{subequations}
\begin{align}
    &\sigma^{ab}_{i_1} + (s^{ab}_{i_1j_1})^{\ast}\theta^T + g_{ab}^{\ast}A_{i_1j_1} = \sigma^{ab}_{j_1},\\
    &(t^{abc}_{i_2})^{\ast}\theta^T - \sigma^{bc}_{d_0(i_2)} + \sigma^{ac}_{d_1(i_2)} - \sigma^{ab}_{d_2(i_2)} - (g_{ab},g_{bc})^{\ast}M_{i_2} = 0,\\
    &A_b - Ad(g_{ab}^{-1})A_a = g_{ab}^{\ast}\theta^L,\\
    &B_b - B_a + \langle A_a \wedge g_{ab}^{\ast}\theta^R \rangle = d\sigma^{ab}_{i_1} + g_{ab}^{\ast}\Theta_{i_1}^L.
\end{align}
\end{subequations}
In this case $H \in \Omega^3(M,\mathfrak{t})$ is given locally by $H_{|M_a} := dB_a + \langle dA_a \wedge A_a \rangle + \frac{1}{3}\langle A_a \wedge [A_a \wedge A_a] \rangle$. If $(\sigma^{ab}_{i_1},A_a,B_a)$ and $(\hat{\sigma}^{ab}_{i_1},\hat{A}_a,\hat{B}_a)$ are two connections, then an isomorphism between them can only exist if $A_a = \hat{A}_a$ and is then given by $\tau_a \in \Omega^1(M_a,\mathfrak{t})$ such that
\begin{align}
\begin{split}
    &\hat{\sigma}^{ab}_{i_1} - \sigma^{ab}_{i_1} = \tau_b - \tau_a,\\
    &\hat{B}_a - B_a = d\tau_a.
\end{split}
\end{align}
This cocycle data agrees with \cite{ShengXuZhu}. It can be obtained, as in Section \ref{sec:2pbs}, by taking sections $s_a: M_a \rightarrow P$ and trivializations with connections $\eta_{a,\nabla}$ of $s_a^{\ast}\mathcal{P}_{\nabla}$. Then $\sigma^{ab}_{i_1}$ are the 1-forms defining the connection on the isomorphism of gerbes
$$\sigma_{ab}: \text{(Trivial)} \stackrel{\eta_b}{\rightarrow} s_b^{\ast}\mathcal{P} \stackrel{(s_a,g_{ab})^{\ast}\rho^{-1}}{\longrightarrow} s_a^{\ast}\mathcal{P} \otimes g_{ab}^{\ast}\mathcal{G} \stackrel{\eta_a^{-1}}{\rightarrow} g_{ab}^{\ast}\mathcal{G},$$
while $A_a := s_a^{\ast}A$ and $B_a := Curv(\eta_a)$. Conversely, from this data, recall that $(P,A)$ is constructed as $P := \sqcup_a M_a \times G/ \sim$ with $(a,x,g_{ab}(x)g) \sim (b,x,g)$, and that we define $r_a: \pi^{-1}(M_a) \subset P \rightarrow G$ to be $[a,x,g] \mapsto g$. Then the connection $A$ is constructed as $A_{|\pi^{-1}(M_a)} := r_a^{\ast}\theta^L + Ad(r_a^{-1})A_a$, while $(\mathcal{P}_{\nabla},B)$ is obtained by gluing the gerbes with connective structure and curving $$(r_a^{\ast}\mathcal{G}_{\nabla},r_a^{\ast}\Theta + B_a - \langle A_a \wedge r_a^{\ast}\theta^R \rangle) \rightarrow \pi^{-1}(M_a)$$
with the isomorphisms 
$$r_a^{\ast}\mathcal{G}_{\nabla} \stackrel{(g_{ab},r_b)^{\ast}m^{-1}}{\rightarrow} g_{ab}^{\ast}\mathcal{G} \otimes r_b^{\ast}\mathcal{G} \stackrel{\sigma_{ab}^{-1}}{\rightarrow} r_b^{\ast}\mathcal{G}$$
and the 2-isomorphisms $(g_{ab},g_{bc},r_c)^{\ast}\alpha^{-1} \otimes \tau^{abc}$.


This cocycle data can be used to prove Proposition \ref{prop:connections} below, which describes the groupoids $\mathcal{A}(\mathcal{P})$, $\mathcal{A}^{en}(\mathcal{P})$ of connections and enhanced connections on a $\mathcal{G}$-bundle introduced in Definition \ref{def:2pbcon}. First we need some notation: for $P \rightarrow M$ a $G$-bundle and $ad \, P \rightarrow M$ its associated $\mathfrak{g}$-bundle we write $\Omega^1(ad \,P) \times_{\langle \cdot,\cdot \rangle} \Omega^2(M,\mathfrak{t})$ for the group with underlying set $\Omega^1(ad \,P) \times \Omega^2(M,\mathfrak{t})$ but with product
\begin{equation}\label{eq:torsorproduct}
    (a_1,b_1) \cdot (a_2,b_2) = (a_1+a_2,b_1+b_2+\langle a_1 \wedge a_2 \rangle)
\end{equation}
i.e., it is a non-trivial central extension of $\Omega^1(ad \,P)$ by $\Omega^2(M,\mathfrak{t})$\footnote{The space $\Omega^1(M,\mathfrak{g}) \times_{\langle \cdot,\cdot \rangle} \Omega^2(M,\mathfrak{t})$ is the natural space of 1-forms with values in $\mathfrak{g} \times B\mathfrak{t}$, if this is thought of as a 2-vector space with a braiding for the addition functor given precisely by $\langle \cdot,\cdot \rangle$.}.


\begin{proposition}\label{prop:connections}
Let $(P,\mathcal{P},\rho,\alpha^{\rho}) \rightarrow M$ be a $\mathcal{G}$-bundle.
\begin{enumerate}
    \item\label{it:con1} Connective structures on $(P,\mathcal{P},\rho,\alpha^{\rho})$ always exist and any two are isomorphic; the set of isomorphisms between them is a torsor for $\Omega^1(M,\mathfrak{t})$.
    \item\label{it:con2} For any fixed connective structure $(\nabla,\nabla_{\rho})$ on $(P,\mathcal{P},\rho,\alpha^{\rho})$, the set $\mathcal{A}(\mathcal{P}_{\nabla})$ is a right torsor for $\Omega^1(ad \,P) \times_{\langle \cdot,\cdot \rangle} \Omega^2(M,\mathfrak{t})$. In particular, it is an affine $\Omega^2(M,\mathfrak{t})$-bundle over the space of $G$-connections on $P$.
    \item\label{it:con3} For any fixed connective structure $(\nabla,\nabla_{\rho})$ on $(P,\mathcal{P},\rho,\alpha^{\rho})$, the set $\mathcal{A}^{en}(\mathcal{P}_{\nabla})$ is a right torsor for $\Omega^1(ad \,P) \times_{\langle \cdot,\cdot \rangle} \Omega^2(M,\mathfrak{t}) \times \Gamma(S^2T^{\ast}M \otimes \mathfrak{t})$. In particular, it is an affine $\Gamma(T^{\ast}M \otimes T^{\ast}M \otimes \mathfrak{t})$-bundle over the space of $G$-connections on $P$.
\end{enumerate}
\end{proposition}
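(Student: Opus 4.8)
The plan is to carry out the whole argument in the \v{C}ech cocycle description of \S\ref{sec:cocyclecon} relative to a fixed good cover $\{M_a\}_{a\in\Lambda}$ of $M$ and fixed cocycle data $g_{ab}, s^{ab}_{i_1j_1}, t^{abc}_{i_2}$ for the $\mathcal{G}$-bundle. Under this description a connective structure is a datum $\{\sigma^{ab}_{i_1}\}$ subject to the first two equations of \S\ref{sec:cocyclecon}, a connection is a triple $(\{\sigma^{ab}_{i_1}\},A,B)$ subject to all four, and an isomorphism of connective (resp. connection) data is a collection $\tau_a\in\Omega^1(M_a,\mathfrak{t})$ with $\hat\sigma^{ab}_{i_1}-\sigma^{ab}_{i_1}=\tau_b-\tau_a$ (resp. and $\hat B_a-B_a=d\tau_a$). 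The recurring mechanism throughout is that all obstructions and ambiguities will live in the \v{C}ech cohomology of the fine sheaves $\Omega^1_{\mathfrak{t}}$ and $\Omega^2_{\mathfrak{t}}$ on $M$, whose positive-degree groups vanish.

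For part \ref{it:con1} I would first observe that if $\sigma$ and $\hat\sigma$ both satisfy the connective-structure equations then $\delta^{ab}:=\hat\sigma^{ab}_{i_1}-\sigma^{ab}_{i_1}$ is, by the first equation, independent of $i_1$, hence a well-defined element of $\Omega^1(M_{ab},\mathfrak{t})$, and, by the second, a \v{C}ech $1$-cocycle. Since $\check{H}^1(\{M_a\},\Omega^1_{\mathfrak{t}})=0$ it is a coboundary $\delta^{ab}=\tau_b-\tau_a$, which is precisely an isomorphism; this shows any two connective structures are isomorphic. Existence is the dual statement: choosing $\sigma^{ab}_{i_1}$ solving the first equation (connections on the isomorphisms $\sigma_{ab}$ of gerbes exist by the Appendix), the failure of the second equation is an index-independent \v{C}ech $2$-cocycle in $\Omega^1_{\mathfrak{t}}$ whose class obstructs existence; as $\check{H}^2(\{M_a\},\Omega^1_{\mathfrak{t}})=0$ it can be absorbed into a redefinition $\sigma^{ab}_{i_1}\mapsto\sigma^{ab}_{i_1}+\rho^{ab}$ preserving the first equation. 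Finally two isomorphisms $\tau,\tau'$ of the same pair satisfy $\tau_a-\tau'_a=\tau_b-\tau'_b$ on overlaps, so their difference glues to a global element of $\Omega^1(M,\mathfrak{t})$, and conversely; hence the isomorphisms form an $\Omega^1(M,\mathfrak{t})$-torsor.

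For part \ref{it:con2} I fix $\{\sigma^{ab}_{i_1}\}$ and analyse the remaining data $(A,B)$, mapping $(A,B)\mapsto A$ into the affine space of $G$-connections. Given $A$, the fourth equation of \S\ref{sec:cocyclecon} is solvable for $B$ because $d\sigma^{ab}_{i_1}+g_{ab}^{\ast}\Theta^L_{i_1}-\langle A_a\wedge g_{ab}^{\ast}\theta^R\rangle$ is a \v{C}ech $1$-cocycle in $\Omega^2_{\mathfrak{t}}$ (using the second and third equations together with Theorem \ref{th:MaurerCartan}) and $\check{H}^1(\{M_a\},\Omega^2_{\mathfrak{t}})=0$; two solutions differ by a global $2$-form, so the fibres are $\Omega^2(M,\mathfrak{t})$-torsors and the map is an affine $\Omega^2(M,\mathfrak{t})$-bundle. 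To obtain the full torsor structure I would define a right action of $\Omega^1(ad\,P)\times_{\langle\cdot,\cdot\rangle}\Omega^2(M,\mathfrak{t})$ by $(A,B)\cdot(a,\gamma)=(A+a,B')$ with $B'_a=B_a+\gamma-\langle a_a\wedge A_a\rangle$. The crux is the transformation law
\begin{equation*}
\langle a_b\wedge A_b\rangle=\langle a_a\wedge A_a\rangle+\langle a_a\wedge g_{ab}^{\ast}\theta^R\rangle,
\end{equation*}
which follows from $Ad$-invariance of $\langle\cdot,\cdot\rangle$ and the identity $g_{ab}^{\ast}\theta^L=Ad(g_{ab}^{-1})g_{ab}^{\ast}\theta^R$; it guarantees both that $B'$ again solves the fourth equation and that $\gamma=\beta+\langle a\wedge A\rangle$ is globally defined, where $\beta_a=B'_a-B_a$. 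Composing two such actions and using the skew-symmetry $\langle a_2\wedge a_1\rangle=-\langle a_1\wedge a_2\rangle$ reproduces exactly the twisted product \eqref{eq:torsorproduct}, while freeness and transitivity are immediate, so $\mathcal{A}(\mathcal{P}_{\nabla})$ is a right torsor for $\Omega^1(ad\,P)\times_{\langle\cdot,\cdot\rangle}\Omega^2(M,\mathfrak{t})$.

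Part \ref{it:con3} then reduces to part \ref{it:con2} through Remark \ref{rk:enhcon}: an enhanced connection is a connection together with a free choice of symmetric tensor $h\in\Gamma(S^2T^{\ast}M\otimes\mathfrak{t})$, so $\mathcal{A}^{en}(\mathcal{P}_{\nabla})\cong\mathcal{A}(\mathcal{P}_{\nabla})\times\Gamma(S^2T^{\ast}M\otimes\mathfrak{t})$ as a torsor for the product group, and the decomposition $T^{\ast}M\otimes T^{\ast}M=\Lambda^2T^{\ast}M\oplus S^2T^{\ast}M$ identifies the fibre $\Omega^2(M,\mathfrak{t})\times\Gamma(S^2T^{\ast}M\otimes\mathfrak{t})$ over a $G$-connection with $\Gamma(T^{\ast}M\otimes T^{\ast}M\otimes\mathfrak{t})$. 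I expect the main obstacle to be the computation in part \ref{it:con2}: correctly isolating the global invariant $\gamma=\beta+\langle a\wedge A\rangle$ and checking that the $2$-cocycle $\langle a_1\wedge a_2\rangle$ emerges with the correct sign, since this is exactly the place where the nontrivial central extension in \eqref{eq:torsorproduct} is produced, rather than a mere direct product.
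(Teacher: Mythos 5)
Your proposal is correct and follows essentially the same route as the paper: cocycle data on a good cover of $M$, fine-sheaf/\v{C}ech vanishing to produce connective structures and curvings, and the explicit right action $(A,B)\cdot(a,\gamma)=(A+a,\,B+\gamma-\langle a\wedge A\rangle)$ whose composition law generates the twisted product \eqref{eq:torsorproduct}, with part \ref{it:con3} reduced to part \ref{it:con2} via Remark \ref{rk:enhcon}. The only cosmetic difference is that the paper verifies transitivity by passing to the $2$-form $\tilde{b}$ on the total space $P$ and checking that $\tilde{b}-\langle A\wedge a\rangle$ is basic, whereas you isolate the same global invariant $\gamma=\beta+\langle a\wedge A\rangle$ directly from the \v{C}ech data on $M$ using the transformation law $\langle a_b\wedge A_b\rangle=\langle a_a\wedge A_a\rangle+\langle a_a\wedge g_{ab}^{\ast}\theta^R\rangle$.
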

\begin{proof}
We prove existence of connective structures and connections using the cocycle description on a cover $\{M_a\}_{a \in \Lambda}$ of $M$. Given cocycle data $g_{ab}$, $s^{ab}_{i_1j_1}$, $t^{abc}_{i_2}$ for the $\mathcal{G}$-bundle, one checks immediately that for fixed $a, \,b \in \Lambda$, $(s^{ab}_{i_1j_1})^{\ast}\theta^T + g_{ab}^{\ast}A_{i_1j_1}$ is a cocycle of 1-forms in $M_{ab}$ with respect to the covering $\{g_{ab}^{-1}(U_{i_1})\}_{i_1 \in I_1}$ and so there exist $\tilde{\sigma}^{ab}_{i_1}$ with $\tilde{\sigma}^{ab}_{i_1} + (s^{ab}_{i_1j_1})^{\ast}\theta^T + g_{ab}^{\ast}A_{i_1j_1} = \tilde{\sigma}^{ab}_{j_1}$. Then for fixed $a, \, b, \, c \in \Lambda$ one can use the cocycle conditions to see that there is a 1-form $\eta_{abc} \in \Omega^1(M_{abc},\mathfrak{t})$ that restricts on each $(g_{ab},g_{bc})^{-1}(U_{i_2})$ to $\eta_{abc} := (t^{abc}_{i_2})^{\ast}\theta^T - \tilde{\sigma}^{bc}_{d_0(i_2)} + \tilde{\sigma}^{ac}_{d_1(i_2)} - \tilde{\sigma}^{ab}_{d_2(i_2)} + (g_{ab},g_{bc})^{\ast}M_{i_2}$; moreover, one can check that $\eta_{abc}$ form a cocycle of 1-forms with respect to the cover $\{M_a\}_a$ of $M$; hence there exist $\xi_{ab}$ with $\eta_{abc} = \xi_{ab} - \xi_{ac} + \xi_{bc}$ and so $\sigma^{ab}_{i_1} := \tilde{\sigma}^{ab}_{i_1} + \xi_{ab}$ gives a connective structure. Given two connective structures $\sigma^{ab}_{i_1}$, $\hat{\sigma}^{ab}_{i_1}$ then their difference is a cocycle of one-forms on $M$ with respect to $\{M_a\}_{a \in \Lambda}$, as it follows from their cocycle equations, hence an isomorphism always exists and the set of such is indeed a torsor for $\Omega^1(M,\mathfrak{t})$.

Once a connective structure $\sigma^{ab}_{i_1}$ is chosen, the existence of the 1-forms $A_a$ is simply the classical existence of connections on $G$-bundles and then the existence of $B_a$ follows from the fact that $d\sigma^{ab}_{i_1} + g_{ab}^{\ast}\Theta_{i_1} \in \Omega^2(M_{ab},\mathfrak{t})$ is independent of $i_1$ and that $-\langle A_a \wedge g_{ab}^{\ast}\theta^R \rangle + d\sigma^{ab}_{i_1} + g_{ab}^{\ast}\Theta_{i_1}$ is a cocycle of $2$-forms in $M$ for any choice of $A$. This shows that the space of $\mathcal{G}$-connections with fixed connective structure is an affine $\Omega^2(M,\mathfrak{t})-$bundle over the space of $G$-connections. To obtain the other, more precise description, note that if $(A,B) \in \mathcal{A}(\mathcal{P}_{\nabla})$ is a connection and $(a,b) \in \Omega^1(M,ad \,P)\times \Omega^2(M,\mathfrak{t})$ then $(A+a,B+\langle A \wedge a \rangle + b)$ is a connection. In fact, all connections are of this form as they are necessarily of the form $(A+a,B+\tilde{b})$ with $a \in \Omega^1(M,ad \,P)$, $\tilde{b} \in \Omega^2(P,\mathfrak{t})$ satisfying $act^{\ast}\tilde{b} - p^{\ast}\tilde{b} = - \langle a \wedge g^{\ast}\theta^R \rangle$ over $P \times G$, so $b := \tilde{b} - \langle A \wedge a \rangle$ is a basic form. One can easily check that this defines a right action of $\Omega^1(ad\,P) \times_{\langle\cdot,\cdot\rangle} \Omega^2(M,\mathfrak{t})$. Then part \ref{it:con3} follows directly from Remark \ref{rk:enhcon}.
\end{proof}

\begin{remark}\label{rk:spaceconnections}
    Parts \ref{it:con1} and \ref{it:con2} from Proposition \ref{prop:connections} can be summarized in a single, more abstract statement by saying that the groupoid $\mathcal{A}(\mathcal{P})$ is a torsor (i.e., a principal 2-bundle over a point) for the Lie 2-group associated to the crossed module 
    $$(\Omega^1(ad\,P) \times_{\langle \cdot,\cdot \rangle} \Omega^2(M,\mathfrak{t}),\Omega^1(M,\mathfrak{t}),f,\triangleright)$$
    with $\triangleright$ trivial and $f:\Omega^1(M,\mathfrak{t}) \rightarrow \Omega^1(ad\,P) \times_{\langle \cdot,\cdot \rangle} \Omega^2(M,\mathfrak{t})$, $\eta \mapsto (0,d\eta)$. Similarly, $\mathcal{A}^{en}(\mathcal{P})$ is a torsor for $(\Omega^1(ad\,P) \times_{\langle \cdot,\cdot \rangle} \Omega^2(M,\mathfrak{t}) \times \Gamma(S^2T^{\ast}M \otimes \mathfrak{t}),\Omega^1(M,\mathfrak{t}),f,\triangleright)$. These results are analogous to the classical fact that the space of connections on a $G$-bundle is a torsor for $\Omega^1(M, \,ad P)$. 
\end{remark}

\subsubsection{Cocycle data for the gauge action on connections}\label{sec:gauge}

\begin{definition}
    For $\mathcal{P}_{\nabla}^1$, $\mathcal{P}_{\nabla}^2$ two $\mathcal{G}$-bundles with connective structure and $(u,\varphi,\alpha^{\varphi})$ an isomorphism betwen the underlying $\mathcal{G}$-bundles (cf. Definition \ref{def:2pbmult}), we define a \emph{connection} on $(u,\varphi,\alpha^{\varphi})$ to be a connection on the isomorphism of gerbes $\varphi$ such that that $\alpha^{\varphi}$ is a flat 2-isomorphism of gerbes. If $\psi:(u,\varphi,\alpha^{\varphi}) \Rightarrow (u,\varphi',\alpha^{\varphi'})$ is a 2-isomorphism of $\mathcal{G}$-bundles and $(u,\varphi,\alpha^{\varphi}), \, (u,\varphi',\alpha^{\varphi'})$ are equipped with connections, we say that $\psi$ is \emph{flat} if $\psi$ is flat as a 2-isomorphism of gerbes. This defines the \emph{bicategory of $\mathcal{G}_{\nabla}$-bundles}.
\end{definition}

We proceed to decribe isomorphisms and 2-isomorphisms of $\mathcal{G}_{\nabla}$-bundles, and their action on connections, in terms of cocycle data on a good cover $\{M_a\}_a$ of $M$ (cf. Sections \ref{sec:cocycle2pbsm} and \ref{sec:cocyclecon}). This will be useful in Section \ref{sec:chern} in order to give a local formula for the Chern connection of a holomorphic $\mathcal{G}_{\nabla}$-bundle with a reduction to a $\mathcal{K}_{\nabla}$-bundle.

If the $\mathcal{G}_{\nabla}$-bundles $\mathcal{P}_{\nabla}^1$ and $\mathcal{P}_{\nabla}^2$ are described by $(g_{ab}^1,\sigma_{ab,\nabla}^1,\tau_{abc}^1)$, $(g_{ab}^2,\sigma_{ab,\nabla}^2,\tau_{abc}^2)$, where $\sigma_{ab,\nabla}^i$ are trivializations with connections of $(g_{ab}^i)^{\ast}\mathcal{G}_{\nabla}$ and $\tau_{abc}^i: m(\sigma_{ab,\nabla}^i,\sigma_{bc,\nabla}^i) \rightarrow \sigma_{ac,\nabla}^i$ are flat isomorphisms of trivializations as in Sections \ref{sec:cocycle2pbsm} and \ref{sec:cocyclecon}, then an isomorphism $\mathcal{P}_{\nabla}^1 \rightarrow \mathcal{P}_{\nabla}^2$ is given by:
\begin{enumerate}
    \item Functions $\varphi_a: M_a \rightarrow G$ with $\varphi_ag_{ab}^1 = g_{ab}^2\varphi_b$.
    \item Trivializations with connection $\Phi_{a,\nabla}$ of $\varphi_a^{\ast}\mathcal{G}_{\nabla}$.
    \item Flat isomorphisms of trivializations $\psi_{ab}: m(\Phi_{a,\nabla},\sigma_{ab,\nabla}^1) \rightarrow m(\sigma_{ab,\nabla}^2,\Phi_{b,\nabla})$ satisfying 
\end{enumerate}
\begin{equation}
        \begin{tikzcd}
            \text{(Trivial)} \ar[rr,bend left=100,"{m((\Phi_a\sigma_{ab}^1)\sigma_{bc}^1)}"{name=F}] 
              \ar[rr,bend left=40,"{m(\Phi_a(\sigma_{ab}^1\sigma_{bc}^1))}"{name=G,2cell}]
              \ar[rr,"{m(\Phi_a\sigma_{ac}^1)}"{name=H,2cell}]
              \ar[rr,bend right=30,"{m(\sigma_{ac}^2\Phi_c)}"{name=E,2cell}]
              \ar[rr,bend right=100,"{m((\sigma_{ab}^2\sigma_{bc}^2)\Phi_c)}"{name=J,swap}] & & 
                (g_{ac}^2\varphi_c)^{\ast}\mathcal{G}
             & \ar[Rightarrow,from=F,to=G,"{\alpha}"{swap}] \ar[Rightarrow,from=G,to=H,"{\tau^{abc}_1}"{swap,pos=0.9}] \ar[Rightarrow,from=H,to=E,"{\psi_{ac}}"{swap,pos=0.1}] \ar[Rightarrow,from=E,to=J,"{(\tau^{abc}_2)^{-1}}"{swap,pos=0.2}] 
        \end{tikzcd} = 
        \begin{tikzcd}
            \text{(Trivial)} \ar[rr,bend left=100,"{m((\Phi_a\sigma_{ab}^1)\sigma_{bc}^1)}"{name=F}] 
              \ar[rr,bend left=40,"{m((\sigma_{ab}^2\Phi_b)\sigma_{bc}^1)}"{name=G,2cell}]
              \ar[rr,"{m(\sigma_{ab}^2(\Phi_b\sigma_{bc}^1))}"{name=H,2cell}]
              \ar[rr,bend right=40,"{m(\sigma_{ab}^2(\sigma_{bc}^2\Phi_c))}"{name=E,2cell}]
              \ar[rr,bend right=100,"{m((\sigma_{ab}^2\sigma_{bc}^2)\Phi_c)}"{name=J,swap}] & & 
                (g_{ac}^2\varphi_c)^{\ast}\mathcal{G} .
             & \ar[Rightarrow,from=F,to=G,"{\psi_{ab}}"{swap}] \ar[Rightarrow,from=G,to=H,"{\alpha}"{swap,pos=0.5}] \ar[Rightarrow,from=H,to=E,"{\psi_{bc}}"{swap,pos=0.1}] \ar[Rightarrow,from=E,to=J,"{\alpha}"{swap,pos=0.5}] 
        \end{tikzcd}
    \end{equation}   
In particular, each trivialization with connection $\Phi_{a,\nabla}$ has a curvature $Curv(\Phi_{a,\nabla}) \in \Omega^2(M_a,\mathfrak{t})$ satisfying 
\begin{align*}
    dCurv(\Phi_{a,\nabla}) &= \varphi_a^{\ast}\mu,\\
    Curv(\Phi_{a,\nabla}) - Curv(\Phi_{b,\nabla}) &= Curv(\sigma_{ab,\nabla}^2) - \langle (g_{ab}^2)^{\ast}\theta^L \wedge \varphi_b^{\ast}\theta^R \rangle - Curv(\sigma_{ab,\nabla}^1) + \langle \varphi_a^{\ast}\theta^L \wedge (g_{ab}^1)^{\ast}\theta^R \rangle.    
\end{align*}
If $(\varphi_a,\Phi_{a,\nabla},\psi_{ab})$, $(\varphi_a',\Phi_{a,\nabla}',\psi_{ab}')$ are two isomorphisms $\mathcal{P}_{\nabla}^1 \rightarrow \mathcal{P}_{\nabla}^2$, then a 2-isomorphism between them can only exist if $\varphi_a = \varphi_a'$ and in that case it is given by flat 2-isomorphisms $\chi_a: \Phi_{a,\nabla} \Rightarrow \Phi'_{a,\nabla}$ such that
\begin{equation}
        \begin{tikzcd}
            \text{(Trivial)} 
              \ar[rr,bend left=20,"{m(\Phi_a\sigma_{ab}^1)}"{name=G}]
              \ar[rr,bend right=20,"{m(\Phi_a'\sigma_{ab}^1)}"{name=H,2cell}]
              \ar[rr,bend right=70,"{m(\sigma_{ab}^2\Phi_b')}"{name=E,swap}] & & 
                (g_{ab}^2\varphi_b)^{\ast}\mathcal{G} \\
             &\ar[Rightarrow,from=G,to=H,"{\chi_a}"{swap}] \ar[Rightarrow,from=H,to=E,"{\psi_{ab}'}"{swap}] &
        \end{tikzcd} = 
        \begin{tikzcd}
            \text{(Trivial)} 
              \ar[rr,bend left=20,"{m(\Phi_a\sigma_{ab}^1)}"{name=G}]
              \ar[rr,bend right=20,"{m(\sigma_{ab}^2\Phi_b)}"{name=H,2cell}]
              \ar[rr,bend right=70,"{m(\sigma_{ab}^2\Phi_b')}"{name=E,swap}] & & 
                (g_{ab}^2\varphi_b)^{\ast}\mathcal{G} \\
             &\ar[Rightarrow,from=G,to=H,"{\psi_{ab}}"{swap}] \ar[Rightarrow,from=H,to=E,"{\chi_{b}}"{swap}] &
        \end{tikzcd}.
    \end{equation} 
In particular, the existence of such $\chi_a$ implies $Curv(\Phi_{a,\nabla}) = Curv(\Phi_{a,\nabla}')$. Now if $(A_a^1,B_a^1)$ is a connection for $\mathcal{P}_{\nabla}^1$, then $(\varphi_a,\Phi_{a,\nabla},\psi_{ab})$ sends it to the following connection for $\mathcal{P}_{\nabla}^2$:
\begin{align}
    A_a^2 &:= Ad(\varphi_a)A_a^1 - \varphi_a^{\ast}\theta^R,\\
    B_a^2 &:= B_a - \langle \varphi_a^{\ast}\theta^L \wedge A_a^1 \rangle - Curv(\Phi_{a,\nabla}).
\end{align}
Note that if there exists a 2-isomorphism between $(\varphi_a,\Phi_{a,\nabla},\psi_{ab})$ and $(\varphi_a',\Phi_{a,\nabla}',\psi_{ab}')$, then $\varphi_a = \varphi_a'$ and $Curv(\Phi_{a,\nabla}) = Curv(\Phi_{a,\nabla}')$; hence, their actions on $(A_a^1,B_a^1)$ coincide.

\subsection{Comparison with adjusted connections}\label{sec:adjcon}

This section is a digression from the main topic of this article. As explained in Section \ref{sec:adj}, some multiplicative gerbes can be described by crossed modules and in this case connective structures correspond to a certain class of adjustments that we call \emph{strong}. Since there exists a theory of connections for adjusted Lie crossed modules (\cite{KimSaeTransport,RistSaeWolf}), it seems natural to check if it agrees with our Definition \ref{def:2pbcon}, whenever they can be compared. This is our next task. As far as we know, this has not been carried out explicitly anywhere in the literature. 

We start by collecting the cocycle data for Lie crossed module bundles and adjusted connections on them following \cite{KimSaeTransport,RistSaeWolf}. Let $(\tilde{G},H,f,\triangleright,\tilde{\kappa})$ be such an adjusted Lie crossed module, as in Example \ref{ex:crossed} and Remark \ref{rk:coradj}. Then a \emph{bundle} for it is given in a cover $\{M_a\}_a$ of $M$ by maps $\tilde{g}_{ab}: M_{ab} \rightarrow \tilde{G}$, $h_{abc}: M_{abc} \rightarrow H$ such that
\begin{align}
\begin{split}
    t(h_{abc})\tilde{g}_{ab}\tilde{g}_{bc} = \tilde{g}_{ac},\\
    h_{acd}h_{abc} = h_{abd} \tilde{g}_{ab} \triangleright h_{bcd}.
\end{split}
\end{align}
An \emph{adjusted connection} is given by $\Lambda_{ab} \in \Omega^1(M_{ab},\mathfrak{h})$, $\tilde{A}_a \in \Omega^1(M_a,\tilde{\mathfrak{g}})$ and $\tilde{B}_a \in \Omega^2(M_a,\mathfrak{h})$ satisfying
\begin{align}
\begin{split}\label{eq:cocycleadjcon}
    \Lambda_{bc} + g_{bc}^{-1} \triangleright \Lambda_{ab} &= \Lambda_{ac} - \tilde{g}_{ac}^{-1} \triangleright (h_{abc}^{\ast}\theta^R + \tilde{A}_a \triangleright h_{abc} \cdot h_{abc}^{-1} ), \\
    \tilde{A}_b - Ad(\tilde{g}_{ab}^{-1})A_a &= \tilde{g}_{ab}^{\ast}\theta^L - f(\Lambda_{ab}),\\
    \tilde{g}_{ab}^{-1} \triangleright \tilde{B}_a - \tilde{B}_b &= -d\Lambda_{ab} - \frac{1}{2}[\Lambda_{ab} \wedge \Lambda_{ab}] - \tilde{A}_b \triangleright \Lambda_{ab} + \tilde{\kappa}\left(\tilde{g}_{ab}, d\tilde{A}_a + \frac{1}{2}[\tilde{A}_a \wedge \tilde{A}_a] + f(\tilde{B}_a) \right).
\end{split}
\end{align}
An \emph{isomorphism of adjusted connections} $(\Lambda_{ab},\tilde{A}_a,\tilde{B}_a) \rightarrow (\Lambda_{ab}',\tilde{A}_a',\tilde{B}_a')$ is described by $\lambda_a \in \Omega^1(M_a,\mathfrak{h})$ such that
\begin{align}
\begin{split}
    \Lambda_{ab}' - \Lambda_{ab} &= \lambda_b - \tilde{g}_{ab}^{-1} \triangleright \lambda_a,\\
    \tilde{A}'_a - \tilde{A}_a &= -f(\lambda_a),\\
    \tilde{B}'_a - \tilde{B}_a &= d\lambda_a + \frac{1}{2}[\lambda_a \wedge \lambda_a] + \tilde{A}'_a \triangleright \lambda_a.
\end{split}
\end{align}
An important property of adjusted connections that can be derived directly from the above cocycle data is the following equivariance relation:
\begin{align}
    d\tilde{A}_b + \frac{1}{2}[\tilde{A}_b \wedge \tilde{A}_b] = Ad(\tilde{g}_{ab}^{-1})(d\tilde{A}_a + \frac{1}{2}[\tilde{A}_a \wedge \tilde{A}_a]) -f\left( d\Lambda_{ab} + \frac{1}{2}[\Lambda_{ab} \wedge \Lambda_{ab}] + \tilde{A}_b \triangleright \Lambda_{ab} \right) \label{eq:adjeq}
\end{align}
If $r: \tilde{\mathfrak{g}} \rightarrow \mathfrak{h}$ is the map corresponding to splittings $(s,l)$ as in \eqref{eq:splittinglie2alg}, then an adjusted connection $(\Lambda_{ab},\tilde{A}_a,\tilde{B}_a)$ is said to be \emph{fake-flat with respect to $r$} if
\begin{equation}
    r(d\tilde{A}_a + \frac{1}{2}[\tilde{A}_a \wedge \tilde{A}_a] + f\tilde{B}_a) = 0.
\end{equation} 

Assume now that $(\tilde{G},H,f,\triangleright,\tilde{\kappa})$ is a central Lie crossed module, let $G := \tilde{G}/Im(t)$, $T := \ker (t)$ and assume that $\tilde{\kappa}$ is obtained from a strong adjustment $(s,\kappa)$ and a splitting $l: \mathfrak{g} \rightarrow \tilde{\mathfrak{g}}$ as in Remark \ref{rk:coradj}, so that the associated multiplicative gerbe $\mathcal{G}$ has a connective structure by Proposition \ref{prop:adj}. Then we have:

\begin{proposition}\label{prop:adjcon}
    Under the above conditions, there is an equivalence of bicategories between the bicategory of $\mathcal{G}$-bundles with connection in the sense of Definition \ref{def:2pbcon} and the bicategory of bundles for the crossed module with adjusted connections that are fake-flat with respect to $r$.
\end{proposition}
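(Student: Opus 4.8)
The plan is to establish the equivalence directly at the level of \v{C}ech cocycle data on a common good cover $\{M_a\}_a$ of $M$, building explicit functors in both directions and checking that they are mutually quasi-inverse. Three earlier results supply the dictionary: the explicit bundle-gerbe model for $\mathcal{G}$ attached to the crossed module (Example \ref{ex:crossedmultger}), the identification of connective structures with strong adjustments (Proposition \ref{prop:adj}), and the linear splittings $s,l,r$ of \eqref{eq:splittinglie2alg}, which give the decompositions $\tilde{\mathfrak{g}} = l(\mathfrak{g}) \oplus \operatorname{Im}(f)$ and $\mathfrak{h} = \mathfrak{t} \oplus \ker(s)$, with $fr + l\pi = \operatorname{id}_{\tilde{\mathfrak{g}}}$ and $rf + \iota s = \operatorname{id}_{\mathfrak{h}}$ for the inclusion $\iota:\mathfrak{t}\hookrightarrow\mathfrak{h}$. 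Throughout I would use the explicit formulas \eqref{eq:adjpair}, \eqref{eq:adjcurv} for the pairing $\langle\cdot,\cdot\rangle$ and the Maurer-Cartan curving $\Theta^L$ produced from $(s,\kappa)$ by Theorem \ref{th:MaurerCartan}, together with the identities \eqref{eq:delkappa} for $\partial_1\kappa$.

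On underlying objects there is nothing new to do: a crossed-module bundle $(\tilde{g}_{ab},h_{abc})$ projects to the $G$-cocycle $g_{ab} := \pi\circ\tilde{g}_{ab}$, and substituting $\tilde{g}_{ab}, h_{abc}$ into the gluing isomorphisms of Example \ref{ex:crossedmultger} yields precisely the gerbe cocycles $s^{ab}_{i_1 j_1}, t^{abc}_{i_2}$ of Section \ref{sec:cocycle2pbsm}; this is the content of Theorem \ref{th:ScPr} and Proposition \ref{prop:2pbmult} restricted to cocycles. The real work is to match the connection data. Given a fake-flat adjusted connection $(\Lambda_{ab},\tilde{A}_a,\tilde{B}_a)$, I would set $A_a := \pi\circ\tilde{A}_a \in \Omega^1(M_a,\mathfrak{g})$, let $B_a := s(\tilde{B}_a) \in \Omega^2(M_a,\mathfrak{t})$ be the $\mathfrak{t}$-component, and build the gerbe-connection $1$-forms $\sigma^{ab}_{i_1}$ out of $s(\Lambda_{ab})$ corrected by the potentials underlying $s^{ab}_{i_1 j_1}$ from Proposition \ref{prop:adj}. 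One then verifies the cocycle equations of Section \ref{sec:cocyclecon}, the decisive one being the curving equation \eqref{eq:cocyclecon1e}, whose $\mathfrak{t}$-valued form must reproduce, after applying $s$, the third line of \eqref{eq:cocycleadjcon}. Here fake-flatness enters essentially: the term $\tilde{\kappa}(\tilde{g}_{ab}, d\tilde{A}_a + \tfrac12[\tilde{A}_a\wedge\tilde{A}_a] + f\tilde{B}_a)$ in \eqref{eq:cocycleadjcon} is controlled because $r(d\tilde A_a + \tfrac12[\tilde A_a\wedge \tilde A_a]+f\tilde B_a)=0$ pins down the $\ker(s)$-component of $\tilde{B}_a$, so that $s$-projecting the adjusted equations leaves exactly the pairing- and $\Theta^L$-terms predicted by \eqref{eq:adjpair}, \eqref{eq:adjcurv}.

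In the reverse direction I would start from $\mathcal{G}$-bundle connection data $(g_{ab}, s^{ab}_{i_1j_1}, t^{abc}_{i_2}; \sigma^{ab}_{i_1}, A_a, B_a)$, choose local lifts $\tilde{g}_{ab}$ of $g_{ab}$ to $\tilde{G}$ and sections determining $h_{abc}$ as in Example \ref{ex:crossedmultger}, and reconstruct $\tilde{A}_a := l(A_a) + f r(\tilde A_a)$, with the $\operatorname{Im}(f)$-part and the forms $\Lambda_{ab}$ chosen so that the first two lines of \eqref{eq:cocycleadjcon} hold; the two-form $\tilde{B}_a$ is then defined by prescribing its $\mathfrak{t}$-part to be $B_a$ and its $\ker(s)$-part by the fake-flatness equation, which simultaneously guarantees fake-flatness and fixes $\tilde B_a$ uniquely. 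Because the lifts $\tilde g_{ab}$ and the auxiliary sections involve choices, different choices produce crossed-module data related by the $1$- and $2$-morphisms of Section \ref{sec:gauge}, so this is genuinely a functor of bicategories rather than a strict bijection; one checks that the gauge cocycles $\tau_a$ for $\mathcal{G}_{\nabla}$-connections correspond under $s$ to the $\lambda_a$ of adjusted isomorphisms, and that flat $2$-isomorphisms match on both sides. Full faithfulness and essential surjectivity then follow by combining these verifications with Proposition \ref{prop:adj}, which already handles the connective-structure layer.

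The main obstacle I anticipate is the third cocycle equation of \eqref{eq:cocycleadjcon}: it is nonlinear and $\mathfrak{h}$-valued, mixing $\triangleright$, $Ad$, the bracket, and the adjustment $\tilde{\kappa}$, whereas the target equation \eqref{eq:cocyclecon1e} is abelian and $\mathfrak{t}$-valued. Showing these are equivalent requires carefully tracking how $s$ and $r$ interact with $\tilde{\kappa}(g,v) = \kappa(g,v) + r(Ad(g^{-1})v - v)$ (Remark \ref{rk:coradj}) and how the equivariance relation \eqref{eq:adjeq} feeds fake-flatness through the transition functions; in particular one must check that fake-flatness is preserved by the adjusted gauge action, so that the subcategory of fake-flat connections is well-defined and closed, and that it is exactly this subcategory---no more, no less---that is hit from the $\mathcal{G}_{\nabla}$-side. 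Once these compatibilities are confirmed, the remaining identities reduce to the straightforward linear-algebra consequences of \eqref{eq:delkappa} and the splitting \eqref{eq:splittinglie2alg}.
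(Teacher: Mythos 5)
Your plan follows the same route as the paper's proof --- a cocycle-level comparison built from Example \ref{ex:crossedmultger}, Proposition \ref{prop:adj} and the splittings $s,l,r$ of \eqref{eq:splittinglie2alg}, with fake-flatness determining the $\ker(s)$-component of $\tilde{B}_a$ (your observation that this component must be $-\tfrac{1}{2}r[l(A_a)\wedge l(A_a)]$ when $\tilde{A}_a = l(A_a)$ is exactly right). However, your explicit transfer formulas are wrong at precisely the step you yourself flag as the main obstacle, and the error cannot be fixed by ``careful tracking'' alone: the correspondence requires quadratic correction terms that you omit, and finding them is the substantive content of the proof. The correct dictionary is
\begin{align*}
\sigma_{ab} = s\bigl(\Lambda_{ab} - \kappa(\tilde{g}_{ab},\tilde{A}_a)\bigr), \qquad
B_a = s\Bigl(\tilde{B}_a - \tfrac{1}{2}\partial_1\kappa(\tilde{A}_a\wedge\tilde{A}_a)\Bigr), \qquad A_a = \pi(\tilde{A}_a),
\end{align*}
with inverse $\tilde{A}_a = l(A_a)$, $\Lambda_{ab} = \sigma_{ab} + \kappa(\tilde{g}_{ab},l(A_a)) + r\bigl(Ad(\tilde{g}_{ab}^{-1})l(A_a) + \tilde{g}_{ab}^{\ast}\theta^L\bigr)$ and $\tilde{B}_a = B_a + \tfrac{1}{2}\partial_1\kappa(l(A_a)\wedge l(A_a)) - \tfrac{1}{2}r[l(A_a)\wedge l(A_a)]$. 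You instead set $B_a := s(\tilde{B}_a)$ and, in the opposite direction, prescribe the $\mathfrak{t}$-part of $\tilde{B}_a$ to be $B_a$; likewise your ``$s(\Lambda_{ab})$ corrected by the potentials underlying $s^{ab}_{i_1j_1}$'' misidentifies the $\sigma$-correction, which is the connection-dependent term $-\kappa(\tilde{g}_{ab},\tilde{A}_a)$ and not a \v{C}ech potential (in this bundle-gerbe model the connection data are global forms $\sigma_{ab}\in\Omega^1(M_{ab},\mathfrak{t})$, as in \eqref{eq:cocycleadj}, not cover-indexed ones).

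To see that omitting the quadratic term is fatal rather than cosmetic, shift $\tilde{B}_a$ by the $\mathfrak{t}$-valued form $c_a := \tfrac{1}{2}\partial_1\kappa(\tilde{A}_a\wedge\tilde{A}_a)$. In the third equation of \eqref{eq:cocycleadjcon} the left-hand side changes by $\tilde{g}_{ab}^{-1}\triangleright c_a - c_b = c_a - c_b$ (the action on $\mathfrak{t}$ is trivial by centrality), while the right-hand side is unchanged because $f(c_a)=0$; since $\partial_1\kappa$ has nonzero skew part $-\Theta^L_1$ in general and $\tilde{A}_a$ transforms nontrivially across overlaps, $c_a - c_b \neq 0$, so your data satisfies the cocycle equations on one side exactly when it fails them on the other. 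The defect is also not of the form $(\lambda_b - \lambda_a, d\lambda_a)$, so it cannot be absorbed into an isomorphism. Two smaller points: your reconstruction $\tilde{A}_a := l(A_a) + fr(\tilde{A}_a)$ is circular as written (the paper simply takes $\tilde{A}_a := l(A_a)$, i.e.\ vanishing $\operatorname{Im}(f)$-part), and in the essential-surjectivity step the comparison isomorphism is the $\ker(s)$-valued family $\lambda_a = r(\tilde{A}_a)$ --- this is where fake-flatness is actually used --- rather than anything obtained from the $\mathcal{G}_{\nabla}$-gauge cocycles $\tau_a$ under $s$, as you suggest.
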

\begin{proof}
    First, there is an equivalence of bicategories between the bicategory of $\mathcal{G}$-bundles and the bicategory of crossed module bundles, because both are equivalent to the bicategory of principal 2-bundles (as in Definition \ref{def:2pb}) for the corresponding Lie 2-group: the case of crossed modules is proven in \cite{NikWal2pbs}, while the other one is Proposition \ref{prop:2pbmult}. This means that we only need to prove that the categories of connections are equivalent in both approaches. For this we use first the cocycle description for connections on $\mathcal{G}$-bundles in the sense of Definition \ref{def:2pbcon} adapted to the multiplicative gerbe with connective structure from Proposition \ref{prop:adj} and Example \ref{ex:crossedmultger}. Unwinding the definitions and using the formulas for $\langle \cdot,\cdot \rangle$ and $\Theta^L$, this is: $\sigma_{ab} \in \Omega^1(M_{ab},\mathfrak{t})$, $A_a \in \Omega^1(M_a,\mathfrak{g})$, $B_a \in \Omega^2(M_a,\mathfrak{t})$ such that:
    \begin{subequations}
    \label{eq:cocycleadj}
    \begin{align}
        \sigma_{ab} - \sigma_{ac} + \sigma_{bc} &= - \kappa(\tilde{g}_{bc},\tilde{g}_{ab}^{\ast}\theta^L) - s(\tilde{g}_{ac}^{-1} \triangleright h_{abc}^{\ast}\theta^R), \label{eq:cocycleadj1}\\
        A_b &= Ad(g_{ab}^{-1})A_a + g_{ab}^{\ast}\theta^L, \label{eq:cocycleadj2}\\
        B_b - B_a &= d\sigma_{ab} - \frac{1}{2}(\partial_1\kappa(\tilde{g}_{ab}^{\ast}\theta^L \wedge \tilde{g}_{ab}^{\ast}\theta^L) + \partial_1\kappa(A_a \wedge \tilde{g}_{ab}^{\ast}\theta^R) - \partial_1\kappa( \tilde{g}_{ab}^{\ast}\theta^R \wedge A_a)  ) \label{eq:cocycleadj3},
    \end{align}
    \end{subequations}
    where $g_{ab} := \pi(\tilde{g}_{ab}): M_{ab} \rightarrow G$. An isomorphism of connections $(\sigma_{ab},A_a,B_a) \rightarrow (\sigma_{ab}',A_a',B_a')$ can only exist if $A_a = A_a'$ and is then given by $\lambda_a \in \Omega^1(M_a,\mathfrak{t})$ such that $\sigma'_{ab} - \sigma_{ab} = \lambda_b - \lambda_a$, $B'_a - B_a = d\lambda_a$. Now letting $\xi_{ab} := (\kappa(\tilde{g}_{ab},l(A_a)) + r(Ad(\tilde{g}_{ab}^{-1})l(A_a) + \tilde{g}_{ab}^{\ast}\theta^L))$, a quick computation reveals that 
    \begin{align}
        \kappa(\tilde{g}_{bc},\tilde{g}_{ab}^{\ast}\theta^L) &= \tilde{g}_{bc}^{-1} \triangleright \xi_{ab} - \xi_{ac} + \xi_{bc} + \tilde{g}_{ac}^{-1} \triangleright (l(A_a) \triangleright h_{abc} \cdot h_{abc}^{-1}) + rf(\tilde{g}_{ac}^{-1} \triangleright h_{abc}^{\ast}\theta^R),\\
        s(d\xi_{ab}) &= \partial_1\kappa(\tilde{g}_{ab}^{\ast}\theta^L \wedge Ad(\tilde{g}_{ab}^{-1})l(A_a)) + \kappa(\tilde{g}_{ab},dl(A_a)),
    \end{align}
    which can be used to see that 
    \begin{align}
        \Lambda_{ab} &:= \xi_{ab} + \sigma_{ab},\\
        \tilde{A}_a &:= l(A_a),\\
        \tilde{B}_a &:= B_a + \frac{1}{2}\partial_1\kappa(l(A_a) \wedge l(A_a)) - \frac{1}{2}r[l(A_a) \wedge l(A_a)]
    \end{align}
    is an adjusted connection which is fake-flat with respect to $r$. It is then clear that this map can be extended to a fully faithful functor. It is also essentially surjective, since for $(\Lambda_{ab},\tilde{A}_a,\tilde{B}_a)$ a $r$-fake-flat adjusted connection, one has that $\sigma_{ab} := s(\Lambda_{ab} - \kappa(\tilde{g}_{ab},\tilde{A}_a))$, $A_a := \pi(\tilde{A}_a)$, $B_a := s(\tilde{B}_a - \frac{1}{2}\partial_1\kappa(\tilde{A}_a \wedge \tilde{A}_a))$ is a $\mathcal{G}$-connection that maps under the above functor to an adjusted connection isomorphic to the original one. An isomorphism is given by the one-forms $\lambda_a := r(\tilde{A}_a)$ (here the fake-flatness condition is used to prove that this is indeed an isomorphism).
\end{proof}

\begin{remark}
    In \cite{RistSaeWolf} there is no definition of a \emph{connective structure} on a bundle for an adjusted crossed module. Moreover, in the cocycle equations \eqref{eq:cocycleadjcon}, $\Lambda_{ab}$ and $\tilde{A}_a$ are coupled, so in principle it is unclear how one could write such definition. Our Proposition \ref{prop:adjcon} decouples the cocycle data, so in particular it gives the first notion of \emph{connective structure} on a bundle for an adjusted central crossed module. This is important because the Chern correspondence in Theorem \ref{th:chern} is a statement about holomorphic connective structures.
\end{remark}

\section{Holomorphic principal 2-bundles and the Chern correspondence}\label{sec:2pbshol}

Let $X$ be a complex manifold and let $\mathfrak{G}$ be a complex Lie 2-group. Then \emph{holomorphic} $\mathfrak{G}$-bundles are defined as in Definition \ref{def:2pb}, demanding that all the manifolds and maps are holomorphic. In particular, a holomorphic $\mathfrak{G}$-bundle has an underlying smooth $\mathfrak{G}$-bundle and one may study the space of holomorphic structures on a given smooth $\mathfrak{G}$-bundle. In this section we provide different results that describe this space in terms of \emph{semiconnections} when $\mathfrak{G}$ arises from a holomorphic multiplicative $T$-gerbe with holomorphic connective structure. In this case one can also study the space of \emph{holomorphic connective structures} on a smooth $\mathfrak{G}$-bundle, which we also describe.

\subsection{Semiconnections on principal 2-bundles}

Fix $X$ a complex manifold, $T$, $G$ complex Lie groups with $T$ abelian and $(\mathcal{G}_{\nabla},m_{\nabla},\alpha)$ a holomorphic multiplicative $T$-gerbe over $G$ with holomorphic connective structure as in Definition \ref{def:multgercx}. In this section we will define \emph{semiconnections} on smooth $\mathcal{G}$-bundles and use them to describe holomorphic structures and holomorphic structures with holomorphic connective structures on them.

\begin{definition}
\begin{enumerate}
    \item A \emph{holomorphic principal $\mathcal{G}$-bundle} over $X$ is a principal $\mathcal{G}$-bundle $(P,\mathcal{P},\rho,\alpha^{\rho})$ over $X$ such that $P$ is a holomorphic manifold with the action of $G$ on $P$ holomorphic, $\mathcal{P} \rightarrow P$ is a holomorphic gerbe and $\rho$, $\alpha^{\rho}$ are an isomorphism and a 2-isomorphism of holomorphic gerbes. A \emph{compatible connection} $(\nabla,\nabla_{\rho},A,B)$ is a connection as in Definition \ref{def:2pbcon} for which $(\nabla,B)$ is compatible with the holomorphic structure of the gerbe $\mathcal{P} \rightarrow P$, $\nabla_{\rho}$ is a compatible connection on $\rho$ and $A \in \Omega^1(P,\mathfrak{g})$ is of type $(1,0)$. Note $F_A^{0,2} = 0$, $H^{0,3} = 0$ in this case.
    \item A \emph{holomorphic principal $\mathcal{G}_{\nabla}$-bundle} over $X$ is a holomorphic $\mathcal{G}$-bundle $(P,\mathcal{P},\rho,\alpha^{\rho})$ over $X$ with a connective structure $(\nabla,\nabla_{\rho})$ as in Definition \ref{def:2pbcon} such that $\nabla$ is a holomorphic connective structure on $\mathcal{P} \rightarrow P$ and $\nabla_{\rho}$ is a holomorphic connection on $\rho$. A \emph{compatible connection} on a holomorphic $\mathcal{G}_{\nabla}$-principal 2-bundle is a connection $(\nabla,\nabla_{\rho},A,B)$ whose connective structure is the given one and such that $A \in \Omega^1(P,\mathfrak{g})$ is of type $(1,0)$ and $B$ is compatible with $\nabla$. Note $F_A^{0,2} = 0$, $H^{1,2+0,3} = 0$ in this case.
\end{enumerate}
\end{definition}

In terms of the cocycle data from Sections \ref{sec:cocycle2pbsm} and \ref{sec:cocyclecon}, a holomorphic $\mathcal{G}$-bundle is one for which $g_{ab}$, $s^{ab}_{i_1j_1}$ and $t^{abc}_{i_2}$ can be chosen to be holomorphic. In that case, a compatible connection is one for which $\sigma^{ab}_{i_1}$, $A_a$ can be chosen to be of type $(1,0)$ and $B_a$ can be chosen to be of type $(2,0)+(1,1)$. On the other hand, a holomorphic $\mathcal{G}_{\nabla}$-bundle is one for which $g_{ab}$, $s^{ab}_{i_1j_1}$, $t^{abc}_{i_2}$, $\sigma^{ab}_{i_1}$ can be chosen to be holomorphic and in that case a compatible connection is one for which $A_a$ can be chosen to be of type $(1,0)$ and $B_a$ can be chosen to be of type $(2,0)$. It can be proven similarly as in Proposition \ref{prop:connections} that compatible connections on holomorphic $\mathcal{G}$-bundles and on holomorphic $\mathcal{G}_{\nabla}$-bundles always exist.

Recall that for any complex Lie group $G$ the category of holomorphic $G$-bundles is equivalent to the category of smooth $G$-bundles with \emph{semiconnections}. Here a semiconnection on a smooth $G$-bundle $P$ may be defined as an equivalence class of smooth $G$-connections $A \in \Omega^1(P,\mathfrak{g})$ such that $F_A^{0,2} = 0$, where we identify $A_1 \sim A_2$ if $A_1-A_2 \in \Omega^{1,0}(X,ad \,P)$. Recall also that we proved the equivalent statement for gerbes in Proposition \ref{prop:dolbeaultgerbes}, based on Definition \ref{def:dolbgerbes}, which justifies the following definitions.


\begin{definition}\label{def:dol2pbs}
    The bicategory of smooth $\mathcal{G}$-bundles with 1-semiconnections is defined in the following way.
    \begin{enumerate}
        \item A \emph{smooth $\mathcal{G}$-bundle with 1-semiconnection} $(P,\mathcal{P},\rho,\alpha^{\rho},D_A,D_B,D^{\rho})$ is a smooth $\mathcal{G}$-bundle $(P,\mathcal{P},\rho,\alpha^{\rho})$ with:
        \begin{itemize}
            \item A semiconnection $D_A$ on the $G$-bundle $P \rightarrow X$.
            \item A 1-semiconnection $D_B$ on the gerbe $\mathcal{P} \rightarrow P$, where we see $P$ as a complex manifold with the complex structure induced by $D_A$.
            \item A 1-semiconnection $D^{\rho}$ on the isomorphism of gerbes with 1-semiconnection $\rho:(p^{\ast}\mathcal{P} \otimes g^{\ast}\mathcal{G},p^{\ast}D_B \otimes g^{\ast}(\Theta^L)) \rightarrow ((pg)^{\ast}\mathcal{P},(pg)^{\ast}D_B)$ that is preserved by $\alpha^{\rho}$. Here we see $\Theta^L$ as a 1-semiconnection by Proposition \ref{prop:pairingcx}.
        \end{itemize} 
        \item An \emph{isomorphism of smooth $\mathcal{G}$-bundles with 1-semiconnections} $$(u,\phi,\alpha^{\phi},D^{\phi}):(P^1,\mathcal{P}^1,\rho^1,\alpha^{\rho,1},D_A^1,D_B^1,D^{\rho^1}) \rightarrow (P^2,\mathcal{P}^2,\rho^2,\alpha^{\rho,2},D_A^2,D_B^2,D^{\rho^2})$$ is an isomorphism of smooth $\mathcal{G}$-bundles $(u,\phi,\alpha^{\phi})$ such that $u^{\ast}D^2_A = D^1_A$ with a 1-semiconnection $D^{\phi}$ on the isomorphism of gerbes with 1-semiconnections $\phi:(\mathcal{P}^1,D_1) \rightarrow (u^{\ast}\mathcal{P}^2,u^{\ast}D_2)$ that is preserved by $\alpha^{\phi}$.
        \item A \emph{2-isomorphism of smooth $\mathcal{G}$-bundles with 1-semiconnections} $\psi:(u,\phi,\alpha^{\phi},D^{\phi}) \Rightarrow (u',\phi',\alpha^{\phi'},D^{\phi'})$ is a 2-isomorphism $\psi$ of smooth $\mathcal{G}$-bundles such that $\psi: \phi \Rightarrow \phi'$ preserves the 1-semiconnections $D^{\phi}$, $D^{\phi'}$. 
    \end{enumerate} 
    We define in an analogous way the bicategory of smooth $\mathcal{G}$-bundles with 2-semiconnections. 
    For a fixed smooth $\mathcal{G}_{\nabla}$-bundle $\mathcal{P}_{\nabla}$ we write $\mathcal{D}(\mathcal{P}_{\nabla})$ for the set of 1-semiconnections $(D_A,D_B,D^{\rho})$ such that $\{(D_B)_{ij}$, $D^{\rho}_i\}$ is the $(0,1)$-part of the given connective structure and $\mathcal{D}'(\mathcal{P}_{\nabla})$ for the set of 2-semiconnections whose underlying connective structure is the given one. 
\end{definition}

The following proposition follows directly from Definition \ref{def:dol2pbs} and Proposition \ref{prop:dolbeaultgerbes}.

\begin{proposition}\label{prop:dolbeault2pbs}
The bicategory of holomorphic $\mathcal{G}$-bundles is equivalent to the bicategory of smooth $\mathcal{G}$-bundles with 1-semiconnections. The bicategory of holomorphic $\mathcal{G}_{\nabla}$-bundles is equivalent to the bicategory of smooth $\mathcal{G}$-bundles with 2-semiconnections.
\end{proposition}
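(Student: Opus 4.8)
The plan is to derive Proposition~\ref{prop:dolbeault2pbs} directly from the corresponding statement for gerbes, namely Proposition~\ref{prop:dolbeaultgerbes}, by checking that the Dolbeault-type correspondence assembles compatibly across all the pieces of a $\mathcal{G}$-bundle. A holomorphic $\mathcal{G}$-bundle consists of four layers of data: the holomorphic $G$-bundle $P$, the holomorphic gerbe $\mathcal{P}\rightarrow P$, the isomorphism $\rho$, and the $2$-isomorphism $\alpha^{\rho}$. I would treat each layer in turn using the corresponding Dolbeault statement. For $P$ itself one uses the classical fact, recalled in the excerpt, that holomorphic $G$-bundles are equivalent to smooth $G$-bundles with a semiconnection $D_A$ (an equivalence class of connections with $F_A^{0,2}=0$). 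For the gerbe $\mathcal{P}$, the complex structure on its base $P$ is not fixed a priori but is the one induced by $D_A$; once that is in place, Proposition~\ref{prop:dolbeaultgerbes} supplies the equivalence between holomorphic gerbes and gerbes with a $1$-semiconnection $D_B$, and between holomorphic structures with holomorphic connective structure and gerbes with a $2$-semiconnection.

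The key technical point I would isolate is the treatment of $\rho$ and $\alpha^{\rho}$. Here I would invoke the notion of a $1$-semiconnection (resp.\ $2$-semiconnection) \emph{on an isomorphism of gerbes}, as already set up in Definition~\ref{def:dol2pbs}: $\rho$ is regarded as an isomorphism of gerbes with semiconnection
$$\rho:\left(p^{\ast}\mathcal{P}\otimes g^{\ast}\mathcal{G},\,p^{\ast}D_B\otimes g^{\ast}\Theta^L\right)\longrightarrow\left((pg)^{\ast}\mathcal{P},(pg)^{\ast}D_B\right),$$
where $\Theta^L$ is read as a $1$-semiconnection on $\mathcal{G}$ via Proposition~\ref{prop:pairingcx}, which guarantees that the Maurer--Cartan curving is compatible with the holomorphic structure of $\mathcal{G}_{\nabla}$. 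The Dolbeault correspondence for isomorphisms of gerbes (the part of Proposition~\ref{prop:dolbeaultgerbes} dealing with morphisms) then gives that a holomorphic $\rho$ is the same as a smooth $\rho$ equipped with a $1$-semiconnection $D^{\rho}$ preserved by $\alpha^{\rho}$. The condition that $\alpha^{\rho}$ be a holomorphic $2$-isomorphism becomes the condition that $\alpha^{\rho}$ preserve $D^{\rho}$, which is exactly what Definition~\ref{def:dol2pbs} records; since $2$-isomorphisms of gerbes carry no further connective data, this layer imposes no additional semiconnection datum.

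Having matched objects, I would check that the correspondence is compatible with morphisms and $2$-morphisms so as to upgrade it to an equivalence of bicategories. An isomorphism $(u,\phi,\alpha^{\phi})$ of holomorphic $\mathcal{G}$-bundles requires $u$ holomorphic (equivalently $u^{\ast}D_A^2=D_A^1$), $\phi$ a holomorphic isomorphism of gerbes (equivalently a $1$-semiconnection $D^{\phi}$ on $\phi$ preserved by $\alpha^{\phi}$), and $\alpha^{\phi}$ holomorphic (equivalently preserving $D^{\phi}$); a $2$-isomorphism $\psi$ is holomorphic iff it preserves the relevant semiconnections. Each of these translations is again a direct application of the morphism and $2$-morphism parts of Proposition~\ref{prop:dolbeaultgerbes}, together with the already-established equivalence for holomorphic $G$-bundles governing the datum $u$. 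Assembling these bijections, and noting they are natural, yields the claimed equivalences of bicategories, with the $\mathcal{G}_{\nabla}$-case obtained verbatim by replacing ``$1$-semiconnection'' with ``$2$-semiconnection'' throughout.

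The main obstacle I expect is \emph{bookkeeping of the induced complex structures and their mutual compatibility}, rather than any single hard computation: the complex structure on $P$ in which $\mathcal{P}$ is holomorphic is the one induced by $D_A$, and one must verify that the semiconnections $D_B$, $D^{\rho}$ are defined relative to that structure and that the pullbacks $p^{\ast}$, $(pg)^{\ast}$, $g^{\ast}$ intertwine everything correctly over $P\times G$ and $P\times G\times G$. In particular one should confirm that the cocycle-level description given just before the statement --- $g_{ab}$, $s^{ab}_{i_1j_1}$, $t^{abc}_{i_2}$ holomorphic and $\sigma^{ab}_{i_1}$, $A_a$, $B_a$ of the prescribed types --- matches the abstract semiconnection data layer by layer, which reduces to the gerbe case already handled. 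Since every individual equivalence is supplied by Proposition~\ref{prop:dolbeaultgerbes} and the classical $G$-bundle statement, no genuinely new analysis is needed, and the proof is the short assembly indicated in the excerpt: it ``follows directly from Definition~\ref{def:dol2pbs} and Proposition~\ref{prop:dolbeaultgerbes}''.
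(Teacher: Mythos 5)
Your proposal is correct and is essentially the paper's own argument: the paper proves this proposition in one line, asserting it ``follows directly from Definition \ref{def:dol2pbs} and Proposition \ref{prop:dolbeaultgerbes}'', and your layer-by-layer assembly (classical Dolbeault correspondence for the underlying $G$-bundle, Proposition \ref{prop:dolbeaultgerbes} for the gerbe $\mathcal{P}\rightarrow P$ and for the isomorphism $\rho$ with $\Theta^L$ read as a semiconnection via Proposition \ref{prop:pairingcx}, and the preservation conditions for $\alpha^{\rho}$, morphisms, and $2$-morphisms) is precisely the intended unwinding of that sentence. No gap.
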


For practical purposes, it is convenient to describe 1-semiconnections and 2-semiconnections on $\mathcal{G}$-bundles in a more straightforward way than the one in Definition \ref{def:dol2pbs}. For this, recall from Proposition \ref{prop:connections} that the space $\mathcal{A}(\mathcal{P}_{\nabla})$ of connections on a $\mathcal{G}_{\nabla}$-bundle over $X$ is a torsor for the group $\Omega^1(ad \,P) \times_{\langle \cdot,\cdot \rangle} \Omega^2(X,\mathfrak{t})$ with product given by \eqref{eq:torsorproduct} and define similarly subgroups $\Omega^{1,0}(ad\,P) \times_{\langle \cdot,\cdot \rangle} \Omega^{2,0+1,1}(X,\mathfrak{t})$ and $\Omega^{1,0}(ad\,P) \times_{\langle \cdot,\cdot \rangle} \Omega^{2,0}(X,\mathfrak{t})$.

\begin{proposition}\label{prop:dolbtorsor}
    Let $\mathcal{P}_{\nabla}$ be a smooth $\mathcal{G}_{\nabla}$-bundle. There are canonical bijections
    \begin{align*}
        \mathcal{D}(\mathcal{P}_{\nabla}) &\rightarrow \{(A,B) \in \mathcal{A}(\mathcal{P}_{\nabla}) \:|\: F_A^{0,2} = 0, \, H^{0,3} = 0\}/\Omega^{1,0}(ad\,P) \times_{\langle \cdot,\cdot \rangle} \Omega^{2,0+1,1}(M,\mathfrak{t}), \\
        \mathcal{D}'(\mathcal{P}_{\nabla}) &\rightarrow \{(A,B) \in \mathcal{A}(\mathcal{P}_{\nabla})  \:|\: F_A^{0,2} = 0, \, H^{1,2+0,3} = 0\}/\Omega^{1,0}(ad\,P) \times_{\langle \cdot,\cdot \rangle} \Omega^{2,0}(M,\mathfrak{t}).
    \end{align*}
    These maps send a 1-semiconnection (resp. a 2-semiconnection) $(D_A,D_B,D^{\rho})$ to the set of all connections on $\mathcal{P}_{\nabla}$ that are compatible with the holomorphic structure (resp. holomorphic structure with holomorphic connective structure) induced by $(D_A,D_B,D^{\rho})$. 
\end{proposition}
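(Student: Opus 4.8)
# Proof Proposal for Proposition \ref{prop:dolbtorsor}

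The plan is to establish the two bijections by showing that each side is a quotient of the affine space of compatible connections by the appropriate subgroup, and that the quotient is forced by the gauge-theoretic interpretation of semiconnections. I will treat the 1-semiconnection case in detail; the 2-semiconnection case is entirely parallel, differing only in the type conditions imposed.

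First I would set up the correspondence by invoking Proposition \ref{prop:dolbeault2pbs} together with the Dolbeault-type description of semiconnections on gerbes from Proposition \ref{prop:dolbeaultgerbes}. A 1-semiconnection $(D_A, D_B, D^{\rho})$ determines a holomorphic structure on the $\mathcal{G}$-bundle, and by the discussion following Definition \ref{def:dol2pbs} (and the existence statement proven "similarly as in Proposition \ref{prop:connections}") compatible connections on a holomorphic $\mathcal{G}$-bundle always exist. So the map sends $(D_A, D_B, D^{\rho})$ to the nonempty set of all connections $(A,B) \in \mathcal{A}(\mathcal{P}_{\nabla})$ that are compatible with the induced holomorphic structure. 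Each such connection satisfies $F_A^{0,2} = 0$ and $H^{0,3} = 0$, since compatibility of $A$ forces $D_A$ to be the $(0,1)$-part of $A$ (so $F_A^{0,2}=0$ is the integrability of the induced holomorphic structure on $P$), and compatibility of $B$ with the 1-semiconnection $D_B$ forces the curvature three-form $\hat H$ to have vanishing $(0,3)$-part, which by Lemma \ref{lem:curvature} descends to $H^{0,3}=0$ on $X$.

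Next I would identify the fibers of this map. Two compatible connections $(A_1,B_1)$ and $(A_2,B_2)$ induce the \emph{same} 1-semiconnection precisely when their $(0,1)$-parts agree, i.e. when $D_{A_1} = D_{A_2}$ as $\bar\partial$-operators on $P$ and the $(0,1)$-parts of the curvings and the connection on $\rho$ coincide. By Proposition \ref{prop:connections}(2), $\mathcal{A}(\mathcal{P}_{\nabla})$ is a torsor for $\Omega^1(ad\,P) \times_{\langle \cdot,\cdot \rangle} \Omega^2(X,\mathfrak{t})$ under the product \eqref{eq:torsorproduct}, so $(A_2,B_2) = (A_1,B_1)\cdot(a,b)$ for a unique $(a,b)$. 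The condition that both connections induce the same holomorphic structure translates exactly into $a \in \Omega^{1,0}(ad\,P)$ (so that $D_{A_1}=D_{A_2}$) and $b \in \Omega^{2,0+1,1}(X,\mathfrak{t})$ (so that the $(0,1)$-data of the curving is unchanged); conversely, acting by any element of the subgroup $\Omega^{1,0}(ad\,P) \times_{\langle \cdot,\cdot \rangle} \Omega^{2,0+1,1}(X,\mathfrak{t})$ preserves the induced holomorphic structure. This is where I would verify that the twisted product \eqref{eq:torsorproduct} genuinely restricts to a subgroup on these type-constrained forms: the correction term $\langle a_1 \wedge a_2 \rangle$ with $a_i \in \Omega^{1,0}$ lands in $\Omega^{2,0}$, hence stays inside $\Omega^{2,0+1,1}$, so closure holds. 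Therefore the fiber is exactly one orbit of this subgroup, giving the claimed bijection onto the quotient.

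The main obstacle I anticipate is the bookkeeping at the level of the isomorphism $\rho$ and its semiconnection $D^{\rho}$: one must check that the $(0,1)$-part of the connection $\nabla_\rho$ is determined by, and compatible with, the data on the two gerbes in a way consistent with the fixed connective structure $\mathcal{G}_\nabla$ on the structure group (using Proposition \ref{prop:pairingcx} to regard $\Theta^L$ as a 1-semiconnection). Concretely, I would carry this out in the cocycle description of Section \ref{sec:cocyclecon}, where compatibility becomes the statement that $\sigma^{ab}_{i_1}$ is of type $(1,0)$ and $A_a$ is of type $(1,0)$, with $B_a$ of type $(2,0)+(1,1)$; the ambiguity in recovering a connection from a semiconnection is then manifestly the freedom to add $(1,0)$-forms to $\sigma^{ab}$ and $A_a$ and $(2,0)+(1,1)$-forms to $B_a$, matching the subgroup above. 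For the 2-semiconnection statement, the only change is that the holomorphic connective structure additionally fixes the $(0,1)$-part of $\sigma^{ab}$, forcing $b \in \Omega^{2,0}$ and sharpening the curvature condition to $H^{1,2+0,3}=0$ via the analog of Lemma \ref{lem:curvature}; the subgroup correspondingly shrinks to $\Omega^{1,0}(ad\,P) \times_{\langle \cdot,\cdot \rangle} \Omega^{2,0}(X,\mathfrak{t})$, and closure under \eqref{eq:torsorproduct} again holds since $\langle \Omega^{1,0} \wedge \Omega^{1,0}\rangle \subset \Omega^{2,0}$.
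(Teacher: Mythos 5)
Your proposal is correct in outline, but it takes a genuinely different route from the paper at the one step that carries the real technical content. You obtain non-emptiness of the fibers by combining Proposition \ref{prop:dolbeault2pbs} with the paper's earlier assertion (stated before Definition \ref{def:dol2pbs}, and only asserted there, not proven) that compatible connections on holomorphic $\mathcal{G}$- and $\mathcal{G}_{\nabla}$-bundles always exist. The paper instead argues self-containedly from the equivalence-class description of semiconnections (Remark \ref{rk:dolbgerbes}): a 2-semiconnection gives classes $([A],[B])$ for which the curvature $\tau_B$ of $\nabla_{\rho}$ is only known to be of type $(2,0)$, whereas membership in $\mathcal{A}(\mathcal{P}_{\nabla})$ requires $\tau_B = -\langle p^{\ast}A \wedge g^{\ast}\theta^R\rangle = -R(A)$ on the nose (condition (3) of Definition \ref{def:2pbcon}). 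The paper closes this gap by a normalization argument: $\tau_B + R(A)$ is $\delta$-closed of type $(2,0)$ on $P \times G$, hence $\delta$-exact with a $(2,0)$-primitive $b$ on $P$, so the curving can be adjusted within its equivalence class to yield an honest connection. This normalization is precisely what makes existence nontrivial, and your proposal never confronts it; it is absorbed into the appeal to the existence assertion. Your route is not circular (that assertion is logically prior to the proposition and provable by partition-of-unity arguments on holomorphic cocycle data, ``similarly as in Proposition \ref{prop:connections}''), and your fiber analysis --- which the paper compresses into ``it is easy to check'' --- is correct and usefully explicit, including the closure check $\langle \Omega^{1,0} \wedge \Omega^{1,0}\rangle \subset \Omega^{2,0}$, which, you should note, uses that the pairing is of type $(2,0)$ by Proposition \ref{prop:pairingcx}. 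In short: your approach buys brevity and a cleaner torsor-theoretic picture of the fibers, but defers the hard step to a claim the paper leaves unproven; the paper's approach localizes the difficulty in a single cohomological vanishing statement and, in doing so, effectively proves that claim as well.
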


\begin{proof}
    We prove the case of 2-semiconnections, as the other one is similar. It follows from Definition \ref{def:dol2pbs} and Remark \ref{rk:dolbgerbes} that an element of $\mathcal{D}(\mathcal{P}_{\nabla})$ is a pair $([A],[B])$ of an equivalence class of connections $A$ on the $G$-bundle $P$ such that $F_A^{0,2} = 0$ up to addition of $\Omega^{1,0}(ad \,P)$ (yielding a holomorphic structure on $P$ such that $A \in \Omega^{1,0}(P,\mathfrak{g})$ and which we use in the following), and an equivalence class of curvings $B$ on the gerbe with connective structure $\mathcal{P}_{\nabla} \rightarrow P$ whose curvature $\hat{H}$ is of type $(3,0)+(2,1)$ and such that the curvature of $\rho_{\nabla}:p^{\ast}\mathcal{P}_{\nabla} \otimes g^{\ast}\mathcal{G}_{\nabla} \rightarrow (pg)^{\ast}\mathcal{P}_{\nabla}$ with respect to $p^{\ast}B \otimes g^{\ast}\Theta^L$, $(pg)^{\ast}B$ is some $\tau_B \in \Omega^2(P \times G,\mathfrak{t})$ of type $(2,0)$, up to addition of $\Omega^{2,0}(P,\mathfrak{t})$. For such pair $([A],[B])$ choose a representing connection $A \in [A]$. We claim that we may always choose a representing curving $B \in [B]$ such that $\tau_B = -R(A)$, for $R(A)$ as in Lemma \ref{lem:chernsimons}. This is because for any representing $B \in [B]$ we have that $\tau_B + R(A) \in \Omega^{2,0}(P \times G,\mathfrak{t})$ satisfies $\delta(\tau_B + R(A)) = 0$, hence there is $b \in \Omega^{2,0}(P,\mathfrak{t})$ with $\delta b = \tau_B + R(A)$. Then for such a choice of $B$ we have $(A,B) \in \mathcal{A}(\mathcal{P}_{\nabla})$ with $F_A^{0,2} = 0$ and $H^{1,2+0,3} = \hat{H}^{1,2+0,3} + CS(A)^{1,2+0,3} = 0$; it is easy to check that this gives a bijection as above.
\end{proof}


\subsubsection{Cocycle data for 1-semiconnections and 2-semiconnections}

We proceed to give cocycle data for 1-semiconnections and 2-semiconnections. If $g_{ab}$, $s^{ab}_{i_1j_1}$, $t^{abc}_{i_2}$ is cocycle data for a smooth $\mathcal{G}$-bundle in a cover $\{X_a\}_{a \in A}$ of $X$ as in Section \ref{sec:cocycle2pbsm}, then a 1-semiconnection is given by 
\begin{equation}
    \sigma^{ab}_{i_1} \in \Omega^{0,1}(g_{ab}^{-1}(U_{i_1}^1),\mathfrak{t}), \quad  A_a \in \Omega^{0,1}(X_a,\mathfrak{g}), \quad D_a \in \Omega^{0,2}(X_a,\mathfrak{t})
\end{equation} 
such that
\begin{align}
\begin{split}
    &\sigma^{ab}_{i_1} + ((s^{ab}_{i_1j_1})^{\ast}\theta^T + g_{ab}^{\ast}A_{i_1j_1})^{0,1} = \sigma^{ab}_{j_1},\\
    &((t^{abc}_{i_2})^{\ast}\theta^T  + (g_{ab},g_{bc})^{\ast}M_{i_2})^{0,1} - \sigma^{bc}_{d_0(i_2)} + \sigma^{ac}_{d_1(i_2)} - \sigma^{ab}_{d_2(i_2)} = 0,\\
    &A_b - Ad(g_{ab}^{-1})A_a = (g_{ab}^{\ast}\theta^L)^{0,1},\\
    &D_b - D_a + \langle A_a \wedge (g_{ab}^{\ast}\theta^R)^{0,1} \rangle = (d\sigma^{ab}_{i_1} + g_{ab}^{\ast}\Theta_{i_1})^{0,2},\\
    &\bar{\partial} A_a + \frac{1}{2}[A_a \wedge A_a] = 0,\\
    &\bar{\partial} D_a + \langle \bar{\partial} A_a \wedge A_a \rangle + \frac{1}{3}\langle A_a \wedge [A_a \wedge A_a] \rangle = 0.
\end{split}
\end{align}
On the other hand, a 2-semiconnection is given by
\begin{equation}
    \sigma^{ab}_{i_1} \in \Omega^{0,1}(g_{ab}^{-1}(U_{i_1}^1),\mathfrak{t}), \quad  A_a \in \Omega^{0,1}(X_a,\mathfrak{g}), \quad D_a \in \Omega^{1,1+0,2}(X_a,\mathfrak{t})
\end{equation}
such that
\begin{align}
\begin{split}
    &\sigma^{ab}_{i_1} + (s^{ab}_{i_1j_1})^{\ast}\theta^T + g_{ab}^{\ast}A_{i_1j_1} = \sigma^{ab}_{j_1},\\
    &(t^{abc}_{i_2})^{\ast}\theta^T  + (g_{ab},g_{bc})^{\ast}M_{i_2} - \sigma^{bc}_{d_0(i_2)} + \sigma^{ac}_{d_1(i_2)} - \sigma^{ab}_{d_2(i_2)} = 0,\\
    &A_b - Ad(g_{ab}^{-1})A_a = (g_{ab}^{\ast}\theta^L)^{0,1},\\
    &D_b - D_a + \langle A_a \wedge g_{ab}^{\ast}\theta^R \rangle - \langle (g_{ab}^{\ast}\theta^L)^{1,0} \wedge A_b \rangle = (d\sigma^{ab}_{i_1} + g_{ab}^{\ast}\Theta_{i_1})^{1,1+0,2}, \\
    &\bar{\partial} A_a + \frac{1}{2}[A_a \wedge A_a] = 0,\\
    &(dD_a)^{1,2+0,3} + \langle d A_a \wedge A_a \rangle + \frac{1}{3}\langle A_a \wedge [A_a \wedge A_a] \rangle = 0.
\end{split}
\end{align}
This data is obtained by observing that, if $(\sigma^{ab}_{i_1},A_a,B_a)$ is a connection with $F_A^{0,2} = 0$ and $H^{1,2+0,3} = 0$, then $(\sigma^{ab}_{i_1},A_a^{0,1},B_a^{1,1+0,2}+\langle A_a^{1,0} \wedge A_a^{0,1} \rangle)$ satisfies the equations above, given data as above it can always be completed to a connection, and two connections differ by $a \in \Omega^{1,0}(ad \,P)$, $b \in \Omega^{2,0}(X,\mathfrak{t})$ if and only if the corresponding data $(\sigma^{ab}_{i_1},A_a^{0,1},B_a^{1,1+0,2}+\langle A_a^{1,0} \wedge A_a^{0,1} \rangle)$ coincides (and similarly for $1$-semiconnections). 

Similarly to what we discussed in Section \ref{sec:gauge}, isomorphisms of $\mathcal{G}_{\nabla}$-bundles act on 1-semiconnections and 2-semiconnections. We proceed to give a formula for this action in terms of cocycle data. This can be obtained from the formula for the action on connections of Section \ref{sec:gauge} and the maps above that send connections to semiconnections.

Let $\mathcal{P}_{\nabla}^1$ and $\mathcal{P}_{\nabla}^2$ be $\mathcal{G}_{\nabla}$-bundles described by cocycle data $(g_{ab}^1,\sigma_{ab,\nabla}^1,\tau_{abc}^1)$, $(g_{ab}^2,\sigma_{ab,\nabla}^2,\tau_{abc}^2)$. An isomorphism $\mathcal{P}_{\nabla}^1 \rightarrow \mathcal{P}_{\nabla}^2$ described by $(\varphi_a,\Phi_{a,\nabla},\psi_{ab})$ as in Section \ref{sec:gauge} acts on a 1-semiconnection $(A_a^1,D_a^1)$ for $\mathcal{P}_{\nabla}^1$ by sending it to the 1-semiconnection $(A_a^2,D_a^2)$ for $\mathcal{P}_{\nabla}^2$ with
\begin{align}
    A_a^2 &= Ad(\varphi_a)A_a^1 - (\varphi_a^{\ast}\theta^R)^{0,1},\\
    D_a^2 &= D_a^1 - \langle (\varphi_a^{\ast}\theta^L)^{0,1} \wedge A_a^1 \rangle  - Curv(\Phi_{a,\nabla})^{0,2},     
\end{align}
and it acts on a 2-semiconnection $(A_a^1,D_a^1)$ by sending it to the 2-semiconnection $(A_a^2,D_a^2)$ for $\mathcal{P}_{\nabla}^2$ with
\begin{align}
    A_a^2 &= Ad(\varphi_a)A_a^1 - (\varphi_a^{\ast}\theta^R)^{0,1},\\
    D_a^2 &= D_a^1 - \langle (\varphi_a^{\ast}\theta^L)^{0,1} \wedge A_a^1 \rangle + \langle A_a^2 \wedge (\varphi_a^{\ast}\theta^R)^{1,0} \rangle - \langle (\varphi_a^{\ast}\theta^L)^{1,0} \wedge A_a^1 \rangle  - Curv(\Phi_{a,\nabla})^{1,1+0,2}.     
\end{align}

\subsection{Complexification of principal 2-bundles and Chern correspondence}\label{sec:chern}

In this section we fix compact connected Lie groups $K$, $T_{\mathbb R}$ with $T_{\mathbb R}$ abelian, a multiplicative $T_{\mathbb R}$-gerbe $\mathcal{K}$ over $K$ and a complex manifold $X$. We write $G$, $T$ and $\mathcal{G}_{\nabla}$ for the complexifications of $K$, $T_{\mathbb R}$, $\mathcal{K}$, respectively, as in Theorem \ref{th:complexification}, and we write $j: K \rightarrow G$ for the complexification map. We will show that there is a complexification functor fom $\mathcal{K}$-bundles to $\mathcal{G}$-bundles and we will prove a relation between connections on a $\mathcal{K}$-bundle and holomorphic structures (with or without holomorphic connective structures) on its complexification. 

If $P_h \rightarrow X$ is a $K$-bundle, then its \emph{fibrewise complexification} is the smooth $G$-bundle $P_h^{\mathbb C} := P_h \times G / K$, where $k \in K$ acts as $(p,g) \cdot k = (pk,k^{-1}g)$. Note there is a canonical $K$-equivariant map $l:P_h \rightarrow P_h^{\mathbb C}$. This defines a functor from $K$-bundles to $G$-bundles that is easily improved to a functor from $K$-bundles with connection to $G$-bundles with connection which we denote by $(P_h,A_h) \mapsto (P_h^{\mathbb C},A_h^{\mathbb C})$. Recall also the fibrewise complexification of gerbes from Definition  \ref{def:complexificationgerbes}. The following proposition generalizes these two construction to $\mathcal{K}$-bundles.

\begin{proposition}\label{prop:complexification2pb}
Let $(P_h,\mathcal{P}_h,\rho_h,\alpha^{\rho_h}) \rightarrow X$ be a $\mathcal{K}$-bundle. Then, there is a unique smooth $\mathcal{G}$-bundle $(P,\mathcal{P},\rho,\alpha^{\rho}) \rightarrow X$ such that $P = P_h^{\mathbb C}$ and 
\begin{enumerate}
    \item There is an isomorphism of $T$-gerbes $\phi: \mathcal{P}_h^{\mathbb C} \rightarrow l^{\ast}\mathcal{P}$ over $P_h$.
    \item There is a 2-isomorphism of $T$-gerbes over $P_h \times K$
    \begin{equation*}
        \begin{tikzcd}[column sep = 6em]
             p^{\ast}\mathcal{P}_{h}^{\mathbb C} \otimes k^{\ast}\mathcal{K}^{\mathbb C} \ar[r,"{\rho_h^{\mathbb C}}",{name=U}] \ar[d,"{p^{\ast}\phi \otimes id}",swap] 
             & (pk)^{\ast}\mathcal{P}_{h}^{\mathbb C} \ar[d,"(pk)^{\ast}\phi"] \ar[Rightarrow, dl, "\psi"]\\
             l(p)^{\ast}\mathcal{P} \otimes j(k)^{\ast}\mathcal{G} \ar[r," {(l(p),j(k))^{\ast}\rho}",{name=D},swap] 
             & (l(p)j(k))^{\ast}\mathcal{P}
        \end{tikzcd}
    \end{equation*}
    intertwining $\alpha^{\rho_h}$ with $\alpha^{\rho}$.
\end{enumerate} 
We call $(P,\mathcal{P})$ the \emph{fibrewise complexification} of $(P_h,\mathcal{P}_h)$ and denote it by $(P_h,\mathcal{P}_h)^{\mathbb C}$.
\end{proposition}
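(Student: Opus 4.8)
The plan is to construct the complexified $\mathcal{G}$-bundle directly from cocycle data and to establish uniqueness by the classification machinery developed in the previous sections. I would begin by choosing a good cover $\{X_a\}_{a \in A}$ of $X$ together with sections $s_a : X_a \to P_h$ of the $K$-bundle, so that the $\mathcal{K}$-bundle $(P_h,\mathcal{P}_h,\rho_h,\alpha^{\rho_h})$ is presented by cocycle data $g_{ab}^h : X_{ab} \to K$, trivializations $s^{ab}_{i_1j_1}$ of $(g_{ab}^h)^{\ast}\mathcal{K}$, and isomorphisms $t^{abc}_{i_2}$, exactly as in Section \ref{sec:cocycle2pbsm}. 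The key observation is that the complexification maps $j_K : K \to G$ and $j_T : T_{\mathbb R} \to T$, together with the identification $j_K^{\ast}\mathcal{G} = \mathcal{K}^{\mathbb C}$ furnished by Theorem \ref{th:complexification}, allow me to push this data forward: the transition functions become $j_K \circ g_{ab}^h : X_{ab} \to G$, and the gerbe cocycle data for $\mathcal{K}$ maps to cocycle data for $\mathcal{K}^{\mathbb C} = j_K^{\ast}\mathcal{G}$ via the fibrewise complexification $T_{\mathbb R} \hookrightarrow T$. This yields a well-defined smooth $\mathcal{G}$-bundle whose underlying $G$-bundle is precisely $P_h^{\mathbb C}$, since the transition functions $j_K \circ g_{ab}^h$ are exactly those defining the fibrewise complexification of $P_h$.

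Next I would produce the comparison data $(\phi,\psi)$ of the statement. The isomorphism $\phi : \mathcal{P}_h^{\mathbb C} \to l^{\ast}\mathcal{P}$ is built tautologically from the construction: since $\mathcal{P}$ is glued, over $P_h^{\mathbb C}$, from copies of $r_a^{\ast}\mathcal{G}$ using the complexified trivializations, and $\mathcal{P}_h^{\mathbb C}$ is the fibrewise complexification of $\mathcal{P}_h$, the pullback along the canonical $K$-equivariant map $l : P_h \to P_h^{\mathbb C}$ recovers the complexified gerbe data over $P_h$ by the compatibility $j_K^{\ast}\mathcal{G} = \mathcal{K}^{\mathbb C}$. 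The 2-isomorphism $\psi$ intertwining $\rho_h^{\mathbb C}$ with $\rho$ is then obtained by tracking how the action isomorphisms transform under fibrewise complexification over $P_h \times K$; because $\rho$ is assembled from $m$ and the gluing isomorphisms in the standard way, checking that the diagram of the statement commutes and that $\psi$ intertwines $\alpha^{\rho_h}$ with $\alpha^{\rho}$ reduces to the coherence already encoded in the cocycle equations for $t^{abc}_{i_2}$ and $\alpha_{i_3}$.

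For uniqueness, I would argue that any smooth $\mathcal{G}$-bundle $(P',\mathcal{P}',\rho',\alpha^{\rho'})$ with $P' = P_h^{\mathbb C}$ admitting data $(\phi',\psi')$ as in the statement is forced, over the image $l(P_h)$, to agree with $(P,\mathcal{P})$ up to canonical isomorphism; since $l(P_h)$ is a deformation retract of $P_h^{\mathbb C}$ (the fibre $G/K$ being contractible onto $K/K$), the restriction functor from $\mathcal{G}$-bundles over $P_h^{\mathbb C}$ to those over $l(P_h)$ is an equivalence, and the gerbe data extends uniquely. This is the same homotopy-invariance principle that drives the uniqueness clause of Theorem \ref{th:complexification}, now applied fibrewise over $X$ rather than over the classifying space.

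The main obstacle I anticipate is the verification of the coherence 2-isomorphism $\psi$ and its compatibility with the associators: one must check that the pentagon-type diagram relating $\alpha^{\rho_h}$, $\alpha^{\rho}$, $\phi$ and $\psi$ commutes, which is a bookkeeping exercise involving the interplay of the multiplicative structure $m$ on $\mathcal{G}$ with the fibrewise complexification of the $K$-action. While each individual identity follows from the defining cocycle relations, organizing them so that the 2-categorical coherence holds strictly — rather than merely up to a further correction — is the delicate point, and is where the hypothesis that $\mathcal{K}^{\mathbb C} = j_K^{\ast}\mathcal{G}$ \emph{as multiplicative gerbes} (not merely as gerbes) must be used in full force.
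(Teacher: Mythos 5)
Your existence argument coincides with the paper's: both choose cocycle data $(g_{ab}^h,\sigma^{ab,h},\tau^{abc,h})$ for the $\mathcal{K}$-bundle on a cover of $X$, replace $g_{ab}^h$ by $j\circ g_{ab}^h$, and use the multiplicative identification $g_{ab}^{\ast}\mathcal{G}=(g_{ab}^h)^{\ast}j^{\ast}\mathcal{G}=(g_{ab}^h)^{\ast}\mathcal{K}^{\mathbb C}$ from Theorem \ref{th:complexification} to read the complexified trivializations $(\sigma^{ab,h})^{\mathbb C}$, $(\tau^{abc,h})^{\mathbb C}$ as cocycle data for a $\mathcal{G}$-bundle lifting $P_h^{\mathbb C}$; your remark that the coherence of $\psi$ with the associators rests on the identification being one of \emph{multiplicative} gerbes is consistent with this. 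This half is fine.

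The gap is in your uniqueness argument, where you depart from the paper. The paper argues by descent: for two candidates $\mathcal{P}^1,\mathcal{P}^2$ over $P=P_h^{\mathbb C}$, the gerbe $\mathcal{P}^1\otimes(\mathcal{P}^2)^{\ast}$ inherits from $\rho^1\otimes(\rho^2)^{\ast}$ and the associators descent data along $P\to X$, giving a $T$-gerbe on $X$ whose triviality is equivalent to $\mathcal{P}^1\cong\mathcal{P}^2$ as $\mathcal{G}$-bundles; since $K\to G$ is injective, this descent can be computed along $P_h\to X$ instead, where the data $\phi^i,\psi^i$ furnish a trivialization. You instead invoke the deformation retraction of $P_h^{\mathbb C}$ onto $l(P_h)$. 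But that retraction only controls objects living over the total space $P$, whereas a $\mathcal{G}$-bundle structure consists of the gerbe $\mathcal{P}$ over $P$ \emph{together with} the action isomorphism $\rho$ over $P\times G$ and the associator $\alpha^{\rho}$ over $P\times G\times G$, and the comparison datum $\psi$ constrains $\rho$ only over $P_h\times K$. Two $\mathcal{G}$-bundle structures can have the same underlying gerbe over $P$ (hence certainly isomorphic restrictions to $l(P_h)$) and still differ in $\rho$; such differences are measured by classes on the simplicial manifold $(P/\!/G)_{\bullet}$, not on $P$, so a homotopy equivalence $l(P_h)\hookrightarrow P$ of total spaces is not the relevant statement. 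To repair your sketch you would need that restriction along the \emph{level-wise} homotopy equivalence $(P_h/\!/K)_{\bullet}\to(P/\!/G)_{\bullet}$ is bijective on isomorphism classes of lifts; this is true, because both classifications are computed by sheaf cohomology that reduces to cohomology of the common quotient $X$ (cf. the proof of Proposition \ref{prop:pontryagin1}), but this simplicial input is exactly what your argument omits, and once it is supplied the argument is essentially the paper's descent argument in cohomological form. (Note also that your phrase ``the restriction functor is an equivalence'' cannot hold literally at the level of 2-morphisms, since 2-automorphisms are smooth $T$-valued functions, which do not restrict bijectively along $l$; only the truncated classification statement holds, which is all you need.)
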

\begin{proof}
If such $\mathcal{G}$-bundle exists then it is unique up to isomorphism because if $\mathcal{P}^1, \,\mathcal{P}^2 \rightarrow P$ are any two such bundles then $\rho_1 \otimes\rho_2^{\ast}$, $\alpha^{\rho_1} \otimes \alpha^{\rho_2^{\ast}}$ equips the $T$-gerbe $\mathcal{P}^1 \otimes (\mathcal{P}^2)^{\ast} \rightarrow P$ with the necessary descent data to give a $T$-gerbe over $X$ which is trivial precisely when $\mathcal{P}^1$ isomorphic to $\mathcal{P}^2$ as $\mathcal{G}$-bundles. But since $K \rightarrow G$ is injective this $T$-gerbe can also be obtained by descending $l^{\ast}\mathcal{P}^1 \otimes l^{\ast}\mathcal{P}_2^{\ast}$ and the maps $\phi$, $\psi$ give precisely a trivialization of the descent of this $T$-gerbe.
In order to construct the $\mathcal{G}$-bundle, choose cocycle data $\{g_{ab,h},\sigma^{ab,h},\tau^{abc,h}\}$ for $(P_h,\mathcal{P}_{\nabla,h})$ over a cover $\{X_a\}_{a \in A}$ of $X$; i.e., $g_{ab}^h: X_{ab} \rightarrow K$, $\sigma^{ab,h}$ are trivializations of $(g_{ab}^h)^{\ast}\mathcal{K}$ as a $T_{\mathbb R}$-gerbe and $\tau^{abc,h}$ are isomorphisms of trivializations of $(g_{ac}^h)^{\ast}\mathcal{K}$ $m(\sigma^{ab,h},\sigma_{bc,h}) \rightarrow \sigma^{ac,h}$ satisfying a cocycle condition. Then we define $g_{ab} := j \circ g_{ab}^h: X_{ab} \rightarrow G$ and we note that $g_{ab}^{\ast}\mathcal{G} = (g_{ab}^h)^{\ast}j^{\ast}\mathcal{G} = (g_{ab}^h)^{\ast}\mathcal{K}^{\mathbb C}$; hence $(g_{ab}, (\sigma^{ab,h})^{\mathbb C},(\tau^{abc,h})^{\mathbb C})$ is cocycle data for a $\mathcal{G}$-bundle with the above property.
\end{proof}

\begin{remark}
    Fibrewise complexification can be easily enhanced to a functor of bicategories $C: \langle \text{Smooth } \mathcal{K}-\text{bundles} \rangle \rightarrow \langle \text{Smooth } \mathcal{G}-\text{bundles} \rangle$. Similarly, there are complexification functors between the bicategories of bundles with connective structures and between the bicategories of bundles with connections which we denote by $(P_h,\mathcal{P}_{h,\nabla_h},A_h,B_h) \mapsto (P_h^{\mathbb C},\mathcal{P}_{h,\nabla_h}^{\mathbb C},A_h^{\mathbb C},B_h^{\mathbb C})$.
\end{remark}


The classical Chern correspondence \cite{Sandon} is the statement that, for $K$ a compact Lie group and $P_h$ a $K$-bundle, holomorphic structures on $P_h^{\mathbb C}$ are in bijection with connections on $P_h$ such that $F_{A_h}^{0,2} = 0$, and that this bijection is obtained by sending each holomorphic structure on $P_h^{\mathbb C}$ to the unique connection on $P_h$ whose complexification is compatible with the holomorphic structure. Theorem \ref{th:chern} provides two analogs of this correspondence for $\mathcal{K}$-bundles; in the case $K = \{1\}$ the result for $1$-semiconnections was already observed in \cite{Chat}, while the result for $2$-semiconnections appears in \cite{Gualt}. There is also a close relationship with the Chern correspodence for Courant algebroids in \cite{GarRuTi}. We introduce first some notation.

Recall enhanced connections from Definition \ref{def:2pbcon} and, using Remark \ref{rk:enhcon}, write $((A,B),g) \in \mathcal{A}^{en}(\mathcal{P}_{\nabla})$ for an enhanced connection thought of as a pair of a connection $(A,B) \in \mathcal{A}(\mathcal{P}_{\nabla})$ and a $g \in \Gamma(S^2T^{\ast}X \otimes \mathfrak{t})$. Write $\Gamma(S^{1,1}T^{\ast}X \otimes \mathfrak{t}_{\mathbb R})$ for the intersection of the symmetric $\mathfrak{t}$-valued tensors of type $(1,1)$ on $X$ with the $\mathfrak{t}_{\mathbb R}$-valued tensors. A $g \in \Gamma(S^{1,1}T^{\ast}X \otimes \mathfrak{t}_{\mathbb R})$ determines an $\omega \in \Omega^{1,1}(X,\mathfrak{t})$ by $\omega = g(J\cdot,\cdot)$, where $J$ is the complex structure on $X$. Write $d^c: \Omega^p(X,\mathfrak{t}) \rightarrow \Omega^{p+1}(X,\mathfrak{t})$ for the operator $d^c := J_{\mathfrak{t}}(\overline{\partial}-\partial)$ and note that it preserves $\mathfrak{t}_{\mathbb R}$-valued forms. 

\begin{theorem}\label{th:chern}
    Let $(\mathcal{P}_h,\nabla_h)$ be a $\mathcal{K}$-bundle with connective structure over $X$ and let $(\mathcal{P},\nabla)$ be its fibrewise complexification. Then
    \begin{enumerate}
        \item\label{it:chern1} There is a canonical bijection
        $$ \mathcal{D}(\mathcal{P}_{\nabla}) \rightarrow \{(A_h,B_h) \in \mathcal{A}(\mathcal{P}_{h,\nabla_h}) \:|\: F_{A_h}^{0,2} = 0, \,H_h^{0,3} = 0\}/\Omega^{1,1}(X,\mathfrak{t}_{\mathbb R}). $$
        This map sends a 1-semiconnection $(D_A,D_B,D^{\rho})$ to the set of connections on $\mathcal{P}_{h,\nabla_h}$ whose complexification is compatible with the holomorphic structure on $\mathcal{P}_{\nabla}$ determined by $(D_A,D_B,D^{\rho})$.
        \item\label{it:chern2} There is a canonical bijection
        $$ \mathcal{D}'(\mathcal{P}_{\nabla}) \rightarrow \{((A_h,B_h),g) \in \mathcal{A}^{en}(\mathcal{P}_{h,\nabla_h}) \:|\: g^{0,2} = 0, \, F_{A_h}^{0,2} = 0, \, \,H_h = d^c(g(J\cdot,\cdot))\}. $$
        This map sends  a 2-semiconnection $(D_A,D_B,D^{\rho})$ to the unique enhanced connection $((A_h,B_h),g) \in \mathcal{A}^{en}(\mathcal{P}_{h,\nabla_h})$ such that $g^{0,2} = 0$ and that the connection $(A_h^{\mathbb C},B_h^{\mathbb C}- J_{\mathfrak{t}}g(J\cdot,\cdot))$ on $\mathcal{P}_{\nabla}$ is compatible with the holomorphic structure with holomorphic connective structure determined by $(D_A,D_B,D^{\rho})$.
    \end{enumerate}
\end{theorem}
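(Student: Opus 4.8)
The plan is to deduce both statements from the Dolbeault-type description of semiconnections already established in Proposition \ref{prop:dolbtorsor}, combined with the classical Chern correspondence for the underlying $G$-bundle and a fibrewise analysis of the curving. The candidate inverse of each claimed bijection is the complexification functor of Proposition \ref{prop:complexification2pb}: it sends a connection $(A_h,B_h)$ (resp. an enhanced connection $((A_h,B_h),g)$) on $\mathcal{P}_{h,\nabla_h}$ to $(A_h^{\mathbb C},B_h^{\mathbb C})$ (resp. to $(A_h^{\mathbb C},B_h^{\mathbb C}-J_{\mathfrak{t}}g(J\cdot,\cdot))$) on $\mathcal{P}_{\nabla}$. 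So it suffices to show this functor induces a bijection onto the classes appearing in Proposition \ref{prop:dolbtorsor}. The first task is to check that it lands in the constrained sets there: since $A_h$ is a $K$-connection one has $F_{A_h^{\mathbb C}}^{0,2}=(F_{A_h}^{0,2})^{\mathbb C}$, and the curvature of $B_h^{\mathbb C}$, resp. of its correction by $-J_{\mathfrak{t}}g(J\cdot,\cdot)$, is computed from $H_h$ via Lemma \ref{lem:curvature} and Lemma \ref{lem:chernsimons}. Using $d^c=J_{\mathfrak{t}}(\bar\partial-\partial)$ with $\omega=g(J\cdot,\cdot)$, the $(1,2)$-part of the curvature of $B_h^{\mathbb C}-J_{\mathfrak{t}}g(J\cdot,\cdot)$ equals $H_h^{1,2}-J_{\mathfrak{t}}\bar\partial\omega$; together with the $(0,3)$-part this shows that the conditions $F_{A_h}^{0,2}=0,\ H_h^{0,3}=0$ (resp. $g^{0,2}=0,\ F_{A_h}^{0,2}=0,\ H_h=d^c(g(J\cdot,\cdot))$) translate precisely into $F_A^{0,2}=0,\ H^{0,3}=0$ (resp. $F_A^{0,2}=0,\ H^{1,2+0,3}=0$).

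Next I would normalize the $G$-connection. Given a class of connections on $\mathcal{P}_{\nabla}$ with $F_A^{0,2}=0$, the classical Chern correspondence \cite{Sandon} states that the holomorphic structure on $P_h^{\mathbb C}$ determined by $A^{0,1}$ carries a unique compatible connection reducing to $K$, namely $A_h^{\mathbb C}$ for a unique $K$-connection $A_h$ with $F_{A_h}^{0,2}=0$. Modding out by $\Omega^{1,0}(ad\,P)$ I may therefore choose the representative with $A=A_h^{\mathbb C}$, and this choice is rigid. After this normalization the residual freedom on the complex side acting on the curving $B$ is $\Omega^{2,0+1,1}(X,\mathfrak{t})$ in Part \ref{it:chern1} and $\Omega^{2,0}(X,\mathfrak{t})$ in Part \ref{it:chern2}, and the problem reduces to a purely abelian statement about curvings of the complexified $T$-gerbe over the reduction $l:P_h\hookrightarrow P$.

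The heart of the argument is this abelian analysis. Writing $B=B_h^{\mathbb C}+\beta$ relative to a reference complexified curving and using $\mathfrak{t}=\mathfrak{t}_{\mathbb R}\oplus J_{\mathfrak{t}}\mathfrak{t}_{\mathbb R}$, the datum of the semiconnection retains only $\beta^{0,2}$ and $\beta^{1,1}$. The component $\beta^{0,2}$ fixes the $(0,2)$- and, by reality of $B_h$, the $(2,0)$-parts of the real curving, while $\beta^{1,1}$ is a general $\mathfrak{t}$-valued $(1,1)$-form. For Part \ref{it:chern2} the key point is that such a form splits uniquely into its real and imaginary parts with respect to $J_{\mathfrak{t}}$: the imaginary part reconstructs the $(1,1)$-part of $B_h$, and the real part is $-J_{\mathfrak{t}}\omega$ with $\omega\in\Omega^{1,1}(X,\mathfrak{t}_{\mathbb R})$ uniquely determined, hence a unique $g\in\Gamma(S^{1,1}T^{\ast}X\otimes\mathfrak{t}_{\mathbb R})$ with $g^{0,2}=0$ via $\omega=g(J\cdot,\cdot)$. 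This produces the advertised unique enhanced connection, and the curvature constraint then forces $H_h=d^c\omega$ by the computation above. For Part \ref{it:chern1} one quotients in addition by $\Omega^{1,1}(X,\mathfrak{t})$, which kills exactly the real part $\omega$, so no symmetric tensor survives and the residual freedom on the compact side is precisely $\Omega^{1,1}(X,\mathfrak{t}_{\mathbb R})$. In both parts surjectivity and injectivity follow by running this reconstruction backwards, and well-definedness across the quotients is immediate since $\Omega^{1,1}(X,\mathfrak{t}_{\mathbb R})\subset\Omega^{2,0+1,1}(X,\mathfrak{t})$.

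I expect the main obstacle to be the bookkeeping of the two commuting complex structures $J$ on $X$ and $J_{\mathfrak{t}}$ on $\mathfrak{t}$, and in particular making the uniqueness in Part \ref{it:chern2} rigorous: one must verify that imposing $g^{0,2}=0$ together with the normalization $A=A_h^{\mathbb C}$ rigidifies the enhanced connection completely, so that the $\Omega^{2,0}(X,\mathfrak{t})$-ambiguity of the $2$-semiconnection is absorbed exactly rather than leaving residual freedom. For general $K$ this analysis must be carried out over the reduction $l:P_h\hookrightarrow P$, tracking the Chern-Simons terms $CS(A_h)$ and $R(A_h)$ of Lemma \ref{lem:chernsimons} that enter the curvature and yield the anomaly term $\langle F_{A_h}\wedge F_{A_h}\rangle$, and checking that the complexification functor intertwines the torsor structures of Proposition \ref{prop:connections} with their compact analogues.
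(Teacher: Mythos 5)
Your proposal is correct and follows essentially the same route as the paper's proof: reduce to connections via Proposition \ref{prop:dolbtorsor}, use the complexification map $((A_h,B_h),g)\mapsto (A_h^{\mathbb C},B_h^{\mathbb C}-J_{\mathfrak{t}}g(J\cdot,\cdot))$, verify the curvature conditions correspond, and then split the bijection into the classical Chern correspondence on the connection part and the group isomorphism $\Omega^2(X,\mathfrak{t}_{\mathbb R})\times\Gamma(S^{1,1}T^{\ast}X\otimes\mathfrak{t}_{\mathbb R})\to\Omega^2(X,\mathfrak{t})/\Omega^{2,0}(X,\mathfrak{t})$ on the curving part. The paper packages your ``normalize the connection, then decompose the $(1,1)$-part into $\mathfrak{t}_{\mathbb R}$- and $J_{\mathfrak{t}}\mathfrak{t}_{\mathbb R}$-valued pieces'' step as a single equivariance statement for affine bundles over the same base, but the content is identical.
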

\begin{proof}
    We only show the proof of part \ref{it:chern2}, as the other one is similar. We shall prove that the map
    \begin{align*}
        \phi:\{(A_h,(B_h,g)) \in \mathcal{A}^{en}(\mathcal{P}_{h,\nabla_h}) \,|\,g^{0,2} = 0 \} &\rightarrow \mathcal{A}(\mathcal{P}_{\nabla}) /\Omega^{1,0}(ad\,P)  \times_{\langle \cdot,\cdot \rangle} \Omega^{2,0}(X,\mathfrak{t})\\
        ((A_h,B_h),g)  &\mapsto [A_h^{\mathbb C},B_h^{\mathbb C}-J_{\mathfrak{t}}g(J\cdot,\cdot)]
    \end{align*}
    is a bijection. The theorem will follow then from Proposition \ref{prop:dolbtorsor} and the fact that the curvature $(F_{A_h},H_h)$ of $(A_h,B_h)$ satisfies $F_{A_h}^{0,2} = 0$, $H_h = d^cg(J\cdot,\cdot)$ if and only if the curvature $(F_A,H)$ of $(A_h^{\mathbb C},B_h^{\mathbb C}-J_{\mathfrak{t}}g(J\cdot,\cdot))$ satisfies $F_{A}^{0,2} = 0$, $H^{1,2+0,3} = 0$. To see that $\phi$ is a bijection, recall from part \ref{it:con3} of Proposition \ref{prop:connections} that $\{((A_h,B_h),g)  \in \mathcal{A}^{en}(\mathcal{P}_{h,\nabla_h}) \,|\,g^{0,2} = 0 \}$ is a $\Omega^2(X,\mathfrak{t}_{\mathbb R}) \times \Gamma(S^{1,1}T^{\ast}X \otimes \mathfrak{t}_{\mathbb R})$-bundle over $\mathcal{A}(P_h)$, while $\mathcal{A}(\mathcal{P}_{\nabla}) /\Omega^{1,0}(ad\,P)  \times_{\langle \cdot,\cdot \rangle} \Omega^{2,0}(X,\mathfrak{t})$ is a $\Omega^{2}(X,\mathfrak{t})/\Omega^{2,0}(X,\mathfrak{t})$-bundle over $\mathcal{A}(P)/\Omega^{1,0}(ad\,P)$. Now the map
    \begin{align*}
        \mathcal{A}(P_h) &\rightarrow \mathcal{A}(P)/\Omega^{1,0}(ad\,P)\\
        A_h &\mapsto [A_h^{\mathbb C}] 
    \end{align*}
    is a bijection, while the map
    \begin{align*}
         \Omega^2(X,\mathfrak{t}_{\mathbb R}) \times \Gamma(S^{1,1}T^{\ast}X \otimes \mathfrak{t}_{\mathbb R}) &\rightarrow \Omega^{2}(X,\mathfrak{t})/\Omega^{2,0}(X,\mathfrak{t})\\
        (b,g) &\mapsto [b-J_{\mathfrak{t}}g(J\cdot,\cdot)] 
    \end{align*}
    is an isomorphism of groups. Thus $\phi$ may be regarded as an equivariant map between affine bundles with the same fiber over the same space and so it is a bijection.
\end{proof}

We obtain the following corollary, which can be interpreted as saying that holomorphic $\mathcal{G}_{\nabla}$-bundles with a reduction to a $\mathcal{K}_{\nabla}$-bundle are the geometric objects prequantizing the Hermitian metrics proposed by Yau \cite{Yau} as a generalization of K\"ahler metrics with the potential to fulfill Reid's fantasy \cite{Reid}.

\begin{corollary}
    Let $(\mathcal{P}_h,\nabla_h)$ be a $\mathcal{K}$-bundle with connective structure over $X$. A holomorphic structure with holomorphic connective structure on its complexification determines an $\omega \in \Omega^{1,1}(X,\mathfrak{t}_{\mathbb R})$ such that
    \begin{equation}
        dd^c\omega - \langle F_h \wedge F_h \rangle = 0,
    \end{equation}
    where $F_h \in \Omega^2(ad\,P_h)$ is the curvature of the Chern connection on $P_h$.
\end{corollary}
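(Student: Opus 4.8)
The plan is to derive the corollary directly from part \ref{it:chern2} of Theorem \ref{th:chern} together with the Bianchi identity of Proposition \ref{prop:bianchi}. A holomorphic structure with holomorphic connective structure on the complexification $(\mathcal{P},\nabla)$ is, by Proposition \ref{prop:dolbeault2pbs} and the definition of $\mathcal{D}'(\mathcal{P}_{\nabla})$, an element of $\mathcal{D}'(\mathcal{P}_{\nabla})$. Applying the bijection of Theorem \ref{th:chern}\ref{it:chern2}, I would obtain a canonical enhanced connection $((A_h,B_h),g) \in \mathcal{A}^{en}(\mathcal{P}_{h,\nabla_h})$ satisfying $g^{0,2} = 0$, $F_{A_h}^{0,2} = 0$ and $H_h = d^c(g(J\cdot,\cdot))$. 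The first thing to observe is that the $\mathfrak{t}_{\mathbb R}$-valued tensor $g$ has type $(1,1)$: since $g$ is symmetric and real, $g^{0,2} = 0$ forces $g^{2,0} = \overline{g^{0,2}} = 0$ as well, so $g \in \Gamma(S^{1,1}T^{\ast}X \otimes \mathfrak{t}_{\mathbb R})$, and I define $\omega := g(J\cdot,\cdot) \in \Omega^{1,1}(X,\mathfrak{t}_{\mathbb R})$ exactly as in the notation preceding the theorem.

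Next I would identify $A_h$ with the Chern connection and extract the curvature equation. The connection $A_h$ on $P_h$ satisfies $F_{A_h}^{0,2} = 0$, which by the classical Chern correspondence \cite{Sandon} means precisely that $A_h$ is the Chern connection of the induced holomorphic structure on $P_h^{\mathbb C}$; thus $F_h := F_{A_h} \in \Omega^2(ad\,P_h)$ is the curvature named in the statement. The Bianchi identity from Proposition \ref{prop:bianchi}, applied to the connection $(A_h,B_h)$ on the $\mathcal{K}$-bundle $\mathcal{P}_h$, gives $dH_h - \langle F_h \wedge F_h \rangle = 0$. Substituting $H_h = d^c(g(J\cdot,\cdot)) = d^c\omega$ into this identity yields
\begin{equation*}
    dd^c\omega - \langle F_h \wedge F_h \rangle = 0,
\end{equation*}
which is exactly the claimed equation. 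This completes the argument.

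The only genuine subtlety—and the step I expect to require the most care—is checking that the pairing $\langle \cdot,\cdot \rangle$ appearing in the Bianchi identity for the $\mathcal{K}$-bundle is the $\mathfrak{t}_{\mathbb R}$-valued form associated to $\mathcal{K}$ via Corollary \ref{cor:compact}, and that this is compatible with the complexified pairing $\langle \cdot,\cdot \rangle_{\mathbb C}$ used to set up the complexification in Theorem \ref{th:complexification}. Since $F_h$ is $\mathfrak{k}$-valued and $\langle\cdot,\cdot\rangle$ restricts to the real pairing on $\mathfrak{k}$, the quantity $\langle F_h \wedge F_h \rangle$ is $\mathfrak{t}_{\mathbb R}$-valued, matching the $\mathfrak{t}_{\mathbb R}$-valued form $dd^c\omega$ (here I use that $d^c$ preserves $\mathfrak{t}_{\mathbb R}$-valued forms, as noted before the theorem). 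Everything else is a direct transcription: the corollary is essentially the conjunction of the Chern correspondence bijection with the anomaly-cancellation Bianchi identity, specialized to the tensor $g$ supplied by part \ref{it:chern2}.
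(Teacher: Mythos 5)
Your proposal is correct and is exactly the argument the paper intends: the corollary is stated without a separate proof because it is the immediate conjunction of Theorem \ref{th:chern}\,\ref{it:chern2} (which supplies the enhanced connection $((A_h,B_h),g)$ with $g^{0,2}=0$, $F_{A_h}^{0,2}=0$ and $H_h = d^c(g(J\cdot,\cdot))$, whose underlying $A_h$ is by construction the Chern connection) with the anomaly-cancellation Bianchi identity of Proposition \ref{prop:bianchi}, just as you write. Your observations that reality of $g$ upgrades $g^{0,2}=0$ to type $(1,1)$, and that $\langle F_h \wedge F_h\rangle$ is $\mathfrak{t}_{\mathbb R}$-valued, are the right consistency checks and introduce no gap.
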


\begin{definition}
For $(\mathcal{P}_{h},\nabla_h)$ a $\mathcal{K}$-bundle with connective structure and a choice of holomorphic structure with holomorphic connective structure on its complexification $(\mathcal{P},\nabla)$, the corresponding enhanced connection $((A_h,B_h),g)) \in \mathcal{A}^{en}(\mathcal{P}_{h,\nabla_h})$ from Theorem \ref{th:chern} is called the \emph{unitary Chern enhanced connection}, while the connection $(A_h^{\mathbb C},B_h^{\mathbb C}-J_{\mathfrak{t}}\omega) \in \mathcal{A}(\mathcal{P}_{\nabla})$ is called the \emph{complex Chern connection}.
\end{definition}

\begin{remark}\label{rk:chernlocalformula}
We give here a local formula for the Chern connections corresponding to a holomorphic structure with holomorphic connective structure on the complexification of a $\mathcal{K}_{\nabla}$-bundle $(P_h,\mathcal{P}_{\nabla,h})$. In particular, checking that these formulas satisfy the properties of Theorem \ref{th:chern} provides an alternative proof for it. We do it in terms of the cocycle description for 2-bundles in a cover $\{X_a\}_a$ of $X$ from Section \ref{sec:cocycle2pbsm}; 

Let $g_{ab}^h, \,\sigma^{ab,h}_{\nabla}, \, \tau^{abc,h}_{\nabla}$ be cocycle data for $(P_h,\mathcal{P}_{\nabla,h})$ in a cover $\{X_a\}_a$ of $X$ as in Sections \ref{sec:cocycle2pbsm} and \ref{sec:connections}, and let $g_{ab}, \, \sigma^{ab}_{\nabla}, \, \tau^{abc}_{\nabla}$ be holomorphic cocycle data for the holomorphic structure with holomorphic connective structure on its complexification. Then the two cocycle descriptions are related by a complex gauge transformation $(\{\varphi_a\},\{\Phi_{a,\nabla}\},\{\psi_{ab}\})$ as in Section \ref{sec:gauge}. This is
\begin{enumerate}
    \item $\varphi_a: M_a \rightarrow G$ satisfying $\varphi_a g_{ab} = g_{ab}^h \varphi_b$,
    \item $\Phi_{a,\nabla}$ trivializations with connection of $\varphi^{\ast}_a\mathcal{G}_{\nabla}$,
    \item $\psi_{ab}$ flat isomorphisms of trivializations $\psi_{ab}:m(\Phi_{a,\nabla},\sigma^{ab}_{\nabla}) \Rightarrow m(\sigma^{ab,h}_{\nabla},\Phi_{b,\nabla})$ satisfying a certain cocycle equation.
\end{enumerate}
Let $A^h_a$ be the local one-forms of the Chern connection of $P_h \rightarrow P$ in the unitary frame and let $A_a$ be the local one-forms in the holomorphic frame. The classical formula states that\footnote{Here we write $\overline{\cdot}: \mathfrak{g} \rightarrow \mathfrak{g}$, $\overline{\cdot}: \mathfrak{t} \rightarrow \mathfrak{t}$ for the $\mathbb C$-antilinear involutions that leave $J_{\mathfrak{g}}\mathfrak{k}$ and $J_{\mathfrak{t}}\mathfrak{t}_{\mathbb R}$ invariant, repectively, and we write $Im: \mathfrak{t} \rightarrow \mathfrak{t}_{\mathbb R}$, $Re: \mathfrak{t} \rightarrow J_{\mathfrak{t}}\mathfrak{t}_{\mathbb R}$ for the two projections, in analogy to the case $\mathfrak{t}_{\mathbb R} = i\mathbb R$, $\mathfrak{t} = \mathbb C$.}
\begin{align}
    A^h_a &= -(\varphi_a^{\ast}\theta^R)^{0,1} + \overline{(\varphi_a^{\ast}\theta^{R})^{0,1}},\\
    A_a &= (\varphi_a^{\ast}\theta^L)^{1,0} + \overline{(\varphi_a^{\ast}\theta^L)^{0,1}}.
\end{align}
On the other hand, the formulas for the unitary Chern enhanced connection $(B^h_a,g)$ (in the unitary frame, we write $\omega = g(J\cdot,\cdot)$) and the complex Chern connection $B_a$ (in the holomorphic frame) are
\begin{align}
    B^h_a &= -J_{\mathfrak{t}}Im\left( b_a^{1,1} + 2b_a^{0,2} - \langle (\varphi_a^{\ast}\theta^L)^{1,0} \wedge (\varphi_a^{\ast}\theta^L)^{0,1} \rangle - \langle (\overline{\varphi_a^{\ast}\theta^L)^{1,0}} \wedge (\varphi_a^{\ast}\theta^L)^{0,1} \rangle \right),\\
    \omega &= -J_{\mathfrak{t}}Re \left( b_a^{1,1} - \langle (\varphi_a^{\ast}\theta^L)^{1,0} \wedge (\varphi_a^{\ast}\theta^L)^{0,1} \rangle \right),\\
    B_a &= b_a^{2,0} + \overline{b_a^{0,2}} + \langle (\varphi_a^{\ast}\theta^L)^{1,0} \wedge \overline{(\varphi_a^{\ast}\theta^L)^{0,1}} \rangle,
\end{align}
where $b_a := Curv(\Phi_{a,\nabla})$. These formulas follow from noting that
$$(Curv(\sigma_{ab,\nabla}) - \langle A_a \wedge g_{ab}^{\ast}\theta^R \rangle) - (Curv(\sigma_{ab,\nabla}^h) - \langle A_a^h \wedge (g_{ab}^h)^{\ast}\theta^R \rangle)  =  (b_b + \langle \varphi_b^{\ast}\theta^L \wedge A_b \rangle) - (b_a + \langle \varphi_a^{\ast}\theta^L \wedge A_a \rangle)$$
and taking $-J_{\mathfrak{t}}Im((\cdot)^{1,1} + 2( \cdot)^{0,2})$, $(\cdot)^{2,0} + \overline{(\cdot)^{0,2}}$ and $-J_{\mathfrak{t}}Re((\cdot)^{1,1})$ on both sides.
\end{remark}

The classical Chern correspondence also states that, if $P_h^1$, $P_h^2$ are $K$-bundles with holomorphic structures on their complexifications determining Chern connections $A_h^1$, $A_h^2$, then an isomorphism $P_h^1 \rightarrow P_h^2$ is flat with respect to $A_h^1$, $A_h^2$ precisely when its complexification is holomorphic with respect to the corresponding holomorphic structures. Similarly, our Chern correspondence can  be improved to an equivalence of bicategories of which Theorem \ref{th:chern} is the result at the level of objects. To state this equivalence, consider the forgetful functors of bicategories
\begin{align*}
    F: \langle \text{Holomorphic } \mathcal{G}\text{-bundles} \rangle &\rightarrow \text{Smooth } \mathcal{G}\text{-bundles} \rangle,\\
    F_{\nabla}: \langle \text{Holomorphic } \mathcal{G}_{\nabla}\text{-bundles} \rangle &\rightarrow \text{Smooth } \mathcal{G}_{\nabla}\text{-bundles} \rangle
\end{align*}
and the complexification functors of bicategories
\begin{align*}
    C: \langle \text{Smooth } \mathcal{K}\text{-bundles} \rangle &\rightarrow \text{Smooth } \mathcal{G}\text{-bundles} \rangle,\\
    C_{\nabla}: \langle \text{Smooth } \mathcal{K}_{\nabla}\text{-bundles} \rangle &\rightarrow \text{Smooth } \mathcal{G}_{\nabla}\text{-bundles} \rangle.
\end{align*}
We define the following bicategories as categorical fibered products:
\begin{align*}
    \langle \text{Holomorphic } \mathcal{K}\text{-bundles} \rangle &:= \langle \text{Holomorphic } \mathcal{G}\text{-bundles} \rangle \prescript{}{F}{\times}^{}_C   \langle \text{Smooth } \mathcal{K}\text{-bundles} \rangle, \\
    \langle \text{Holomorphic } \mathcal{K}_{\nabla}\text{-bundles} \rangle &:= \langle \text{Holomorphic } \mathcal{G}_{\nabla}\text{-bundles} \rangle \prescript{}{F_{\nabla}}{\times}^{}_{C_{\nabla}}   \langle \text{Smooth } \mathcal{K}_{\nabla}\text{-bundles} \rangle.
\end{align*}

\begin{corollary}\label{cor:chern}
\begin{enumerate}
    \item\label{it:corchern1} The bicategory of holomorphic $\mathcal{K}$-bundles is equivalent to the bicategory $D$ with:
        \begin{itemize}
            \item An object in $D_0$ is an equivalence class of $\mathcal{K}$-bundles with connection $(\mathcal{P}_h,\nabla_h,A_h,B_h)$ whose curvature satisfies $F_{A_h}^{0,2} = 0$, $H_h^{0,3} = 0$ and we identify $(\mathcal{P}_h,\nabla_h,A_h,B_h) \sim (\mathcal{P}_h,\nabla_h,A_h,B_h+b)$ for any $b \in \Omega^{1,1}(X,\mathfrak{t}_{\mathbb R})$.
            \item Isomorphisms $(\mathcal{P}_h,\nabla_h,A_h,[B_h]) \rightarrow (\mathcal{P}_{h,2},\nabla_{h,2},A_{h,2},[B_{h,2}])$ in $D_1$ are isomorphisms of $\mathcal{K}$-bundles with connection $(\mathcal{P}_h,\nabla_h,A_h,B_h) \rightarrow (\mathcal{P}_{h,2},\nabla_{h,2},A_{h,2},B_{h,2})$ that are flat up to forms in $\Omega^{1,1}(X,\mathfrak{t}_{\mathbb R})$.
            \item 2-isomorphisms in $D_2$ are flat 2-isomorphisms between the corresponding isomorphisms of $\mathcal{K}$-bundles with connection.
        \end{itemize}
    \item\label{it:corchern2} The bicategory of holomorphic $\mathcal{K}_{\nabla}$-bundles is equivalent to the bicategory $D'$ of smooth $\mathcal{K}$-bundles with enhanced connections $((A_h,B_h),g))$ such that $g^{0,2} = 0$, $F_{A_h}^{0,2} = 0$, $H_h = d^cg(J\cdot,\cdot)$. 
\end{enumerate}
\end{corollary}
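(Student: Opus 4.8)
The plan is to upgrade Theorem \ref{th:chern} from a bijection at the level of objects to an equivalence of bicategories, by checking that the maps there are compatible with the 1-morphisms and 2-morphisms in both approaches. First I would fix the target: by Proposition \ref{prop:dolbeault2pbs} the bicategory of holomorphic $\mathcal{G}_{\nabla}$-bundles is equivalent to the bicategory of smooth $\mathcal{G}$-bundles with 2-semiconnections, so by the definition of $\langle \text{Holomorphic } \mathcal{K}_{\nabla}\text{-bundles} \rangle$ as the categorical fibered product along $F_{\nabla}$ and $C_{\nabla}$, an object of the left-hand side is a smooth $\mathcal{K}_{\nabla}$-bundle $\mathcal{P}_{h,\nabla_h}$ together with a 2-semiconnection on its complexification $\mathcal{P}_{\nabla}$. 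Theorem \ref{th:chern}\eqref{it:chern2} already identifies these objects with the objects of $D'$, namely enhanced connections $((A_h,B_h),g)$ satisfying $g^{0,2}=0$, $F_{A_h}^{0,2}=0$, $H_h = d^c(g(J\cdot,\cdot))$. So the content of the corollary beyond Theorem \ref{th:chern} is the identification of morphisms and 2-morphisms.

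The key steps would be as follows. For a 1-morphism in the fibered product, I would take an isomorphism $u_h$ of smooth $\mathcal{K}_{\nabla}$-bundles $\mathcal{P}_{h,\nabla_h}^1 \rightarrow \mathcal{P}_{h,\nabla_h}^2$ together with a \emph{holomorphic} isomorphism of the complexified $\mathcal{G}_{\nabla}$-bundles lying over $C_\nabla(u_h) = u_h^{\mathbb C}$. Using Proposition \ref{prop:dolbeault2pbs} and the cocycle description of the gauge action on 2-semiconnections at the end of Section \ref{sec:chern}, the holomorphicity of the complexified isomorphism is the statement that its action sends the 2-semiconnection of $\mathcal{P}^1_{\nabla}$ to that of $\mathcal{P}^2_{\nabla}$. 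Translating through the Chern correspondence bijection of Theorem \ref{th:chern}\eqref{it:chern2}, I would show this is equivalent to $u_h$ being flat as an isomorphism of $\mathcal{K}$-bundles with enhanced connection; the appropriate notion of flatness here is flatness of the underlying connection isomorphism, which on the symmetric-tensor part $g$ is automatic since the gauge group acts on $\mathcal{K}_{\nabla}$-bundles by genuine (compact-group) gauge transformations preserving $g$. The explicit gauge-action formulas for $A^2_a, D^2_a$ in terms of $\varphi_a$ and $\mathrm{Curv}(\Phi_{a,\nabla})$, together with the complexification formulas, reduce this to a direct comparison: the complexified gauge transformation is holomorphic precisely when $\varphi_a$ and $\Phi_{a,\nabla}$ are holomorphic, which by the classical Chern correspondence for $K$-bundles corresponds to flatness of $u_h$. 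For 2-morphisms I would run the analogous, easier argument: a 2-isomorphism $\chi_a$ of the complexified data is holomorphic iff it is flat, and this matches flat 2-isomorphisms of $\mathcal{K}$-bundles with connection. Part \eqref{it:corchern1} is the same argument with 1-semiconnections replacing 2-semiconnections and with the symmetric-tensor data suppressed, so the quotient by $\Omega^{1,1}(X,\mathfrak{t}_{\mathbb R})$ in the definition of $D$ precisely records the ambiguity $\Omega^{2,0+1,1}$ modulo $\Omega^{2,0}$ appearing in Proposition \ref{prop:dolbtorsor}.

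Concretely, I would assemble the equivalence as a functor of bicategories from $\langle \text{Holomorphic } \mathcal{K}_{\nabla}\text{-bundles} \rangle$ to $D'$ that is the identity on underlying $\mathcal{K}$-bundles and sends an object to its unitary Chern enhanced connection via Theorem \ref{th:chern}\eqref{it:chern2}, on 1-morphisms forgets the holomorphic complexified isomorphism and remembers only $u_h$, and on 2-morphisms likewise. Essential surjectivity on objects is exactly Theorem \ref{th:chern}\eqref{it:chern2}; fullness and faithfulness on morphisms and 2-morphisms are the flatness-versus-holomorphicity equivalences just described, using the local formulas of Remark \ref{rk:chernlocalformula} to control the Chern connection explicitly when needed.

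The main obstacle I anticipate is the morphism-level bookkeeping: verifying that ``the complexified gauge transformation is holomorphic'' is \emph{exactly} ``the unitary gauge transformation is flat (up to $\Omega^{1,1}(X,\mathfrak{t}_{\mathbb R})$ in the unenhanced case)'' requires carefully matching the gauge-action cocycle formulas for 2-semiconnections with the flatness conditions on $\mathcal{K}$-bundle isomorphisms, keeping track of the $(1,0)/(0,1)$ type decompositions of $\varphi_a^{\ast}\theta^L$, $\varphi_a^{\ast}\theta^R$ and of $\mathrm{Curv}(\Phi_{a,\nabla})$. In particular one must confirm that the symmetric-tensor component $g$ is genuinely preserved — and not merely preserved modulo the affine ambiguity — by any isomorphism of smooth $\mathcal{K}_{\nabla}$-bundles, so that it descends correctly to $D'$; this is where the restriction to the compact real form $\mathcal{K}$ (rather than the complexification) is doing the real work, and it is the step I would check most carefully.
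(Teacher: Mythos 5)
Your proposal is correct and follows essentially the same route as the paper's proof: reduce to 2-semiconnections via Proposition \ref{prop:dolbeault2pbs}, handle objects with Theorem \ref{th:chern}, invoke the classical Chern correspondence for the underlying $K$-bundle isomorphism, and settle the gerbe and symmetric-tensor part by a reality/type argument. The step you flag for careful checking --- that isomorphisms of smooth $\mathcal{K}_{\nabla}$-bundles genuinely preserve $g$ --- is precisely the paper's observation that the curvature of $(\varphi^u_{\nabla})^{\mathbb C}$ with respect to the complex Chern curvings is an $\mathfrak{t}_{\mathbb R}$-valued form plus the $J_{\mathfrak{t}}\mathfrak{t}_{\mathbb R}$-valued form $J_{\mathfrak{t}}(g_1(J\cdot,\cdot)-g_2(J\cdot,\cdot))$, and such a sum can be of type $(2,0)$ only if both terms vanish.
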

\begin{proof}
We show the proof of part \ref{it:corchern2}, as the other one follows similarly. First, it follows from Proposition \ref{prop:dolbeault2pbs} that the bicategory of holomorphic $\mathcal{K}_{\nabla}$-bundles is equivalent to the bicategory whose objects are $\mathcal{K}_{\nabla}$-bundles with 2-semiconnections in their complexifications (such that the connective structure of the 2-semiconnection is the complexification of the one in the $\mathcal{K}$-bundle); whose isomorphisms are isomorphisms of $\mathcal{K}_{\nabla}$-bundles that complexify to isomorphisms of 2-semiconnections, and whose 2-isomorphisms are 2-isomorphisms of $\mathcal{K}$-bundles preserving the 2-semiconnections.

Theorem \ref{th:chern} implies then that every object in $\langle\text{Holomorphic } \mathcal{K}_{\nabla}\text{-bundles} \rangle$ can be described by an object in $D'_0$, and conversely. An isomorphism in $\langle\text{Holomorphic } \mathcal{K}_{\nabla}\text{-bundles} \rangle $ between the $\mathcal{K}_{\nabla}$-bundles corresponding to $(\mathcal{P}_h,\nabla_h,A_h,B_h,g)$ and $(\mathcal{P}_{h,2},\nabla_{h,2},A_{h,2},B_{h,2},g_2)$ is then an isomorphism of $\mathcal{K}_{\nabla}$-bundles $(u,\varphi^u_{\nabla},\alpha^u): (\mathcal{P}_h,\nabla_h) \rightarrow (\mathcal{P}_h^2,\nabla_h^2)$ such that $u$ is flat (by the classical Chern correspondence) and that $(\varphi^u_{\nabla})^{\mathbb C}$ has curvature of type $(2,0)$ with respect to the curvings $B_{h,1} - J_{\mathfrak{t}}g_1(J\cdot,\cdot)$ and $u^{\ast}(B_{h,2} - J_{\mathfrak{t}}g_2(J\cdot,\cdot))$ (by Theorem \ref{th:chern} and Remark \ref{rk:dolbgerbes}). But its curvature with respect to these curvings equals its curvature with respect to $B_{h,1}$, $u^{\ast}B_{h,2}$ (which is $\mathfrak{t}_{\mathbb R}$-valued because so are $B_{h,1}$, $B_{h,2}$ and $\varphi^u_{\nabla}$) plus the $J_{\mathfrak{t}}\mathfrak{t}_{\mathbb R}$-valued form $J_{\mathfrak{t}}(g_1(J\cdot,\cdot) - g_2(J\cdot,\cdot))$; such a sum can only be of type $(2,0)$ when both terms are zero. This means that all isomorphisms in $\langle\text{Holomorphic } \mathcal{K}_{\nabla}\text{-bundles} \rangle $ are described by isomorphisms in $D'$, and conversely. The same holds trivially for 2-isomorphisms, which concludes the proof.
\end{proof}

\subsection{Examples of holomorphic principal 2-bundles}

In this section we provide some examples of holomorphic principal 2-bundles with holomorphic connective structure. These are particularly interesting in non-K\"ahler geometry because of the following result.

\begin{proposition}\cite{BryHol}
    Let $X$ be a complex manifold satisfying the $\partial\overline{\partial}$-lemma and let $\mathcal{L}_{\nabla} \rightarrow X$ be a holomorphic $T$-gerbe with holomorphic connective structure. Then $\mathcal{L}$ is flat.
\end{proposition}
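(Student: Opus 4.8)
The plan is to prove that $\mathcal{L}$ is flat by showing that its de Rham Dixmier--Douady class vanishes in $H^3(X,\mathfrak{t})$. Exactly as in Lemma \ref{lem:drclass}, flatness of a $T$-gerbe is equivalent to the vanishing of this real characteristic class: writing $Z=\ker\exp_T$, the exact sequence $H^2(X,\underline{T})\to H^3(X,Z)\to H^3(X,\underline{\mathfrak{t}})$ shows that the smooth class of $\mathcal{L}$ lies in the image of the locally constant sheaf $\underline{T}$ precisely when its image in $H^3(X,\mathfrak{t})$ is zero. Since this image is represented by the $3$-curvature of any choice of curving, it suffices to exhibit a curving whose curvature represents $0$.

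First I would use the holomorphic connective structure to build an adapted curving. In cocycle form (Proposition \ref{prop:dolbeaultgerbes}) the connective structure is given by holomorphic $\lambda_{ijk}$ and $(1,0)$-forms $A_{ij}$ with $\bar\partial A_{ij}=0$ and $A_{ij}-A_{ik}+A_{jk}=\lambda_{ijk}^{\ast}\theta^T$, the latter being of type $(1,0)$. Hence $dA_{ij}=\partial A_{ij}$ is of type $(2,0)$ and defines a \v{C}ech $1$-cocycle with values in the sheaf of smooth $(2,0)$-forms. This sheaf is fine, so the cocycle is a coboundary and I may choose a curving $B_i\in\Omega^{2,0}(U_i,\mathfrak{t})$ with $B_i-B_j=\partial A_{ij}$. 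Using $\bar\partial A_{ij}=0$ one checks $d(B_i-B_j)=\bar\partial\partial A_{ij}=-\partial\bar\partial A_{ij}=0$, so the local forms $dB_i$ glue to a global closed $3$-form $H$ representing the de Rham class; by construction $H=\partial B_i+\bar\partial B_i$ lies in the Hodge filtration piece $F^2\Omega^3(X,\mathfrak{t})=\Omega^{3,0}(X,\mathfrak{t})\oplus\Omega^{2,1}(X,\mathfrak{t})$.

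The decisive step uses the $\partial\bar\partial$-lemma. Since $H$ is $d$-closed and lies in $F^2$, strictness of the Hodge filtration (a formal consequence of the $\partial\bar\partial$-lemma) gives $[H]\in F^2H^3(X,\mathfrak{t})=(H^{3,0}\oplus H^{2,1})\otimes\mathfrak{t}$. On the other hand $[H]$ is a real class: writing $\Lambda=Z\otimes_{\mathbb{Z}}\mathbb{R}$, a real form of $\mathfrak{t}$, the class lies in the image of $H^3(X,Z)$ and is therefore fixed by the conjugation $c$ acting by complex conjugation on forms and on $\mathfrak{t}$. Since $c$ carries $F^2H^3\otimes\mathfrak{t}$ isomorphically onto $\overline{F^2}H^3\otimes\mathfrak{t}=(H^{1,2}\oplus H^{0,3})\otimes\mathfrak{t}$, the identity $[H]=c([H])$ forces $[H]\in(F^2\cap\overline{F^2})H^3\otimes\mathfrak{t}=0$. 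Thus the de Rham Dixmier--Douady class vanishes and $\mathcal{L}$ is flat.

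I expect the only delicate point to be the clean formulation of the last paragraph: making precise that the $\partial\bar\partial$-lemma yields both the Hodge decomposition $H^3(X,\mathbb{C})=\bigoplus_{p+q=3}H^{p,q}$ and the strictness statement that a $d$-closed form in $F^2$ has class in $F^2H^3$, and identifying the real structure on $\mathfrak{t}$ induced by the lattice $Z$ as the one with respect to which the Dixmier--Douady class is conjugation-invariant. Everything else --- the existence of the $(2,0)$-curving and the type computation of $H$ --- is routine once the holomorphicity conditions $\bar\partial A_{ij}=0$ are in place, which is exactly what distinguishes a \emph{holomorphic} connective structure from a merely compatible one.
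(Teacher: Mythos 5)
Your proof is correct, but it takes a genuinely different route from the paper's. The paper first reduces the structure group to the compact form $T_{\mathbb R}$ and then invokes the Chern correspondence (Theorem \ref{th:chern}) to represent the de Rham class of $\mathcal{L}$ by $[-2i\partial\omega]$ with $\omega \in \Omega^{1,1}(X,\mathfrak{t}_{\mathbb R})$, i.e.\ by a $d$-closed, $\partial$-exact form, which the bare $\partial\bar{\partial}$-lemma kills immediately. You instead build a curving of type $(2,0)$ directly from the holomorphicity of the connective structure (this is exactly a curving \emph{compatible with the holomorphic connective structure} in the sense of Appendix \ref{sec:cxgerbes}, and your fineness argument for its existence is sound, since $\{dA_{ij}\}=\{\partial A_{ij}\}$ is a \v{C}ech $1$-cocycle of smooth $(2,0)$-forms), so that the curvature lies in $F^2\Omega^3(X,\mathfrak{t})$; you then combine strictness of the Hodge filtration with reality of the class coming from $H^3(X,Z)$ to get $[H]\in F^2H^3\cap\overline{F^2H^3}=0$. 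The trade-off: your argument avoids the reduction step and Theorem \ref{th:chern} entirely and stays at the level of cocycle data, but it needs the stronger Hodge-theoretic consequences of the $\partial\bar{\partial}$-lemma (degeneration/strictness and Hodge symmetry) rather than just the lemma itself, whereas the paper needs the reduction to $T_{\mathbb R}$ (asserted without proof) and the full Chern correspondence but only the most elementary form of the lemma. Note also that the reality input is the same in both proofs, just packaged differently: your identification of $Z\otimes_{\mathbb Z}\mathbb R$ with a real form $\mathfrak{t}_{\mathbb R}\subset\mathfrak{t}$ is precisely the hypothesis that $T$ is the complexification of a compact group, which is what the paper's reduction to a $T_{\mathbb R}$-gerbe presupposes; for a complex abelian $T$ where $Z$ spans $\mathfrak{t}$ over $\mathbb R$ (e.g.\ a complex torus) neither argument goes through, so this shared restriction is not a defect of your proof relative to the paper's.
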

\begin{proof}
    Any $T$-gerbe with $\mathfrak{t}$-connective structure admits a reduction to a $T_{\mathbb R}$-gerbe with $\mathfrak{t}_{\mathbb R}$-connective structure. Then we may represent the de Rham class of $\mathcal{L}$ by $[-2i\partial \omega] \in H^3(X,\mathfrak{t})$, where $\omega = g(J\cdot,\cdot) \in \Omega^{1,1}(X,\mathfrak{t}_{\mathbb R})$ is determined by the reduction and the holomorphic structure with holomorphic connective structure as in Theorem \ref{th:chern}. But then by the $\partial\overline{\partial}$-lemma we have $[-2i\partial \omega] = 0 \in H^3(X,\mathfrak{t})$.
\end{proof}

\emph{Hopf surfaces} provide the simplest examples of non K\"ahler compact complex manifolds. We will focus on the Hopf surface $X = \mathbb C^2 \setminus \{(0,0)\} / \mathbb Z$, where $\mathbb Z$ acts as $k \cdot (x_1,x_2) = (2^kx_1,2^kx_2)$. As a smooth manifold, it is diffeomorphic to $S^1 \times S^3$. We proceed to give some examples of holomorphic principal 2-bundles over $X$.

\subsubsection{$\mathbb C^{\ast}$-Gerbes over the Hopf surface}\label{sec:gerbeshopf}
The Hodge numbers $h^{0,2}$, $h^{1,1}$ and $h^{1,2}$ of $X$ vanish and so from the exact sequences
\begin{align*}
    H^2(X,\mathcal{O}_{\mathbb C}) &\rightarrow H^2(X,\mathcal{O}_{\mathbb C^{\ast}}) \rightarrow H^3(X,\mathbb Z) \rightarrow 0,\\
    \mathbb H^2(X, 0 \rightarrow \Omega^1_{\mathbb C,\overline{\partial}-cl}) \rightarrow \mathbb H^2(X, &\mathcal{O}_{\mathbb C^{\ast}} \rightarrow \Omega^1_{\mathbb C,\overline{\partial}-cl}) \rightarrow \mathbb H^2(X, \mathcal{O}_{\mathbb C^{\ast}} \rightarrow 0) \rightarrow \mathbb H^3(X, 0 \rightarrow \Omega^1_{\mathbb C,\overline{\partial}-cl}),
\end{align*}
we see that holomorphic $\mathbb C^{\ast}$-gerbes with holomorphic connective structure over $X$ are completely classified by $H^3(X,\mathbb Z) = \mathbb Z$. Theorem \ref{th:chern} implies that a generator of this group can be described either by giving cocycle data for a $U(1)$-gerbe with connection of curvature $H_h = d^c\omega$, for some $\omega \in \Omega^{1,1}(X,\mathbb R)$, or by giving cocycle data for a holomorphic $\mathbb C^{\ast}$-gerbe with holomorphic connective structure. The first is done in \cite{Gualt}, with
\begin{equation}
    \omega = \frac{1}{4\pi i} \frac{dx_1 \wedge \overline{dx_1} + dx_2 \wedge \overline{dx_2}}{|x_1|^2 + |x_2|^2},
\end{equation} 
the Hermitian form associated to the \emph{Boothby metric} on $X$. Here we give explicit holomorphic cocycle data for the same gerbe. For this we need some notation. Let 
\begin{align*}
    log_{+}: \{z \in \mathbb C^{\ast}: \: z/|z| \neq 1\} &\rightarrow \{\zeta \in \mathbb C: \: 0 < Im(\zeta) < 2\pi \}, \\
    log_{-}: \{z \in \mathbb C^{\ast}: \: z/|z| \neq -1\} &\rightarrow \{\zeta \in \mathbb C: \: -\pi < Im(\zeta) < \pi \}
\end{align*}
be two branches of the logarithm such that 
\begin{equation*}
    log_+(z) - log_-(z) =
        \begin{cases}
        0      & \quad \text{if } \quad z \in \mathbb H \\
        2\pi i & \quad \text{if } \quad \bar{z} \in \mathbb H,
        \end{cases}
\end{equation*}
where $\mathbb H = \{z \in \mathbb C: \: Im(z) > 0\}$. We write $[x_1,x_2] \in X$ for the class of $(x_1,x_2) \in \mathbb C^2 \setminus \{0\}$ and define the open cover
\begin{align*}
    U_{1^+} = \{[x_1,x_2] \in X \:|\: x_1 \neq 0, \, x_1/|x_1| \neq 1\}, \quad &U_{2^+} = \{[x_1,x_2] \in X \:|\: x_2 \neq 0, \, x_2/|x_2| \neq 1\}, \\
    U_{1^-} = \{[x_1,x_2] \in X \:|\: x_1 \neq 0, \, x_1/|x_1| \neq -1\}, \quad &U_{2^-} = \{[x_1,x_2] \in X \:|\: x_2 \neq 0, \, x_2/|x_2| \neq -1\}.
\end{align*}
The holomorphic gerbe with holomorphic connective structure is described in this cover by the following cocycle data:
\begin{align*}
    &\lambda_{1^{\pm}2^+2^-} = 1, \quad &\lambda_{1^{+}1^-2^{\pm}} = \begin{cases}
        1      & \quad \text{if } \quad x_1 \in \mathbb H \\
        x_1/x_2 & \quad \text{if } \quad \bar{x_1} \in \mathbb H
        \end{cases}, \\
        &A_{1^+1^-} = \begin{cases}
        0      & \quad \text{if } \quad x_1 \in \mathbb H \\
        dlog(x_1) & \quad \text{if } \quad \bar{x_1} \in \mathbb H
        \end{cases}, \quad &A_{2^+2^-} = \begin{cases}
        0      & \quad \text{if } \quad x_2 \in \mathbb H \\
        dlog(x_2) & \quad \text{if } \quad \bar{x_2} \in \mathbb H
        \end{cases}, \\
        &A_{12} = \frac{1}{2\pi i}\left(log(x_1) - log(x_2) \right)dlog(x_2).
\end{align*}
Here the last equation means $A_{1^+2^-} = \frac{1}{2\pi i}\left(log_+(x_1) - log_-(x_2) \right)dlog(x_2)$ and similarly for $A_{1^+2^+}$, $A_{1^-2^+}$, $A_{1^-2^-}$. 



\subsubsection{Higher extensions of $SL(2,\mathbb C)$-bundles over the Hopf surface}

Consider the compact Lie group $SU(2)$, whose complexification is $SL(2,\mathbb C)$. For each level $l \in \mathbb Z = H^3(SU(2),\mathbb Z)$, $l \neq 0$, we consider the corresponding central extension $BU(1) \rightarrow \mathcal{K}_l \rightarrow SU(2)$ from Example \ref{ex:string}, equipped with its unique connective structure whose corresponding pairing is a multiple of the Killing form by Example \ref{ex:stringconstr}. Let us study $\mathcal{K}_l$-bundles over the Hopf surface $X$. First, note that $SU(2)$-bundles over a 4-manifold are totally classified by their Pontryagin class, and it follows from Proposition \ref{prop:pontryagin1} that the characteristic class controlling the lift of $SU(2)$-bundles to $\mathcal{K}_l$-bundles is an integer multiple of the Pontryagin class. Since $H^4(X,\mathbb Z)$ has no torsion, this means that the unique $SU(2)$-bundle admitting a lift to a $\mathcal{K}_l$-bundle is the trivial one.

It follows by a descent argument similar to the one in the proof of Proposition \ref{prop:complexification2pb} that $\mathcal{K}_l$-bundles over $X$ whose underlying $SU(2)$-bundle is $X \times SU(2)$ are in bijection with $U(1)$-gerbes on $X$, where each gerbe $\mathcal{L}_h \rightarrow X$ is sent to the $\mathcal{K}_l$-bundle 
$$(X \times SU(2),\pi_X^{\ast}\mathcal{L}_h \otimes \pi_{SU(2)}^{\ast}\mathcal{G}_l, (g_1,g_2)^{\ast}m,(g_1,g_2,g_3)^{\ast}\alpha),$$
where $m$ and $\alpha$ are the product and associator of $\mathcal{K}_l$. We claim that, for any choice of $\mathcal{L}_h$ and any holomorphic structure on $X \times SL(2,\mathbb C)$, we can always lift it to a unique holomorphic structure with holomorphic connective structure on the complexification of the corresponding $\mathcal{K}_l$-bundle. Indeed, if such lift exists, then it must be unique because any two non-isomorphic choices would differ by a non-trivial holomorphic structure with holomorphic connective structure on the topologically trivial $\mathbb C^{\ast}$-gerbe over $X$, but there is no such structure by the arguments in Section \ref{sec:gerbeshopf}. To prove existence, take any connection $(A_h,B_h)$ on the $\mathcal{K}_l$-bundle such that $A_h$ is the Chern connection of the holomorphic structure (with curvature $F_h \in \Omega^{1,1}(X,\mathbb R)$). Then the curvature $3$-form $H_h \in \Omega^3(X,\mathbb R)$ satisfies $dH_h = \langle F_h \wedge F_h \rangle \in \Omega^{2,2}(X,\mathbb R)$, hence $\overline{\partial}H_h^{1,2} = 0$ and, since $h^{1,2} = 0$, we obtain $H^{1,2}_h = i\overline{\partial}\omega$ for some $\omega \in \Omega^{1,1}(X,\mathbb R)$. This yields $H_h = d^c\omega$ and so we obtain a holomorphic structure with holomorphic connective structure by Theorem \ref{th:chern}.

We can also consider higher extensions of topologically non-trivial $SL(2,\mathbb C)$-bundles by using the following toy model for the Hull-Strominger system. Given a pair $(P^1,P^2)$ of $SU(2)$-bundles with Pontryagin classes related by $p_1(P^1) = d \cdot p_1(P^2)$ for some $d \in \mathbb Z$, we construct an extension  $BU(1) \rightarrow \mathcal{K}_{1,d} \rightarrow SU(2) \times SU(2)$ by integrating as in Theorem \ref{th:MaurerCartan} the diagonal pairing 
$$\langle \cdot,\cdot \rangle: (\mathfrak{su}(2) \oplus \mathfrak{su}(2)) \otimes (\mathfrak{su}(2) \oplus \mathfrak{su}(2)) \rightarrow \mathbb R$$
that equals the Killing form multiplied by $1/8$ in the first $\mathfrak{su}(2)$ factor and the Killing form multiplied by $-d/8$ in the second $\mathfrak{su}(2)$ factor. Then Proposition \ref{prop:pontryagin1} implies that $P^1 \times P^2$ can be lifted to a smooth $\mathcal{K}_{1,d}$-bundle. By similar arguments as in the preceding paragraph, the set of such smooth lifts is a torsor for $H^3(X,\mathbb Z) = \mathbb Z$ and, once such a lift is chosen, each holomorphic structure in the complexification of $P^1 \times P^2$ can be lifted in a unique way to a holomorphic structure with holomorphic connective structure in the complexification of the $\mathcal{K}_{1,d}$-bundle. This relates the moduli spaces of holomorphic principal 2-bundles with holomorphic connective structures over the Hopf surface to the moduli spaces of holomorphic vector bundles studied in \cite{Moraru}.

\appendix

\section{Gerbes as cocycle data}\label{ap:gerbes}

\subsection{Smooth gerbes}
\emph{Gerbes} are higher analogs of line bundles: geometric representatives of cohomology classes in $H^3(M,\mathbb Z)$. They were first defined by Giraud \cite{Giraud} as certain \emph{sheaves of groupoids} and then studied in the work of Brylinski \cite{BryLoop,BryHol,BryGauge}, who in particular set up a working definition in terms of \v{C}ech cocycle data which has been extensively used since then, as well as the essentially equivalent definitions of \emph{Hitchin-Chatterjee} gerbes \cite{Chat,Hit} and \emph{bundle gerbes} \cite{Mur}, while the most recent approach is to treat them as principal 2-bundles with structure 2-group $BU(1)$ \cite{NikWal2pbs}. In this appendix we recall familiar facts from this theory in the \v{C}ech cocycle approach. These are \emph{working} definitions, in the sense that we describe geometric objects in terms of the cocycle data with which they can be constructed, without gluing this data. The gluing is performed in Proposition \ref{prop:2pbmult}.

Fix during this section a manifold $M$ and an abelian Lie group $T$ with Lie algebra $\mathfrak{t}$. In the presence of an open cover $\{U_i\}_{i \in I}$ of $M$ we will often write $U_{i_1...i_k} = \bigcap_{j= 1}^k U_{i_j}$ for the finite intersections of the open sets.

\begin{definition}\label{def:gerbes}
A \emph{$T$-gerbe} over $M$ is the data of an open cover $\{U_i\}_{i \in I}$ of $M$ and a $T$-valued \v{C}ech 2-cocycle in this cover; i.e., functions $\lambda_{ijk}:U_{ijk} \rightarrow T$ satisfying $\lambda_{ikl}\lambda_{ijk} = \lambda_{ijl}\lambda_{jkl}$, modulo refinements. A \emph{connective structure} on it is the data of 1-forms $A_{ij} \in \Omega^1(U_{ij},\mathfrak{t})$ such that $A_{ij} - A_{ik} + A_{jk} = \lambda_{ijk}^{\ast}\theta^T$, for $\theta^T \in \Omega^1(T,\mathfrak{t})$ the Maurer-Cartan form on $T$, modulo refinements. A \emph{curving} for it is a collection of 2-forms $B_i \in \Omega^2(U_i,\mathfrak{t})$ such that $B_i - B_j = dA_{ij}$, modulo refinements. A \emph{connection} for a gerbe is a connective structure with a curving. Its \emph{curvature} is $H \in \Omega^3_{cl}(M,\mathfrak{t})$ given locally by $H_{|U_i} = dB_i$. 

Given two gerbes described by $(\{U^1_i\}_{i\in I_1},\{\lambda^1_{ijk}\}_{i,j,k \in I_1})$ and $(\{U^2_i\}_{i\in I_2},\{\lambda^2_{ijk}\}_{i,j,k \in I_2})$, respectively, an \emph{isomorphism} between them is the data of $\{V_a\}_{a \in A}$ a common refinement of $\{U^1_i\}_{i\in I_1}$ and $\{U^2_i\}_{i\in I_2}$ (with refinement maps $i_1:A \rightarrow I_1$ and $i_2:A \rightarrow I_2$) and functions $s_{ab}: V_{ab} \rightarrow T$ satisfying $s_{ac}\lambda^1_{i_1(a)i_1(b)i_1(c)} = \lambda^2_{i_2(a)i_2(b)i_2(c)}s_{bc}s_{ab}$. If the gerbes have connective structures $A_{ij}^1$, $A_{ij}^2$, then we say that they are \emph{preserved} when $0 = A^1_{i_1(a)i_1(b)} - A^2_{i_2(a)i_2(b)} - s_{ab}^{\ast}\theta^T$; we also define a \emph{connection} on the isomorphism (even when it does not preserve them) as a collection of $1$-forms $A_a \in \Omega^1(V_a,\mathfrak{t})$ satisfying $A_a - A_b = A^1_{i_1(a)i_1(b)} - A^2_{i_2(a)i_2(b)} - s_{ab}^{\ast}\theta^T$. An isomorphism of gerbes with a connection is also called an \emph{isomorphism of gerbes with connective structure}, while a connection on the identity isomorphism is also called an \emph{isomorphism of connective structures}. If the gerbes also have curvings $B_i^1$, $B_i^2$ then the \emph{curvature} of the isomorphism with connection $(\{s_{ab}\}_{a,b},\{A_a\}_a)$ with respect to $B^1_i$, $B^2_i$ is $F \in \Omega^2(M,\mathfrak{t})$ defined locally by $F_{|V_a} := dA_a - B^1_{i_1(a)} + B^2_{i_2(a)}$; note that it satisfies $dF = H_2 - H_1$.

Given two isomorphisms $(\{V_a\},\{s_{ab}\})$, $(\{V'_a\},\{s'_{ab}\})$ between the same gerbes, a \emph{2-isomorphism} between them is the data of a common refinement $\{W_r\}_{r \in R}$ of $\{V_a\}_{a \in A}$ and $\{V'_a\}_{a \in A'}$ such that the refinement maps $R \rightarrow A \rightarrow I_n$, $R \rightarrow A' \rightarrow I_n$ coincide for $n = 1,\,2$ and functions $t_r: W_r \rightarrow T$ such that $t_rs_{a(r)a(s)} = s'_{a'(r)a'(s)}t_s$. If the gerbes have connective structures $A_{ij}^1$, $A_{ij}^2$ and the isomorphisms have connections $A_a$, $A_a'$ then this is a \emph{2-isomorphism of gerbes with connective structure} or a \emph{flat 2-isomorphism} when the 1-form $\eta \in \Omega^1(M,\mathfrak{t})$ given locally by $\eta = A'_{a'(r)} - A_{a(r)} + t_r^{\ast}\theta^T$ is actually $\eta = 0$. Gerbes (with or without connective structures and curvings), their isomorphisms and 2-isomorphisms form a bicategory.
\end{definition}

\begin{remark}\label{rk:bundlegerbes}
    The bicategory of Murray's \emph{bundle gerbes} \cite{Mur} is defined in a similar way but replacing the use of a cover of $M$ by a general surjective submersion $\pi: Y \rightarrow M$. For such a surjective submersion, write $Y^{[n]} := Y \times_M Y \times_M ... \times_M Y$ for the $n$th fibered product over $M$ and $d_j: Y^{[n]} \rightarrow Y^{[n-1]}$, $j=0, \,..., \,n-1$ for the map that forgets the $j$-th point. Then a $T$-gerbe over $M$ can be defined as a principal $T$-bundle $L \rightarrow Y^{[2]}$ with an isomorphism $\lambda:d_2^{\ast}L \otimes d_0^{\ast}L \rightarrow d_1^{\ast}L$ of $T$-bundles over $Y^{[3]}$ satisfying $d_1^{\ast}\lambda \circ d_3^{\ast}\lambda = d_2^{\ast}\lambda \circ d_0^{\ast}\lambda$ over $Y^{[4]}$; a connective structure on it is a $T$-connection $\nabla$ on $L \rightarrow Y^{[2]}$ such that $\lambda$ is flat and a curving for it is a 2-form $B \in \Omega^2(Y,\mathfrak{t})$ such that $d_1^{\ast}B - d_0^{\ast}B = Curv(L,\nabla)$. Isomorphisms and 2-isomorphisms of bundle gerbes can be defined similarly as in Definition \ref{def:gerbes}; the resulting bicategory is equivalent to the one from Definition \ref{def:gerbes}.
\end{remark}

Let $T$ be a connected abelian Lie group and let $Z \subset \mathfrak{t}$ be the kernel of the exponential map $exp:\mathfrak{t} \rightarrow T$. From the cocycle description of gerbes one can easily prove the following proposition. 
\begin{proposition}\label{prop:gerbes}
    \begin{enumerate}
        \item Every $T$-gerbe admits a connective structure and curving. 
        \item $T$-gerbes over $M$ are classifed by $H^3(M,Z)$ and the class of a gerbe is represented in de Rham cohomology by taking the curvature of any curving.
        \item\label{it:flatgerbes} A gerbe admits a flat connection if and only if it admits locally constant cocycle data. An isomorphism of flat gerbes admits a flat connection if and only if it admits locally constant cocycle data in the same frame in which the gerbes are described by locally constant cocycle data. A 2-isomorphism between isomorphisms with flat connections is flat if and only if it is described by locally constant functions in the same frame in which the gerbes and the isomorphisms are described by locally constant cocycle data.
    \end{enumerate}

\end{proposition}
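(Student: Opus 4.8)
The plan is to work entirely with the \v{C}ech cocycle description of Definition \ref{def:gerbes} and to exploit the acyclicity of the fine sheaves $\Omega^q_{\mathfrak{t}}$ of $\mathfrak{t}$-valued $q$-forms (which admit partitions of unity), together with the short exact sequence of sheaves on $M$
$$0 \rightarrow \underline{Z} \rightarrow C^{\infty}_{\mathfrak{t}} \xrightarrow{exp} C^{\infty}_T \rightarrow 0$$
induced by $1 \rightarrow Z \rightarrow \mathfrak{t} \xrightarrow{exp} T \rightarrow 1$. For part (1), given a gerbe $\lambda_{ijk}$ the forms $\lambda_{ijk}^{\ast}\theta^T$ are closed (since $T$ is abelian, $d\theta^T = 0$) and form a \v{C}ech $2$-cocycle valued in $\Omega^1_{\mathfrak{t}}$; as $H^2(M,\Omega^1_{\mathfrak{t}}) = 0$ there is a connective structure $A_{ij}$ with $A_{jk} - A_{ik} + A_{ij} = \lambda_{ijk}^{\ast}\theta^T$, given explicitly by $A_{ij} := \sum_l \rho_l\, \lambda_{lij}^{\ast}\theta^T$ for a partition of unity $\{\rho_l\}$. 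Then $\{dA_{ij}\}$ is a $1$-cocycle in $\Omega^2_{\mathfrak{t}}$, and $H^1(M,\Omega^2_{\mathfrak{t}}) = 0$ furnishes a curving $B_i$ with $B_i - B_j = dA_{ij}$.

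For part (2), the cocycle description identifies isomorphism classes of $T$-gerbes with $H^2(M,C^{\infty}_T)$. Acyclicity of the fine sheaf $C^{\infty}_{\mathfrak{t}}$ makes the connecting homomorphism of the sequence above an isomorphism $H^2(M,C^{\infty}_T) \cong H^3(M,\underline{Z}) = H^3(M,Z)$, which gives the classification; concretely, lifting $\lambda_{ijk}$ to $\tilde{\lambda}_{ijk} \in C^{\infty}_{\mathfrak{t}}$ on a refinement, the integral class is $\delta\tilde{\lambda}$. To see that the curvature $H$ represents the image of this class in $H^3(M,\mathfrak{t})$, I would run the standard \v{C}ech--de Rham double-complex comparison computing $H^{\ast}(M,\mathfrak{t})$ as in Example \ref{ex:simpderham}: the tuple assembled from $(\tilde{\lambda},A,B)$ is a chain whose total differential links the locally constant representative $\delta\tilde{\lambda}$ to the global form $H = dB_i$, using the relations $\delta A = d\tilde{\lambda}$ and $dA = \pm\,\delta B$, so the two are cohomologous. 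Independence of $[H]$ from the chosen connective structure and curving follows because any two choices of $(A,B)$ change $H$ only by a globally exact $3$-form.

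For part (3), the ``if'' directions are immediate: locally constant $\lambda_{ijk}$ (resp. $s_{ab}$, resp. $t_r$) have vanishing $\theta^T$-pullback, so $A = 0,\, B = 0$ (resp. $A_a = 0$, resp. the identity) is a flat connection. For the converse I would pass to a good cover and apply the Poincar\'e lemma. Given a flat connection, $dB_i = H = 0$ allows writing $B_i = dC_i$ on contractible $U_i$ and absorbing $C_i$ into the connective structure to arrange $B_i = 0$, whence $dA_{ij} = 0$; writing the now-closed forms as $A_{ij} = da_{ij}$ and performing the gerbe isomorphism with $\mu_{ij} = exp(a_{ij})$ replaces $A_{ij}$ by $0$ and $\lambda_{ijk}$ by a cocycle $\lambda'_{ijk}$ with $(\lambda'_{ijk})^{\ast}\theta^T = 0$, i.e. locally constant. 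The statements for isomorphisms and $2$-isomorphisms are the same argument one and two degrees lower: with both gerbes in a locally constant frame ($A^i = 0$, $B^i = 0$), a flat connection on an isomorphism satisfies $dA_a = 0$, and absorbing $A_a = d\alpha_a$ into the $2$-isomorphism $t_a = exp(-\alpha_a)$ renders $s_{ab}$ locally constant; finally a $2$-isomorphism $t_r$ is flat precisely when $t_r^{\ast}\theta^T = 0$, i.e. when $t_r$ is locally constant.

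The routine content is part (1), the classification isomorphism, and the ``if'' directions, which are pure sheaf theory. I expect the main obstacle to be the bookkeeping in the \v{C}ech--de Rham chase of part (2) identifying $[H]$, and maintaining consistent sign and orientation conventions through the Poincar\'e-lemma reductions of part (3); none of these is conceptually hard, but each requires care to phrase the coboundary modifications ($\mu_{ij} = exp(a_{ij})$, $t_a = exp(-\alpha_a)$) as genuine (2-)isomorphisms of gerbes with connective structure, so that the reduction to a locally constant frame is legitimate within the bicategory of Definition \ref{def:gerbes}.
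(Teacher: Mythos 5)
The paper gives no proof of this proposition (it is asserted to follow easily from the cocycle description), and your argument is exactly the intended routine filling-in: fine-sheaf acyclicity for part (1), the exponential sheaf sequence together with the \v{C}ech--de Rham zig-zag for part (2), and Poincar\'e-lemma gauging on a good cover for part (3); the structure is correct throughout. The one slip is a sign in the reduction for isomorphisms: under the paper's convention $A_a - A_b = A^1_{i_1(a)i_1(b)} - A^2_{i_2(a)i_2(b)} - s_{ab}^{\ast}\theta^T$, putting both gerbes in the zero frame gives $s_{ab}^{\ast}\theta^T = A_b - A_a$, so the gauging $2$-isomorphism must be $t_a = exp(+\alpha_a)$; your choice $t_a = exp(-\alpha_a)$ produces $(s'_{ab})^{\ast}\theta^T = 2(A_b - A_a)$ rather than $0$ --- a purely conventional fix of the kind you yourself flagged, and note that your gerbe-level gauge $\mu_{ij} = exp(a_{ij})$ already carries the correct sign.
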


\subsection{Holomorphic gerbes and holomorphic connective structures}\label{sec:cxgerbes}

Let $X$ be a complex manifold and $T$ a complex abelian Lie group with Lie algebra $\mathfrak{t}$.

\begin{definition}
A \emph{holomorphic $T$-gerbe} over $X$ is a $T$-gerbe $(\{U_i\},\{\lambda_{ijk}\})$ such that $\lambda_{ijk}$ are holomorphic functions. In this case, a \emph{compatible connective structure} is a connective structure $\{A_{ij}\}$ such that $A_{ij} \in \Omega^{1,0}(U_{ij},\mathfrak{t})$ and a \emph{compatible curving} is a curving $\{B_i\}$ such that $B_i \in \Omega^{2,0+1,1}(U_i,\mathfrak{t})$. A \emph{holomorphic connective structure} is a connective structure $\{A_{ij}\}$ such that $A_{ij} \in \Omega^{1,0}(U_{ij},\mathfrak{t})$ and $\bar{\partial}A_{ij} = 0$ and in this case a curving $\{B_i\}$ is \emph{compatible with the holomorphic connective structure} if $B_i \in \Omega^{2,0}(U_i,\mathfrak{t})$. It is then a \emph{holomorphic curving} if, moreover, $\overline{\partial}B_i = 0$.

A \emph{holomorphic isomorphism} $(\{V_a\},\{s_{ab}\})$ between holomorphic $T$-gerbes is an isomorphism such that $s_{ab}$ are holomorphic. If, moreover, the holomorphic gerbes have compatible connective structures, a connection $A_a$ on a holomorphic isomorphism is \emph{compatible} if $A_a \in \Omega^{1,0}(V_a,\mathfrak{t})$; if they have holomorphic connective structures then the connection is \emph{holomorphic} when $A_a \in \Omega^{1,0}(V_a,\mathfrak{t})$ and $\bar{\partial} A_a = 0$. A \emph{holomorphic 2-isomorphism} $(\{W_r\},\{t_{r}\})$ between holomorphic isomorphisms is one for which $t_r$ are holomorphic.
\end{definition}

Write $\mathcal{O}_T$ for the sheaf of holomorphic $T$-valued functions and $\Omega^1_{\overline{\partial}-cl,\mathfrak{t}}$ for the sheaf of holomorphic $\mathfrak{t}$-valued 1-forms. It follows from the definitions (see also \cite{BryHol}) that holomorphic $T$-gerbes are classified by $H^2(X,\mathcal{O}_T)$, their automorphisms are classified by $H^1(X,\mathcal{O}_T)$ and there are $H^0(X,\mathcal{O}_T)$ 2-automorphisms of a given isomorphism. Similarly, holomorphic gerbes with holomorphic connective structure are classified by $\mathbb H^2(X,\mathcal{O}_{X,T} \rightarrow \Omega^1_{\overline{\partial}-cl,\mathfrak{t}})$, their automorphisms by $\mathbb H^1(X,\mathcal{O}_{X,T} \rightarrow \Omega^1_{\overline{\partial}-cl,\mathfrak{t}})$ and their 2-automorphisms by $\mathbb H^0(X,\mathcal{O}_{X,T} \rightarrow \Omega^1_{\overline{\partial}-cl,\mathfrak{t}})$.

Given a smooth $T$-gerbe, we say that it \emph{admits} a holomorphic structure (with holomorphic connective structure) if it is isomorphic as a smooth $T$-gerbe to the underlying smooth $T$-gerbe of a holomorphic gerbe (with holomorphic connective structure). One can characterize the different holomorphic structures that a smooth gerbe admits in terms of an analog of the classical `semiconnections' (or `Dolbeault operators') for $T$-bundles. 

\begin{definition}\label{def:dolbgerbes}
    Let $\mathcal{L}$ be a smooth $T$-gerbe described by $(\{U_i\},\{\lambda_{ijk}\})$.
    \begin{itemize}
        \item A \emph{1-semiconnection} on $\mathcal{L}$ is the data of $D_{ij} \in \Omega^{0,1}(U_{ij},\mathfrak{t})$, $D_i \in \Omega^{0,2}(U_i,\mathfrak{t})$ with $D_{ij} - D_{ik} + D_{jk} = \lambda_{ijk}^{\ast}\theta^{0,1}$, $D_i - D_j = (dD_{ij})^{0,2}$, $(dD_i)^{0,3} = 0$.
        \item A \emph{2-semiconnection} on $\mathcal{L}$ is the data of $D_{ij} \in \Omega^{1}(U_{ij},\mathfrak{t})$, $D_i \in \Omega^{1,1+0,2}(U_i,\mathfrak{t})$ with $D_{ij} - D_{ik} + D_{jk} = \lambda_{ijk}^{\ast}\theta$, $D_i - D_j = (dD_{ij})^{1,1+0,2}$, $(dD_i)^{1,2+0,3} = 0$.
    \end{itemize}
    Let $\phi:\mathcal{L}^1 \rightarrow \mathcal{L}^2$ be an isomorphism of smooth $T$-gerbes described by $(\{U_i\},\{s_{ij}\})$.
    \begin{itemize}
        \item If $\mathcal{L}^1$, $\mathcal{L}^2$ have 1-semiconnections $D^1$, $D^2$, a \emph{1-semiconnection} on $\phi$ is the data of $D^{\phi}_{i} \in \Omega^{0,1}(U_{i},\mathfrak{t})$ with $D^{\phi}_i - D^{\phi}_j = D^1_{ij} - D^2_{ij} - s_{ij}^{\ast}\theta^{0,1}$, $(dD^{\phi}_i)^{0,2} = D^1_i - D^2_i$.
        \item If $\mathcal{L}^1$, $\mathcal{L}^2$ have 2-semiconnections $D^1$, $D^2$, a \emph{2-semiconnection} on $\phi$ is the data of $D^{\phi}_{i} \in \Omega^{1}(U_{i},\mathfrak{t})$ with $D^{\phi}_i - D^{\phi}_j = D^1_{ij} - D^2_{ij} - s_{ij}^{\ast}\theta$, $(dD^{\phi}_i)^{1,1+0,2} = D^1_i - D^2_i$.
    \end{itemize}
    Let $\psi: \phi \Rightarrow \phi': \mathcal{L}^1 \rightarrow \mathcal{L}^2$ be a 2-isomorphism of smooth $T$-gerbes described by $(\{U_i\},\{t_{i}\})$.
    \begin{itemize}
        \item If $\mathcal{L}^1$, $\mathcal{L}^2$, $\phi$, $\phi'$ have 1-semiconnections, then we say that $\psi$ \emph{preserves the 1-semiconnections} if $D^{\phi'}_i - D^{\phi}_i + t_i^{\ast}\theta^{0,1} = 0$. 
        \item If $\mathcal{L}^1$, $\mathcal{L}^2$, $\phi$, $\phi'$ have 2-semiconnections, then we say that $\psi$ \emph{preserves the 2-semiconnections} if $D^{\phi'}_i - D^{\phi}_i + t_i^{\ast}\theta = 0$.
    \end{itemize}
\end{definition}

Note that 1-semiconnections and 2-semiconnections play roles similar to connective structures and curvings (Definition \ref{def:gerbes}). Furthermore, if $D_1$, $D_2$ are semiconnections on $\mathcal{L}_1$, $\mathcal{L}_2$ then there is a canonical semiconnection $D_1 \otimes D_2$ on $\mathcal{L}_1 \otimes \mathcal{L}_2$ and if $f: X \rightarrow Y$ is a holomorphic map and $(\mathcal{L},D) \rightarrow Y$ is a gerbe with semiconnection then $f^{\ast}\mathcal{L}$ carries a canonical semiconnection $f^{\ast}D$ (but $f$ needs to be holomorphic, otherwise $f^{\ast}$ may not preserve types of differential forms).

\begin{remark}\label{rk:dolbgerbes}
    A 1-semiconnection on a $T$-gerbe $\mathcal{L}$ can be equivalently defined as an equivalence class of connections with curvature $H$ of type $(3,0)+(2,1)+(1,2)$, up to addition of cocycles of $(1,0)$-forms and addition of $(2,0)+(1,1)$ forms. Then a 1-semiconnection on an isomorphism of gerbes with 1-semiconnections is an equivalence class of connections on the isomorphism whose curvature with respect to any choice of representing connections is of type $(2,0)+(1,1)$, up to addition of $(1,0)$-forms.

    Similarly, a 2-semiconnection on a $T$-gerbe $\mathcal{L}$ can be equivalently defined as an equivalence class of connections with curvature $H$ of type $(3,0)+(2,1)$, up to addition of $(2,0)$ forms. Then a 2-semiconnection on an isomorphism of gerbes with 2-semiconnections is a connection on the isomorphism (on the nose, without taking equivalence classes) whose curvature with respect to any choice of representing connections is of type $(2,0)$.
\end{remark}

The content of the following proposition is well-known in the literature 
\cite{Ald,BryHol,Chat,Upm} (sometimes in implicit terms).

\begin{proposition}\label{prop:dolbeaultgerbes}
The bicategory of holomorphic $T$-gerbes is equivalent to the bicategory of smooth $T$-gerbes with 1-semiconnections. The bicategory of holomorphic $T$-gerbes with holomorphic connective structure is equivalent to the bicategory of smooth $T$-gerbes with 2-semiconnections. Under this correpondence, compatible connections are precisely the ones that yield the given semiconnections as in Remark \ref{rk:dolbgerbes}.
\end{proposition}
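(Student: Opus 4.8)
The statement to prove is Proposition~\ref{prop:dolbeaultgerbes}, which asserts two equivalences of bicategories (holomorphic $T$-gerbes with smooth $T$-gerbes carrying 1-semiconnections, and holomorphic $T$-gerbes with holomorphic connective structure with smooth $T$-gerbes carrying 2-semiconnections), together with the identification of compatible connections. The natural plan is to mimic the classical Dolbeault/Koszul--Malgrange argument for holomorphic vector bundles, but organized entirely at the level of the cocycle data introduced in Appendix~\ref{ap:gerbes}. First I would fix a smooth $T$-gerbe $\mathcal{L} = (\{U_i\},\{\lambda_{ijk}\})$ equipped with a 1-semiconnection $(D_{ij},D_i)$ and show that, after passing to a refinement by Stein (or at least $\bar{\partial}$-acyclic) opens, one can solve the relevant $\bar{\partial}$-equations to replace the smooth $\lambda_{ijk}$ by holomorphic transition functions $\lambda_{ijk}^{hol}$ without changing the isomorphism class of the gerbe; the data $D_{ij}$, $D_i$ are precisely the obstruction-free input that allows this integration. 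Concretely, the condition $D_{ij}-D_{ik}+D_{jk} = \lambda_{ijk}^{\ast}\theta^{0,1}$ together with $D_i - D_j = (dD_{ij})^{0,2}$ and $(dD_i)^{0,3}=0$ encodes exactly a Dolbeault representative of the gerbe's class, and I would use the fine-sheaf resolution of $\mathcal{O}_T$ by $C^\infty_T \xrightarrow{\bar{\partial}} \Omega^{0,1}_{\mathfrak{t}} \xrightarrow{\bar{\partial}} \Omega^{0,2}_{\mathfrak{t}} \to \cdots$ to match the hypercohomology computation already quoted after Definition~\ref{def:dolbgerbes} (holomorphic gerbes classified by $H^2(X,\mathcal{O}_T)$, holomorphic gerbes with holomorphic connective structure by $\mathbb{H}^2(X,\mathcal{O}_T \to \Omega^1_{\bar{\partial}\text{-cl},\mathfrak{t}})$).

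\textbf{Construction of the two functors.} I would build the functor from smooth gerbes with semiconnections to holomorphic gerbes by the integration just sketched, and the inverse functor by the obvious forgetful-then-extract procedure: a holomorphic gerbe has holomorphic $\lambda_{ijk}$, whence $\lambda_{ijk}^{\ast}\theta^{0,1}=0$, so $(D_{ij},D_i)=(0,0)$ is a tautological 1-semiconnection; more precisely Remark~\ref{rk:dolbgerbes} tells me to read off a 1-semiconnection as an equivalence class of smooth connections whose curvature has no $(0,3)$ (respectively, in the 2-semiconnection case, no $(0,3)$ nor $(1,2)$) component, modulo the stated additions. The key is that these two assignments are mutually inverse up to the coherent isomorphisms and 2-isomorphisms, which is where the bicategorical bookkeeping enters: I would verify on 1-morphisms that a holomorphic isomorphism $(\{V_a\},\{s_{ab}\})$ corresponds to a smooth isomorphism equipped with a 1-semiconnection $D_i^\phi$ satisfying the compatibility in Definition~\ref{def:dolbgerbes}, again solving a $\bar{\partial}$-problem to holomorphize $s_{ab}$ on a refinement, and on 2-morphisms that a holomorphic $t_r$ corresponds to a 2-isomorphism preserving semiconnections. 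The final clause about compatible connections is then immediate from Remark~\ref{rk:dolbgerbes}, which is stated precisely so that "compatible connection" and "connection inducing the given semiconnection" are definitionally the same notion; I would simply invoke it.

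\textbf{The 2-semiconnection case and the main obstacle.} The second equivalence runs in parallel: the extra datum of a holomorphic connective structure $A_{ij}\in\Omega^{1,0}$ with $\bar{\partial}A_{ij}=0$ corresponds, under the dictionary, to upgrading a 1-semiconnection to a 2-semiconnection, where now $D_{ij}\in\Omega^1(U_{ij},\mathfrak{t})$ carries a $(1,0)$ part and the curvature is required to lose its $(1,2)$ component as well. Here I would use the exact triangle relating the complexes $(\mathcal{O}_T \to \Omega^1_{\bar\partial\text{-cl},\mathfrak{t}})$ and the fine resolutions, exactly matching the hypercohomology groups listed in the excerpt. The main obstacle I anticipate is not the $\bar{\partial}$-solvability --- that is standard given a good (Stein-refinable) cover --- but rather the \emph{coherence and naturality} of the constructed equivalence at the bicategorical level: one must check that the isomorphisms witnessing $F\circ G \cong \mathrm{id}$ and $G\circ F \cong \mathrm{id}$ themselves assemble into pseudonatural transformations, and that the associator/unitor diagrams commute, all while choices of refinements and of $\bar{\partial}$-primitives have been made non-canonically. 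The cleanest way to sidestep the brunt of this is to phrase both sides as the Deligne-type hypercohomology groupoids/2-groupoids and to check that the comparison of the two resolutions ($\mathcal{O}_T$ versus its fine Dolbeault resolution) induces the equivalence level by level, so that coherence is inherited from the quasi-isomorphism of complexes of sheaves rather than verified by hand; this is exactly the route the paper points to by citing \cite{Ald,BryHol,Chat,Upm}. I would therefore present the $\bar{\partial}$-integration as the substantive content and treat the bicategorical coherence as a formal consequence of working with (hyper)cohomology of an abelian sheaf, remarking that the verification of the cocycle and refinement compatibilities is routine.
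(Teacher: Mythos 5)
Your proposal is correct and follows essentially the same route as the paper: one direction is the tautological assignment $(D_{ij},D_i)=(0,0)$ to a holomorphic cocycle, and the other is the local $\bar{\partial}$-integration (the paper solves $\bar{\partial}c_i = D_i$ and $\bar{\partial}f_{ij} = c_i - c_j - D_{ij}$ and corrects the cocycle to $\lambda_{ijk}\exp(f_{ij})\exp(f_{ik})^{-1}\exp(f_{jk})$), with the extension to 1- and 2-morphisms and the compatible-connections clause treated as routine, exactly as you do. Your extra suggestion of inheriting bicategorical coherence from a quasi-isomorphism of sheaf complexes via Deligne-type 2-groupoids is a legitimate way to organize the verification the paper leaves implicit, but it does not change the substance of the argument.
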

\begin{proof}
    We give a brief sketch of the proof in the case of 1-semiconnections. If $\mathcal{L}$ is given by holomorphic data $\lambda_{ijk}$, then we define a 1-semiconnection by $D_{ij} = 0$, $D_i = 0$. Conversely, if $(\mathcal{L},D)$ is a smooth gerbe with 1-semiconnection given by $\lambda_{ijk}$, $D_{ij}$, $D_i$ then we choose $c_i \in \Omega^{0,1}(U_i,\mathfrak{t})$, $f_{ij} \in C^{\infty}(U_{ij},\mathfrak{t})$ with $\bar{\partial}c_i = D_i$ and $\bar{\partial}f_{ij} = c_i - c_j - D_{ij}$; then $\lambda_{ijk}exp(f_{ij})exp(f_{ik})^{-1}exp(f_{jk})$ is data for a holomorphic gerbe. Different choices of $c_i$, $f_{ij}$ yield canonically isomorphic holomorphic gerbes. These maps can be extended to an equivalence of bicategories in a similar way.
\end{proof}

We conclude by defining the fibrewise complexification of a gerbe.

\begin{definition}\label{def:complexificationgerbes}
    Let $T_{\mathbb R}$ be a compact abelian Lie group and let $j_T: T_{\mathbb R} \rightarrow T$ be its complexification (inducing an inclusion $dj_T: \mathfrak{t}_{\mathbb R} \rightarrow \mathfrak{t}$). For $(\mathcal{L},\nabla,B) \rightarrow X$ a $T_{\mathbb R}$-gerbe with connection described by $(\{U_i\},\{\lambda_{ijk}\},\{A_{ij}\},\{B_i\})$, its \emph{fibrewise complexification} is the smooth $T$-gerbe with connection $(\mathcal{L}^{\mathbb C},\nabla^{\mathbb C},B^{\mathbb C})$ described by $(\{U_i\},\{j_T(\lambda_{ijk})\},\{dj_T \circ A_{ij}\}, \{dj_T \circ B_i\})$.
\end{definition}

\end{document}